\newtheoremstyle{thm}
{10pt}% this gives how much space is left above theorem
{6pt}% space below theorem
{\itshape}% font used (italics for theorems)
{}% Indent amount (empty = no indent, \parindent = paragraph indent)
{\bf}% Tital font
{.}% Punctuation after thm head
{.5em}% Space after thm head
{} % This one should just be left empty
\theoremstyle{thm}
\newtheorem{thm}{Theorem}[chapter]
\newtheorem{prop}[thm]{Proposition}
\newtheorem{lemma}[thm]{Lemma}
\newtheorem{cor}[thm]{Corollary}
\theoremstyle{definition}
\newtheorem{defn}[thm]{Definition}
\DeclareMathOperator{\im}{Im} 
\DeclareMathOperator{\End}{End}
\newcommand{\nc}{\newcommand}
\newcommand{\rnc}{\renewcommand}
\renewcommand{\l}{\lambda}
\renewcommand{\d}{\delta}
\rnc{\k}{\kappa}
\newcommand{\e}{\epsilon}
\newcommand{\noi}{\noindent}
\nc{\bmw}{\mathscr{C}}
\nc{\ckt}[1]{\mathbb{KT}_{#1}^k}
\nc{\akt}[1]{\widehat{\mathbb{KT}}_{#1}}
\nc{\cl}[1]{\mathrm{cl}_{#1}}
\newcommand{\bnk}{\mathscr{B}_n^k}
\newcommand{\bmk}{\mathscr{B}_{n-1}^k}
\newcommand{\bt}{\mathscr{B}_2^k}
\rnc{\b}[1]{\,\overline{\!#1}}
\nc{\blk}{\mathscr{B}_l^k}
\nc{\bl}[1]{\widetilde{\mathscr{B}}_{#1}^k}
\nc{\ak}[1]{\mathfrak{h}_{#1,k}}
\newcommand{\At}[1]{\widetilde{\mathfrak{h}}_{#1,k}}
\newcommand{\ix}{X_i^{-1}}
\newcommand{\iy}{Y^{-1}}
\nc{\kum}[1]{\sum_{#1=0}^{k-1}}
\nc{\kkum}[1]{\sum_{#1=0}^{k}}
\nc{\pum}[1]{\sum_{#1=1}^p}
\nc{\ppum}[1]{\sum_{#1=0}^{p-1}}
\nc{\tw}[1]{0 \leq #1 \leq k-1}
\nc{\tf}{\text{,\quad for all }}
\nc{\iif}{\text{,\quad if }}
\nc{\la}[1]{\left\langle #1 \right\rangle}
\nc{\rel}[1]{\stackrel{(\ref{#1})}{=}}
\newcommand{\stack}[1]{\stackrel{#1}{=}}
\nc{\sstack}[1]{\stackrel{#1}{\subseteq}}
\nc{\f}{&\hphantom{=}\,\,}
\nc{\p}{\stackrel{\hphantom{(1)}}{=}}
\nc{\h}{\stackrel{\hphantom{(19)}}{=}}
\newcommand{\y}[2]{Y^{\prime \,#2}_{#1}}
\rnc{\a}[2]{\alpha_{#1}^{#2}}
\rnc{\o}{\ldots}
\nc{\floor}[1]{\left\lfloor #1 \right\rfloor}
\nc{\ceil}[1]{\left\lceil \frac{#1}{2} \right\rceil}
\rnc{\ll}[1]{\left( #1 \right)}
\nc{\s}{\subseteq}
\nc{\Z}{\mathbb{Z}}
\nc{\dg}{\psi} %map from \bnk to CKT
\nc{\wh}[1]{\widehat{#1}}
\newcommand{\B}{\mathbb B}
\nc{\M}{\mathbb{M}}
\nc{\X}{\mathfrak{W}}
\def\be{\begin{align*}}
\def\ee{\end{align*}}
\newcommand{\beq}{\begin{align}}
\newcommand{\eeq}{\end{align}}
\newcommand{\commentout}[1]{}
\rnc{\sectionmark}[1]{%
\markright{\thesection\ #1}}
\makeindex \linespread{1.5}
\begin{document}
\author{\vskip 1cm Shona Huimin Yu}
\title{The Cyclotomic Birman-Murakami-Wenzl Algebras}
%\date{September 2007}
\maketitle
%\pagenumbering{roman}
\setcounter{tocdepth}{2}
\pagebreak

{\thispagestyle{plain}
\begin{center} 
\textbf{\Large Abstract \\ (2008)}
\vspace{0.2cm}
\end{center}
\vskip -0.2cm

This thesis presents a study of the cyclotomic BMW (Birman-Murakami-Wenzl) algebras, introduced by H\"aring-Oldenburg as a generalization of the BMW algebras associated with the cyclotomic Hecke algebras of type $G(k,1,n)$ (also known as Ariki-Koike algebras) and type $B$ knot theory involving affine/cylindrical tangles.

The motivation behind the definition of the BMW algebras may be traced back to an important problem in knot theory; namely, that of classifying knots (and links) up to isotopy.
The algebraic definition of the BMW algebras uses generators and relations originally inspired by the Kauffman link invariant. They are closely connected with the Artin braid group of type~$A$, Iwahori-Hecke algebras of type $A$, and with many diagram algebras, such as the Brauer and Temperley-Lieb algebras.
Geometrically, the BMW algebra is isomorphic to the Kauffman Tangle algebra. The representations and the
cellularity of the BMW algebras have now been extensively studied in the literature. These algebras also feature in the theory of quantum groups, statistical mechanics, and topological quantum field theory.

In view of these relationships between the BMW algebras and several objects of ``type $A$'', several authors have since naturally generalized the BMW algberas for other types of Artin groups.
Motivated by knot theory associated with the Artin braid group of type $B$, H\"aring-Oldenburg introduced the cyclotomic BMW algebras $\mathscr{B}_n^k$ as a generalization of the BMW algebras such that the Ariki-Koike algebra $\mathfrak{h}_{n,k}$ is a quotient of $\mathscr{B}_n^k$, in the same way the Iwahori-Hecke algebra of type $A$ is a quotient of the BMW algebra.

In this thesis, we investigate the structure of these algebras and show they have a topological realization as a certain cylindrical analogue of the Kauffman Tangle algebra. In particular, they are shown to be $R$-free of rank $k^n (2n-1)!!$ and
bases that may be explicitly described both algebraically and diagrammatically in terms of cylindrical tangles are obtained.
Unlike the BMW and Ariki-Koike algebras, one must impose extra so-called ``admissibility conditions'' on the parameters of the ground ring in order for these results to hold. This is due to potential torsion caused by the polynomial relation of order $k$ imposed on one of the generators of $\mathscr{B}_n^k$. It turns out that the representation theory of $\mathscr{B}_2^k$ is crucial in determining these conditions precisely. The representation theory of $\mathscr{B}_2^k$ is analysed in detail in a joint preprint with Wilcox in \cite{WY06}} \thispagestyle{plain}{(http://arxiv.org/abs/math/0611518). The admissibility conditions and a universal ground ring with admissible parameters are given explicitly in Chapter 3.

The admissibility conditions are also closely related to the existence of a non-degenerate Markov trace function of $\mathscr{B}_n^k$ which is then used together with the cyclotomic Brauer algebras in the linear independency arguments contained in Chapter 4.

Furthermore, in Chapter 5, we prove the cyclotomic BMW algebras are cellular, in the sense of Graham and Lehrer. The proof uses the cellularity of the Ariki-Koike algebras (Graham-Lehrer \cite{GL96} and Dipper-James-Mathas \cite{DJM98}) and an appropriate ``lifting'' of a cellular basis of the Ariki-Koike algebras into $\bnk$, which is compatible with a certain anti-involution of $\mathscr{B}_n^k$.

When $k = 1$, the results in this thesis specialize to those previously established for the BMW algebras by Morton-Wasserman \cite{MW89}, Enyang \cite{E04}, and Xi \cite{X00}. \\
\\
\textbf{Remarks:}

During the writing of this thesis, Goodman and Hauschild-Mosley also attempt similar arguments to establish the freeness and diagram algebra results mentioned above. However, they withdrew their preprints \cite{GH107,GH207}, due to issues with their generic ground ring crucial to their linear independence arguments. A similar strategy to that proposed in \cite{GH107}, together with different trace maps and the study of rings with admissible parameters in Chapter 3, is used in establishing linear independency of our basis in Chapter 4. 

Since the submission of this thesis, new versions of these preprints have been released in which Goodman and Hauschild-Mosley use alternative topological and Jones basic construction theory type arguments to establish freeness of $\mathscr{B}_n^k$ and an isomorphism with the cyclotomic Kauffman Tangle algebra. However, they require their ground rings to be an integral domain with parameters satisfying the (slightly stronger) admissibility conditions introduced by Wilcox and the author in \cite{WY06}.
Also, under these conditions, Goodman has obtained cellularity results.}

Rui and Xu have also proved freeness and cellularity results when $k$ is odd, and later Rui and Si for general $k$, under the assumption that $\delta$ is invertible and using another stronger condition called ``$u$-admissibility''. The methods and arguments employed are strongly influenced by those used by Ariki, Mathas and Rui \cite{AMR06} for the cyclotomic Nazarov-Wenzl algebras and involve the construction of seminormal representations; their preprints have recently been released on the arXiv.

\thispagestyle{plain}
{
It should also be noted there are slight differences between the definitions of cyclotomic BMW algebras and ground rings used, as explained partly above.
Furthermore, Goodman and Rui-Si-Xu use a weaker definition of cellularity, to bypass a problem discovered in their original proofs relating to the anti-involution axiom of the original Graham-Lehrer definition.
}
\pagebreak
\tableofcontents
\setcounter{chapter}{-1}
\pagenumbering{arabic}

\newpage

{\thispagestyle{plain}

{\mbox{}\vfill
  \begin{center}
{\em I dedicate this to my family, who have endured me for the longest. \\ To my parents and my little sister Nina, this is for you.} 
  \end{center}
\mbox{}\vfill}

\newpage

{\thispagestyle{plain}
\begin{center} 
\textbf{\Large Acknowledgements}
\end{center}
Firstly, I thank my supervisor Bob Howlett for his patience, encouragement and help during my degree. Thank you for initiating my studies at the University of Sydney and exposing me to various areas of  mathematics. Thank you also for providing the pstricks diagrams in this thesis.
I thank my co-supervisor Gus Lehrer for his assistance and Andrew Mathas for always being generous with his time and giving me helpful advice and criticism on my work.

Also, I am very grateful to Bob, Gus and Andrew for their financial  support throughout my degree. Thank you for your generosity when my scholarships had expired and, in particular, for giving me invaluable experiences by allowing me to attend international conferences and meet and learn from experts in the area.

My special thanks to Stewart Wilcox, a best friend and colleague, without  whom this thesis would not have been completed. Thank you for always encouraging me and for sharing your mathematical enthusiasm, thoughts and  ideas with me. I have benefited from our many, both mathematical and non-mathematical, discussions. Thank you also for discovering and remedying errors in past literature pertaining to this thesis and for being a meticulous proofreader.

To all my friends from University of Sydney, thank you  for providing such a crazy jovial environment.
To the past and present residents of the Carslaw 4$^\mathrm{th}$ floor corridor, thank you for being a reliable source of procrastination, entertainment, deeply philosophical discussions, and an ``uncountably  finite'' number of corny maths jokes. 
The good times we have shared, even the indefatigably constant teasing, have enriched my life. Thank you all for your constant support and, in particular, Stewart Wilcox, Anna Wang, Nicholas  Wilson, Neil Saunders, Timothy Schaerf, Tanya Wong, Chris Ormerod, Janet  Thomas, Justin Koonin, Erwin Lobo, Hai Ho, Leanne Rylands, and Sonia Morr, for your attempts to keep me calm and see beyond my pessimism and cynicism.
Thank you also to the staff of the School of Mathematics and Statistics at the University of Sydney for always being friendly and helpful.
 
I also gratefully acknowledge the financial support of the University Postgraduate Award, the PRSS, and the Algebra group at The University of Sydney.

Last but not least, I must thank my family, my parents and my little sister  Nina, for their continual love and support and for standing by me over the years. Sorry it took me so long. Words cannot express how thankful I am for the sacrifices you have all made these last few years so that I could pursue my studies abroad. Thank you for providing me with a home away from home and for putting up with me. I love you all. Thank you God for my family and friends.
}

\setcounter{page}{0}
\chapter{Introduction}

The Birman-Murakami-Wenzl (BMW) algebras, conceived independently by Birman and Wenzl \cite{BW89} and Murakami \cite{M87}, are defined by generators and relations originally inspired by the Kauffman link invariant of \cite{K90}. The BMW algebras are closely connected with Artin braid groups, Iwahori-Hecke algebras of the symmetric group, and Brauer algebras. In fact, they may be construed as deformations of the Brauer algebras obtained by replacing the symmetric group algebras with the corresponding Iwahori-Hecke algebras.
\vspace{-0.13cm}
\begin{defn} \label{defn:bmw}
Fix a natural number $n$. Let $R$ be a unital commutative ring 
containing units $A_0,q,\l$ such that $\l - \l^{-1} = \d(1-A_0)$ holds, where $\d := q - q^{-1}$. The \emph{\textbf{BMW algebra}} 
$\bmw_n := \bmw_n(q,\l,A_0)$ is defined to be the unital associative $R$-algebra 
generated by $X_1^{\pm 1}, \ldots, X_{n-1}^{\pm 1}$ and 
$e_1, \ldots, e_{n-1}$ subject to the following relations, which hold for all possible values of $i$ unless otherwise stated.
\[\begin{array}{rcll}
\hspace{2cm} X_i - X_i^{-1} &=& \d(1-e_i)& \\
X_iX_j &=& X_jX_i &\qquad \text{for } |i-j| \geq 2 \\
X_iX_{i+1}X_i &=& X_{i+1}X_iX_{i+1}& \\
X_ie_j &=& e_jX_i &\qquad \text{for } |i-j| \geq 2 \\
e_ie_j &=& e_je_i &\qquad \text{for } |i-j| \geq 2 \\
X_ie_i &=& e_iX_i \,\,\,=\,\,\, \l e_i& \\
X_i X_j e_i &=& e_j e_i \,\,\,=\,\,\, e_j X_i X_j &\qquad \text{for } |i-j| = 1 \\
e_i e_{i\pm 1} e_i &=& e_i& \\
e_i^2 &=& A_0 e_i.& 
\end{array}
\]
\end{defn}
%\pagebreak
The relations above are an algebraic version of geometric relations satisfied by certain tangle diagrams in the \emph{Kauffman tangle algebra} $\mathbb{KT}_{n}$, an algebra of (regular isotopy equivalence classes of) tangles on $n$ strands in the disc cross the interval (that is, a solid cylinder) modulo the Kauffman skein relation; see Kauffman \cite{K90} and Morton and Traczyk \cite{MT90}. In particular, the relations $X_i - X_i^{-1} = \d(1-e_i)$ reflects the Kauffman skein relation which is typically presented as:
  \begin{center}
    \includegraphics{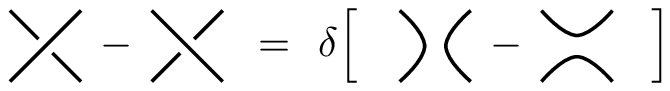}
  \end{center}
Naturally, one expects the BMW algebras to be isomorphic to the Kauffman tangle algebras and, indeed, Morton and Wasserman establish this isomorphism (illustrated below) in \cite{MW89} and, as a result, show the algebra is free of rank $(2n-1)!! = (2n-1) \cdot (2n-3) \cdots 3 \cdot 1$.
\vspace{0.2cm}
  \begin{center}
 \includegraphics{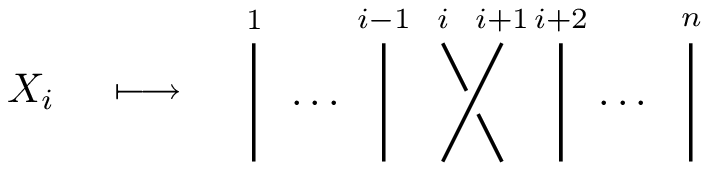} 
    \vskip 0.7cm
    \hspace*{-0.07 cm} 
    \includegraphics{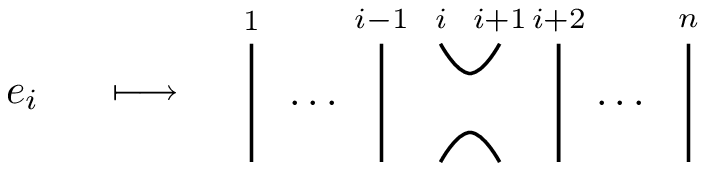}
\end{center}

The representation theory of the BMW algebras has been studied by various authors. Birman and Wenzl \cite{BW89} construct a nondegenerate Markov trace on the algebra using the Kauffman link invariant. The existence of this nondegenerate trace together with tools from the Jones Basic Construction theory (see Jones \cite{J83} and Wenzl \cite{W88}) allow them to derive the structure of the algebra in the generic semisimple setting. They prove that the algebra $\bmw_n$ is generically semisimple with irreducible representations indexed by Young diagrams of size $n-2f$, where $0 \leq f \leq \floor{\frac{n}{2}}$. Wenzl \cite{W90} provides sufficient conditions for the BMW algebras to be semisimple. Moreover, Rui and Si \cite{RS06} recently produced a criterion for semisimplicity of the BMW algebras over an arbitrary field. 

The Iwahori-Hecke algebra of the symmetric group is a quotient of the BMW algebra. Using this connection, several authors have determined analogues of results about the representations and characters of the Iwahori-Hecke algebras for the BMW algebras. For example, Enyang \cite{E04} and Xi \cite{X00} both exploit the fact the Iwahori-Hecke algebra of the symmetric group is cellular, in the sense of Graham and Lehrer \cite{GL96}, to investigate the cellularity of the BMW algebra. Xi shows that certain analogues of the Kazhdan-Lusztig basis for BMW algebras, studied by Fishel and Grojnowski \cite{FG95}, Morton and Trazcyk \cite{MT90} and Morton and Wasserman \cite{MW89}, are in fact cellular. Xi's basis is constructed using certain diagrams called \emph{dangles} and a basis of the Iwahori-Hecke algebra. On the other hand, Enyang produces a basis indexed by certain bitableaux and gives an explicit combinatorial description of his cellular basis. Further results can also be found in Halverson and Ram \cite{HR95} and Leduc and Ram \cite{LR97}.

It is not surprising that several authors have since generalised the BMW algebras for arbitrary simply laced Artin groups (see Cohen et al. \!\!\cite{CGW05}), and defined affine (see Goodman and Hauschild \cite{GH06}) and cyclotomic (see H\"aring-Oldenburg \cite{HO01}) versions. Also, degenerate versions of these algebras exist in the literature; recently, Ariki, Mathas and Rui \cite{AMR06} defined and studied the representation theory of ``cyclotomic Nazarov-Wenzl algebras'', which are quotients of Nazarov's degenerate affine BMW algebras \cite{N96}.
Other quotients and specialisations of the affine and cyclotomic BMW algebras have also appeared in the literature, such as the cyclotomic Brauer (see Rui and Yu \cite{RY04}) and cyclotomic Temperley-Lieb algebras (see Rui and Xi \cite{RX04}). 
The BMW algebras and the algebras we have mentioned above also play a role in the study of quantum groups, quantum field theory, subfactors and statistical mechanics.

Motivated by type $B$ knot theory and the Ariki-Koike algebras, H\"aring-Oldenburg  introduced the ``cyclotomic BMW algebras'' in \cite{HO01}. They are so named because the Ariki-Koike algebras~\cite{AK94,BM93}, which are also known as cyclotomic Hecke algebras of type $G(k,1,n)$, arise as quotients of cyclotomic BMW algebras in the same way as the Iwahori-Hecke algebras arise as quotients of BMW algebras. They are obtained from the original BMW algebras by adding an extra generator $Y$ satisfying a polynomial relation of finite order $k$ and imposing several further relations modelled on type $B$ knot theory. For example, $Y$ satisfies the Artin braid relations of type $B$ with the generators $X_1$, \o, $X_{n-1}$ of the ordinary BMW algebra. The cyclotomic BMW algebras and its representations in the generic case were studied by H\"aring-Oldenburg \cite{HO01}, Orellana and Ram \cite{OR04}, Goodman and Hauschild Mosley \cite{GH107,GH207}. 

When this $k^\mathrm{th}$ order relation on the generator $Y$ is omitted, one obtains the infinite dimensional affine BMW algebras, studied by Goodman and Hauschild in \cite{GH06}. This extra affine generator may be visualised as the cylindrical braid of type $B$ illustrated below.
\begin{figure}[h!]
\begin{center}
  \includegraphics{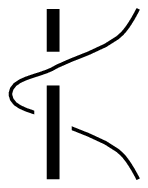}
\end{center}
\end{figure}

Given what has already been established for BMW algebras, it is then conceivable that the cyclotomic and affine BMW algebras be isomorphic to appropriate analogues of the
Kauffman tangle algebras. Indeed, by utilising the results and techniques of Morton and Wasserman \cite{MW89} for the ordinary BMW algebras, this was shown to be the case for the affine version, over an arbitrary ring, by Goodman and Hauschild in \cite{GH06}. The topological realisation of the affine BMW algebra is as an algebra of (regular isotopy equivalence classes of) ``affine'' tangles on $n$ strands in the annulus cross the interval (that is, the solid torus) modulo the Kauffman skein relations. A precise definition is given in Chapter \ref{chap:basis}. In proving this isomorphism, they also obtain a basis analogous to a well-known basis of the affine Hecke algebras.

This thesis is concerned with the study of H\"aring-Oldenburg's cyclotomic BMW algebras. In Chapter \ref{chap:bnkintro}, we introduce these algebras and derive some straightforward identities and formulas pertinent to the next chapter.
A natural problem to address first is whether these algebras are always free. We expect that the cyclotomic BMW algebras $\bnk$ should be free of rank $k^n(2n-1)!!$.
Chapter \ref{chap:spanning} is concerned with obtaining a spanning set of $\bnk$ of this cardinality.

Many difficulties, in particular regarding linear independency, arise in the case of the cyclotomic BMW algebras. Due to the $k^\mathrm{th}$ order polynomial relation imposed on the extra generator $Y$, one can easily obtain torsion on elements associated with certain tangles on two strands. In order to fix this problem, this suggests additional ``admissibility'' assumptions should be imposed on the parameters of our ground ring. In Chapter \ref{chap:admissible}, we devote our study to the representation theory of $\bnk$ at the $n=2$ level to determine precisely the form of these assumptions.  We give these admissibility conditions explicitly and provide a ``generic'' (or universal) ground ring $R_0$, in the sense that for any ring $R$ with admissible parameters (see Definition \ref{defn:adm}) there is a unique map $R_0 \rightarrow R$ which respects the parameters. 
Chapter \ref{chap:admissible} contains results specific to the $n=2$ algebra $\bt$ proven in \cite{WY06} by Wilcox and the author, under a slightly stronger notion of admissibility. A particular result in \cite{WY06} shows that admissibility ensures the freeness of the algebra $\bt$ over R. These results are stated but incompletely proved in H\"aring-Oldenburg \cite{HO01}.

Goodman and Hauschild Mosley \cite{GH107} attempt to follow the same type of arguments they used in \cite{GH06} to establish freeness of $\bnk$ and show that the cyclotomic BMW algebra is isomorphic to a cyclotomic version of the Kauffman tangle algebra. Using diagrammatical arguments, they obtain a second basis of the affine BMW algebra which then restricts naturally onto a spanning set of $\bnk$, different to ours, also of cardinality $k^n (2n-1)!!$. However, they withdrew their preprint due to issues with their generic ground ring crucial to their linear independence arguments. It turns out that the generic ground ring $R_0$ constructed in Chapter \ref{chap:admissible} fulfills the properties required for their arguments. We also discuss in Chapter \ref{chap:admissible} the relationship between our notion of admissibility and that used by Goodman and Hauschild Mosley \cite{GH107}.

In Chapter \ref{chap:basis}, we then follow a similar path to that in \cite{GH107} to prove freeness of $\bnk$ over our alternative ground ring $R_0$ and a basis theorem for $\bnk$ over a general ring $R$ with admissible parameters, hence showing $\bnk$ is $R$-free of rank $k^n (2n-1)!!$. In proving this, we also establish an isomorphism between the cyclotomic BMW algebras and the cyclotomic Kauffman tangle algebras defined in Chapter \ref{chap:basis}.

In Chapter \ref{chap:cellularity}, we investigate the cellularity of $\bnk$.
The Brauer, Iwahori-Hecke, Ariki-Koike and BMW algebras are all cellular algebras, in the sense of Graham and Lehrer \cite{GL96}. The theory of cellular algebras provides a unified axiomatic framework for understanding several important algebras, including the (non-semisimple specialisations of) Iwahori-Hecke algebras. In \cite{GL96}, Graham and Lehrer show that cellular algebras have naturally defined \emph{cell representations} whose structure depends on certain symmetric bilinear forms. Given a cellular algebra, they obtain a general description of its irreducible representations and block theory as well as a criterion for semisimplicity.
Using a known cellular basis of the Ariki-Koike algebras $\ak{n}$, we first obtain an appropriate ``lifting'' of a slight modification of this basis into $\bnk$ which is compatible with a certain anti-involution of $\bnk$. Then using this new basis we prove $\bnk$ is a cellular algebra.

\chapter{The Cyclotomic BMW Algebras} \label{chap:bnkintro}

In this chapter we introduce the cyclotomic Birman-Murakami-Wenzl algebras $\bnk$, as defined by H\"aring-Oldenburg in \cite{HO01}. As seen in Definition \ref{defn:bnk} below, the defining relations of the algebra $\bnk$ consist of the defining relations of the BMW algebra $\bmw_n$ (see Definition \ref{defn:bmw}) and further relations involving an extra generator $Y$ which satisfies a polynomial relation of order $k$. 
Through straightforward calculations and induction arguments, we establish several useful formulas and identities between special elements of the algebra. These results will then be used extensively in the next chapter, which will involve many lengthy manipulations of certain products in the algebra. Throughout let us fix natural numbers $n$ and $k$. 
\vspace{-2mm}
\begin{defn} \label{defn:bnk}
Let $R$ be a unital commutative ring 
containing units $A_0, q_0, q,\l$ and further 
elements $q_1, \ldots, q_{k-1}$ and $A_1, \ldots, A_{k-1}$ such that $\l - \l^{-1} = \d(1-A_0)$ holds, where $\d := q - q^{-1}$. \\ The \emph{\textbf{cyclotomic BMW algebra}} 
$\bnk := \bnk(q,\l,A_i,q_i)$ is the unital associative $R$-algebra 
generated by $Y^{\pm 1}, X_1^{\pm 1}, \ldots, X_{n-1}^{\pm 1}$ and 
$e_1, \ldots, e_{n-1}$ subject to the following relations, which hold for all possible values of $i$ unless otherwise stated.
\begin{eqnarray}
X_i - X_i^{-1} &=& \d(1-e_i) \label{eqn:ix} \\
X_iX_j &=& X_jX_i \qquad\qquad\qquad\quad \mspace{-3mu} \text{for } |i-j| \geq 2 \label{eqn:braid2} \\
X_iX_{i+1}X_i &=& X_{i+1}X_iX_{i+1}  \label{eqn:braid1}\\
X_ie_j &=& e_jX_i \ \qquad\qquad\qquad\quad \text{for } |i-j| \geq 2 \label{eqn:xecomm} \\
e_ie_j &=& e_je_i \qquad\qquad\qquad\quad\,\,\,\mspace{2mu} \text{for } |i-j| \geq 2 \label{eqn:eecomm}
\end{eqnarray}
\begin{eqnarray}
X_ie_i &=& e_iX_i \,\,\,=\,\,\, \l e_i \label{eqn:untwist} \\
X_i X_j e_i &=& e_j e_i \,\,\,=\,\,\, e_j X_i X_j \mspace{1mu}\,\qquad \text{for } |i-j| = 1 \label{eqn:xxe} \\
e_i e_{i\pm 1} e_i &=& e_i \label{eqn:eee} \\
e_i^2 &=& A_0e_i \\
Y^k \= \sum_{i=0}^{k-1} q_i Y^i \label{eqn:ycyclo} \\
X_1YX_1Y &=& YX_1YX_1  \label{eqn:braidb}\\
YX_i &=& X_iY \qquad\qquad\qquad\quad\mspace{5mu} \text{for } i > 1 \label{eqn:ycommx}\\
Ye_i &=& e_iY \qquad\qquad\qquad\quad\mspace{12mu} \text{for } i > 1 \label{eqn:ycomme}\\
YX_1Ye_1 &=& \lambda^{-1}e_1 \,\,\,=\,\,\, e_1YX_1Y \label{eqn:eyxy} \\
e_1Y^me_1 &=& A_m e_1 \qquad\qquad\qquad\quad\mspace{-1mu} \text{for } 0 \leq m \leq k-1. \label{eqn:eye}
\end{eqnarray}
\end{defn}
\textbf{Remark:} Observe that, by relations (\ref{eqn:ix}) and (\ref{eqn:ycyclo}), it is unnecessary to include the inverses of $Y$ and $X$ as generators of $\bnk$ in Definition \ref{defn:bnk}. 

The definition of $\bnk$ given here is a slight modification of the original definition given by H\"aring-Oldenburg \cite{HO01}, in which the $k^\mathrm{th}$ order polynomial relation (\ref{eqn:ycyclo}) on the generator $Y$ is $\prod_{i=0}^{k-1}(Y-p_i) = 0$, where the $p_i$ are units in the ground ring $R$. Under this stronger relation, the $q_i$ in relation (\ref{eqn:ycyclo}) would then be the signed elementary symmetric polynomials in the $p_i$, where $q_0 = (-1)^{k-1} \prod_i p_i$ is invertible.
However, we need not impose this stronger polynomial relation on $Y$ (that is, it is not necessary for us to assume that $\sum_{j=0}^k q_j y^j$ splits in $R$), until Chapter \ref{chap:cellularity}.

Define $q_k := -1$. Then $\sum_{j=0}^k q_j Y^j = 0$ and the inverse of $Y$ may then be expressed as a linear combination of non-negative powers of $Y$ as follows:
\[
Y^{-1} = - q_0^{-1} \sum_{i=0}^{k-1} q_{i+1}  Y^i.
\]

Using the defining $k^\mathrm{th}$ order relation on $Y$ and (\ref{eqn:eye}), there exists elements $A_{m}$ of $R$, \emph{for all} $m \in \mathbb{Z}$, such that 
\begin{equation} \label{eqn:negeye}
e_1 Y^{m} e_1 = A_{m} e_1.
\end{equation}
We will see later that, in order for our algebras to be ``well-behaved'', the $A_m$ cannot be chosen independently of the other parameters of the algebra.

\noi Observe that there is an unique anti-involution $^*: \bnk \rightarrow \bnk$ such that
\renewcommand{\theequation}{$\ast$}
\begin{equation} \label{eqn:star}
  Y^* = Y, \quad X_i^* = X_i \quad \text{and} \quad e_i^* = e_i,
\end{equation}
for every $i = 1,\ldots,n-1$. Here an anti-involution always means an involutary $R$-algebra anti-automorphism. \\
Also the following defines an $R$-algebra isomorphism \[\bnk(q,\l,A_i,q_i) \rightarrow \bnk(q^{-1},\l^{-1},A_{-i},-q_{k-i}q_0^{-1}):\]
\[  
Y \mapsto \iy, \quad X_i \mapsto \ix, \quad e_i \mapsto e_i.
\]

\noi For all $i = 1,\ldots, n$, define the following elements of $\bnk$:
\[ Y_i':= X_{i-1} \ldots X_2 X_1 Y X_1 X_2 \ldots X_{i-1}.\]
Observe that these elements are fixed under the (\ref{eqn:star}) anti-involution.

\renewcommand{\theequation}{\arabic{equation}}
\addtocounter{equation}{-1}
We now establish several identities in the algebra which will be used frequently in future proofs.
Let us fix $n$ and $k$. The following calculations are valid over a general ring $R$ with any choice of the above parameters $A_0, \o, A_{k-1}, q_0, \o, q_{k-1}, q, \l$. 

\begin{prop} \label{prop:conseq}
The following relations hold in $\bnk$: 
\begin{enumerate}
\item[(a)]
For all $i$,
\begin{equation} \label{eqn:xi2}
  X_i^2 = 1+ \d X_i - \d \l e_i
\end{equation}
and
\begin{equation} \label{eqn:exe} 
e_iX_{i\pm 1}e_i = \l^{-1} e_i. 
\end{equation}
\item[(b)]
For all $j$ and $i \neq j$ or $j-1$, 
\begin{equation} \label{eqn:prop1b}
  X_i Y_j' = Y_j' X_i \quad\text{and}\quad e_i Y_j' = Y_j' e_i. 
\end{equation}
\item[(c)]
For all $i$ and $j$, 
\begin{equation} \label{eqn:prop1c}
Y_i' Y_j' = Y_j' Y_i'.
\end{equation} 
\item[(d)] 
For all $i$, 
\begin{equation} \label{eqn:prop1d}
Y_i' X_i Y_i' e_i = \l^{-1} e_i = e_i Y_i' X_i Y_i'.
\end{equation}
\item[(e)]
For all $i$ and $p$,
  \begin{equation} \label{eqn:m9}
    e_i \y{i+1}{p} = e_i \y{i}{-p} \quad \text{ and } \quad \y{i+1}{p} e_i = \y{i}{-p} e_i. 
  \end{equation}
\end{enumerate}
\end{prop}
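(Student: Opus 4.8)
The plan is to prove the five parts in the order (a), (b), (c), (d), (e), since each later part leans on the earlier ones. For part (a), the identity \eqref{eqn:xi2} is just a rearrangement of the defining relation \eqref{eqn:ix} after multiplying through by $X_i$ and using $X_i e_i = \l e_i$ from \eqref{eqn:untwist}; that is, $X_i^2 - 1 = \d(X_i - X_i e_i) = \d X_i - \d\l e_i$. For \eqref{eqn:exe}, multiply \eqref{eqn:ix} (with index $i\pm1$) on both sides by $e_i$ and use $e_i X_{i\pm1}^{-1} e_i = e_i X_{i\pm 1} e_i - \d e_i + \d e_i X_{i\pm1} e_i$ together with the relation $e_i e_{i\pm1} e_i = e_i$ from \eqref{eqn:eee}; alternatively, sandwich $e_i e_{i\pm 1} = X_i X_{i\pm1} e_{i\pm1}$-type identities coming from \eqref{eqn:xxe} and apply \eqref{eqn:untwist}. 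Either route is a short manipulation using only the BMW-part relations, so \eqref{eqn:exe} is the genuinely ``ordinary BMW'' step and presents no obstacle.

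For part (b), I would argue by induction on $|i - j|$ (or rather on the distance of $i$ from the ``block'' $\{j-1, j\}$), unwinding the definition $Y_j' = X_{j-1}\cdots X_1 Y X_1 \cdots X_{j-1}$. If $i \geq j+1$ or $i \le j-2$, then $X_i$ (resp.\ $e_i$) commutes past every $X_\ell$ with $\ell \le j-1$ by \eqref{eqn:braid2}, \eqref{eqn:xecomm}, and commutes with $Y$ by \eqref{eqn:ycommx}, \eqref{eqn:ycomme} (here one uses $i > 1$, which holds since $i \geq j+1 \geq 2$; the case $i \le j-2$ needs $j \geq 3$ so $i \geq 1$, and one checks the degenerate small cases directly). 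The commutation $e_i Y_j' = Y_j' e_i$ follows the same pattern using \eqref{eqn:xecomm} and \eqref{eqn:ycomme}. This is routine but one must be careful about the boundary indices and the hypothesis ``$i \neq j$ or $j-1$'' — that is, $i \notin \{j-1, j\}$ — which is exactly what is needed to avoid the braid relations \eqref{eqn:braid1}, \eqref{eqn:braidb} that genuinely entangle $Y_j'$ with $X_{j-1}$ and $X_j$.

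For part (c), commutativity of the $Y_i'$, I would induct on $\max(i,j)$; the key case is $Y_i' Y_{i+1}'$, where $Y_{i+1}' = X_i Y_i' X_i$, so one must show $Y_i'(X_i Y_i' X_i) = (X_i Y_i' X_i) Y_i'$. By part (b), $Y_i'$ commutes with $X_\ell$ for $\ell < i-1$, and unwinding both $Y_i'$'s down to the core reduces everything to the single type-$B$ braid relation $X_1 Y X_1 Y = Y X_1 Y X_1$ of \eqref{eqn:braidb}, conjugated by $X_2 \cdots X_{i-1}$; I expect this reduction to the ``$\bt$-level'' identity to be the main obstacle, as it requires tracking several braid moves simultaneously. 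For part (d), note $Y_i' X_i Y_i' = X_{i-1}\cdots X_1 (Y X_1 \cdots X_{i-1} X_i X_{i-1} \cdots X_1 Y) X_1 \cdots X_{i-1}$; using the braid relations to pull the $X_i$ into the middle and \eqref{eqn:eyxy}, i.e.\ $Y X_1 Y e_1 = \l^{-1} e_1$, conjugated by $X_1 \cdots X_{i-1}$ (which sends $e_1$ to $e_i$ via \eqref{eqn:xxe}), gives $Y_i' X_i Y_i' e_i = \l^{-1} e_i$; the right-hand equality follows by applying the anti-involution \eqref{eqn:star}, under which $Y_i'$, $X_i$, $e_i$ are all fixed. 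Finally, for part (e), I would first treat $p \ge 0$ by induction on $p$: the base case $p=0$ is trivial, $p=1$ is $e_i Y_{i+1}' = e_i X_i Y_i' X_i = \l e_i Y_i' X_i = \cdots$, where one uses \eqref{eqn:untwist} and the relations from (b), (d) to collapse $X_i$ factors against $e_i$, landing on $e_i Y_i'^{-1}$ after invoking \eqref{eqn:prop1d}; the inductive step multiplies by $Y_{i+1}'$ and commutes it past the accumulated $Y_i'^{-1}$ powers using \eqref{eqn:prop1c}. Negative $p$ then follows by applying the $R$-algebra isomorphism $Y \mapsto Y^{-1}, X_i \mapsto X_i^{-1}, e_i \mapsto e_i$ recorded just before the proposition, which sends $Y_i'$ to $Y_i'^{-1}$ (one checks this from the definition, using that the isomorphism is multiplicative), or by a direct parallel induction; the second equation in \eqref{eqn:m9} is the $^*$-image of the first.
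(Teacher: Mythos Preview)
Your overall strategy matches the paper's: part (a) from \eqref{eqn:ix}, \eqref{eqn:untwist}, \eqref{eqn:xxe}, \eqref{eqn:eee}; part (b) from the commutation and braid relations; part (c) from (b) together with the type-$B$ braid relation \eqref{eqn:braidb}; part (d) by induction on $i$ with base case \eqref{eqn:eyxy}; and part (e) from (c), (d) and $Y_{i+1}' = X_i Y_i' X_i$.

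There is, however, a genuine slip in your argument for (b). You assert that when $i \le j-2$, the generator $X_i$ ``commutes past every $X_\ell$ with $\ell \le j-1$'' by \eqref{eqn:braid2}. This is false: for instance $X_i$ does not commute with $X_{i+1}$, and $i+1 \le j-1$. The case $i \ge j+1$ is indeed pure far-commutation, but the case $i \le j-2$ genuinely requires the braid relation \eqref{eqn:braid1}. Concretely, for $i = j-2$ one computes
\[
X_{j-2} Y_j' = X_{j-2} X_{j-1} X_{j-2}\, Y_{j-1}'\, X_{j-2} X_{j-1}
= X_{j-1} X_{j-2} X_{j-1}\, Y_{j-1}'\, X_{j-2} X_{j-1},
\]
then uses that $X_{j-1}$ commutes with $Y_{j-1}'$ (induction) to push it across, and applies \eqref{eqn:braid1} once more on the right. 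The $e_i$ version similarly needs \eqref{eqn:xxe}, not just \eqref{eqn:xecomm}. This is exactly why the paper cites \eqref{eqn:braid1} and \eqref{eqn:xxe} in its one-line justification of (b). Once this is repaired, the rest of your outline goes through and is essentially the paper's proof; in particular your derivation of (e) from (d) via $e_i Y_{i+1}' = \lambda\, e_i Y_i' X_i = e_i Y_i'^{-1}$ is exactly the intended step, and then (c) handles all powers $p$ at once without needing a separate sign argument.
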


\begin{proof}
The quadratic relation in part (a) follows by multiplying relation (\ref{eqn:ix}) by $X_i$ and applying relation (\ref{eqn:untwist}) to simplify. Equation (\ref{eqn:exe}) is proved below.
\begin{align*}
  e_i X_{i\pm 1} e_i &\rel{eqn:untwist} \l^{-1} e_i X_{i+1} X_i e_i \\
&\rel{eqn:xxe} \l^{-1} e_i e_{i+1} e_i \\
&\rel{eqn:eee} \l^{-1} e_i.
\end{align*}

\noi The first equation in part (b) follows from the braid relations (\ref{eqn:braid2}), (\ref{eqn:braid1}) and (\ref{eqn:ycommx}) and the second follows from relations (\ref{eqn:xecomm}), (\ref{eqn:xxe}) and (\ref{eqn:ycomme}).

\noi Part (c) follows from part (b) and the braid relation (\ref{eqn:braidb}). 

\noi We prove (d) by induction on $i \geq 1$. The case where $i=1$ is simply relation (\ref{eqn:eyxy}). Now assume (d) holds for a fixed $i$. Then
\begin{eqnarray*}
Y_{i+1}' X_{i+1} Y_{i+1}' e_{i+1} 
&=& X_i Y_i' X_i X_{i+1} X_i Y_i' X_i e_{i+1} \\
&\stackrel{(\ref{eqn:braid1})}{=}&  X_i Y_i' X_{i+1} X_i X_{i+1} Y_i' X_i e_{i+1} \\
&\rel{eqn:prop1b}&  X_i X_{i+1} Y_i' X_i Y_i' X_{i+1} X_i e_{i+1} \\
&\rel{eqn:xxe}& X_i X_{i+1} Y_i' X_i Y_i' e_i e_{i+1} \\
&\stackrel{\text{ind. hypo.}}{=}& \l^{-1} X_i X_{i+1} e_i e_{i+1} \\
&\stackrel{(\ref{eqn:xxe}),(\ref{eqn:eee})}{=}& \l^{-1} e_{i+1}. 
\end{eqnarray*}
The second equality of part (d) now follows immediately by applying the anti-involution (\ref{eqn:star}) to the first.
Moreover, part (e) follows from parts (c) and (d), remembering that $Y_{j+1}' = X_j Y_j' X_j$.
\end{proof}
The most important and useful property of the $Y_i'$ is their pairwise commutativity. 
Here are some useful identities involving the $Y_i'$, $X_i$ and $e_i$ which shall be used extensively throughout later proofs.
\begin{prop} \label{prop:XXp1}
The following equations hold for all $i$:
\begin{eqnarray}
e_i e_{i+1} e_{i+2} \gamma_i \= \gamma_{i+2} e_i e_{i+1} e_{i+2}, \quad \text{where $\gamma_i = X_i,e_i$ or $Y_i'$}; \label{eqn:ep2} \\
X_i X_{i+1} \gamma_i  \= \gamma_{i+1} X_i X_{i+1}, \quad \text{where $\gamma_i = X_i$ or $e_i$}. \label{eqn:XXp1}
\end{eqnarray} 
\end{prop}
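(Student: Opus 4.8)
The plan is to prove both families of identities by first establishing the braid-type identity \eqref{eqn:XXp1} and then bootstrapping to the three-strand identity \eqref{eqn:ep2}. For \eqref{eqn:XXp1} with $\gamma_i = X_i$, the claim $X_iX_{i+1}X_i = X_{i+1}X_iX_{i+1}$ is just the braid relation \eqref{eqn:braid1}, so nothing is needed. For $\gamma_i = e_i$, I would write $X_iX_{i+1}e_i$ and apply \eqref{eqn:xxe} (in the form $X_iX_{i+1}e_i = e_{i+1}e_i$) and then \eqref{eqn:xxe} again (in the form $e_{i+1}e_i = e_{i+1}X_iX_{i+1}$), which immediately gives $X_iX_{i+1}e_i = e_{i+1}X_iX_{i+1}$. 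So \eqref{eqn:XXp1} is essentially a direct unpacking of relation \eqref{eqn:xxe}, once one notes that $|i-(i+1)|=1$.

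For \eqref{eqn:ep2} I would treat the three cases of $\gamma_i$ in turn, exploiting commutation relations to move everything two steps along. The cleanest route: using \eqref{eqn:eee} write $e_ie_{i+1}e_{i+2} = e_ie_{i+1}e_{i+2}\cdot e_{i+1}e_{i+2}$ is not quite right; instead I would insert copies of $e_{i+1}e_{i+2}e_{i+1}=e_{i+1}$ and $e_{i+1}e_ie_{i+1}=e_{i+1}$ to create room, or more directly use the standard trick $e_ie_{i+1}e_{i+2}\gamma_i = e_ie_{i+1}\gamma_i e_{i+2}$ when $\gamma_i$ commutes with $e_{i+2}$ (true for $\gamma_i = X_i, e_i, Y_i'$ since the indices differ by $2$, using \eqref{eqn:xecomm}, \eqref{eqn:eecomm}, and \eqref{eqn:prop1b} respectively), then move $\gamma_i$ past $e_{i+1}$ using the $|i-j|=1$ relations. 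For $\gamma_i = e_i$: $e_ie_{i+1}e_{i+2}e_i = e_ie_{i+1}e_ie_{i+2} = e_ie_{i+2} = e_ie_{i+2}e_{i+1}e_{i+1}$... — the better pattern is to reduce $e_ie_{i+1}e_{i+2}e_i$ via \eqref{eqn:eee} to $e_ie_{i+2}$ and then expand $e_{i+2}e_{i+1}e_{i+2}=e_{i+2}$ to recover $\gamma_{i+2}e_ie_{i+1}e_{i+2}$; symmetry under $^*$ handles half the work. For $\gamma_i = X_i$ and $\gamma_i = Y_i'$, I would use \eqref{eqn:XXp1} repeatedly: $e_ie_{i+1}e_{i+2}X_i$; rewrite $e_{i+1}e_{i+2} = X_{i+1}X_{i+2}e_{i+1}$ via \eqref{eqn:xxe}, slide $X_i$ through, and reassemble. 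For the $Y_i'$ case one additionally invokes Proposition~\ref{prop:conseq}(b),(c) and the relation $Y_{i+1}' = X_iY_i'X_i$ to transport the affine generator.

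The main obstacle I anticipate is bookkeeping in the $\gamma_i = Y_i'$ subcase of \eqref{eqn:ep2}: since $Y_i'$ is a long word in the $X_j$'s and $Y$, one must carefully combine the commutation relations \eqref{eqn:prop1b}, the pairwise commutativity \eqref{eqn:prop1c}, and the rewriting rules from \eqref{eqn:xxe} and \eqref{eqn:XXp1} without accidentally using a relation outside its valid index range; it is a genuine but routine induction-free manipulation. Everything else reduces quickly to the defining relations \eqref{eqn:braid1}, \eqref{eqn:xxe}, \eqref{eqn:eee} together with the $^*$-symmetry, so I would present \eqref{eqn:XXp1} first, then the $e_i$ and $X_i$ cases of \eqref{eqn:ep2}, and finish with the $Y_i'$ case.
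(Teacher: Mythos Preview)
Your handling of \eqref{eqn:XXp1} and of the $\gamma_i=e_i$ case of \eqref{eqn:ep2} is exactly what the paper does. For the other two cases of \eqref{eqn:ep2} your outline is workable but the paper takes a shorter and different route. First, a slip: from \eqref{eqn:xxe} one has $X_{i+1}X_{i+2}e_{i+1}=e_{i+2}e_{i+1}$, not $e_{i+1}e_{i+2}$, so your proposed rewriting is off and the ``slide $X_i$ through via \eqref{eqn:XXp1}'' plan does not assemble as stated. The paper never invokes \eqref{eqn:XXp1} for the $X_i$ case; instead it commutes $X_i$ past $e_{i+2}$, collapses $e_ie_{i+1}X_i=e_ie_{i+1}e_iX_{i+1}^{-1}=e_iX_{i+1}^{-1}$ via \eqref{eqn:xxe} and \eqref{eqn:eee}, then re-expands $e_{i+2}=e_{i+2}e_{i+1}e_{i+2}$ and uses $X_{i+1}^{-1}e_{i+2}e_{i+1}=X_{i+2}e_{i+1}$ to finish. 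More importantly, for $\gamma_i=Y_i'$ the paper does \emph{not} unpack $Y_{i+1}'=X_iY_i'X_i$ at all: after commuting $Y_i'$ past $e_{i+1}e_{i+2}$ via \eqref{eqn:prop1b}, it applies Proposition~\ref{prop:conseq}(e), i.e.\ \eqref{eqn:m9}, twice --- $e_iY_i'=e_iY_{i+1}'^{-1}$ and then $Y_{i+1}'^{-1}e_{i+1}=Y_{i+2}'e_{i+1}$ --- and finishes with one more use of \eqref{eqn:prop1b}. This two-line argument completely dissolves the ``bookkeeping obstacle'' you anticipate; the identity \eqref{eqn:m9} is precisely the tool that transports $Y_i'$ two steps without ever touching the $X_j$'s.
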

\begin{proof}
We prove (\ref{eqn:ep2}) in the three stated cases.
\begin{eqnarray*}
e_i e_{i+1} e_{i+2} X_i &\rel{eqn:xecomm}& e_i e_{i+1} X_i e_{i+2} \\
&\stack{(\ref{eqn:xxe}),(\ref{eqn:eee})}& e_i X_{i+1}^{-1} e_{i+2} \\
&\stack{(\ref{eqn:xxe}),(\ref{eqn:eee})}& e_i X_{i+2} e_{i+1} e_{i+2} \\
&\rel{eqn:xecomm}& X_{i+2} e_i e_{i+1} e_{i+2}. 
\end{eqnarray*}
\begin{align*}
e_i e_{i+1} e_{i+2} e_i &\rel{eqn:eecomm} e_i e_{i+1} e_i e_{i+2} \\
&\rel{eqn:eee} e_i e_{i+2} \\
&\rel{eqn:eee} e_i e_{i+2} e_{i+1} e_{i+2} \\
&\rel{eqn:eecomm} e_{i+2} e_i e_{i+1} e_{i+2}. 
\end{align*}
\begin{align*}
e_i e_{i+1} e_{i+2} Y_i' &\rel{eqn:prop1b} e_i Y_i' e_{i+1} e_{i+2} \\
&\rel{eqn:m9} e_i \y{i+1}{-1} e_{i+1}e_{i+2} \\
&\rel{eqn:m9} e_i Y_{i+2}' e_{i+1}e_{i+2} \\
&\rel{eqn:prop1b} Y_{i+2}' e_i e_{i+1} e_{i+2}. 
\end{align*}
Equation (\ref{eqn:XXp1}) follows clearly from relations (\ref{eqn:braid1}) and (\ref{eqn:xxe}). 
\end{proof}
%Here are some more useful properties involving the $Y_i'$.
\begin{lemma}\label{lemma:mult}
The following hold for any $i$ and non-negative integer $p$: 
\begin{align}
X_i \y{i}{p} &= \y{i+1}{p} X_i - \d \pum{s} \y{i+1}{s} \y{i}{p-s} + \d \pum{s} \y{i+1}{s} e_i \y{i}{p-s} \label{eqn:magic} \\
X_i  \y{i}{-p} &= \y{i+1}{-p} X_i + \d \pum{s} \y{i+1}{s-p} \y{i}{-s} - \d \pum{s} \y{i+1}{s-p} e_i \y{i}{-s} \label{eqn:m2} \\
\ix \y{i}{p} &= \y{i+1}{p} \ix - \d \pum{s} \y{i+1}{p-s} \y{i}{s} + \d \pum{s} \y{i+1}{p-s} e_i \y{i}{s} \label{eqn:m3} \\
\ix \y{i}{-p} &= \y{i+1}{-p} \ix + \d \pum{s} \y{i+1}{-s} \y{i}{-(p-s)} - \d \pum{s} \y{i+1}{-s} e_i \y{i}{-(p-s)} \label{eqn:m4} \\
X_i \y{i+1}{p} &= \y{i}{p} X_i + \d \pum{s} \y{i}{p-s} \y{i+1}{s} - \d \pum{s} \y{i}{p-s} e_i \y{i+1}{s} \label{eqn:m5} 
\end{align}
\begin{align}
X_i \y{i+1}{-p} &= \y{i}{-p} X_i - \d \pum{s} \y{i}{-s} \y{i+1}{s-p} + \d \pum{s} \y{i}{-s} e_i \y{i+1}{s-p} \label{eqn:m6} \\
\ix \y{i+1}{p} &= \y{i}{p} \ix + \d \pum{s} \y{i}{s} \y{i+1}{p-s} - \d \pum{s} \y{i}{s} e_i \y{i+1}{p-s} \label{eqn:m7} \\
\ix \y{i+1}{-p} &= \y{i}{-p} \ix - \d \pum{s} \y{i}{-(p-s)} \y{i+1}{-s} + \d \pum{s} \y{i}{-(p-s)} e_i \y{i+1}{-s} \label{eqn:m8} \\
X_i \y{i}{p} X_i &= \y{i+1}{p} - \d \sum_{s=1}^{p-1} \y{i+1}{s} \y{i}{p-s} X_i + \d \sum_{s=1}^{p-1} \y{i+1}{s} e_i \y{i}{p-s} X_i \label{eqn:m10} \\
X_i \y{i}{p} X_i &= \y{i+1}{p} - \d \sum_{s=1}^{p-1} X_i \y{i}{s} \y{i+1}{p-s} + \d \sum_{s=1}^{p-1} X_i \y{i}{s} e_i \y{i+1}{p-s} \label{eqn:m11} \\
X_i \y{i}{-p} X_i &= \y{i+1}{-p} + \d \sum_{s=0}^{p} \y{i+1}{s-p} \y{i}{-s} X_i - \d \sum_{s=0}^{p} \y{i+1}{s-p} e_i \y{i}{-s} X_i \label{eqn:m12} \\
X_i \y{i}{-p} X_i &= \y{i+1}{-p} + \d \sum_{s=0}^{p} X_i \y{i}{-s} \y{i+1}{s-p} - \d \sum_{s=0}^{p} X_i \y{i}{-s} e_i \y{i+1}{s-p}. \label{eqn:m13} 
\end{align}
\end{lemma}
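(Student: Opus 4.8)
The plan is to establish $(\ref{eqn:magic})$ directly by induction on $p$, and then to extract every other identity by purely formal manipulation, using only relations $(\ref{eqn:ix})$, $(\ref{eqn:xi2})$, $(\ref{eqn:untwist})$, the anti-involution $(\ast)$, and the parameter-inverting isomorphism $\bnk(q,\l,A_i,q_i)\to\bnk(q^{-1},\l^{-1},A_{-i},-q_{k-i}q_0^{-1})$ recorded in the previous section.

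First I would prove $(\ref{eqn:magic})$ by induction on $p\ge 0$. The case $p=0$ is the tautology $X_i=X_i$ (both sums empty), and the case $p=1$ reads $X_i Y_i' = \y{i+1}{1}\bigl(X_i-\delta+\delta e_i\bigr)=\y{i+1}{1}\ix$, which is just the defining relation $Y_{i+1}'=X_iY_i'X_i$ rewritten using $(\ref{eqn:ix})$. For the inductive step I would multiply the identity for $p$ on the right by $Y_i'$: inside the two sums this replaces $\y{i}{p-s}$ by $\y{i}{p+1-s}$, and in the leading term $\y{i+1}{p}X_iY_i'$ I reinsert the $p=1$ case. The two terms $-\delta\y{i+1}{p+1}$ and $+\delta\y{i+1}{p+1}e_i$ so produced are precisely the missing $s=p+1$ summands, so the sums extend to $1\le s\le p+1$ and $(\ref{eqn:magic})$ for $p+1$ follows.

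With $(\ref{eqn:magic})$ in hand I would read off the remaining single-sided identities. Identity $(\ref{eqn:m3})$ comes from $(\ref{eqn:magic})$ by substituting $X_i=\ix+\delta(1-e_i)$ for the leftmost $X_i$ and for the $X_i$ on the right-hand side, and then reindexing $s\mapsto p-s$, the boundary terms at $s=p$ recombining to the claimed form. Applying $(\ast)$ — which fixes $X_i$, $\ix$, $e_i$, $Y_i'$ and $Y_{i+1}'$ and reverses products — to $(\ref{eqn:magic})$ and to $(\ref{eqn:m3})$, and rearranging, yields $(\ref{eqn:m5})$ and $(\ref{eqn:m7})$. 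Finally, the $\ix$/negative-power identities $(\ref{eqn:m2})$, $(\ref{eqn:m4})$, $(\ref{eqn:m6})$, $(\ref{eqn:m8})$ are the images of $(\ref{eqn:m3})$, $(\ref{eqn:magic})$, $(\ref{eqn:m7})$, $(\ref{eqn:m5})$ respectively under the isomorphism above, which sends $X_i\mapsto\ix$, $e_i\mapsto e_i$, $Y_i'\mapsto(Y_i')^{-1}$ (hence $\y{i}{p}\mapsto\y{i}{-p}$) and carries $\delta$ to $-\delta$.

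For the three-factor identities I would right-multiply $(\ref{eqn:magic})$ by $X_i$, rewrite $\y{i+1}{p}X_i^2$ via the quadratic relation $(\ref{eqn:xi2})$, and collapse $e_iX_i=\l e_i$ using $(\ref{eqn:untwist})$; the two terms $\delta\y{i+1}{p}X_i$ and $-\delta\l\y{i+1}{p}e_i$ that arise are exactly the $s=p$ summands of the two sums, which therefore drop out, leaving $(\ref{eqn:m10})$ (for $p\ge 1$; the degenerate case $p=0$ being vacuous). The same computation applied to $(\ref{eqn:m2})$ gives $(\ref{eqn:m12})$, where now the absorbed terms reappear as the $s=0$ summands of the sums occurring there. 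Lastly $(\ref{eqn:m11})$ and $(\ref{eqn:m13})$ are the images of $(\ref{eqn:m10})$ and $(\ref{eqn:m12})$ under $(\ast)$. The only delicate point throughout is the bookkeeping of these boundary summands together with the fact that $\y{i}{a}$ and $\y{i+1}{b}$ do \emph{not} commute, so the order of factors must be tracked carefully when reindexing; I expect this routine accounting, and not any conceptual difficulty, to be where all the care is needed.
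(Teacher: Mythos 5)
Your proposal is correct and follows essentially the same architecture as the paper's proof: establish (\ref{eqn:magic}) by iterating $X_iY_i'=Y_{i+1}'\ix=Y_{i+1}'(X_i-\d+\d e_i)$ (your induction on $p$ is exactly the paper's telescoping computation), then obtain the remaining identities formally via the substitution $X_i=\ix+\d(1-e_i)$ with the reindexing $s\mapsto p-s$, the anti-involution $(\ast)$, and the quadratic relation (\ref{eqn:xi2}) absorbing the boundary summands. The one genuine divergence is in the negative-power identities: the paper gets (\ref{eqn:m2}) by multiplying (\ref{eqn:magic}) on the left by $\y{i+1}{-p}$ and on the right by $\y{i}{-p}$ and using the commutativity of the $Y_j'$, whereas you invoke the parameter-inverting isomorphism $Y\mapsto Y^{-1}$, $X_i\mapsto\ix$, $e_i\mapsto e_i$ (which sends $\d$ to $-\d$); both are valid, the paper's conjugation being slightly more self-contained while your route reuses a symmetry the paper itself exploits in the proof of Lemma \ref{lemma:eype}. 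You are also right to restrict (\ref{eqn:m10}) and (\ref{eqn:m11}) to $p\geq1$ — at $p=0$ the left side is $X_i^2=1+\d X_i-\d\l e_i$ while the stated right side degenerates to $1$, a point the paper's statement glosses over (though ``excluded'' would be more accurate than ``vacuous'').
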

\begin{proof}
We obtain the first equation through the following straightforward calculation. For all $p \geq 0$,
\be 
  X_i \y{i}{p} &\p Y'_{i+1} \ix \y{i}{p-1} \\ 
  &\rel{eqn:ix} Y'_{i+1} X_i \y{i}{p-1} - \d Y'_{i+1} \y{i}{p-1} + \d Y'_{i+1} e_i \y{i}{p-1} \\ 
&\p \y{i+1}{2}X_i \y{i}{p-2} - \d \y{i+1}{2} \y{i}{p-2} + \d \y{i+1}{2} e_i \y{i}{p-2} \\
&\phantom{\p} \phantom{\y{i+1}{2}X_i \y{i}{p-2}} \,\,\,- \d Y'_{i+1} \y{i}{p-1} + \d Y'_{i+1} e_i \y{i}{p-1} \\
&\p \ldots \, = \\
&\p \y{i+1}{p-1} X_i Y_i' - \d \sum_{s=1}^{p-1} \y{i+1}{s} \y{i}{p-s} + \d \sum_{s=1}^{p-1} \y{i+1}{s} e_i \y{i}{p-s} \\
&\p \y{i+1}{p} X_i - \d \sum_{s=1}^{p} \y{i+1}{s} \y{i}{p-s} + \d \sum_{s=1}^{p} \y{i+1}{s} e_i \y{i}{p-s},
\end{align*}
proving equation (\ref{eqn:magic}).
Multiplying equation (\ref{eqn:magic}) on the left by $\y{i+1}{-p}$ and the right by $\y{i}{-p}$ and rearranging gives equation (\ref{eqn:m2}). 
Applying (\ref{eqn:star}) to equations (\ref{eqn:magic}) and (\ref{eqn:m2}) and rearranging  then produces equations (\ref{eqn:m5}) and (\ref{eqn:m6}), respectively. By using (\ref{eqn:ix}) and a simple change of the summation index $s \mapsto p-s$, one easily obtains equations (\ref{eqn:m3}), (\ref{eqn:m4}), (\ref{eqn:m7}) and (\ref{eqn:m8}) from equations (\ref{eqn:magic}), (\ref{eqn:m2}), (\ref{eqn:m5}) and (\ref{eqn:m6}), respectively.

\noi Using equations (\ref{eqn:magic}) and (\ref{eqn:xi2}), we obtain
\begin{align*}
X_i \y{i}{p} X_i &= \y{i+1}{p} X_i^2 - \d \pum{s} \y{i+1}{s} \y{i}{p-s} X_i + \d \pum{s} \y{i+1}{s} e_i \y{i}{p-s} X_i \\
&= \y{i+1}{p} - \d \sum_{s=1}^{p-1} \y{i+1}{s} \y{i}{p-s} X_i + \d \sum_{s=1}^{p-1} \y{i+1}{s} e_i \y{i}{p-s} X_i,
\end{align*}
proving equation (\ref{eqn:m10}). Applying (\ref{eqn:star}) to (\ref{eqn:m10}) and a straightforward change of summation now gives equation (\ref{eqn:m11}). Similarly, using equation (\ref{eqn:m2}) and (\ref{eqn:star}), one obtains equations (\ref{eqn:m12}) and (\ref{eqn:m13}).
\end{proof}

\begin{lemma} \label{lemma:eype}
For all integers $p$, the following hold:
\begin{enumerate}
  \item[(I)] $e_i \y{i}{p} e_i \in \la{Y^{s_1} \y{2}{s_2} \ldots \y{i-1}{s_{i-1}} e_i}$;
  \item[(II)] $X_i \y{i}{p} e_i \in \la{Y^{s_1} \y{2}{s_2} \ldots \y{i-1}{s_{i-1}} \y{i}{s_i} e_i \,\big|\,  |s_i| \leq |p|\,}$;
  \item[(III)] $e_i \y{i}{p} X_i \in \la{e_i Y^{s_1} \y{2}{s_2} \ldots \y{i-1}{s_{i-1}} \y{i}{s_i} \,\big|\,  |s_i| \leq |p|\,}$, where $\la{J}$ denotes the $R$-span of $J$.
\end{enumerate}
\end{lemma}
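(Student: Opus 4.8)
The plan is to prove all three statements together by induction on $|p|$, treating the base cases $p = 0$ and $p = \pm 1$ first and then bootstrapping using the identities from Lemma~\ref{lemma:mult}. For $p = 0$, statement (I) is $e_i^2 = A_0 e_i$, while (II) and (III) are trivial since $X_i e_i, e_i X_i \in \la{e_i}$ by (\ref{eqn:untwist}). For $p = 1$: statement (II) asks that $X_i Y_i' e_i$ lie in the span of terms $Y^{s_1}\cdots \y{i-1}{s_{i-1}} \y{i}{s_i} e_i$ with $|s_i| \le 1$; here I would use (\ref{eqn:magic}) with $p=1$, giving $X_i Y_i' = \y{i+1}{1} X_i$, and then rewrite $\y{i+1}{1} e_i = \y{i}{-1} e_i$ by (\ref{eqn:m9}) after pushing $X_i$ past — more carefully, $X_i \y{i}{1} e_i = \y{i+1}{1} X_i e_i = \l \y{i+1}{1} e_i = \l \y{i}{-1} e_i$ by (\ref{eqn:untwist}) and (\ref{eqn:m9}), which lies in the claimed span. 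Statement (III) for $p=1$ follows by applying the anti-involution (\ref{eqn:star}), under which $Y_i', X_i, e_i$ are all fixed, so $(X_i \y{i}{1} e_i)^* = e_i \y{i}{1} X_i$. Statement (I) for $p=1$: $e_i \y{i}{1} e_i = e_i X_{i-1}\cdots X_1 Y X_1 \cdots X_{i-1} e_i$; I would use (\ref{eqn:prop1b}) to move the $X_j$ for $j \le i-2$ out to commute with $e_i$, reducing to $e_i X_{i-1} (\text{lower } Y_{i-1}' \text{ stuff}) X_{i-1} e_i$, and then (\ref{eqn:exe}) or the $n=1$-level relation $e_1 Y^m e_1 = A_m e_1$ applied in a translated form; in fact the cleanest route is induction on $i$ reducing $e_i \y{i}{p} e_i$ down to the base case $e_1 Y^p e_1 = A_p e_1$ of (\ref{eqn:negeye}).

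The inductive step for general $|p| \ge 2$ is where I would lean on Lemma~\ref{lemma:mult}. For (II), starting from (\ref{eqn:magic}) (or (\ref{eqn:m2}) for negative powers), I would write
\[
X_i \y{i}{p} e_i = \y{i+1}{p} X_i e_i - \d \sum_{s=1}^{p-1} \y{i+1}{s} \y{i}{p-s} e_i + \d \sum_{s=1}^{p-1} \y{i+1}{s} e_i \y{i}{p-s} e_i,
\]
wait — more carefully one multiplies (\ref{eqn:magic}) on the right by $e_i$; the first term becomes $\y{i+1}{p}\l e_i = \l \y{i}{-p} e_i$ by (\ref{eqn:untwist}) and (\ref{eqn:m9}), which has $|s_i| = |p|$ and is fine; the middle sum terms $\y{i+1}{s}\y{i}{p-s} e_i = \y{i}{-s} \y{i}{p-s} e_i$ again by (\ref{eqn:m9}), with $s < p$; and the last sum involves $\y{i+1}{s} e_i \y{i}{p-s} e_i = \y{i}{-s} e_i \y{i}{p-s} e_i$, where the factor $e_i \y{i}{p-s} e_i$ is handled by statement (I) (for exponent $p-s$, which has strictly smaller absolute value when $1 \le s \le p-1$), landing us in a span of $Y^{s_1}\cdots \y{i-1}{s_{i-1}} e_i$ terms multiplied on the left by $\y{i}{-s}$; since $\y{i}{s_i}$ for these terms has $|s_i| = s < p$, we are within the allowed range. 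Then (III) follows from (II) by applying $*$, and (I) for $|p| \ge 2$ I would get either by the same expansion sandwiched between two $e_i$'s, using (II) for the inner part and the $i$-induction to collapse, or directly from the $i$-induction.

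The main obstacle I anticipate is bookkeeping the interaction between the two inductions — on $|p|$ and on $i$ — and making sure the exponent bounds $|s_i| \le |p|$ are genuinely preserved: every time I invoke (\ref{eqn:m9}) to turn a $\y{i+1}{s}$ into a $\y{i}{-s}$ the absolute value of the exponent is unchanged, which is exactly what I need, but I must check that no step secretly produces a $\y{i}{s_i}$ with $|s_i| > |p|$ (the only term reaching $|s_i| = |p|$ should be the single ``leading'' term coming from $X_i e_i = \l e_i$, and all correction terms must have strictly smaller exponent). A second, more routine obstacle is that statement (I) has no exponent bound, so proving (I) is genuinely the easier of the three and should be established first (by induction on $i$, bottoming out at (\ref{eqn:negeye})) and then used freely inside the proofs of (II) and (III). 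I also need to handle negative $p$ in parallel using (\ref{eqn:m2}), (\ref{eqn:m4}), etc., but this is symmetric to the positive case via the isomorphism $Y \mapsto Y^{-1}$ noted after Definition~\ref{defn:bnk}, so I would remark on that rather than redo the computation.
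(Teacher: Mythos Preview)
Your approach is essentially the paper's: expand via Lemma~\ref{lemma:mult}, use (\ref{eqn:m9}) to convert $\y{i+1}{s}$ into $\y{i}{-s}$, run a joint induction on $i$ and $p$, obtain (III) from (II) by applying~(\ref{eqn:star}), and handle negative $p$ via the isomorphism $Y\mapsto Y^{-1}$.

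One point does need correcting, however: your claim that (I) ``should be established first (by induction on $i$)'' and then used freely in (II) has the dependency backwards. The natural reduction of $e_i\y{i}{p}e_i$ --- write $\y{i}{p}=X_{i-1}\y{i-1}{}X_{i-1}\y{i}{p-1}$ and apply (\ref{eqn:m5}) --- produces, after simplification, a term $e_iX_{i-1}\y{i-1}{p-s}e_{i-1}\y{i}{s}e_i$ with $s$ ranging over $0,\ldots,p-1$; at $s=0$ this requires (II) at level $i-1$ for the \emph{same} exponent $p$, not a smaller one. So (I) cannot be proved in isolation by induction on $i$ alone. The paper handles this by taking the outer induction on $i$ (assuming both (I) and (II) at level $i-1$ for all exponents) with an inner induction on $p$. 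Your outer-$|p|$ scheme can also be made to work, but then within each $|p|$-step you must prove (II) for all $i$ \emph{first} --- your own computation shows (II) at exponent $p$ only needs (I) at strictly smaller exponents --- and only then prove (I) by a sub-induction on $i$, using the freshly established (II).

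Two minor slips along the way: the sums in (\ref{eqn:magic}) run to $s=p$, not $p-1$; and $X_i\y{i}{1}e_i=\l^{-1}\y{i}{-1}e_i$ rather than $\l\y{i}{-1}e_i$ (the extra $-\d+\d A_0$ terms combine with $\l$ to give $\l^{-1}$).
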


\begin{proof}
The relation (\ref{eqn:eye}) and the $k^\mathrm{th}$ order relation on $Y$ tells us that, for any integer $p$, $e_1 Y^{p} e_1$ is always a scalar multiple of $e_1$, hence showing part (I) of the lemma for the case $i=1$.

\noi Now, for all $p \geq 0$, equation (\ref{eqn:magic}) implies that
\begin{eqnarray*}
X_1 Y^p e_1 &=& \y{2}{p} X_1 e_1 - \d \pum{s} \y{2}{s}Y^{p-s} e_1 + \d \pum{s} \y{2}{s} e_1 Y^{p-s} e_1 \\
&\rel{eqn:untwist}& \l \y{2}{p} e_1 - \d \pum{s} \y{2}{s}Y^{p-s} e_1 + \d \pum{s} \y{2}{s} e_1 Y^{p-s} e_1 \\
&\stack{(\ref{eqn:m9}),(\ref{eqn:eye})}& \l Y^{-p} e_1 - \d \pum{s} Y^{p-2s} e_1 + \d \pum{s} A_{p-s} Y^{-s} e_1.
\end{eqnarray*}

\noi Similarly, by equations (\ref{eqn:m2}), (\ref{eqn:untwist}) and (\ref{eqn:negeye}),
\[
X_1 Y^{-p} e_1 = \l Y^p e_1 + \d \pum{s} Y^{p-2s} e_1 - \d \pum{s} A_{-s} Y^{p-s} e_1 \text{, for all }p \geq 0.
\]

\noi Observe that $|\!-\! p\kern1.1pt| = |p|$ and when $1 \leq s \leq p$, we have $|s|$,$|p-s|$, $|p-2s| \leq |p|$. Hence $X_1 Y^p e_1 \in \la{Y^m e_1 \,\big|\, |m| \leq |p|\,}$, for all $p \in \mathbb{Z}$, proving part (II) of the lemma for the case $i=1$.

We are now able to prove (I) and (II), for all integers $p \geq 0$, together by induction on $i$, which will in turn involve inducting on $p \geq 0$. Both hold trivially for $p = 0$, since $e_i^2 = A_0 e_i$ and $X_i e_i = \l e_i$, for all $i$. \\ Now let us assume that: $X_{i-1} \y{i-1}{r} e_{i-1} \in \la{Y^{s_1} \y{2}{s_2} \ldots \y{i-2}{s_{i-2}} \y{i-1}{s_{i-1}} e_{i-1} \,\big|\,  |s_{i-1}| \leq |r|\,}$ and \\ $e_{i-1} \y{i-1}{r} e_{i-1} \in \la{Y^{s_1} \y{2}{s_2} \ldots \y{i-2}{s_{i-2}} e_{i-1}}$, for all $r \geq 0$, and
$e_{i} \y{i}{r} e_{i} \in \la{Y^{s_1} \y{2}{s_2} \ldots \y{i-1}{s_{i-1}} e_{i}}$ and $X_i \y{i}{r} e_i \in \la{Y^{s_1} \y{2}{s_2} \ldots \y{i-1}{s_{i-1}} \y{i}{s_i} e_i \,\big|\,  |s_i| \leq |r|\,}$, for all $r < p$. 

\noi For all $p > 0$,
\begin{align}
e_i \y{i}{p} e_i &\h e_i X_{i-1} Y_{i-1}' X_{i-1} \y{i}{p-1} e_i \nonumber \\
&\rel{eqn:m5} e_i X_{i-1} \y{i-1}{p} X_{i-1} e_i + \d \sum_{s=1}^{p-1} e_i X_{i-1} \y{i-1}{p-s} \y{i}{s} e_i - \d \sum_{s=1}^{p-1} e_i X_{i-1} \y{i-1}{p-s} e_{i-1} \y{i}{s} e_i \nonumber \\
&\kern.18em\rel{eqn:ix} e_i X_{i-1} \y{i-1}{p} X_{i-1}^{-1} e_i + \d \ppum{s} e_i X_{i-1} \y{i-1}{p-s} \y{i}{s} e_i - \d \ppum{s} e_i X_{i-1} \y{i-1}{p-s} e_{i-1} \y{i}{s} e_i \nonumber \\
&\kern.18em\rel{eqn:xxe} e_i e_{i-1} \ix \y{i-1}{p} X_{i-1}^{-1} e_i + \d \ppum{s} e_i X_{i-1} \y{i-1}{p-s} \y{i}{s} e_i - \d \ppum{s} e_i X_{i-1} \y{i-1}{p-s} e_{i-1} \y{i}{s} e_i \nonumber \\
&\rel{eqn:prop1b} e_i e_{i-1} \y{i-1}{p} \ix X_{i-1}^{-1} e_i + \d \ppum{s} e_i X_{i-1} \y{i-1}{p-s} \y{i}{s} e_i - \d \ppum{s} e_i X_{i-1} \y{i-1}{p-s} e_{i-1} \y{i}{s} e_i \nonumber \\
&\h e_i e_{i-1} \y{i-1}{p} e_{i-1} e_i + \d \ppum{s} e_i X_{i-1} \y{i}{s} e_i \y{i-1}{p-s}  - \d \ppum{s} e_i X_{i-1} \y{i-1}{p-s} e_{i-1} \y{i}{s} e_i, \label{eqn:eype}
\end{align}
by relation (\ref{eqn:xxe}) and Proposition \ref{prop:conseq}.

\noi Let us consider the first term in the latter equation above. By induction on $i$, 
\[
e_{i-1} \y{i-1}{p} e_{i-1} \in \la{Y^{s_1} \y{2}{s_2} \ldots \y{i-2}{s_{i-2}} e_{i-1}}.
\]
Therefore, by relation (\ref{eqn:eee}),
\[
e_i e_{i-1} \y{i-1}{p} e_{i-1} e_i \in \la{Y^{s_1} \y{2}{s_2} \ldots \y{i-2}{s_{i-2}} e_i}.
\]
Now let us consider the second term in the RHS of (\ref{eqn:eype}). Fix $0 \leq s \leq p-1$.

\begin{align*}
e_i X_{i-1} \y{i}{s} e_i \y{i-1}{p-s}
&\rel{eqn:xxe} e_i e_{i-1} \ix \y{i}{s} e_i \y{i-1}{p-s} \\
&\rel{eqn:ix} e_i e_{i-1} X_i \y{i}{s} e_i \y{i-1}{p-s} - \d e_i e_{i-1} \y{i}{s} e_i \y{i-1}{p-s} + \d e_i e_{i-1} e_i \y{i}{s} e_i \y{i-1}{p-s}. 
\end{align*}

\noi By induction on $p$ and equation (\ref{eqn:prop1b}),
\begin{align*}
& e_i e_{i-1} X_i \y{i}{s} e_i \y{i-1}{p-s} \\
\,\,&\stackrel{\phantom{(19)}}{\in} \la{e_i e_{i-1} Y^{m_1} \y{2}{m_2} \ldots \y{i-1}{m_{i-1}} \y{i}{m_i} e_i \y{i-1}{p-s} \,\big|\, |m_i| \leq |s|\,} \\
&\h \la{e_i Y^{m_1} \y{2}{m_2} \ldots \y{i-2}{m_{i-2}} e_{i-1} \y{i-1}{m_{i-1}} \y{i}{m_i} e_i \y{i-1}{p-s} \,\big|\, |m_i| \leq |s|\,} \\
&\rel{eqn:m9} \la{e_i Y^{m_1} \y{2}{m_2} \ldots \y{i-2}{m_{i-2}} e_{i-1} \y{i-1}{m_{i-1}-m_i} e_i \y{i-1}{p-s} \,\big|\, |m_i| \leq |s|\,} \\
&\h \la{Y^{m_1} \y{2}{m_2} \ldots \y{i-2}{m_{i-2}} e_i e_{i-1} e_i \y{i-1}{p-s+m_{i-1}-m_i} \,\big|\, |m_i| \leq |s|\,}.
\end{align*}

\noi Therefore
\[
e_i e_{i-1} X_i \y{i}{s} e_i \y{i-1}{p-s} \in \la{Y^{m_1} \y{2}{m_2} \ldots \y{i-2}{m_{i-2}} \y{i-1}{p-s+m_{i-1}-m_i} e_i}.
\]

\noi Also, by (\ref{eqn:m9}), (\ref{eqn:prop1b}) and (\ref{eqn:eee}),
\[
e_i e_{i-1} \y{i}{s} e_i \y{i-1}{p-s} = e_i \y{i-1}{p-2s}.
\]

\noi Moreover, by induction on $p$,
\begin{align*}
e_i e_{i-1} e_i \y{i}{s} e_i \y{i-1}{p-s} 
&\rel{eqn:eee} e_i \y{i}{s} e_i \y{i-1}{p-s} \\ 
&\stackrel{\phantom{(1)}}{\in} \la{Y^{m_1} \y{2}{m_2} \ldots \y{i-1}{m_{i-1}+p-s} e_i}.
\end{align*}

\noi Thus, for all $0 \leq s \leq p-1$,
\[
e_i X_{i-1} \y{i}{s} e_i \y{i-1}{p-s} \in \la{Y^{s_1} \y{2}{s_2} \ldots \y{i-1}{s_{i-1}} e_i}.
\]
Hence the second term in the RHS of equation (\ref{eqn:eype}) is in $\la{Y^{s_1} \y{2}{s_2} \ldots \y{i-1}{s_{i-1}} e_i}$.

Finally, by induction on $i$ and using (\ref{eqn:m9}), (\ref{eqn:prop1b}) and (\ref{eqn:eee}),
\be
e_i X_{i-1} \y{i-1}{p-s} e_{i-1} \y{i}{s} e_i
&\in \la{e_i Y^{m_1} \y{2}{m_2} \ldots \y{i-1}{m_{i-1}} e_{i-1} \y{i}{s} e_i \,\big|\, |m_{i-1}| \leq |p-s|\,} \\
&\in \la{Y^{m_1} \y{2}{m_2} \ldots \y{i-1}{m_{i-1}-s} e_i \,\big|\, |m_{i-1}| \leq |p-s|\,}.
\end{align*}
Thus the third term in the RHS of equation (\ref{eqn:eype}) is in $\la{Y^{s_1} \y{2}{s_2} \ldots \y{i-1}{s_{i-1}} e_i}$.

Also, for all $p \geq 0$, equation (\ref{eqn:magic}) implies that 
\begin{eqnarray*}
  X_i \y{i}{p} e_i \= \y{i+1}{p} X_i e_i - \d \pum{s} \y{i+1}{s} \y{i}{p-s} e_i + \d \pum{s} \y{i+1}{s} e_i \y{i}{p-s} e_i \\ 
&\stackrel{(\ref{eqn:untwist}),(\ref{eqn:m9})}{=}& \l \y{i}{-p} e_i - \d \pum{s} \y{i}{p-2s} e_i + \d \pum{s} \y{i}{-s} e_i \y{i}{p-s} e_i.
\end{eqnarray*}
The first term above is clearly in $\la{Y^{s_1} \y{2}{s_2} \ldots \y{i-1}{s_{i-1}} \y{i}{s_i} e_i \,\big|\, |s_i| \leq |p|\,}$, since $|\!-\! p\kern1pt| = |p\,|$. \\
Regarding the second term above, since $1 \leq s \leq p$, $|p-2s| \leq |p|$, so it is also an element of $\la{Y^{s_1} \y{2}{s_2} \ldots \y{i-1}{s_{i-1}} \y{i}{s_i} e_i \,\big|\, |s_i| \leq |p|\,}$.
Moreover, we have $0 \leq p-s \leq p-1 < p$, so by induction on $p$,
\begin{align*}
  \y{i}{-s} e_i \y{i}{p-s} e_i 
  &\in \la{\y{i}{-s} Y^{m_1} \y{2}{m_2} \ldots \y{i-1}{m_{i-1}} e_i} \\
&\subseteq \la{Y^{s_1} \y{2}{s_2} \ldots \y{i-1}{s_{i-1}} \y{i}{s_i} e_i \,\big|\, |s_i| \leq |p|\,}.
\end{align*}
Therefore, for all $p \geq 0$,
\[
X_i \y{i}{p} e_i \in \la{Y^{s_1} \y{2}{s_2} \ldots \y{i-1}{s_{i-1}} \y{i}{s_i} e_i \,\big|\, |s_i| \leq |p|\,}
\]
and
\[
e_i \y{i}{p} e_i \in \la{Y^{s_1} \y{2}{s_2} \ldots \y{i-1}{s_{i-1}} e_i}.
\]

Let us denote $\dagger: \bnk(q^{-1},\l^{-1},A_{-i},-q_{k-i}q_0^{-1}) \rightarrow \bnk(q,\l,A_i,q_i)$ to be the isomorphism of $R$-algebras defined by 
\[
  Y \mapsto \iy, \quad X_i \mapsto \ix, \quad e_i \mapsto e_i.
\]
Note that $\dagger$ maps $Y_i'$ to its inverse. \\
We have shown above that, for all $p \geq 0$, $e_i \y{i}{p} e_i \in \la{Y^{s_1} \y{2}{s_2} \ldots \y{i-1}{s_{i-1}} e_i}$, as an element of $\bnk(q^{-1},\l^{-1},A_{-i},-q_{k-i}q_0^{-1})$. Therefore, using $\dagger$,
\begin{equation} \label{eqn:eypeneg}
e_i \y{i}{-p} e_i \in \la{Y^{s_1} \y{2}{s_2} \ldots \y{i-1}{s_{i-1}} e_i},
\end{equation}
as an element of $\bnk(q,\l,A_i,q_i)$, for all $p \geq 0$. \\
Furthermore, our previous work also shows that, as an element of $\bnk(q^{-1},\l^{-1},A_{-i},-q_{k-i}q_0^{-1})$,
\[
X_i \y{i}{p} e_i \in \la{Y^{s_1} \y{2}{s_2} \ldots \y{i-1}{s_{i-1}} \y{i}{s_i} e_i \,\big|\, |s_i| \leq |p|\,}.
\]
Thus, applying $\dagger$,
\[
\ix \y{i}{-p} e_i \in \la{Y^{s_1} \y{2}{s_2} \ldots \y{i-1}{s_{i-1}} \y{i}{s_i} e_i \,\big|\, |s_i| \leq |p|\,},
\]
as an element of $\bnk(q,\l,A_i,q_i)$.

\noi However, by relation (\ref{eqn:ix}), $\ix \y{i}{-p} e_i = X_i \y{i}{-p} e_i - \d \y{i}{-p} e_i + \d e_i \y{i}{-p} e_i$. By (\ref{eqn:eypeneg}), the last two terms are clearly in $\la{Y^{s_1} \y{2}{s_2} \ldots \y{i-1}{s_{i-1}} \y{i}{s_i} e_i \,\big|\, |s_i| \leq |p|\,}$.
Hence, as an element of $\bnk(q,\l,A_i,q_i)$,
\[
X_i \y{i}{-p} e_i \in \la{Y^{s_1} \y{2}{s_2} \ldots \y{i-1}{s_{i-1}} \y{i}{s_i} e_i \,\big|\, |s_i| \leq |p|\,}.
\]
This concludes the proof of (I) and (II) for all integers $p$. Applying (\ref{eqn:star}) to part (II) immediately gives part (III) of the Lemma.
\end{proof}

\vskip 1cm
\begin{center} \textbf{Notational conventions}
\end{center}
\vskip 0.5cm
We now take the opportunity to fix some ``standard'' notation to be used throughout. \\
If $R$ is a ring as in Definition \ref{defn:bnk}, we may use $\bnk(R)$ or $\bnk$ for short to denote the algebra $\bnk(q,\l,A_i,q_i)$.
If $J$ is a subset of an $R$-module, $\la{J}$ is used to denote the $R$-span of $J$.
Finally, for a subset $S \subseteq R$, we denote $\la{S}_R$ to be the \emph{ideal} generated by $S$ in $R$ and only omit the subscript $R$ if it does not create any ambiguity in the current context.

\chapter{Spanning sets of {$\mathscr{B}_{n}^{k}$}} \label{chap:spanning}

In this chapter, we produce a spanning set of $\bnk(R)$ for any ring $R$, as in Definition \ref{defn:bnk}, of cardinality $k^n(2n-1)!! = k^n (2n-1) \cdot (2n-3) \cdots 3 \cdot 1$. Hence this shows the rank of $\bnk$ is at most $k^n(2n-1)!!$. The spanning set we obtain involves picking a basis of the Ariki-Koike algebras, which we define below. We note here that our spanning sets differs from that obtained by Goodman and Hauschild Mosley in \cite{GH107}.

\begin{defn}
For any unital commutative ring $R$ and $q', q_0, \o, q_{k-1} \in R$, let $\mathfrak{h}_{n,k}(R)$ denote the unital associative $R$-algebra with generators $T_0^{\pm 1}$, $T_1^{\pm 1}$, \o, $T_{n-1}^{\pm 1}$ and relations
\[
\begin{array}{rcll}
  T_0 T_1 T_0 T_1 &=& T_1 T_0 T_1 T_0& \\
  T_i T_{i \pm 1} T_i \= T_{i \pm 1} T_i T_{i \pm 1}&\quad \text{for } i = 1,\o,n-2\\
  T_i T_j \= T_j T_i&\quad \text{for } |i-j|\geq 2 \\
T_0^k \= {\displaystyle \sum_{i=0}^{k-1} q_i T_0^i} \\
   T_i^2 \= (q'-1)T_i + q'&\quad \text{for } i = 1,\o,n-2.
\end{array}
\]
\end{defn}
Here $\mathfrak{h}_{n,k}$ denotes the Ariki-Koike algebras, as introduced independently by Ariki and Koike in \cite{AK94} and Brou\'e and Malle in \cite{BM93}. They are sometimes referred to as the `cyclotomic Hecke algebras of type $G(k,1,n)$' and may be thought of as the Iwahori-Hecke algebras corresponding to the complex reflection group $(\mathbb{Z}/k\mathbb{Z}) \wr \mathfrak{S}_n$, the wreath product of the cyclic group $\mathbb{Z}/k\mathbb{Z}$ of order $k$ with the symmetric group $\mathfrak{S}_n$ of degree $n$. Indeed, by considering the case when $q' = 1$, $q_0 = 1$ and $q_i = 0$, one recovers the group algebra of $(\mathbb{Z}/k\mathbb{Z}) \wr \mathfrak{S}_n$. Also, it is isomorphic to the Iwahori-Hecke algebra of type $A_{n-1}$ or $B_n$, when $k = 1$ or $2$, respectively. These algebras feature extensively in the literature. For example, Ariki and Koike \cite{AK94} prove that it is $R$-free of rank $k^n n!$, the cardinality of $(\mathbb{Z}/k\mathbb{Z}) \wr \mathfrak{S}_n$. In addition, they classify its irreducible representations, and give explicit matrix representations in the generic semisimple setting. Also, Graham and Lehrer \cite{GL96} and Dipper, James and Mathas \cite{DJM98} prove that the algebra is cellular. 

Now suppose $R$ is a ring as in the definition of $\bnk$ and let $q' := q^2$. Then, from the given presentations of the algebras, it is straightforward to show that $\ak{n}(R)$ is a quotient of $\bnk(R)$ under the following projection
\begin{eqnarray*}
  \hspace{2cm}\pi_n: \bnk &\rightarrow& \ak{n} \\
  Y &\mapsto& T_0, \\
  X_i &\mapsto& q^{-1} T_i, \text{ \quad for }1 \leq i \leq n-1  \\
  e_i &\mapsto& 0.
\end{eqnarray*}
Indeed, $\bnk/I \cong \mathfrak{h}_{n,k}$ as $R$-algebras, where $I$ is the two-sided ideal generated by the $e_i$'s in $\bnk(R)$. (Remark: due to relation (\ref{eqn:eee}), it is straightforward to see that the ideal $I$ is actually equal to the two-sided ideal generated by just a single fixed $e_j$).
Our main aim in this chapter is to obtain a spanning set of $\bnk$, for any choice of basis $\X_{n,k}$ for $\mathfrak{h}_{n,k}$. 
For any basis $\X_{n,k}$ of $\mathfrak{h}_{n,k}$, let $\overline{\X}_{n,k}$ be any subset of $\bnk$ mapping onto $\X_{n,k}$ of the same cardinality. 
Also, for any $l \leq n$, there is a natural map $\blk \rightarrow \bnk$. Let $\bl{l}$ denote the image of $\blk$ under this map; it is the subalgebra of $\bnk$ generated by $Y, X_1, \o, X_{l-1}, e_1, \o, e_{l-1}$. Observe that \emph{a priori} it is not clear that this map is injective; i.e., that $\bl{l}$ is isomorphic to $\blk$. In fact, over a specific class of ground rings, this will follow as a consequence of freeness of $\bnk$ which is established in Chapter \ref{chap:basis}.
Finally, let $\widetilde{\X}_{l,k}$ be the image of $\overline{\X}_{l,k}$ in $\bnk$.

\begin{thm} \label{thm:span}
The set of elements of the following form spans $\bnk$.
\vspace{-0.1cm}
\begin{eqnarray*}
\y{i_1}{s_1} \y{i_2}{s_2} \, \ldots \, \y{i_f}{s_f} \!\!\!\!\!&&\!\!(X_{i_1} \ldots X_{j_{\scriptscriptstyle 1}-1} e_{j_1} \ldots e_{n-2} e_{n-1}) \\
 \ldots \!\!\!\!\!&&\!\! (X_{i_f} \ldots X_{j_{\scriptscriptstyle f}-1} e_{j_f} \ldots e_{n-2f} e_{n-2f+1})\,\, \chi^{(n-2f)} \\
\!\!\!\!\!&&\!\! (e_{n-2f+1} e_{n-2f} \ldots e_{h_{\scriptscriptstyle f}} X_{h_{\scriptscriptstyle f} - 1} \ldots X_{g_f}) \ldots \\
\!\!\!\!\!&&\!\! (e_{n-1} e_{n-2} \ldots e_{h_{\scriptscriptstyle 1}} X_{h_{\scriptscriptstyle 1} - 1} \ldots X_{g_1}) \,\,\, \y{g_f}{t_f} \y{g_2}{t_2} \, \ldots \, \y{g_1}{t_1},
\end{eqnarray*}
where $f = 1,2,\ldots, \lfloor \frac{n}{2} \rfloor$, $i_1 > i_2 > \ldots > i_f$, $g_1 > g_2 > \ldots > g_f$ and, for each $m = 1,2,\ldots f$, we require $1 \leq i_m < j_m \leq n-2m+1$, $s_1, \ldots, s_f, t_1, \ldots, t_f \in \left\{ \floor{\frac{k}{2}} - (k-1), \o, \floor{\frac{k}{2}} \right\}$ and $\chi^{(n-2f)}$ is an element of $\widetilde{\X}_{n-2f,\,k}$. 
\end{thm}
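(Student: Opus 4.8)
The plan is to show that the $R$-span of the proposed set of elements is a left ideal (equivalently, that it is closed under left multiplication by each generator $Y^{\pm 1}$, $X_i^{\pm 1}$, $e_i$), and since it contains $1$ (take $f=0$, i.e.\ the identity, with an appropriate choice of $\chi$ running over $\widetilde{\X}_{n,k}$ — and the empty product of $Y_i'$'s), this forces it to be all of $\bnk$. Equivalently, it is cleaner to induct on $n$: assuming the analogous spanning result for $\widetilde{\mathscr{B}}_{n-1}^k$, I would write a general element of $\bnk$ and push all occurrences of the ``top'' strand data (the generators $X_{n-1}$, $e_{n-1}$, and the variable $Y_{n-1}'$ or $Y_n'$) either to the far left or far right into the prescribed ``hook'' shape $X_{i_1}\ldots X_{j_1-1}e_{j_1}\ldots e_{n-1}$ on the left and its $^*$-image on the right, absorbing everything else into a $\widetilde{\mathscr{B}}_{n-1}^k$-part which by induction is a combination of lower elements times $\chi^{(n-2f)}$.

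The key steps, in order, would be: (1) Establish a normal form for products of the $Y_i'$: using Proposition~\ref{prop:conseq}(c) they pairwise commute, so any monomial in them can be sorted into the ordered form $\y{i_1}{s_1}\ldots\y{i_f}{s_f}$ with $i_1>\cdots>i_f$; then use the $k^{\mathrm{th}}$-order relation (\ref{eqn:ycyclo}) (and its consequence expressing $Y^{-1}$ in non-negative powers) together with Lemma~\ref{lemma:mult} to reduce each exponent $s_m$ into the symmetric range $\{\floor{k/2}-(k-1),\ldots,\floor{k/2}\}$ — this is just the observation that modulo the degree-$k$ relation every power of $Y_i'$ (hence, after conjugating by braids, of $Y$ via (\ref{eqn:m9})-type identities) lies in the span of $k$ consecutive powers, and we may center that window. (2) Handle the interaction of a stray $e_{n-1}$ or $X_{n-1}^{\pm1}$ with the body: use (\ref{eqn:ep2}) and (\ref{eqn:XXp1}) from Proposition~\ref{prop:XXp1} to slide $e$'s and $X$'s past the $e_ie_{i+1}e_{i+2}$ chains, and use Lemma~\ref{lemma:eype}(I),(II),(III) to convert any $e_i\y{i}{p}e_i$, $X_i\y{i}{p}e_i$, $e_i\y{i}{p}X_i$ that arises into the allowed span of $Y'$-monomials times a single $e_i$ (this is exactly why the hook ends in $e_{n-1}$ and why the $Y'$-exponents in the middle do not blow up). (3) Run the induction: strip the outermost hook $X_{i_1}\ldots e_{n-1}$ on the left and $e_{n-1}\ldots X_{g_1}$ on the right, observe the remaining factor lives in $\widetilde{\mathscr{B}}_{n-2}^k$ (after using (\ref{eqn:eee}) to collapse the sandwiched $e_{n-1}$'s, à la the $\bnk/I\cong\ak{n}$ remark), and apply the inductive hypothesis to get the $\chi^{(n-2f)}\in\widetilde{\X}_{n-2f,k}$. (4) Finally count: the number of such hook pairs times $|\widetilde{\X}_{n-2f,k}|=k^{n-2f}(n-2f)!$ times $k^{2f}$ choices of $s$'s and $t$'s sums to $k^n(2n-1)!!$, matching the claimed cardinality (though for the \emph{spanning} statement the count is not logically needed, only reassuring).

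The main obstacle, I expect, is Step~(2): controlling what happens when a $Y_i'$ with a large exponent must be commuted past an $e_i$ or an $X_i$ that sits between it and the place it needs to go. Lemma~\ref{lemma:mult} introduces correction terms $\delta\sum\y{i+1}{s}\y{i}{p-s}$ and $\delta\sum\y{i+1}{s}e_i\y{i}{p-s}$ with \emph{lower} powers, so there is a genuine induction on the exponent hidden inside the induction on $n$, and one must be careful that reducing the $Y_i'$-exponents back into the symmetric window (Step~(1)) does not re-create the high-exponent $e_i$-terms one has just eliminated — the safeguard is precisely the ``$|s_i|\le|p|$'' bookkeeping in Lemma~\ref{lemma:eype}(II)/(III), which guarantees the $e_i$-sandwiched exponents only ever decrease. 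The secondary nuisance is purely organizational: keeping track of the indices $i_m,j_m,g_m,h_m$ so that the hooks genuinely nest with $1\le i_m<j_m\le n-2m+1$ and the $e$-runs reach down to level $n-2f+1$, which amounts to a careful but routine diagram-chase bookkeeping argument on where each strand of the underlying tangle starts and ends.
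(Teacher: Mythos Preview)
Your overall strategy is sound and matches the paper's: show the span is invariant under left multiplication by generators (hence a left ideal), observe it contains~$1$, and conclude. You have also correctly identified Lemma~\ref{lemma:mult} and Lemma~\ref{lemma:eype} as the engines that control the $Y_i'$-exponents when commuting past $X_i$ and $e_i$, and your remark about the $|s_i|\le|p|$ bookkeeping is exactly the right reason those lemmas suffice to keep the exponents in the symmetric window~$P$.

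There is, however, a genuine gap in your Step~(3). After you strip the outermost hook $\alpha_{i_1j_1,n-1}^{s_1}$ on the left and its $^*$-image on the right, and apply the inductive hypothesis to the $\widetilde{\mathscr{B}}_{n-2}^k$-part, you obtain an expression of the form
\[
\alpha_{i_1j_1,n-1}^{s_1}\,\alpha_{i_2j_2,n-3}^{s_2}\cdots\chi\cdots
\]
with \emph{no a priori relation between $i_1$ and $i_2$}. The ordering $i_1>i_2>\cdots>i_f$ is part of the statement, and restoring it is not ``routine bookkeeping'': when $i_1\le i_2$ one must swap the two outermost $\alpha$-chains, and this swap produces correction terms (from (\ref{eqn:magic})--(\ref{eqn:m13}) and the case analysis of how $e_j\ldots e_l$ interacts with a second chain $\alpha_{g,h,l-2}^r$) whose $Y'$-exponents must again be shown to stay in $P$ and whose new leading index must strictly increase. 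The paper isolates this as a separate lemma (Lemma~\ref{lemma:beer}), and its proof is the longest and most delicate computation in the chapter --- precisely because one must track simultaneously the sign and magnitude of the exponents on both chains through several applications of (\ref{eqn:ep2}), (\ref{eqn:XXp1}) and Lemma~\ref{lemma:leftideal}. Your proposal does not indicate how you would handle this, and the induction on $n$ alone does not give it for free.

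Organizationally, the paper avoids your induction on $n$ and instead inducts on the number $m$ of $\alpha$-chains. It introduces an \emph{unordered} version $V_{l,m}^g$ of the chain products alongside the ordered $V_{l,m}$, first proves $V_{l,m}^g\widetilde{\mathscr{B}}_{l-2m}^k$ is a left ideal (easy, by iterating Lemma~\ref{lemma:angels}), and only then proves $V_{l,m}^g\widetilde{\mathscr{B}}_{l-2m}^k=V_{l,m}\widetilde{\mathscr{B}}_{l-2m}^k$ via the reordering Lemma~\ref{lemma:beer} and a descending induction on the leading index~$i$. This separation of ``left ideal'' from ``ordering'' is what makes the argument go through cleanly; in your scheme the two are entangled, and you would need to supply the analogue of Lemma~\ref{lemma:beer} explicitly before the induction on $n$ closes.
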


\noi In a later chapter, we will show that, under additional assumptions on the ground ring, picking an appropriate `lifting' of a certain basis of $\ak{n}$ leads to a cellular basis of $\bnk$.

Suppose $l \geq 1$. Let $i$ and $j$ be such that $i \leq j \leq l$ and $p$ be any integer. Define
\[
\a{ijl}{p} := \y{i}{p}  X_i \o X_{j-1} e_j \o e_l.
\]
Then Theorem \ref{thm:span} says that the algebra $B_n^k$ is spanned by the set of elements
\[
\alpha_{i_1j_1,n-1}^{s_1}\ldots\alpha_{i_fj_f,n-2f+1}^{s_f} \chi^{(n-2f)}(\alpha_{g_fh_f,n-2f+1}^{t_f})^*\ldots (\alpha_{g_1h_1,n-1}^{t_1})^*,
\]
with conditions specified as above.
Diagrammatically (in the Kauffman tangle algebra on $n$ strands), the product $\a{ijl}{0}$ may be visualised as a `tangle diagram' with $n$ points on the top and bottom row such that the $i^\mathrm{th}$ and $(j+1)^\mathrm{th}$ are joined by a horizontal strand in the top row. The rest of the diagram consists of vertical strands, which cross over this horizontal strand but not each other, and a horizontal strand joining the $l^\mathrm{th}$ and $(l+1)^\mathrm{th}$ points in the bottom row. We illustrate this roughly in Figure \ref{fig:onealpha} below.
 \begin{figure}[h!]
 \begin{center}
   \includegraphics{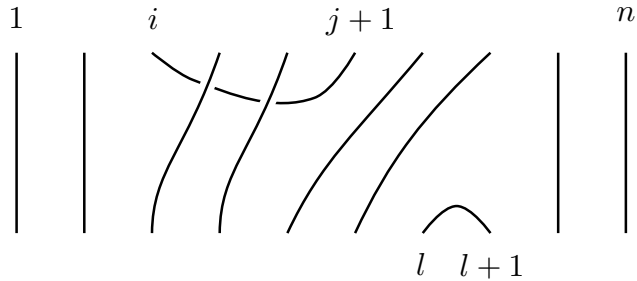}
   \caption{A diagrammatic intepretation of $\a{ijl}{0} = X_i \o X_{j-1} e_j \o e_l$ as a tangle on $n$ strands.} \label{fig:onealpha}
\end{center}
\end{figure}
\\ Using this picture visualisation, one may then use a straightforward calculation to show that the spanning set of Theorem \ref{thm:span} has cardinality $k^n (2n-1)!!$.

The following lemma essentially states that left multiplication of an $\a{mjl}{p}$ chain by a generator of $\bl{l+1}$ yields another $\alpha$ chain multiplied by `residue' terms in the smaller $\bl{l-1}$ subalgebra. Specifically, it helps us to prove that the $R$-submodule spanned by $\{\a{ijl}{p} \bl{l-1}\}$ is a left ideal of $\bl{l+1}$, in particular when $p$ is restricted to be within a certain range of $k$ consecutive integers.
\begin{lemma} \label{lemma:leftideal}
  For $\gamma \in \{X,e\}$, $m \leq l$ and $p \in \mathbb{Z}$,
  \[
  \gamma_m \alpha_{ijl}^p \in \la{\a{i'j'l}{p'} \bl{l-1} \,\Big|\, i' \geq \mathrm{min}(i,m), |p'| \leq |p| \text{ and } p'p \geq 0 \text{ unless } i'=m}.
  \]
  In fact, the only case in which $p'p < 0$ occurs is the case $X_i \alpha_{ijl}^p$, where $i \leq j \leq l$ and $p \in \mathbb{Z}$.
\end{lemma}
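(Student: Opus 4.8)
The plan is to analyze $\gamma_m \alpha_{ijl}^p = \gamma_m \y{i}{p} X_i \cdots X_{j-1} e_j \cdots e_l$ by cases on the position of $m$ relative to $i$ and $j$, pushing $\gamma_m$ rightward through the chain using the relations established in Chapter~1. First I would dispose of the easy cases: if $m \leq i-2$ or $m \geq j+1$ (and more generally $m$ far from the ``active'' part), then by relations (\ref{eqn:braid2}), (\ref{eqn:xecomm}), (\ref{eqn:eecomm}), (\ref{eqn:prop1b}) and Proposition~\ref{prop:XXp1}, $\gamma_m$ either commutes past everything to land on the $e_j \cdots e_l$ tail (where it may be absorbed or turned into an element of $\bl{l-1}$ after using $e_i e_{i\pm1} e_i = e_i$), or it commutes with $\y{i}{p}$ and interacts only with the braid part $X_i \cdots X_{j-1}$. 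In the braid part, $\gamma_m X_i \cdots X_{j-1}$ with $i \leq m \leq j-1$ is handled by the standard ``braid move'' identities (\ref{eqn:XXp1}): $X_m X_i \cdots X_{j-1} = X_i \cdots X_{j-1} X_{m+1}$ type relations, and for $\gamma = e$, $e_m X_i \cdots X_{j-1}$ either telescopes (when $e_m$ meets $X_{m}$ or $X_{m-1}$ via (\ref{eqn:xxe})) producing $e$'s that collapse into the tail, or commutes through. The upshot in all these cases is a new $\alpha$-chain $\a{i'j'l}{p}$ with $i' \geq \min(i,m)$, the \emph{same} exponent $p$ (so $p'=p$, trivially $|p'|\leq|p|$ and $p'p\geq 0$), times a residue in $\bl{l-1}$.

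The genuinely interesting case is $\gamma_m$ with $m$ equal to $i$ or $i-1$ (or $i+1$), since then $\gamma_m$ collides with $\y{i}{p}$, and this is where Lemma~\ref{lemma:mult} enters: applying (\ref{eqn:magic}), (\ref{eqn:m2}) and their relatives rewrites $X_i \y{i}{p}$ (or $X_m \y{i}{\pm p}$ via $\y{i}{p}=X_{i-1}\y{i-1}{p}X_{i-1}$ bookkeeping) as $\y{i+1}{p}X_i$ plus $\d$-correction terms of the form $\y{i+1}{s}\y{i}{p-s}$ and $\y{i+1}{s}e_i\y{i}{p-s}$ with $1 \leq s \leq p$. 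The leading term $\y{i+1}{p}X_i X_i\cdots$ must be reorganized (using (\ref{eqn:xi2}) to absorb the extra $X_i$, or recognizing $\y{i+1}{p}X_i\cdots X_{j-1}$ as a chain starting at $i+1\geq\min(i,m)$); the correction terms containing an $e_i$ get absorbed into the tail via (\ref{eqn:eee}) after noting $e_i X_i\cdots X_{j-1} e_j\cdots e_l$ simplifies, contributing to $\bl{l-1}$; and the correction terms $\y{i+1}{s}\y{i}{p-s}X_i\cdots$ are new $\alpha$-chains with exponent $p-s$ where $0\leq p-s \leq p-1$, so $|p'|=|p-s|\leq|p|$ and $p'p\geq 0$. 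For the case $X_i\alpha_{ijl}^p$ with $p<0$, the analogous identity (\ref{eqn:m2}) produces correction exponents $s-p$ with $1\leq s\leq p\leq\ldots$—here one must track carefully that $\y{i}{-s}$ appears with $-s$ of the \emph{opposite} sign to $p$ when $p>0$, which is exactly the single exceptional case flagged in the statement; I would verify the sign bookkeeping explicitly here and confirm $|{-s}|\leq|p|$.

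The key structural point making this go through is that $i'\geq\min(i,m)$: multiplication by $\gamma_m$ never lowers the starting index below $m$, because $\gamma_m$ can only ``push'' the horizontal strand at position $i$ to the right (to $i+1$) or, when $m<i$, create a new strand configuration starting no lower than $m$. I would organize the proof as a finite case check on $\mathrm{sign}(m-i)$ and $\mathrm{sign}(m-(j-1))$, roughly six or seven cases, in each case exhibiting the rewrite explicitly; the inductive structure of Lemma~\ref{lemma:eype} is a good template. The main obstacle I anticipate is purely combinatorial rather than conceptual: keeping the tail manipulations $e_m\cdots(e_j\cdots e_l)$ straight and correctly identifying when residue terms genuinely lie in $\bl{l-1}$ (i.e., involve no generator $e_m, X_m$ with $m\geq l$)—this requires careful use of (\ref{eqn:eee}) and (\ref{eqn:ep2}) to ``cancel'' the $e_l$ at the right end against incoming $e$'s, and checking that the $Y'$-factors that survive have their indices $\leq l-1$. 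The exponent and sign constraints $|p'|\leq|p|$, $p'p\geq 0$ then follow automatically from Lemma~\ref{lemma:mult}'s summation ranges, with the lone exception as stated.
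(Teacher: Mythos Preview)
Your overall strategy---a case analysis on the position of $m$ relative to $i$ and $j$, driven by the identities of Lemma~\ref{lemma:mult} and Proposition~\ref{prop:conseq}---is exactly the paper's approach. Two points, however, need sharpening before the argument goes through.

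First, in the critical case $X_i\,\alpha_{ijl}^p$ with $i<j$ (the paper's case~(C)), your plan is to apply (\ref{eqn:magic}) to $X_i\y{i}{p}$, yielding correction terms $\y{i+1}{s}\y{i}{p-s}X_iX_{i+1}\cdots e_l$. You then assert these are ``new $\alpha$-chains with exponent $p-s$'', but they are not: the factor $\y{i+1}{s}$ does not commute past $X_i$, and when $i+1=l$ it does not even lie in $\bl{l-1}$, so it cannot be absorbed as residue. The paper avoids this by using (\ref{eqn:m11}) instead, which packages the corrections as $X_i\,\y{i}{s}\y{i+1}{p-s}X_{i+1}\cdots e_l$. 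Now $\y{i}{s}$ \emph{does} commute past $X_{i+1}\cdots e_l$ (by (\ref{eqn:prop1b})), landing in $\bl{l-1}$ since $i\le l-1$, and what remains is $X_i\,\alpha_{i+1,j,l}^{p-s}$, which reduces to the already-handled case $m=i-1$. The choice of which ``relative'' of (\ref{eqn:magic}) to use here is not cosmetic.

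Second, you treat Lemma~\ref{lemma:eype} only as a structural template, but the paper invokes it directly: parts~(I) and~(III) dispose of $e_i\,\alpha_{ijl}^p$ (cases~(1),~(2)), and part~(II) handles $X_i\,\alpha_{iil}^p$ (case~(B)) and the $e_i$-correction terms arising in case~(C). This is also where the sign reversal $p'p<0$ actually originates---not from the summation ranges in (\ref{eqn:m2}) as you suggest (those preserve sign), but from the term $\lambda\,\y{i}{-p}e_i$ in the expansion of $X_i\y{i}{p}e_i$ underlying Lemma~\ref{lemma:eype}(II). That is why the exception is tied specifically to $i'=m=i$ and $\gamma=X$.
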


\begin{proof}
  Let $p$ be any integer and fix $m$, $i$, $j$ and $l$. \\
Henceforth, let $T := \la{\a{i'j'l}{p'} \bl{l-1} \,\big|\, i' \geq \mathrm{min}(i,m), |p'| \leq |p| \text{ and } p'p \geq 0 \text{ unless } i'=m}$.
In all the following calculations, it is straightforward in each case to check that the resulting elements satisfy the minimality condition required to be a member of $T$.

\noi  \textbf{The action of $e_m$}. \\
  The action of $e_m$ on $\a{ijl}{p}$ falls into the following four cases:
\begin{enumerate}
  \item[(1)] $e_m \cdot \y{m}{p} X_m \ldots e_j \ldots e_l$, where $m < j \leq l$,\\
  \item[(2)] $e_m \cdot \y{m}{p} e_m \ldots e_l$, where $m = j \leq l$,\\
  \item[(3)] $e_m \cdot \a{m+1,j,l}{p}$, where $m+1 \leq j \leq l$, \\
  \item[(4a)] $e_m \cdot \a{ijl}{p}$, where $m < i-1$ and $i \leq j \leq l$, \\
  \item[(4b)] $e_m \cdot \a{ijl}{p}$, where $m > i$ and $i \leq j \leq l$.
\end{enumerate}
\vskip .3cm
\textbf{(1).} By Lemma \ref{lemma:eype} (III),
\begin{align*}
  e_m \y{m}{p} X_m \ldots e_j \ldots e_l &\stackrel{\hphantom{(1)}}{\in} \la{e_m Y^{s_1} \ldots \y{m-1}{s_{m-1}}\y{m}{s_m} X_{m+1} \ldots e_j \ldots e_l \,\big|\, |s_m| \leq |p|\,} \\
&\stackrel{\hphantom{(1)}}{\in} \la{e_m  X_{m+1} \ldots e_j \ldots e_l \left( Y^{s_1} \ldots \y{m-1}{s_{m-1}}\y{m}{s_m} \right) \,\big|\, |s_m| \leq |p|\,} \\
&\stackrel{(\ref{eqn:xxe})}{\in} \la{e_m \ldots e_l \left(X_{j-2}^{-1} \ldots X_{m+1}^{-1} X_m^{-1} \y{m}{s_m} \y{m-1}{s_{m-1}} \ldots Y^{s_1}\right) \,\big|\, |s_m| \leq |p|\,}.
\end{align*}
Here $m\leq j-1\leq l-1$, so the term in the brackets above $X_{j-2}^{-1} \ldots X_{m+1}^{-1} X_m^{-1} \y{m}{s_m} \y{m-1}{s_{m-1}} \ldots Y^{s_1}$ is in $\bl{l-1}$. 
Hence, $e_m \y{m}{p} X_m \ldots e_j \ldots e_l \in \la{\a{mml}{0}\bl{l-1}} \subseteq T$.

\noi \textbf{(2).} By Lemma \ref{lemma:eype} (I),
\begin{align*}
  e_m \y{m}{p} e_m \ldots e_j \ldots e_l &\in \la{Y^{s_1} \y{2}{s_2} \ldots \y{m-1}{s_{m-1}} e_m \o e_l} \\
&\in \la{\a{mml}{0} \bl{l-1}} \subseteq T,
\end{align*}
since $Y^{s_1} \y{2}{s_2} \ldots \y{m-1}{s_{m-1}} \in \bl{l-1}$, as $m \leq l$ in this case. 

\noi \textbf{(3).} By equations (\ref{eqn:m9}), (\ref{eqn:prop1b}) and (\ref{eqn:xxe}),
\begin{align*}
  e_m \a{m+1,j,l}{p} &= e_m \y{m}{-p} X_{m+1} \o e_j \o e_l \\
&= e_m X_{m+1} \o e_j \o e_l \y{m}{-p} \\
\- e_m \o e_l \left(X_{j-2}^{-1} \ldots X_{m+1}^{-1} X_m^{-1} \y{m}{-p} \right) \in T.
\end{align*}

\noi \textbf{(4).} We want to prove that $e_m \a{ijl}{p} \in T$, when $i \neq m$, $m+1$ and $i \leq j \leq l$.
This is separated into the following two cases. 

(a) If $m \leq i-2 \leq l-2$, then $e_m \in \bl{l-1}$ commutes past $\a{ijl}{p}$. Hence $e_m \a{ijl}{p} = \a{ijl}{p} e_m \in T$.

(b) On the other hand, if $m \geq i+1$ then: 

When $m < j$, 
\begin{align*}
  e_m \a{ijl}{p} &\p \y{i}{p} X_i \o X_{m-2} e_m X_{m-1} X_m \left( X_{m+1} \o e_j \o e_l \right) \\
&\rel{eqn:xxe} \y{i}{p} X_i \o X_{m-2} e_m e_{m-1} \left( X_{m+1} \o e_j \o e_l \right) \\
&\p e_m X_{m+1} \o e_j \o e_l \left(\y{i}{p} X_i \o X_{m-2} e_{m-1} \right) \\
&\rel{eqn:xxe} e_m \o e_l \left(X_{j-2}^{-1} \ldots X_{m+1}^{-1} X_m^{-1}\right) \left(\y{i}{p} X_i \o X_{m-2} e_{m-1} \right),
\end{align*}
which is an element of $T$, since $m \leq l-1$ and $j-2 \leq l-2$.

When $m=j$,
\begin{align*}
  e_m \a{ijl}{p} &\p \y{i}{p} X_i \o X_{m-2} e_m X_{m-1} e_m \o e_l \\
&\rel{eqn:exe} \l^{-1} \y{i}{p} X_i \o X_{m-2} e_m \o e_l \\
&\p e_m \o e_l (\l^{-1} \y{i}{p} X_i \o X_{m-2}),
\end{align*}
which is an element of $T$, since $m-2 = j-2 \leq l-2$.

When $m>j$,
\begin{align*}
  e_m \a{ijl}{p} &\p \y{i}{p} X_i \o \gamma_{m-2} e_m e_{m-1} e_m e_{m+1} \o e_l, \quad \text{where $\gamma$ could be $X$ or $e$,} \\
&\rel{eqn:eee} \y{i}{p} X_i \o \gamma_{m-2} e_m e_{m+1} \o e_l \\
&\p e_m \o e_l \left( \y{i}{p} X_i \o \gamma_{m-2} \right) \in T.
\end{align*}

\noi We have now proved that $e_m \a{ijl}{p} \in T$, for all $m \leq l$, $i \leq j \leq l$, and $p \in \mathbb{Z}$.
\\
\\
\textbf{The action of $X_m$.}
\\
The action of $X_m$ on $\a{ijl}{p}$ falls into the following four cases:
\begin{enumerate}
\item[(A)] $X_m \cdot \a{m+1,j,l}{p}$, where $m+1 \leq j \leq l$, \\
\item[(B)] $X_m \cdot \y{m}{p}e_m \o e_l$, where $m = j \leq l$, \\
\item[(C)] $X_m \cdot \y{m}{p}X_m \o e_j \o e_l$, where $m < j \leq l-1$,  \\
\item[(D1)] $X_m \cdot \a{ijl}{p}$, where $m < i-1$ and $i \leq j \leq l$, \\
\item[(D2)] $X_m \cdot \a{ijl}{p}$, where $m > i$ and $i \leq j \leq l$.
\end{enumerate}
\vskip .3cm
\noi \textbf{(A).} When $p$ is a non-negative integer, using equations (\ref{eqn:m5}), (\ref{eqn:m9}) and (\ref{eqn:prop1b}) gives 
\begin{align*}
X_m & \a{m+1,j,l}{p} \\
\- \y{m}{p} X_m \gamma_{m+1} \o e_j \o e_l + \d \pum{s} \y{m}{p-s} \y{m+1}{s} \gamma_{m+1} \o e_j \o e_l \\
&\phantom{=} \qquad \qquad \qquad \qquad \quad \,\,\,\,\, - \d \pum{s} \y{m}{p-s} e_m \y{m+1}{s}  \gamma_{m+1} \o e_j \o e_l \\
\- \y{m}{p} X_m \o e_j \o e_l + \d \pum{s} \y{m+1}{s} \gamma_{m+1} \o e_j \o e_l \left( \y{m}{p-s} \right) \\
&\phantom{=} \qquad \qquad \qquad \quad \,\,\,\,\, - \d \pum{s} \y{m}{p-s} e_m \o e_l \left(X_{j-2}^{-1} \ldots X_{m+1}^{-1} X_m^{-1} \y{m}{-s}\right),
\end{align*}
where $\gamma_{m+1}$ could be either $X_{m+1}$ or $e_{m+1}$.
Observe that if $1 \leq s \leq p$ then $0 \leq p-s \leq p-1$, hence $|s|, |p-s| \leq |p|$. Also, in this case, $m \leq l-1$ and $j-2 \leq l-2$ so the expressions in the brackets above are indeed elements of $\bl{l-1}$. Hence, for all $m+1 \leq j \leq l$, we have $X_m \cdot \a{m+1,j,l}{p} \in T$.

\noi Also, by equations (\ref{eqn:m6}), (\ref{eqn:m9}) and (\ref{eqn:prop1b}), 
\begin{align*}
X_m & \a{m+1,j,l}{-p} \\
\- \y{m}{-p} X_m \gamma_{m+1} \o e_j \o e_l - \d \pum{s} \y{m}{-s} \y{m+1}{s-p} \gamma_{m+1} \o e_j \o e_l \\
&\phantom{=} \qquad \qquad \qquad \qquad \quad \,\,\,\,\, + \d \pum{s} \y{m}{-s} e_m \y{m+1}{s-p} \gamma_{m+1} \o e_j \o e_l \\
\- \y{m}{-p} X_m \o e_j \o e_l - \d \pum{s} \y{m+1}{s-p} \gamma_{m+1} \o e_j \o e_l \ll{\y{m}{-s}} \\
&\phantom{=} \qquad \qquad \qquad \quad \,\,\,\,\, + \d \pum{s} \y{m}{-s} e_m \o e_l \left(X_{j-2}^{-1} \ldots X_{m+1}^{-1} X_m^{-1} \y{m}{p-s}\right).
\end{align*}
Observe that when $1 \leq s \leq p$, we have that $1-p \leq s-p \leq 0$ so certainly $|\!-\! s\kern1.1pt| = |s|$ and $|s-p| \leq |p|$. Again, since $m \leq l-1$ and $j-2 \leq l-2$, the expressions in the brackets above are indeed elements of $\bl{l-1}$. Hence, for all $m+1 \leq j \leq l$, $X_m \cdot \a{m+1,j,l}{-p} \in T$.

\noi \textbf{(B).} By Lemma \ref{lemma:eype}(II) and equation (\ref{eqn:prop1b}),
\begin{align*}
  X_m \y{m}{p}e_m \o e_l &\in \la{Y^{s_1} \o \y{m-1}{s_{m-1}} \y{m}{s_m} e_m \o e_l \,\big|\, |s_i| \leq |p|\,} \\
&\in \la{\a{mml}{s_m} \left(Y^{s_1} \o \y{m-1}{s_{m-1}} \right) \,\big|\, |s_m| \leq |p|\,} \subseteq T, \quad \text{since $m \leq l$}.
\end{align*}

\noi \textbf{(C).} 
When $p$ is a non-negative integer, using equations (\ref{eqn:m11}), (\ref{eqn:m9}) and (\ref{eqn:prop1b}),
\begin{align*}
X_m & \y{m}{p}X_m \o e_j \o e_l \\
&\p \y{m+1}{p} X_{m+1} \ldots e_j \o e_l - \d \sum_{s=1}^{p-1} X_m \y{m}{s} \y{m+1}{p-s} X_{m+1} \ldots e_j \o e_l \\
&\phantom{\p} \qquad \qquad \qquad \qquad \quad + \d \sum_{s=1}^{p-1} X_m \y{m}{s} e_m \y{m+1}{p-s} X_{m+1} \ldots e_j \o e_l \\
&\p  \y{m+1}{p} X_{m+1} \ldots e_j \o e_l - \d \sum_{s=1}^{p-1} X_m \y{m+1}{p-s} X_{m+1} \ldots e_j \o e_l \left( \y{m}{s} \right) \\
&\phantom{\p} \qquad \qquad \qquad \qquad \quad + \d \sum_{s=1}^{p-1} X_m \y{m}{s} e_m X_{m+1} \ldots e_j \o e_l \left( \y{m}{s-p} \right) \\
&\rel{eqn:xxe} \y{m+1}{p} X_{m+1} \ldots e_j \o e_l - \d \sum_{s=1}^{p-1} X_m \y{m+1}{p-s} X_{m+1} \ldots e_j \o e_l \left( \y{m}{s} \right) \\
&\phantom{\p} \qquad \qquad \qquad \qquad \quad + \d \sum_{s=1}^{p-1} X_m \y{m}{s} e_m \o e_l \left( X_{j-2}^{-1} \o X_{m+1}^{-1} X_m^{-1} \y{m}{s-p} \right).
\end{align*}

\noi The first term in the above equation is $\a{m+1,j,l}{p} \in T$. In the second summation term, we have elements of the form $X_m \y{m+1}{u} X_{m+1} \ldots e_j \o e_l \bl{l-1}$, where $1 \leq u \leq p-1$. By case (A) above and since $|u| \leq |p|$, we know therefore the second term is in $T$. Moreover, by case (B), $X_m \y{m}{s} e_m \o e_l \in T$ for all $1\leq s \leq p-1$, so the third term is also in $T$. \\
Hence, for all $m \leq l-1$, we have $X_m \y{m}{p}X_m \o e_j \o e_l \in T$.

Similarly, using equations (\ref{eqn:m13}),(\ref{eqn:m9}) and (\ref{eqn:prop1b}),
\begin{align*}
&  X_m \y{m}{-p}X_m \o e_j \o e_l \\
\- \y{m+1}{-p} X_{m+1} \ldots e_j \o e_l + \d \sum_{s=0}^{p} X_m \y{m}{s-p} X_{m+1} \ldots e_j \o e_l \left( \y{m}{-s} \right) \\
&\phantom{\p} \qquad \qquad \qquad \qquad \quad + \d \sum_{s=0}^{p} X_m \y{m}{-s} e_m \o e_l \left( X_{j-2}^{-1} \o X_{m+1}^{-1} X_m^{-1} \y{m}{p-s} \right).
\end{align*}

\noi The first term in the above equation is $\a{m+1,j,l}{-p} \in T$. The second summation term involves elements of the form $X_m \y{m+1}{u} X_{m+1} \ldots e_j \o e_l \bl{l-1}$, where $-p \leq u \leq 0$. By case (A) above and since $|u| \leq |p|$, we know therefore the 2nd term is in $T$. Moreover, by case (B), $X_m \y{m}{-s} e_m \o e_l \in T$ for all $0 \leq s \leq p$, so the third term is also in $T$. \\
Hence, for all $m \leq l-1$, $X_m \y{m}{-p}X_m \o e_j \o e_l \in T$. We have now proved that, whether
$p$ is positive or negative, $X_m \a{mjl}{p} \in T$.

\noi \textbf{(D).} We want to prove that $X_m \a{ijl}{p} \in T$, when $i \neq m$, $m+1$, $i \leq j \leq l$ and $p$ is any integer. This is separated into the following two cases.

(D1). If $m \leq i-2 \leq l-2$, then $X_m \in \bl{l-1}$ commutes past $\a{ijl}{p}$. Hence $X_m \a{ijl}{p} = \a{ijl}{p} X_m \in T$.

(D2). On the other hand, if $m \geq i+1$ then again we have the following three cases to consider:

When $m <j \leq l$,
\begin{align*}
  X_m \a{ijl}{p} &\p \y{i}{p} X_i \o X_{m-2} X_m X_{m-1} X_m \left( X_{m+1} \o e_j \o e_l \right) \\
&\rel{eqn:braid1} \y{i}{p} X_i \o X_{m-2} X_{m-1} X_m X_{m-1} \left( X_{m+1} \o e_j \o e_l \right) \\
&\rel{eqn:braid2} \y{i}{p} X_i \o X_{m-2} X_{m-1} X_m \o e_j \o e_l (X_{m-1}).
\end{align*}
This is an element of $T$ as $m < j \leq l$ in this case so $X_{m-1} \in \bl{l-1}$.

When $m = j \leq l$,
\begin{align*}
  X_m \a{ijl}{p} &\p \y{i}{p} X_i \o X_{m-2} X_m X_{m-1} e_m \o e_l \\
&\rel{eqn:xxe}\y{i}{p} X_i \o X_{m-2} e_{m-1} e_m \o e_l = \a{i,j-1,l}{p} \in T.
\end{align*}

When $m>j$,
\begin{align*}
  X_m \a{ijl}{p} &\p \y{i}{p} X_i \o \gamma_{m-2} X_m e_{m-1} e_m e_{m+1} \o e_l, \quad \text{where $\gamma$ could be $X$ or $e$,} \\
&\rel{eqn:xxe} \y{i}{p} X_i \o \gamma_{m-2} X_{m-1}^{-1} e_m e_{m+1} \o e_l \\
&\rel{eqn:ix} \y{i}{p} X_i \o \gamma_{m-2} X_{m-1} e_m e_{m+1} \o e_l - \d \y{i}{p} X_i \o \gamma_{m-2} e_m e_{m+1} \o e_l \\
&\qquad \qquad\qquad\qquad\qquad\qquad\qquad\, + \d \y{i}{p} X_i \o \gamma_{m-2} e_{m-1} e_m e_{m+1} \o e_l \\
&\rel{eqn:braid2} \y{i}{p} X_i \o \gamma_{m-2} X_{m-1} e_m e_{m+1} \o e_l - \d e_m e_{m+1} \o e_l \left( \y{i}{p} X_i \o \gamma_{m-2} \right) \\
&\qquad \qquad\qquad\qquad\qquad\qquad\qquad\, + \d \y{i}{p} X_i \o \gamma_{m-2} e_{m-1} e_m e_{m+1} \o e_l.
\end{align*}
Observe that $\y{i}{p} X_i \o \gamma_{m-2} \in \bl{l-1}$, as $m\leq l$ in this case. 

\noi Furthermore, if $m-2 \geq j$, then $\gamma_{m-2} = e_{m-2}$ and 
\begin{align*}
X_m \a{ijl}{p} \- \y{i}{p} X_i \o e_{m-2} e_{m-1} e_m \o e_l \left( X_{m-2}^{-1} \right) \\ 
&- \d e_m e_{m+1} \o e_l \left( \y{i}{p} X_i \o e_{m-2} \right) + \d \y{i}{p} X_i \o e_{m-2} e_{m-1} e_m \o e_l.
\end{align*}
Otherwise, if $m-1 = j$, then $\gamma_{m-2} = X_{m-2}$ and
\begin{align*}
X_m \a{ijl}{p} \- \y{i}{p} X_i \o X_{m-2} e_{m-1} e_m \o e_l \\ 
&- \d e_m e_{m+1} \o e_l \left( \y{i}{p} X_i \o X_{m-2} \right) + \d \y{i}{p} X_i \o X_{m-2} e_{m-1} e_m \o e_l.
\end{align*}
We have now proved that for all $m \leq l$ and $i \leq j \leq l$ and $p \in \mathbb{Z}$, $X_m \a{ijl}{p} \in T$.
\end{proof}

The following lemma says, for a fixed $l$, the $R$-span of all $\a{ijl}{p} \bl{l-1}$ is a \emph{left ideal} of $\bl{l+1}$, when $p$ lies in a range of $k$ consecutive integers.

\begin{lemma} \label{lemma:angels}
  Fix some $l$. Suppose $0 \leq K < k$ and let \[P = \{-K, -K+1, \o, k-K-1\}.\] The $R$-submodule 
\[
L := \la{\a{ijl}{p} \bl{l-1} \,\big| \, p \in P}
\]
is a left ideal of $\bl{l+1}$.
\end{lemma}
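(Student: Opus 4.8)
The plan is to show that $L$ is closed under left multiplication by each algebra generator of $\bl{l+1}$, namely $Y$, $X_1, \ldots, X_l$, $e_1, \ldots, e_l$; since these generate $\bl{l+1}$ as an $R$-algebra, this suffices. The generating set of $L$ consists of elements $\a{ijl}{p} x$ with $i \leq j \leq l$, $p \in P$, and $x \in \bl{l-1}$, so it is enough to left-multiply a single such element and land back in $L$.

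First I would dispense with the ``easy'' generators. For $Y$: recall $\a{ijl}{p} = \y{i}{p} X_i \cdots X_{j-1} e_j \cdots e_l$ and $\y{i}{p} = X_{i-1}\cdots X_1 Y^p X_1 \cdots X_{i-1}$, so $Y = \y{1}{1}$ commutes with $\y{i}{p}$ by Proposition~\ref{prop:conseq}(c) when $i \neq 1$; when $i = 1$ we get $Y \cdot \y{1}{p} = \y{1}{p+1}$, and I must use that $Y^k = \sum_{i=0}^{k-1} q_i Y^i$ (relation~(\ref{eqn:ycyclo})) to re-express any power outside $P$ as an $R$-combination of powers inside the window $P$ — this is exactly why $P$ is a range of $k$ consecutive integers. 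For $X_m$ and $e_m$ with $m \leq l$: by Lemma~\ref{lemma:leftideal}, $\gamma_m \a{ijl}{p}$ lies in $\la{\a{i'j'l}{p'} \bl{l-1} \mid |p'| \leq |p|, \text{ and } p'p \geq 0 \text{ unless } i' = m}$. Since $p \in P$ and $|p'| \leq |p|$ with $p'p \geq 0$ forces $p' \in P$ as well, those contributions are automatically in $L$; and since $\bl{l-1} \cdot \bl{l-1} \subseteq \bl{l-1}$, right-multiplying by the original $x \in \bl{l-1}$ stays within the spanning set of $L$. The one genuinely new possibility flagged in Lemma~\ref{lemma:leftideal} is the case $X_m \a{mjl}{p}$ producing a term with $i' = m$ and $p'$ of the opposite sign to $p$; but again $|p'| \leq |p|$, and I must argue that when $p$ ranges over the $k$ consecutive integers in $P$, so does $-p'$ after reduction via~(\ref{eqn:ycyclo}) — more precisely, any $\a{mjl}{p'}$ with $|p'| \leq |p| \leq k-1$ can be rewritten, using Lemma~\ref{lemma:eype} and the $k$th order relation on $Y$ applied to the $\y{m}{\bullet}$ factor, as an $R$-combination of $\a{mjl}{p''}$ with $p'' \in P$ times $\bl{l-1}$ residue terms.

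Finally there is the generator $X_l$ itself, which sits at the ``boundary'' of $\bl{l+1}$ and is not handled by the $m \leq l$ cases of Lemma~\ref{lemma:leftideal} in the same way; here I expect to need the relations~(\ref{eqn:untwist}),~(\ref{eqn:xxe}),~(\ref{eqn:eee}) and the quadratic relation~(\ref{eqn:xi2}) to push $X_l$ through the tail $e_j \cdots e_l$ of $\a{ijl}{p}$. Concretely $X_l$ acting on $\cdots e_{l-1} e_l$ either gets absorbed (if it meets $e_l$ directly, giving $\lambda e_l$ via~(\ref{eqn:untwist})) or converts to an $e_{l-1}$-type term via $X_{l-1}X_l e_{l-1} = e_l e_{l-1}$ and its relatives, in both cases reducing $j$ or leaving the $\alpha$-chain shape intact with residue in $\bl{l-1}$.

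The main obstacle, I expect, is bookkeeping the power index $p'$: one must check in every branch that the powers of $\y{i}{\bullet}$ produced either already lie in the window $P$ or can be folded back into it using relation~(\ref{eqn:ycyclo}) — and crucially that this folding only introduces residue terms living in $\bl{l-1}$, not in $\bl{l+1} \setminus \bl{l-1}$, which is precisely what Lemma~\ref{lemma:eype} (parts I–III) guarantees. The sign condition ``$p'p \geq 0$ unless $i' = m$'' in Lemma~\ref{lemma:leftideal} is a red herring for membership in $L$ since we only care about $|p'| \leq |p| \leq k-1 < k$ and the fact that $P$ has full length $k$; the real content is ensuring no power of absolute value $\geq k$ ever survives, which it cannot since every step changes the $Y$-exponent by at most the bounds already controlled in Lemmas~\ref{lemma:eype} and~\ref{lemma:leftideal}.
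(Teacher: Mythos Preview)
Your overall plan---check closure under each generator---matches the paper's, and your treatment of $Y$ is fine. But your handling of the case $\gamma_m = X_m$ acting on $\a{mjl}{p}$ has a real gap.

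When Lemma~\ref{lemma:leftideal} produces a term $\a{m,j',l}{p'}$ with $p'p < 0$, you propose to ``fold back'' using the $k^{\mathrm{th}}$-order relation on $Y$ applied to the $\y{m}{\bullet}$ factor. This does not work: the relation $\sum_j q_j Y^j = 0$ gives no direct polynomial relation on powers of $Y_m' = X_{m-1}\cdots X_1 Y X_1\cdots X_{m-1}$, since $(Y_m')^j \neq X_{m-1}\cdots X_1 Y^j X_1\cdots X_{m-1}$ in general. Your final paragraph compounds this: you claim that $|p'|\leq k-1$ together with ``$P$ has full length $k$'' makes the sign condition a red herring, but $|p'|\leq k-1$ does \emph{not} force $p'\in P$ (take $K=0$, $k=5$, $p=4$, $p'=-3$). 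So you genuinely need to get $\a{m,j',l}{p'}$ into $L$ for $p'$ outside $P$, and you have no mechanism for it.

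The paper's fix is an induction on $m$: assume $L$ is already invariant under $X_{m'}$ and $e_{m'}$ for all $m'<m$. Then $L$ is invariant under $X_{m'}^{-1}$ and hence under $\y{m}{\pm 1} = X_{m-1}\cdots X_1 Y^{\pm 1} X_1\cdots X_{m-1}$, since each factor is a generator already handled. It follows that $\a{m,j',l}{p'} = \y{m}{p'}\,\a{m,j',l}{0} \in L$ for \emph{every} integer $p'$, which absorbs the problematic $i'=m$ terms from Lemma~\ref{lemma:leftideal} with no constraint on $p'$ at all. This inductive step is the missing idea in your argument.

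Two minor points: your separate paragraph on $X_l$ is unnecessary, since Lemma~\ref{lemma:leftideal} already covers $m=l$; and $Y$ does not literally commute with $\a{ijl}{p}$ for $i>1$ via Proposition~\ref{prop:conseq}(c) alone---it commutes with $\y{i}{p}$ by (c), and with the remaining $X$'s and $e$'s by relations~(\ref{eqn:ycommx}),~(\ref{eqn:ycomme}).
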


\begin{proof}
We want to prove that $L$ is invariant under left multiplication by the generators of $\bl{l+1}$, namely $Y, X_1, \o, X_l, e_1, \o, e_l$.

If $i > 1$, $Y$ commutes with $\a{ijl}{p}$. Otherwise, when $i = 1$, $Y \a{ijl}{p} = \a{ijl}{p+1}$, so clearly, by the $k^{\text{th}}$ order relation on $Y$, $L$ is invariant under left multiplication by $Y^{\pm 1}$. We will show by induction on $m \leq l$ that $L$ is invariant under $X_m$ and $e_m$.

Suppose $L$ is invariant under $X_{m'}$ and $e_{m'}$ for $m'<m$. Note that when $m=1$, this assumption is vacuous. 
Then in particular, $L$ is invariant under $X_{m'}^{-1} = X_{m'} - \d + \d e_{m'}$  for all $m'<m$. Moreover, this implies $L$ is invariant under $\y{m}{\pm 1} = (X_{m-1} \o X_1 Y X_1 \o X_{m-1})^{\pm 1}$. \\
Thus for all $p' \in \mathbb{Z}$,
\begin{equation} \label{eqn:angel}
\a{m,j',l}{p'} = \y{m}{p'} \a{m,j',l}{0} \in L.
\end{equation}
For $\gamma_m \in \{X_m,e_m\}$ and $p \in P$, Lemma \ref{lemma:leftideal} implies that
\begin{align*}
  \gamma_m \a{ijl}{p} &\in  \la{\a{i'j'l}{p'} \bl{l-1} \,\big|\, i' \geq \mathrm{min}(m,i), |p'| \leq |p| \text{ and } p'p \geq 0 \text{ unless } i'=m}\\
&\subseteq \la{\a{i'j'l}{p'} \bl{l-1} \,\big|\, |p'| \leq |p| \text{ and } p'p \geq 0}+ \la{\a{mj'l}{p} \bl{l-1}}.
\end{align*}
The first set lies in $L$, as if $|p'| \leq |p|$ and $p'p \geq 0$, then $p \in P$ implies $p' \in P$. By (\ref{eqn:angel}) above, $\la{\a{mj'l}{p} \bl{l-1}} \subseteq L$. Thus $\gamma_m \a{ijl}{p} \in L$, whence $\gamma_m L \subseteq L$ and $L$ is a left ideal of $\bl{l+1}$.
\end{proof}

Now we fix $K := \left\lfloor \frac{k-1}{2} \right\rfloor$. When $k$ is odd, $K = \frac{k-1}{2}$ and when $k$ is even, $K = \frac{k-2}{2}$.
The range $P$ in Lemma \ref{lemma:angels} becomes
\[P = \left\{ -\left\lfloor \frac{k-1}{2} \right\rfloor, \o, k - \left\lfloor \frac{k-1}{2} \right\rfloor - 1\right\} = \left\{ \floor{\frac{k}{2}} - (k-1), \o, \floor{\frac{k}{2}} \right\}.\]
For $k$ odd, \[P = \{ -K, \o, K \}\] and for $k$ even, \[P = \{ -K, \o, K+1 \}.\]

We are now almost ready to prove Theorem \ref{thm:span}. A standard way to show that a set which contains the identity element spans the entire algebra is to show it spans a left ideal of the algebra or, equivalently, show that its span is invariant under left multiplication by the generators of the algebra. With the previous lemma in mind, we observe that `pushing' a generator through each $\alpha$ chain may distort the `ordering' of the $\alpha$ chains (the $i_1 > i_2 > \ldots > i_f$ requirement in the statement of Theorem \ref{thm:span}). Motivated by this, we first prove the following Lemma. 

\begin{lemma} \label{lemma:beer}
  If $i \leq g$ and $p,r \in P$,
\[
\a{i,j,l}{p} \a{g,h,l-2}{r} \in \la{\a{i',j',l}{p'} \a{g',h',l-2}{r'} \bl{l-3} \,|\, i' > i \text{ and } p',r' \in P}.
\]
\end{lemma}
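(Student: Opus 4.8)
The plan is to analyze the product $\a{i,j,l}{p}\a{g,h,l-2}{r}$ by first splitting it according to whether $g\geq j$ (so the two $\alpha$ chains are ``disjointly supported'' on the strands) or $i\leq g<j$ (so their supports overlap), and in each case to push the trailing $e$'s and $X$'s of the first chain through the head of the second. Recall $\a{i,j,l}{p}=\y{i}{p}X_i\o X_{j-1}e_j\o e_l$ and $\a{g,h,l-2}{r}=\y{g}{r}X_g\o X_{h-1}e_g\o e_{l-2}$. The key point driving the whole argument is that the $e_{l-1}e_l$ at the tail of $\a{i,j,l}{p}$ must be ``absorbed'' by the second chain: since $\a{g,h,l-2}{r}$ ends in $e_{l-3}e_{l-2}$, relations (\ref{eqn:xxe}) and (\ref{eqn:eee}) let one slide $e_{l-1}$ and $e_l$ past and ultimately use $e_{l-2}e_{l-1}e_{l-2}=e_{l-2}$ or $e_l e_{l-1}e_l = e_l$ type cancellations to drop the index range back down, at the cost of producing a ``residue'' factor living in $\bl{l-3}$.

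\textbf{Key steps.} First I would handle the easy case $g\geq j$: here all generators $X_g,\ldots,X_{h-1},e_g,\ldots,e_{l-2}$ appearing in $\a{g,h,l-2}{r}$ with index $\geq j$ commute with the initial segment $\y{i}{p}X_i\o X_{j-1}$ of the first chain by (\ref{eqn:braid2}), (\ref{eqn:eecomm}), (\ref{eqn:xecomm}), (\ref{eqn:prop1b}), so after commuting I am left with a product of the tail $e_j\o e_l$ of the first chain against $\y{g}{r}X_g\o X_{h-1}e_g\o e_{l-2}$; then the overlapping $e$'s collapse via (\ref{eqn:eee}) and the second-chain factor $\y{g}{r}$ can be pulled to the far right using (\ref{eqn:m9}) and (\ref{eqn:prop1b}) modulo $\bl{l-2}$ residue, and since $g\geq j>i$ the resulting first $\alpha$ chain has index $i'=i$... \emph{wait} --- the statement requires $i'>i$ strictly, so in fact one must be more careful: the initial $\y{i}{p}X_i\o X_{j-1}$ together with whatever survives reassembles into an $\alpha$ chain whose \emph{starting index strictly exceeds} $i$. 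This is the crux: multiplying by a \emph{new} horizontal strand $e_g\o e_{l-2}$ forces one of the vertical strands of the first chain to become horizontal, which bumps the starting index of the residual first chain up past $i$. Concretely, one uses Lemma~\ref{lemma:leftideal} (the action of $X_m$, $e_m$) applied to $\a{i,j,l}{p}$ for the various $m$ appearing in $\a{g,h,l-2}{r}$, but reading off the crucial refinement that when $m>i$ the output index $i'$ satisfies $i'\geq \mathrm{min}(i,m)=i$ and, because a genuine contraction occurs, in fact $i'>i$.

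\textbf{Second case and the obstacle.} For $i\leq g<j$, I would first commute $\y{g}{r}$ of the second chain leftward past $e_j\o e_l$ (it commutes with $e_m$ for $m\neq g, g-1$, i.e.\ with all of $e_j,\ldots,e_l$ since $g<j$) and past the portion $X_i\o X_{j-1}$ where relevant, turning it into powers of $\y{g}{\pm r}$ that can be reabsorbed; then attack the overlap region where $e_j\o e_l$ meets $X_g\o X_{h-1}$ using (\ref{eqn:xxe}) repeatedly, together with Lemma~\ref{lemma:eype}, to rewrite $e_j\o e_l \cdot X_g\o X_{h-1}$ as a combination of $X$'s and $e$'s with a strictly higher starting index and a $\bl{l-2}$ residue. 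The $e_g\o e_{l-2}$ tail of the second chain then absorbs the $e_{l-1}e_l$ via (\ref{eqn:eee}). The main obstacle I expect is bookkeeping the index $i'>i$ strictly in every branch --- in particular verifying that in the ``$m>j$'' sub-branches of Lemma~\ref{lemma:leftideal}, where the output is $e_m\o e_l(\ldots)$ with the residue carrying the original $\y{i}{p}X_i\o\gamma_{m-2}$, the reassembled $\alpha$ chain really does start at an index larger than $i$ (it starts at index $j'\geq g \geq i+1$ or at $m>i$), rather than at $i$ itself --- and in keeping track that the various $\y{}{\pm s}$ residues, after all the commutations via (\ref{eqn:m9}) and (\ref{eqn:prop1b}), land in $\bl{l-3}$ and have exponents $p',r'\in P$ (which follows from the $|p'|\leq|p|$, $p'p\geq 0$ clauses of Lemma~\ref{lemma:leftideal} exactly as in the proof of Lemma~\ref{lemma:angels}).
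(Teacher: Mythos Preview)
Your case split $g\geq j$ versus $i\leq g<j$ matches the paper, but the mechanism you propose for obtaining the \emph{strict} inequality $i'>i$ does not work, and the missing idea is the same in both cases.

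In the case $g\geq j$ you try to salvage $i'>i$ by invoking Lemma~\ref{lemma:leftideal}, claiming that ``because a genuine contraction occurs, in fact $i'>i$''. But Lemma~\ref{lemma:leftideal} only guarantees $i'\geq\min(i,m)$, and for $m>i$ this is merely $i'\geq i$. Inspecting its proof, case (D2) with $m<j$ yields $X_m\a{ijl}{p}=\a{ijl}{p}X_{m-1}$, which still has starting index $i$; so there is no strict gain available from that lemma. What the paper does instead is \emph{swap the two chains}: by (\ref{eqn:ep2}) one has $e_j\cdots e_l\bigl(\y{g}{r}X_g\cdots e_h\cdots e_{l-2}\bigr)=\y{g+2}{r}X_{g+2}\cdots e_{h+2}\cdots e_l\,e_j\cdots e_{l-2}$, and after (\ref{eqn:eee}) and commuting relations this gives
\[
\a{i,j,l}{p}\a{g,h,l-2}{r}=\a{g+2,h+2,l}{r}\,\a{i,j,l-2}{p}.
\]
The new first chain is the \emph{shifted second chain}, with index $i'=g+2>j\geq i$, while the original first chain becomes the new second chain. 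You never consider this role reversal, and without it there is no route to $i'>i$.

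The same swap idea drives the harder case $g<j$. The paper does not ``commute $\y{g}{r}$ leftward'' as you suggest (it cannot pass $X_g$ in the first chain so simply); rather it applies (\ref{eqn:magic})/(\ref{eqn:m2}) to $X_g\y{g}{r}$ inside $\a{ijl}{p}\a{g,h,l-2}{r}$, producing a main term $\y{g+1}{r}\a{ijl}{p}\a{g,h,l-2}{0}$ plus correction sums. For the main term one then proves, by explicit calculation split into subcases $h\leq j-2$ and $h\geq j-1$ and using (\ref{eqn:XXp1}), (\ref{eqn:xxe}), (\ref{eqn:ep2}), (\ref{eqn:eee}), identities such as
\[
\a{ijl}{0}\a{g,h,l-2}{0}=\a{g+1,h+1,l}{0}\a{i,j-2,l-2}{0}\quad(h\leq j-2),
\]
again swapping so that $i'=g+1>i$. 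Your proposal to ``attack the overlap region using (\ref{eqn:xxe}) repeatedly, together with Lemma~\ref{lemma:eype}'' does not identify these product identities and, as above, cannot by itself produce the strict index increase. The correction terms are handled in the paper using Lemma~\ref{lemma:leftideal} and (\ref{eqn:ep2}) in a way that again routes through an $\a{g'+2,h'+2,l}{p'}$ factor with $g'+2>i$; this is the step you would need to make precise rather than the vague ``keeping track'' you describe.
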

\begin{proof}
  Observe that, by Lemma \ref{lemma:angels}, $L = \la{\a{g'h'l-2}{r'} \bl{l-3} | r' \in P}$ is a left ideal of $\bl{l-1}$ therefore it suffices to prove that, for all $i \leq g$ and $p,r \in P$,
\begin{align*}
\a{ijl}{p} \a{g,h,l-2}{r}  \- \left( \y{i}{p} X_i \o e_j \o e_l \right) \left(\y{g}{r} X_g \o e_h \o e_l \right) \\
&\in \la{\a{i',j',l}{p'} \bl{l-1} \a{g',h',l-2}{r'} \bl{l-3} \,|\, i' > i \text{ and } p',r' \in P}.
\end{align*}

\noi Let us denote $\la{\a{i',j',l}{p'} \bl{l-1} \a{g',h',l-2}{r'} \bl{l-3} \,\big|\, i' > i \text{ and } p',r' \in P}$ by $S$.

\noi If $g \geq j$, then 
\begin{eqnarray*}
  e_j \o e_l \ll{\y{g}{r} X_g \o e_h \o e_{l-2}} &\rel{eqn:ep2}& \y{g+2}{r} X_{g+2} \o e_{h+2} \o e_l e_j \o e_{l-1} e_l \\
&\rel{eqn:eee}& \y{g+2}{r} X_{g+2} \o e_{h+2} \o e_l e_j \o e_{l-2}. \end{eqnarray*}
Thus, using the commuting relations (\ref{eqn:braid2}), (\ref{eqn:xecomm}) and equation (\ref{eqn:prop1b}),
\begin{align*}
  \a{ijl}{p} \a{g,h,l-2}{r} \- \y{g+2}{r} X_{g+2} \o e_{h+2} \o e_l \y{i}{p} X_i \o X_{j-1} e_j \o e_{l-2} \\
\- \a{g+2,h+2,l}{r} \a{i,j,l-2}{p}.
\end{align*}
Note that $g+2 > j \geq i$ in this case.
Hence, when $g \geq j$, we have $\a{ijl}{p} \a{g,h,l-2}{r} \in  S$. 

\noi Now suppose on the contrary $g < j$. 
When $r$ is non-negative, we have the following:
\begin{eqnarray*}
  \a{ijl}{p} \a{g,h,l-2}{r} \= \left( \y{i}{p} X_i \o X_g \o X_{j-1} e_j \o e_l \right) \left(\y{g}{r} X_g \o e_h \o e_l \right) \\
&\rel{eqn:prop1b}& \y{i}{p} X_i \o X_g \y{g}{r} \a{g+1,j,l}{0} \a{g,h,l-2}{0} \\
&\rel{eqn:magic}& \y{i}{p} X_i \o X_{g-1} \ll{\y{g+1}{r} X_g} \a{g+1,j,l}{0} \a{g,h,l-2}{0} \\
&& - \,\, \d \sum_{s=1}^{r} \y{i}{p} X_i \o X_{g-1} \ll{\y{g+1}{s} \y{g}{r-s}} \a{g+1,j,l}{0} \a{g,h,l-2}{0} \\
&& + \,\, \d \sum_{s=1}^{r} \y{i}{p} X_i \o X_{g-1} \ll{\y{g+1}{s} e_g \y{g}{r-s}} \a{g+1,j,l}{0} \a{g,h,l-2}{0} \\
&\rel{eqn:prop1b}& \y{g+1}{r} \a{ijl}{p} \a{g,h,l-2}{0} \\
&& - \,\, \d \sum_{s=1}^{r} \a{g+1,j,l}{s} \y{i}{p} X_i \o X_{g-1} \a{g,h,l-2}{r-s} \\
&& + \,\, \d \sum_{s=1}^{r} \y{i}{p} X_i \o X_{g-1} \y{g+1}{s} e_g \a{g+1,j,l}{0} \a{g,h,l-2}{r-s}.
\end{eqnarray*}

\noi Observe that if $r \in P$ is non-negative, then because $1 \leq s \leq r$, it is clear that $s \in P$ and $r-s \leq r-1 \leq K$, hence $r-s \in P$ and $|r-s| \leq K$.

\noi On the other hand,
\begin{eqnarray*}
  \a{ijl}{p} \a{g,h,l-2}{-r} &\rel{eqn:m2}& \y{i}{p} X_i \o X_{g-1} \ll{\y{g+1}{-r} X_g} \a{g+1,j,l}{0} \a{g,h,l-2}{0} \\
&& + \,\, \d \sum_{s=1}^{r} \y{i}{p} X_i \o X_{g-1} \ll{\y{g+1}{s-r} \y{g}{-s}} \a{g+1,j,l}{0} \a{g,h,l-2}{0} \\
&& - \,\,  \d \sum_{s=1}^{r} \y{i}{p} X_i \o X_{g-1} \ll{\y{g+1}{s-r} e_g \y{g}{-s}} \a{g+1,j,l}{0} \a{g,h,l-2}{0} \\
&\rel{eqn:prop1b}& \y{g+1}{-r} \a{ijl}{p} \a{g,h,l-2}{0} \\
&& + \,\, \d \sum_{s=1}^{r} \a{g+1,j,l}{s-r} \y{i}{p} X_i \o X_{g-1} \a{g,h,l-2}{-s} \\
&& - \,\, \d \sum_{s=1}^{r} \y{i}{p} X_i \o X_{g-1} \y{g+1}{s-r} e_g \a{g+1,j,l}{0} \a{g,h,l-2}{-s}.
\end{eqnarray*}

If $-r \in P$, this means $-r \in \{-K, \o, -1\}$. So $1 \leq s \leq r$ implies $-s \in P$. Moreover, $|\!-\! s\kern1.1pt| \leq K$ and $s-r \in P$. To summarise, whether $r$ is positive \emph{or} negative,
\begin{align}
&  \a{ijl}{p} \a{g,h,l-2}{r} \in \y{g+1}{r} \a{ijl}{p} \a{g,h,l-2}{0} + \la{\a{g+1,j,l}{s} \y{i}{p} X_i \o X_{g-1} \a{g,h,l-2}{r-s} \,\big|\, s \in P, |r-s| \leq K} \nonumber \\
&\qquad \qquad \qquad \qquad \qquad \qquad \kern-.1em + \la{\y{i}{p} X_i \o X_{g-1} \y{g+1}{s} e_g \a{g+1,j,l}{0}\a{g,h,l-2}{r-s} \,\big|\, s \in P, |r-s| \leq K}. \label{eqn:twochains}
\end{align}
We now deal with each term of (\ref{eqn:twochains}) separately. \\
The first term is $\y{g+1}{r} \a{ijl}{p} \a{g,h,l-2}{0} = \y{g+1}{r} \y{i}{p} \a{ijl}{0} \a{g,h,l-2}{0}$.

If $h \leq j-2$ (so $i \leq g\leq h \leq j-2$), then 
\begin{align*}
  \a{ijl}{0} \a{g,h,l-2}{0} &\h \ll{X_i \o X_g \o X_h X_{h+1} \o X_{j-1} e_j \o e_l} \ll{X_g \o X_{h-1} e_h \o e_{l-2}} \\
&\rel{eqn:XXp1} \ll{X_{g+1} \o X_h e_{h+1} \o e_{j-2}} \ll{X_i \o X_{j-2} X_{j-1} e_j \o e_l} \ll{e_{j-2} \o e_{l-2}} \\
&\kern.18em\rel{eqn:xxe} \ll{X_{g+1} \o X_h e_{h+1} \o e_{j-2}} \ll{X_i \o X_{j-3} e_{j-1} e_{j-2} e_j \o e_l} \ll{e_{j-1} \o e_{l-2}} \\
&\h \ll{X_{g+1} \o X_h e_{h+1} \o e_{j-2} e_{j-1} \o e_l} \ll{X_i \o X_{j-3} e_{j-2} \o e_l} \\
&\h \a{g+1,h+1,l}{0} \a{i,j-2,l-2}{0}.
\end{align*}
As $i < g+1$, $\y{i}{p}$ commutes with $\a{g+1,h+1,l}{0}$, by equation (\ref{eqn:prop1b}).
Thus we have shown that $\y{g+1}{r} \a{ijl}{p} \a{g,h,l-2}{0} =  \a{g+1,h+1,l}{r} \a{i,j-2,l-2}{p}$ is an element of $S$, when $h \leq j-2$.

Now suppose $h \geq j-1$. Then, using equation (\ref{eqn:XXp1}) for $h \geq j$, we have the following:
\begin{eqnarray*}
  \a{ijl}{0} \a{g,h,l-2}{0} \= \ll{X_{g+1} \o X_{j-1}} \ll{X_i \o X_g \o X_{j-1}e_j \o e_h \o e_l} \ll{X_{j-1} \o e_h \o e_{l-2}} \\
&\rel{eqn:ix}& \ll{X_{g+1} \o X_{j-1}} \ll{X_i \o X_g \o X_{j-1}^{-1} e_j \o e_h \o e_l} \ll{X_{j-1} \o e_h \o e_{l-2}} \\
&& + \, \d \ll{X_{g+1} \o X_{j-1}} \ll{X_i \o X_g \o X_{j-2} e_j \o e_h \o e_l} \ll{X_{j-1} \o e_h \o e_{l-2}} \\
&& - \, \d \ll{X_{g+1} \o X_{j-1}} \ll{X_i \o X_g \o X_{j-2} e_{j-1} e_j \o e_h \o e_l} \ll{X_{j-1} \o e_h \o e_{l-2}} \\
&\stack{(\ref{eqn:xxe}),(\ref{eqn:ep2})}& \ll{X_{g+1} \o X_j} \ll{X_i \o X_g \o X_{j-2} e_{j-1} e_j \o e_h \o e_l} \ll{X_{j-1} \o e_h \o e_{l-2}} \\
&& + \, \d \ll{X_{g+1} \o X_{j-1} e_j \o e_l} \ll{X_i \o X_{j-2} X_{j-1} \o e_h \o e_{l-2}} \\
&& - \, \d \ll{X_{g+1} \o X_{j-1}} \ll{X_{j+1} \o e_{h+2} \o e_l} \ll{X_i \o X_{j-2} e_{j-1} \o e_h \o e_l} \\
&\stack{(\ref{eqn:ep2}),(\ref{eqn:eee})}& \ll{X_{g+1} \o X_j X_{j+1} \o e_l} \ll{X_i \o X_{j-2} e_{j-1} \o e_h \o e_l} \\
&& + \, \d \ll{X_{g+1} \o X_{j-1} e_j \o e_l} \ll{X_i \o X_{j-2} X_{j-1} \o e_h \o e_{l-2}} \\
&& - \, \d {X_{j+1} \o e_{h+2} \o e_l} \ll{X_{g+1} \o X_{j-1}} \ll{X_i \o X_{j-2} e_{j-1} \o e_h \o e_{l-2}} \\
&\rel{eqn:xxe}& \a{g+1,h+2,l}{0} \a{i,j-1,l-2}{0} + \d \a{g+1,j,l}{0} \a{i,h,l-2}{0} - \d \a{j+1,h+2,l}{0} \a{i,g,l-2}{0}.
\end{eqnarray*}
Therefore, when $h \geq j-1$,
\begin{align*}
\y{g+1}{r} &\a{ijl}{p} \a{g,h,l-2}{0} \\
&= \a{g+1,h+2,l}{r} \a{i,j-1,l-2}{p} + \d \, \a{g+1,j,l}{r} \a{i,h,l-2}{p} - \d \,\y{g+1}{r} \a{j+1,h+2,l}{0} \a{i,g,l-2}{p}.
\end{align*}
However, since $g < j$,  $\y{g+1}{r}$ commutes with $\a{j+1,h+2,l}{0}$. Moreover, as $g \leq l-2$, $\y{g+1}{r} \in \bl{l-1}$. Thus $\y{g+1}{r} \a{ijl}{p} \a{g,h,l-2}{0} \in S$.
So far we have proved that the first term of (\ref{eqn:twochains}) is a member of $S$, for all possibilities $i,j,g,h$ where $i \leq g < j$.

We now need to show $\la{\a{g+1,j,l}{s} \y{i}{p} X_i \o X_{g-1} \a{g,h,l-2}{r-s} \,\big|\, s \in P, |r-s| \leq K} \subseteq S$. But this follows immediately from the definition of $S$, as $r-s \in P$ and $\y{i}{p} X_i \o X_{g-1} \in \bl{l-1}$, since $i \leq g-1 \leq l-3$.

Finally, we now prove $\la{\y{i}{p} X_i \o X_{g-1} \y{g+1}{s} e_g \a{g+1,j,l}{0}\a{g,h,l-2}{r-s} \,\big|\, s \in P, |r-s| \leq K} \subseteq S$.  

\noi Let $\sigma := \y{i}{p} X_i \o X_{g-1} \y{g+1}{s} e_g \a{g+1,j,l}{0}\a{g,h,l-2}{r-s}$, where $s \in P$ and $|r-s| \leq K$.

\noi By equation (\ref{eqn:xxe}), $e_g \a{g+1,j,l}{0} = e_g \o e_l \ll{X_{j-2}^{-1} \o X_{g+1}^{-1} X_g^{-1}}$. Then
\[
\sigma = \y{i}{p} X_i X_{i+1} \o X_{g-1} \y{g+1}{s} \a{ggl}{0} \ll{X_{j-2}^{-1} \o X_{g+1}^{-1} X_g^{-1}} \a{g,h,l-2}{r-s}.
\] 
Lemma \ref{lemma:leftideal} implies that 
\[
X_g^{-1} \a{g,h,l-2}{r'} \in \la{\a{g'h'l-2}{p'} \bl{l-3} \,|\, g' \geq g, |p'| \leq |r'| \text{ and } p'r' \geq 0 \text{ unless } g' = g},
\]
where $r' = r-s$.
Observe that $p'$ and $r'$ could be of different signs, but as $|r-s| \leq K$, we know that $|p'| \leq K$.
This allows us to apply Lemma \ref{lemma:leftideal} repeatedly to get
\[
X_{j-2}^{-1} \o X_{g+1}^{-1} X_g^{-1} \a{g,h,l-2}{r-s} \in \la{\a{g'h'l-2}{p'} \bl{l-3} \,|\, g' \geq g, |p'| \leq K}.
\]
Therefore
\begin{align*}
  \sigma &\stackrel{\phantom{(19)}}{\in} \la{\y{i}{p} X_i X_{i+1} \o X_{g-1} \y{g+1}{s} \a{ggl}{0} \a{g'h'l-2}{p'} \bl{l-3} \,|\, g' \geq g, |p'| \leq K} \\
&\rel{eqn:ep2} \la{\y{i}{p} X_i X_{i+1} \o X_{g-1} \y{g+1}{s} \a{g'+2,h'+2,l}{p'} \a{ggl}{0} \bl{l-3} \,|\, g' \geq g, |p'| \leq K} \\
&\kern.18em\rel{eqn:eee} \la{\y{i}{p} X_i X_{i+1} \o X_{g-1} \y{g+1}{s} \a{g'+2,h'+2,l}{p'} \a{g,g,l-2}{0} \bl{l-3} \,|\, g' \geq g, |p'| \leq K} \\
&\stackrel{\phantom{(19)}}{\in} \la{\a{g'+2,h'+2,l}{p'} \y{i}{p} X_i X_{i+1} \o X_{g-1} \y{g+1}{s} \a{g,g,l-2}{0} \bl{l-3} \,|\, g' \geq g, |p'| \leq K}.
\end{align*}
Now $i \leq g \leq l-2$, so $\y{i}{p} X_i X_{i+1} \o X_{g-1} \y{g+1}{s} \in \bl{l-1}$. 
Note that $g'+2 \geq g+2 > i$ and $p' \in P$. Thus $\sigma \in S$.

Therefore we have now proven that each term arising in (\ref{eqn:twochains}) is in S, for all $i,j,g,h$ where $i \leq g < j$.
This concludes the proof of Lemma \ref{lemma:beer}.
\end{proof}

Henceforth, we implicitly require $p \in P$ whenever we write $\alpha_{ijl}^p$. For all $m \geq 0$ and $l \geq 2m$, let us define the following subsets of $\bnk$. Note that these are \emph{not} $R$-submodules.
\begin{eqnarray*}
  V_{l,m}^g &:=& \left\{ \a{i_1,j_1,l-1}{s_1} \a{i_2,j_2,l-3}{s_2} \o \a{i_m,j_m,l-2m+1}{s_m} \right\} \quad \text{and} \\
  V_{l,m} &:=& \left\{ \a{i_1,j_1,l-1}{s_1} \a{i_2,j_2,l-3}{s_2} \o \a{i_m,j_m,l-2m+1}{s_m} \,\big|\, i_1 > i_2 > \ldots > i_m \right\}.
\end{eqnarray*}
If $m > l/2$, we let $V_{l,m}^g = V_{l,m}$ be the empty set.

\noi 
If $U_1$ and $U_2$ are subsets of $\bnk$, let $U_1 U_2 := \la{ u_1u_2 | u_1 \in U_1, u_2 \in U_2}$; i.e., the $R$-span of the set $\{u_1u_2 | u_1 \in U_1, u_2 \in U_2\}$.

\begin{lemma} \label{lemma:Vgleftideal}
For all $l$ and $m$, $V_{l,m}^g\bl{l-2m}$ is a left ideal of $\bl{l}$.
\end{lemma}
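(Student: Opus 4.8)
The plan is to proceed by induction on $m$, showing that $V_{l,m}^g \bl{l-2m}$ is invariant under left multiplication by each generator $Y, X_1, \ldots, X_{l-1}, e_1, \ldots, e_{l-1}$ of $\bl{l}$. The base case $m=0$ is trivial, since $V_{l,0}^g\bl{l} = \bl{l}$. For the inductive step, write a typical element of $V_{l,m}^g\bl{l-2m}$ as $\a{i_1,j_1,l-1}{s_1} w$, where $w \in V_{l-2,m-1}^g\bl{l-2m}$ and, by the inductive hypothesis applied to the algebra $\bl{l-2}$, the submodule $V_{l-2,m-1}^g\bl{l-2m}$ is a left ideal of $\bl{l-2}$.

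First I would handle left multiplication by $Y$: if $i_1 > 1$ then $Y$ commutes with $\a{i_1,j_1,l-1}{s_1}$ (by (\ref{eqn:prop1b})) and absorbs into $w$ using that $V_{l-2,m-1}^g\bl{l-2m}$ is $Y$-stable (since $Y \in \bl{l-2}$); if $i_1 = 1$ then $Y\a{1,j_1,l-1}{s_1} = \a{1,j_1,l-1}{s_1+1}$, and the $k^{\text{th}}$-order relation (\ref{eqn:ycyclo}) on $Y$ rewrites any power back into the allowed range — here I should invoke Lemma \ref{lemma:angels} (with $l$ replaced by $l-1$) to see that the span over all $p \in P$ of the $\a{i,j,l-1}{p}\bl{l-2}$ is already closed under $Y$, so this stays inside $V_{l,m}^g\bl{l-2m}$. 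For the generators $X_m, e_m$ with $m \geq 2$: the key tool is Lemma \ref{lemma:leftideal}, which says $\gamma_m \a{i_1,j_1,l-1}{s_1}$ lies in the span of $\a{i'j'l-1}{p'}\bl{l-3}$-type terms (with $p' \in P$ once we restrict to $P$, as in Lemma \ref{lemma:angels}). The residual $\bl{l-3}$-factor lies in $\bl{l-2}$ and can be pushed onto $w$ using the inductive hypothesis that $V_{l-2,m-1}^g\bl{l-2m}$ is a left ideal of $\bl{l-2}$, so the product lands back in $V_{l,m}^g\bl{l-2m}$.

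The remaining, and main, case is $\gamma_1 = X_1$ or $e_1$, which does not commute past $\a{i_1,j_1,l-1}{s_1}$ but also cannot be simply absorbed, since $X_1, e_1$ need not lie in $\bl{l-2}$ (they involve strand $1$, and the second chain $w$ may begin at $i_2 = 1$). This is where Lemma \ref{lemma:beer} is essential: after using Lemma \ref{lemma:leftideal} to write $\gamma_1 \a{i_1,j_1,l-1}{s_1}$ as a sum of $\a{i_1',j_1',l-1}{p_1}\bl{l-3}$ terms, I would commute the $\bl{l-3}$ part leftward and then confront a product of the form $\a{i_1',j_1',l-1}{p_1} \a{i_2,j_2,l-3}{s_2}(\cdots)$ where possibly $i_1' \leq i_2$, violating the strict-decrease requirement implicit in $V_{l,m}^g$. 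Lemma \ref{lemma:beer} (applied with $l \rightsquigarrow l-1$) rewrites $\a{i_1',j_1',l-1}{p_1}\a{i_2,j_2,l-3}{s_2}$ as a combination of $\a{i_1'',j_1'',l-1}{p_1'}\a{i_2',j_2',l-3}{p_2'}\bl{l-5}$ with $i_1'' > i_1'$ and all exponents back in $P$; iterating this along the chain restores admissibility and leaves us inside $V_{l,m}^g\bl{l-2m}$. The main obstacle is precisely this bookkeeping — ensuring that the repeated application of Lemma \ref{lemma:beer} terminates (the indices $i'_1$ strictly increase and are bounded by $l-1$, so it does) and that the $\bl{l-2m}$-tail is never disturbed, only the chain factors are rearranged.

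Assembling these cases, $V_{l,m}^g\bl{l-2m}$ is stable under all generators of $\bl{l}$ and hence is a left ideal, completing the induction.
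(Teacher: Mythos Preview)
Your inductive scaffolding is exactly right and matches the paper's: induct on $m$, peel off the outermost $\alpha$-chain, apply Lemma~\ref{lemma:angels} to handle left multiplication by $\bl{l}$, and absorb the residual $\bl{l-2}$-factor into the remaining chains via the inductive hypothesis.

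However, you have misread the definition of $V_{l,m}^g$. The set $V_{l,m}^g$ imposes \emph{no} condition $i_1 > i_2 > \cdots > i_m$; that ordering defines $V_{l,m}$ (without the superscript). So your ``main case'' and the entire appeal to Lemma~\ref{lemma:beer} are unnecessary --- there is no ordering to restore. Relatedly, the case $\gamma_1 = X_1$ or $e_1$ is not special: Lemma~\ref{lemma:angels} (with $l$ replaced by $l-1$) handles every generator of $\bl{l}$ uniformly, and the residual it produces lies in $\bl{l-2}$, which the inductive hypothesis absorbs regardless of where the inner chains start. Your concern that $X_1, e_1$ might fail to lie in $\bl{l-2}$ is beside the point; the argument never needs to push a generator into $\bl{l-2}$ directly, only the $\bl{l-2}$-factor that Lemma~\ref{lemma:angels} outputs.

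With the ordering concern removed, the paper's proof is a single chain of inclusions:
\begin{align*}
\bl{l}\, V_{l,m}^g \bl{l-2m}
  &= \la{\bl{l}\, \a{ij,l-1}{p} V_{l-2,m-1}^g \bl{l-2m}} \\
  &\subseteq \la{\a{i'j',l-1}{p'} \bl{l-2}\, V_{l-2,m-1}^g \bl{l-2m}} \\
  &\subseteq V_{l,m}^g \bl{l-2m},
\end{align*}
the first inclusion by Lemma~\ref{lemma:angels} and the second by induction. Lemma~\ref{lemma:beer} enters only in the subsequent Lemma~\ref{lemma:VgV}, which is precisely the reordering statement $V_{l,m}^g\bl{l-2m} = V_{l,m}\bl{l-2m}$ that you were attempting to fold in here.
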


\begin{proof}
We prove the statement by induction on $m$. When $m=0$, we have $V_{l,m}^g = \{1\}$ so $V_{l,m}^g\bl{l-2m}=\bl{l}$ and the statement then follows trivially.

Suppose that $m\geq1$ and assume the statement is true for $m-1$. Note that $l\geq2m\geq2$. By the definition of $V_{l,m}^g$, we have
\begin{align*}
    \bl{l}V_{l,m}^g\bl{l-2m}\-\langle\bl{l}\alpha_{ij,l-1}^pV_{l-2,m-1}^g\bl{l-2m}\rangle\\
        &\subseteq \langle\alpha_{i,j,l-1}^p\bl{l-2}V_{l-2,m-1}^g\bl{l-2m}\rangle\text{,\quad by Lemma \ref{lemma:angels},}\\
        &\subseteq \langle\alpha_{ij,l-1}^pV_{l-2,m-1}^g\bl{l-2m}\rangle\text{,\quad by induction,}\\
        \-V_{l,m}^g\bl{l-2m},
\end{align*}
as required.
\end{proof}

\begin{lemma}\label{lemma:VgV}
For all $l$ and $m$, we have $V_{l,m}^g\bl{l-2m} = V_{l,m}\bl{l-2m}$.
\end{lemma}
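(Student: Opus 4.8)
The plan is to prove $V_{l,m}^g\bl{l-2m} = V_{l,m}\bl{l-2m}$ by induction on $m$, the point being to reorder an arbitrary chain $\a{i_1,j_1,l-1}{s_1}\a{i_2,j_2,l-3}{s_2}\o\a{i_m,j_m,l-2m+1}{s_m}$ (with no constraint on the $i_t$) into a sum of chains with strictly decreasing first indices, at the cost of multiplying by elements of $\bl{l-2m}$. The containment $V_{l,m}\bl{l-2m} \subseteq V_{l,m}^g\bl{l-2m}$ is immediate since every element of $V_{l,m}$ is by definition an element of $V_{l,m}^g$. So the work is all in the reverse inclusion.

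For the reverse inclusion, the base cases $m=0,1$ are trivial since then $V_{l,m}^g = V_{l,m}$. For $m \geq 2$, take a generating element $\a{i_1,j_1,l-1}{s_1}\,w$ of $V_{l,m}^g\bl{l-2m}$, where $w \in V_{l-2,m-1}^g\bl{l-2m}$; by induction $w \in V_{l-2,m-1}\bl{l-2m}$, so we may assume $w = \a{i_2,j_2,l-3}{s_2}\o\a{i_m,j_m,l-2m+1}{s_m}\,b$ with $i_2 > i_3 > \ldots > i_m$ and $b \in \bl{l-2m}$. If $i_1 > i_2$ we are already done. Otherwise $i_1 \leq i_2$, and I would apply Lemma \ref{lemma:beer} to the first two factors: since $i_1 \leq i_2$ (playing the role of $i \leq g$) and $s_1, s_2 \in P$,
\[
\a{i_1,j_1,l-1}{s_1}\a{i_2,j_2,l-3}{s_2} \in \la{\a{i',j',l-1}{p'}\a{i'',j'',l-3}{r'}\bl{l-4} \,\big|\, i' > i_1,\ p',r' \in P}.
\]
Substituting this back in, each resulting term has the form $\a{i',j',l-1}{p'}\bigl(\a{i'',j'',l-3}{r'}\,c\,\a{i_3,j_3,l-5}{s_3}\o\a{i_m,j_m,l-2m+1}{s_m}\,b\bigr)$ where $c \in \bl{l-4}$. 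The bracketed part is, after absorbing $c$ appropriately, an element of $V_{l-2,m-1}^g\bl{l-2m}$ (here one uses that $\bl{l-4}$ normalizes through: $c$ times a chain starting at level $l-3$ stays inside $V_{l-2,m-1}^g\bl{l-2m}$ — this needs the fact that left multiplication of a chain by a generator of the appropriate smaller algebra, via Lemma \ref{lemma:angels} or \ref{lemma:leftideal}, stays in such a span, applied to each generator occurring in $c$). Thus by induction the bracketed part lies in $V_{l-2,m-1}\bl{l-2m}$, i.e. equals a sum of decreasing chains $\a{i_2',j_2',l-3}{t_2}\o\a{i_m',j_m',l-2m+1}{t_m}\,b'$, and the whole term is $\a{i',j',l-1}{p'}$ times such a chain with $i' > i_1$.

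The remaining issue is that $i' > i_1$ does not yet guarantee $i' > i_2'$, so the chain need not be fully decreasing — we have only pushed the "defect" one step to the right and strictly increased the first index. I would therefore run a secondary induction, or rather observe that the first index $i'$ is bounded above (it is at most $l-2$, since $\a{i,j,l-1}{s}$ requires $i < j \leq l-1$), so iterating the above reordering procedure on the remaining $(m-1)$-fold tail — each iteration either terminates with a decreasing chain or strictly increases the first index of the tail — must terminate after finitely many steps. Formally this is cleanest as an induction on $i_1$ downward from $l-2$, combined with the induction on $m$: fixing $m$, assume the result for all chains of length $m$ whose first index exceeds $i_1$; then the argument above expresses our chain in terms of chains with strictly larger first index (handled by this inner induction) plus, when already decreasing, nothing to do.

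The main obstacle I expect is the bookkeeping in the inductive step — specifically verifying that after applying Lemma \ref{lemma:beer} the "residue" factors $c \in \bl{l-4}$ can be absorbed so that the tail genuinely lands back in $V_{l-2,m-1}^g\bl{l-2m}$ (and not merely in $\bl{l}$), so that the inductive hypothesis applies. This requires that $\bl{l-4}$ acting on the left of a $V_{l-2,m-1}^g$-element keeps it inside $V_{l-2,m-1}^g\bl{l-2m}$, which is exactly the content of Lemma \ref{lemma:Vgleftideal} applied at level $l-2$ (note $\bl{l-4} \subseteq \bl{l-2}$). Once that is invoked, the rest is the termination argument for the reordering, which is routine given the upper bound on the indices.
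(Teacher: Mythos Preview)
Your proposal is correct and follows essentially the same approach as the paper: induction on $m$ combined with a descending induction on the first index $i_1$, using Lemma~\ref{lemma:beer} to swap the first two $\alpha$-factors when $i_1 \leq i_2$ and Lemma~\ref{lemma:Vgleftideal} to absorb the resulting $\bl{l-4}$ residue back into $V_{l-2,m-1}^g\bl{l-2m}$. One small correction: the bound on the first index is $i \leq l-1$ (since $\alpha_{ijl-1}^s$ requires $i \leq j \leq l-1$, with $i=j$ allowed), not $i \leq l-2$, but this does not affect the termination argument.
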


\begin{proof}
By definition, $V_{l,m} \subseteq V_{l,m}^g$ hence $V_{l,m}^g\bl{l-2m}\supseteq V_{l,m}\bl{l-2m}$. It now remains to prove the reverse inclusion. We again proceed by induction on $m$. In the case $m=0$, the statement merely says $\bl{l} = \bl{l}$. And when $m=1$, the statement is clearly satisfied as $V_{l,1}^g = V_{l,1}$.

Suppose $m\geq2$ and that the statement is true for $m'<m$. By definition of $V_{l,m}^g$, we have
\[
V_{l,m}^g\bl{l-2m}=\langle\alpha_{ij,l-1}^pV_{l-2,m-1}^g\bl{l-2m}\rangle.
\]
It therefore suffices to show
\begin{equation}\label{eqn:order}
    \alpha_{i,j,l-1}^pV_{l-2,m-1}^g\bl{l-2m}\subseteq V_{l,m}\bl{l-2m},
\end{equation}
for $1\leq i<l$. We will prove (\ref{eqn:order}) by descending induction on $i$. Suppose (\ref{eqn:order}) holds for all $i'$ such that $i<i'<l$. Observe that when $i=l-1$, the inductive hypothesis is vacuous. By induction on $m$, $V_{l-2,m-1}^g\bl{l-2m}=V_{l-2,m-1}\bl{l-2m}$, thus the LHS of (\ref{eqn:order}) is spanned by the set of elements of the form
\[
\alpha_{i,j,l-1}^p\alpha_{i_2j_2l-3}^{s_2}\ldots\alpha_{i_mj_ml-2m+1}^{s_m}\bl{l-2m},
\]
where $i_2>i_3>\ldots>i_m$. If $i>i_2$, then we already have $i>i_2>\ldots>i_m$, so this is a subset of $V_{l,m}\bl{l-2m}$ by definition. On the 
other hand, if $i\leq i_2$ then
\begin{align*}
    \alpha_{i,j,l-1}^p\alpha_{i_2j_2l-3}^{s_2}\ldots\alpha_{i_mj_ml-2m+1}^{s_m} \bl{l-2m} \hspace{-40mm}\\
        &\subseteq \alpha_{i,j,l-1}^p\alpha_{i_2j_2l-3}^{s_2}V_{l-4,m-2}^g\bl{l-2m}\\
        &\subseteq \langle\alpha_{i'j'l-1}^{p'}\alpha_{g'h'l-3}^{r'}\bl{l-4}V_{l-4,m-2}^g\bl{l-2m}\mid i'>i\rangle\text{, \quad by Lemma \ref{lemma:beer}},\\
        &\subseteq \langle\alpha_{i'j'l-1}^{p'}\alpha_{g'h'l-3}^{r'}V_{l-4,m-2}^g\bl{l-2m}\mid i'>i\rangle\text{, \quad by Lemma \ref{lemma:Vgleftideal}}, \\
        &\subseteq \langle\alpha_{i'j'l-1}^{p'}V_{l-2,m-1}^g\bl{l-2m}\mid i'>i\rangle\\
        &\subseteq V_{l,m}\bl{l-2m}\text{, \quad by induction on }i.
\end{align*}
Thus (1) holds. Hence $V_{l,m}^g\bl{l-2m} = V_{l,m}\bl{l-2m}$.
\end{proof}

\noi Recall
\[
\blk/\blk e_{l-1}\blk\cong \ak{l}
\]
and $\pi_l:\blk \twoheadrightarrow \ak{l}$ is the corresponding projection. Recall $\overline{\X}_{l,k}$ was an arbitrary subset of $\blk$ mapping onto a basis $\X_{l,k}$ of $\ak{l}$ and $|\overline{\X}_{l,k}| = |\X_{l,k}|$. We can define an $R$-module homomorphism $\phi_l:\ak{l} \rightarrow \blk$ by sending each element of $\X_{l,k}$ to the corresponding element of $\overline{\X}_{l,k}$.
Thus $\pi_l\phi_l={\rm id}_{\ak{l}}$.
Note that when $l=0$ or $1$, we have an isomorphism $\pi_l:\blk\rightarrow \ak{l}$, with inverse $\phi_l$. And, for $l \geq 2$,
\[
    \blk=\phi_l(\ak{l})+\blk e_{l-1}\blk.
\]
Thus
\begin{equation}\label{Bsplit}
    \bl{l}=\At{l}+\bl{l}e_{l-1}\bl{l}, \quad\text{ for }l\geq2,
\end{equation}
where $\At{l}$ is the image of $\phi_l(\ak{l})$ in $\bnk$.\label{defn:phiak} 
Also, \begin{equation}\label{Bsplit01}
    \At{0}=\bl{0}\qquad \text{and}\qquad \At{1}=\bl{1}.
\end{equation}

\begin{lemma}\label{Iprop}
Let $I_{l,m}=V_{l,m}\bl{l-2m}V_{l,m}^*$.
\begin{enumerate}
\item[(a)] $I_{l,m}$ is a two-sided ideal of $\bl{l}$.
\item[(b)] For $l\geq2m+2$, we have
\[
    V_{l,m}\bl{l-2m}e_{l-2m-1}\bl{l-2m}V_{l,m}^*\subseteq I_{l,m+1}.
\]
\item[(c)] For any fixed $M$, $I_{l,M} = \sum_{m \geq M} V_{l,m} \At{l-2m} V_{l,m}^*$ and is spanned by elements of the form
\[
    \alpha_{i_1j_1l-1}^{s_1}\ldots\alpha_{i_mj_ml-2m+1}^{s_m} \chi(\alpha_{g_mh_ml-2m+1}^{t_m})^*\ldots(\alpha_{g_1h_1l-1}^{t_1})^*,
\]
for $m\geq M$, $i_1>i_2>\ldots>i_m$, $g_m<g_{m-1}<\ldots<g_1$, and $\chi$ is an element of $\widetilde{\X}_{l-2m,k}$.
\end{enumerate}
\end{lemma}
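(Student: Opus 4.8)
The plan is to treat the three parts in turn, using as black boxes the reordering Lemmas~\ref{lemma:VgV}, \ref{lemma:beer} and~\ref{lemma:Vgleftideal}, the single‑chain left‑ideal Lemma~\ref{lemma:angels}, the splitting (\ref{Bsplit})--(\ref{Bsplit01}) of $\bl{l}$, the invertibility of $A_0$, and the fact that each $\bl{j}$ is stable under the anti‑involution (it is generated by $Y,X_1,\o,X_{j-1},e_1,\o,e_{j-1}$, all of which the anti‑involution fixes, so a product of generators is sent to another such product). Throughout put $n':=l-2m$, so that the hypothesis of part~(b) reads $n'\geq2$; when $2m>l$ all the modules in sight are $0$ and the claims are vacuous. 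For part~(a), I would first observe that $I_{l,m}^*=I_{l,m}$, since $\bl{n'}$ is stable under the anti‑involution and $(V_{l,m})^*=V_{l,m}^*$, $(V_{l,m}^*)^*=V_{l,m}$; hence it suffices to prove $I_{l,m}$ is a left ideal, for applying the anti‑involution then gives the right‑ideal property. But $\bl{l}\,V_{l,m}\bl{n'}=\bl{l}\,V_{l,m}^{g}\bl{n'}\subseteq V_{l,m}^{g}\bl{n'}=V_{l,m}\bl{n'}$ by Lemmas~\ref{lemma:VgV} and~\ref{lemma:Vgleftideal}, and right‑multiplying by $V_{l,m}^*$ yields $\bl{l}I_{l,m}\subseteq I_{l,m}$.

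For part~(b) the crucial step is the identity
\[
\bl{n'}\,e_{n'-1}\,\bl{n'}\;=\;\la{\,\a{i,j,n'-1}{p}\,\bl{n'-2}\,(\a{g,h,n'-1}{t})^{*}\;\big|\;p,t\in P\,}\;=:\;W.
\]
The inclusion ``$W\subseteq\bl{n'}e_{n'-1}\bl{n'}$'' holds since each $\a{i,j,n'-1}{p}$ carries $e_{n'-1}$ as its rightmost factor. For the reverse inclusion I would verify that $W$ is a two‑sided ideal of $\bl{n'}$ containing $e_{n'-1}$: one has $W^*=W$ by inspection; $W$ is a left ideal because Lemma~\ref{lemma:angels}, applied with its parameter ``$l$'' taken to be $n'-1$, gives $\bl{n'}\,\a{i,j,n'-1}{p}\bl{n'-2}\subseteq\la{\a{i',j',n'-1}{p'}\bl{n'-2}\,\big|\,p'\in P}$, so right‑multiplying by $(\a{g,h,n'-1}{t})^{*}$ stays inside $W$; and $e_{n'-1}=A_0^{-1}e_{n'-1}^{2}\in W$. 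Granting this identity, I would then conjugate: since $V_{l,m}\,\a{i,j,n'-1}{p}\subseteq V_{l,m}^{g}\,\a{i,j,n'-1}{p}\subseteq V_{l,m+1}^{g}$, Lemma~\ref{lemma:VgV} at index $m+1$ (note $l-2(m+1)=n'-2$) gives $V_{l,m}\,\a{i,j,n'-1}{p}\bl{n'-2}\subseteq V_{l,m+1}^{g}\bl{n'-2}=V_{l,m+1}\bl{n'-2}$; applying the anti‑involution to the same computation absorbs the right‑hand reversed chain into $\bl{n'-2}V_{l,m+1}^*$, and hence $V_{l,m}\bl{n'}e_{n'-1}\bl{n'}V_{l,m}^*\subseteq V_{l,m+1}\bl{n'-2}V_{l,m+1}^*=I_{l,m+1}$.

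For part~(c), I would feed the decomposition $\bl{l-2M}=\At{l-2M}+\bl{l-2M}e_{l-2M-1}\bl{l-2M}$ of~(\ref{Bsplit}) (valid when $l-2M\geq2$; and $\bl{l-2M}=\At{l-2M}$ when $l-2M\leq1$ by~(\ref{Bsplit01})) into $I_{l,M}=V_{l,M}\bl{l-2M}V_{l,M}^*$, which gives $I_{l,M}\subseteq V_{l,M}\At{l-2M}V_{l,M}^*+I_{l,M+1}$ by part~(b); iterating (the recursion terminating once $l-2m\leq1$) yields $I_{l,M}\subseteq\sum_{m\geq M}V_{l,m}\At{l-2m}V_{l,m}^*$. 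For the reverse containment, splitting off the last chain shows $V_{l,m+1}\subseteq V_{l,m}\bl{l-2m}$ (all the generators occurring in it have index $\leq l-2m-1$), so $I_{l,m+1}\subseteq I_{l,m}$ and therefore $V_{l,m}\At{l-2m}V_{l,m}^*\subseteq I_{l,m}\subseteq I_{l,M}$ for every $m\geq M$. The asserted spanning form is then just a readout of a typical product $v\,\chi\,w$ with $v\in V_{l,m}$, $\chi\in\widetilde{\X}_{l-2m,k}$ (a spanning set of $\At{l-2m}$) and $w\in V_{l,m}^*$, remembering that the anti‑involution reverses the order of the chains, so the $g$‑indices increase and the $i$‑indices decrease from left to right.

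The step I expect to be the main obstacle is the identity $\bl{n'}e_{n'-1}\bl{n'}=W$ of part~(b): it requires checking carefully that $W$ is closed under left multiplication by \emph{every} generator of $\bl{n'}$ (this is precisely where Lemma~\ref{lemma:angels} is used), and then, when invoking the reordering Lemmas~\ref{lemma:VgV} and~\ref{lemma:beer}, keeping the bookkeeping of exponents and strand indices tight enough that the output genuinely lands in $V_{l,m+1}\bl{l-2(m+1)}V_{l,m+1}^*$ with all exponents still lying in~$P$. The small case $n'=2$ (where $\bl{n'-2}=\bl{0}$ does not commute with $e_{n'-1}=e_1$) warrants a second glance, but causes no real difficulty, because Lemma~\ref{lemma:angels} is stated with no such restriction.
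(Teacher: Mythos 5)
Your proposal is correct. Parts (a) and (c) coincide with the paper's own argument: (a) via $I_{l,m}=V_{l,m}^g\bl{l-2m}V_{l,m}^*$, Lemma \ref{lemma:Vgleftideal} and $*$-invariance; (c) via downward iteration of the splitting (\ref{Bsplit}) with base case (\ref{Bsplit01}), your explicit justification of $I_{l,m+1}\subseteq I_{l,m}$ being a slightly more detailed version of the paper's one-line remark. Part (b), however, takes a genuinely different route. The paper exploits the identity $V_{l,m}\bl{l-2m}=V_{l,m}\bl{l-2m}e_{l-1}e_{l-3}\cdots e_{l-2m+1}$ (pulling a copy of the terminal $e$ out of each $\alpha$-chain via $\a{ijl'}{p}=A_0^{-1}\a{ijl'}{p}e_{l'}$), so that inserting $e_{l-2m-1}$ creates the chain $e_{l-1}e_{l-3}\cdots e_{l-2m-1}=A_0^{-m-1}(e_{l-1}\cdots e_{l-2m-1})(e_{l-1}\cdots e_{l-2m-1})^*\in V_{l,m+1}V_{l,m+1}^*\subseteq I_{l,m+1}$, and then finishes by invoking part (a) at level $m+1$; neither Lemma \ref{lemma:angels} nor Lemma \ref{lemma:VgV} is needed there. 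You instead identify the two-sided ideal $\bl{l-2m}e_{l-2m-1}\bl{l-2m}$ with $W=\la{\a{i,j,l-2m-1}{p}\,\bl{l-2m-2}\,(\a{g,h,l-2m-1}{t})^*\mid p,t\in P}$ — which is valid, since $W$ is $*$-stable, is a left ideal by Lemma \ref{lemma:angels} applied at level $l-2m-1$, and contains $e_{l-2m-1}=A_0^{-1}e_{l-2m-1}^2$ — and then absorb the flanking chains into $V_{l,m+1}$ and $V_{l,m+1}^*$ via Lemma \ref{lemma:VgV} at index $m+1$. Both arguments are sound; the paper's is shorter because the idempotent trick sidesteps any explicit description of $\bl{l-2m}e_{l-2m-1}\bl{l-2m}$, while yours yields as a by-product a presentation of that ideal as a span of two-sided $\alpha$-chain products with exponents in $P$.
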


\begin{proof}
\begin{enumerate}
\item[(a)]
    By Lemma \ref{lemma:VgV}, we have $I_{l,m}=V_{l,m}^g\bl{l-2m}V_{l,m}^*$. Therefore $I_{l,m}$ is a left ideal of $\bl{l}$ by Lemma \ref{lemma:Vgleftideal}. Because $^*$ preserves the subalgebra $\bl{l-2m}$, thus $I_{l,m}^*=I_{l,m}$ so $I_{l,m}$ is also a right ideal.
\item[(b)]
Suppose $l\geq2m+2$. We have
\[
e_{l-1}e_{l-3}\ldots e_{l-2m-1} = \alpha_{l-1,l-1,l-1}^0\alpha_{l-3,l-3,l-3}^0\ldots\alpha_{l-2m-1,l-2m-1,l-2m-1}^0\in V_{l,m+1}.
\]
Thus
\begin{eqnarray*}
    e_{l-1}e_{l-3}\ldots e_{l-2m-1} &\rel{eqn:eye}& A_0^{-m-1}(e_{l-1}e_{l-3}\ldots e_{l-2m-1})(e_{l-1}e_{l-3}\ldots e_{l-2m-1})^*\\
    &\in& V_{l,m+1}V_{l,m+1}^*\subseteq I_{l,m+1}.
\end{eqnarray*}
    Since $\alpha_{ijl}^p=A_0^{-1}\alpha_{ijl}^pe_l$, by relation (\ref{eqn:eye}), and $e_j$ commutes with $\bl{j-1}$, by equations (\ref{eqn:xecomm}), (\ref{eqn:eecomm}) and (\ref{eqn:prop1b}), $V_{l,m} \bl{l-2m} = V_{l,m}\bl{l-2m} e_{l-1}e_{l-3}\ldots e_{l-2m+1}$. This implies
    \begin{align*}
        V_{l,m}\bl{l-2m}e_{l-2m-1}\bl{l-2m}V_{l,m}^* &= V_{l,m}\bl{l-2m}e_{l-1}e_{l-3}\ldots e_{l-2m+1}e_{l-2m-1}\bl{l-2m}V_{l,m}^*\\
            &\subseteq V_{l,m}\bl{l-2m}I_{l,m+1}\bl{l-2m}V_{l,m}^*\\
            &\subseteq I_{l,m+1},
    \end{align*}
    using that $I_{l,m+1}$ is a two sided ideal in $\bl{l}$ as shown in part (a) above.
\item[(c)]
    If $m \geq M$, then $l-2m \leq l-2M$ so the given elements are clearly contained in $I_{l,M}$. For a fixed $m$, they span the set
    \[
        V_{l,m}\At{l-2m}V_{l,m}^*.
    \]
    It therefore suffices to prove that
    \begin{equation}\label{spanind}
        I_{l,M}\subseteq\sum_{m\geq M}V_{l,m}\At{l-2m}V_{l,m}^*.
    \end{equation}
    We prove this statement by induction on $l-2M$. If $l-2M<2$ then  
\[
I_{l,M}=V_{l,M}\bl{l-2M}V_{l,M}^*=V_{l,M}\At{l-2M}V_{l,M}^* \text{,\quad by }(\ref{Bsplit01}).
\]
Now suppose $l-2M\geq2$ and assume $I_{l,M+1}\subseteq\sum_{m\geq M+1}V_{l,m}\At{l-2m}V_{l,m}^*$. Then using (\ref{Bsplit}) and part (b) of this Lemma we have that
\begin{eqnarray*}
I_{l,M}
\= V_{l,M}\bl{l-2M}V_{l,M}^* \\
&\rel{Bsplit}& V_{l,M}\At{l-2M}V_{l,M}^*+V_{l,M}\bl{l-2M}e_{l-2M-1}\bl{l-2M}V_{l,M}^*\\
&\subseteq& V_{l,M}\At{l-2M}V_{l,M}^*+I_{l,M+1}\\
&\sstack{\text{ind. hypo.}}&V_{l,M}\At{l-2M}V_{l,M}^*+\sum_{m\geq M+1}V_{l,m}\At{l-2m}V_{l,m}^*\\
&=&\sum_{m\geq M}V_{l,m}\At{l-2m}V_{l,m}^*,
\end{eqnarray*}
proving part (c).
\end{enumerate}
\end{proof}
In particular, $I_{n,0}=B_n^k$ by definition, so when $l = n$ and $m=0$, statement (c) of the previous Lemma implies that $\bnk$ is spanned by $\{u\chi^{(n-2m)}v^*|u,v\in V_{n,m}, \chi^{(n-2m)}\in\widetilde{\X}_{n-2m,k}\}$, hence proving Theorem \ref{thm:span}.

%%%%%%%%%%%%%%%%%%%%%%%%%%%%%%%%%%%%%%%%%%%%%%%%%%%%%%%%%%

\chapter{The Admissibility Conditions} \label{chap:admissible}

In the previous chapter, we obtained a spanning set of $\bnk$ over an arbitrary ring $R$ and hence we can conclude that the rank of $\bnk$ is \emph{at most} $k^n (2n-1)!!$. Before we can prove the linear independence of our spanning set, we must first focus our attention on the representation theory of the algebra $\bt(R)$. It is here that the notion of admissibility, as first introduced by H\"aring-Oldenburg \cite{HO01}, arises. Essentially, it is a set of conditions on the parameters $A_0,\o, A_{k-1}, q_0, \o, q_{k-1}, q, \l$ in our ground ring $R$ which ensure the algebra $\bt(R)$ is $R$-free of the expected rank, namely $3k^2$.
 It turns out that, if $R$ is a ring with admissible parameters $A_0, \o, A_{k-1},q_0, \o, q_{k-1}, q, \l$ (see Definition \ref{defn:adm}) then our spanning set for \emph{general} $n$ is actually a basis. 

In Section \ref{section:V}, we establish these ``admissibility'' conditions (see Definition \ref{defn:adm}) and construct a $\bt$-module $V$ of rank $k$ (see Lemma \ref{lem:V}). These results are contained in \cite{WY06}, in which Wilcox and the author are able to use $V$ to then construct the regular representation of $\bt$ and provide an explicit basis of the algebra under a slightly stronger notion of admissibility.
These results are stated but incompletely proved in H\"aring-Oldenburg~\cite{HO01}; specifically, additional arguments are needed to prove Lemma~25 of \cite{HO01}. We take a slightly different approach and the arguments we offer in \cite{WY06} correct this problem. 
Goodman and Hauschild Mosley also study the representations of the $n=2$ algebra in detail in \cite{GH207}. However, they construct the module $V$ assuming $R$ is a field in which $\delta\neq0$ and the roots of the
$k^\mathrm{th}$ order relation and their inverses are all distinct. 
 It is worth noting here as an aside that, strongly influenced by the methods and proofs for the cyclotomic Nazarov-Wenzl algebra in Ariki et al. \!\!\cite{AMR06}, they find that the existence of this (irreducible) $k$-dimensional $\bt$-module leads them to formulate a second (different) type of admissibility condition. For more details, we refer the reader to Goodman and Hauschild Mosley \cite{GH207} and Ariki et al. \!\!\cite{AMR06}.

It is non-trivial to show that there are any non-trivial admissible rings; in other words, that the conditions we impose are consistent with each other. In Section \ref{section:V}, we also construct a  generic (``universal'') ground ring $R_0$ with admissible parameters which we will show satisfies the requirements of an admissible ring as defined in Goodman and Hauschild Mosley \cite{GH107,GH207} and allows us to follow a path similar to their trace arguments to establish linear independence of our spanning set in Chapter \ref{chap:basis}.
It is important to clarify the different notions of admissibility used in the literature. We discuss in Section \ref{section:comparison} the relationship between our admissibility conditions and those used by H\"aring-Oldenburg and Goodman and Hauschild Mosley.

For this chapter, we simplify our notation by omitting the index $1$ of $X_1$ and $e_1$. Specifically, $\bt(R)$ is the unital associative $R$-algebra generated by $Y^{\pm 1}, X^{\pm 1}$ and~$e$ subject to the following relations: 
\begin{eqnarray}
Y^k \= \sum_{i=0}^{k-1} q_i Y^i \\
X - X^{-1} &=& \d(1-e) \label{eqn:xinv} \\
XYXY &=& YXYX \label{eqn:xyxy} \\
Xe &=& \l e \,\,\,=\,\,\, eX \label{eqn:xele}\\
YXYe &=& \lambda^{-1}e \,\,\,=\,\,\, eYXY \label{eqn:eyxy2} \\
eY^me &=& A_m e, \,\,\,\,\,\, \qquad \text{for }\, 0 \leq m \leq 
k-1. \label{eqn:eye2}
\end{eqnarray}
Recall, in Lemma \ref{lemma:eype}, we showed
\[
X Y^p e = \l Y^{-p} e - \d \pum{s} Y^{p-2s} e + \d \pum{s} A_{p-s} Y^{-s} e, \quad \text{for all } p \geq 0.
\]
Using this and the $k^\mathrm{th}$ order relation on $Y$, it is straightforward to show the left ideal of $\bt$ generated by $e$ is the span of $\{Y^i e \mid 0 \leq i \leq k-1\}$. 
 As a consequence of the results in Goodman and Hauschild \cite{GH06}, the set $\{Y^i e \mid i \in \mathbb{Z}\}$ is linearly independent in the affine BMW algebra and so it seems natural to expect that the set $\{Y^i e \mid 0 \leq i \leq k-1\}$ be linearly independent in the cyclotomic BMW algebra. For this purpose, we need to impose additional restrictions on our parameters $A_0, \o, A_{k-1},q_0, \o, q_{k-1}, q, \l$. 

%%%%%%%%%%%%%%%%%%%%%%%%%%%%%%%%%%%%%%%%%%%%%%%%%%%%%%%%%%%%

\section{Construction of a Generic Ground Ring with Admissible Parameters} \label{section:V}

Our aim now is to construct a $\bt$-module $V$ of rank $k$ which is isomorphic, as $\bt$-modules, to precisely the $R$-span of $\{Y^i e \mid 0 \leq i \leq k-1\}$, hence proving the $\{Y^i e \mid 0 \leq i \leq k-1\}$ are indeed linearly independent. 

Let $V$ be the free $R$-module of rank $k$ with basis $v_0, v_1, \ldots, v_{k-1}$. \\
Define a linear map $\b{Y}: V \rightarrow V$ by
\begin{eqnarray}
\b{Y}v_i &=& v_{i+1} \text{,\quad for } 0 \leq i \leq k-2, 
\label{eqn:Yv}\\
\b{Y}v_{k-1} &=& \sum_{i=0}^{k-1} q_i v_i.
\end{eqnarray}

\noi Since $q_0$ is invertible, this guarantees that $\b{Y}$ is invertible, with inverse
\begin{eqnarray}
\b{Y}^{-1} v_i &=& v_{i-1}\text{,\quad for } 1 \leq i \leq k-1, \label{eqn:iyl}\\
\b{Y}^{-1} v_0 &=& -q_0^{-1} \kum{i} q_{i+1} v_i. \label{eqn:iy0}
\end{eqnarray}
Also $\b{Y}^iv_0=v_i$ for $i = 0, \ldots, k-1$. Define $v_s=\b{Y}^sv_0$ for all integers $s$. Note that in particular, this means 
\begin{equation} \label{eqn:iys} 
 \b{Y}^{-1} v_s = v_{s-1} \text{,\quad for \emph{all} } s \in \mathbb{Z}.  
\end{equation}
The definition of $\b{Y}v_{k-1}$ gives
\[
  \sum_{l=0}^k q_l\b{Y}^lv_0=0.
\]
For any integer $i$, applying $\b{Y}^i$ to this gives
\begin{equation}\label{kthV}
  \sum_{l=0}^k q_l\b{Y}^l v_i = 0.
\end{equation}
Also $\b{Y}^l$ is invertible for any integer $l$ and, as $\{v_0, v_1, \ldots, v_{k-1}\}$ is a basis for $V$, the set
\begin{equation}\label{eqn:vbasis}
  \{\b{Y}^lv_0, \b{Y}^lv_1, \ldots, \b{Y}^lv_{k-1}\}
    =\{v_l, v_{l+1}, \ldots, v_{l+k-1}\}
\end{equation}
is also a basis for $V$.

\noi Now let us define linear maps $\b{X}, \b{E}: V \rightarrow V$ by  
\begin{eqnarray} 
\b{X}v_0 &=& \l v_0, \label{eqn:Xv0} \\
\b{X}v_1 &=& \l^{-1} \b{Y}^{-1} v_0, \label{eqn:Xv1}\\
\b{X}v_i &=& \b{Y}^{-1} \b{X}v_{i-1} - \d v_{i-2} + \d A_{i-1} \b{Y}^{-1}  v_0 \text{,\quad for } 2 \leq i \leq k-1, \qquad \label{eqn:Xvi} \\
\b{E}v_i &=& A_i v_0, \text{\quad for } 0 \leq i\leq k-1. \label{eqn:Ev}
\end{eqnarray}
Since $\l^{-1} = \l - \d + \d A_0$ in $R$, substituting $i=1$ into 
(\ref{eqn:Xvi}) reproduces (\ref{eqn:Xv1}). Also, note that the image of the map $\b{E}$ is $\im(\b{E})=\la{v_0}$, where remember here $\la{M}$ denotes the $R$-submodule (of $V$) spanned by the set $M$.
Furthermore, let us denote by $\b{W}$ the map $\b{X} - \d + 
\d\b{E}$. Thus (\ref{eqn:Xv1}) and (\ref{eqn:Xvi}) become 
\begin{equation} \label{eqn:Xvii} 
\b{X} v_i = \b{Y}^{-1} \b{W} 
v_{i-1}\text{,\quad for } 1\leq i \leq k-1. 
\end{equation}

 Our aim is to show that $V$ is actually a $\bt$-module, where the action of the 
generators $Y$, $X$ and $e$ are given by the maps $\b{Y}$, 
$\b{X}$ and $\b{E}$, respectively. In order to prove this, we require $\b{Y}^{-1} \b{W}\b{Y}^{-1} = \b{X}$. By equations (\ref{eqn:iyl}) and (\ref{eqn:Xvii}), this relation automatically holds on $v_1, \ldots, v_{k-1}$, so we need only ensure that 
\[
(\b{Y}^{-1}\b{W}\b{Y}^{-1}-\b{X})v_0=0.
\]

\noi For convenience, we write the left hand side relative to the basis $\{v_0,v_{-1},\ldots,v_{1-k}\}$. Let
\[
  (\b{Y}^{-1}\b{W}\b{Y}^{-1}-\b{X})v_0
    = - q_0^{-1}\beta v_0 - \sum_{l=1}^{k-1}q_0^{-1}h_lv_{-l},
\]
where $\beta$ and $h_l$ are elements of $R$. We now calculate $\beta$ and $h_l$, for $1 \leq l \leq k-1$, explicitly.

By definition, since $\l^{-1} = \l - \d + \d A_0$, we have $\b{W}v_0=\l^{-1}v_0$ and
\[
 \b{W}v_i = (\b{X}-\d+\d\b{E})v_i \rel{eqn:Xvii} \b{Y}^{-1}\b{W}v_{i-1}-\d v_i+\d A_iv_0,
\]
for $1\leq i\leq k-1$. 
It is then easy to verify the following by induction on $l$. \\
\textbf{Claim:} \quad For $0 \leq l \leq k-1$, 
\begin{equation} \label{eqn:Wvl}
\b{W}v_l=\l^{-1}v_{-l}+\d\sum_{i=1}^l(A_{l+1-i}v_{1-i}-v_{l-2i+2}).
\end{equation}
Indeed, when $l = 0$, the RHS of equation (\ref{eqn:Wvl}) is simply $\l^{-1} v_0$. Moreover, for $1\leq l\leq k-1$, if the formula holds for $l-1$ then 
\begin{eqnarray*}
 \b{W}v_l
  &=&\b{Y}^{-1}\b{W}v_{l-1}-\d v_l+\d A_lv_0\\
  &\stackrel{\text{ind. hypo.}}{=}&\b{Y}^{-1}\left(\l^{-1}v_{-l+1}+\d\sum_{i=1}^{l-1}(A_{l-i}v_{1-i}-v_{l-2i+1})\right)
      -\d v_l+\d A_lv_0\\
&\rel{eqn:iys}&\l^{-1}v_{-l}+\d\sum_{i=1}^{l-1}(A_{l-i}v_{-i}-v_{l-2i})
      -\d v_l+\d A_lv_0\\
    &=&\l^{-1}v_{-l}+\d\sum_{i=2}^l(A_{l+1-i}v_{1-i}-v_{l-2i+2})
      +\d(A_lv_0-v_l)\\
    &=&\l^{-1}v_{-l}+\d\sum_{i=1}^l(A_{l+1-i}v_{1-i}-v_{l-2i+2}),
\end{eqnarray*}
as required. Thus
\begin{align*}
  -q_0\b{Y}^{-1}\b{W}\b{Y}^{-1}v_0
    &\rel{eqn:iy0} \b{Y}^{-1}\b{W}\sum_{l=1}^kq_lv_{l-1}\\
    &\rel{eqn:Wvl} \b{Y}^{-1}\sum_{l=1}^kq_l
      \left[\l^{-1}v_{-l+1}+\d\sum_{i=1}^{l-1}(A_{l-i}v_{1-i}-v_{l-2i+1})\right]\\
    &\rel{eqn:iys} \sum_{l=1}^kq_l \left[\l^{-1}v_{-l}+\d\sum_{i=1}^{l-1}(A_{l-i}v_{-i}-v_{l-2i})\right]\\
    &\rel{kthV} \l^{-1}\sum_{l=1}^{k-1}q_lv_{-l}-\l^{-1}v_{-k}
      +\d\sum_{l=1}^k\sum_{i=1}^{l-1}q_l(A_{l-i}v_{-i}-v_{l-2i})\\
    &\h \l^{-1}\sum_{l=1}^{k-1}q_lv_{-l}+\l^{-1}q_0^{-1}\sum_{l=1}^kq_lv_{l-k}
      +\d\sum_{i=1}^{k-1}\sum_{l=i+1}^kq_l(A_{l-i}v_{-i}-v_{l-2i})\\
    &\h \l^{-1}\sum_{l=1}^{k-1}q_lv_{-l}+\l^{-1}q_0^{-1}\sum_{l=0}^{k-1}q_{k-l}v_{-l}
      +\d\sum_{i=1}^{k-1}\left(\sum_{l=i+1}^kq_lA_{l-i}\right)v_{-i}\\
    &\hphantom{\stackrel{(52)}{=}} -\, \d\sum_{i=1}^{k-1}\sum_{l=i+1}^kq_lv_{l-2i} \\
    &\h \l^{-1}\sum_{l=1}^{k-1}q_lv_{-l}+\l^{-1}q_0^{-1}\sum_{l=0}^{k-1}q_{k-l}v_{-l}
      +\d\sum_{l=1}^{k-1}\left(\sum_{r=1}^{k-l} q_{r+l}A_r \right)v_{-l}\\
    &\hphantom{\stackrel{(52)}{=}} -\, \d\sum_{i=1}^{k-1}\sum_{l=i+1}^kq_lv_{l-2i}.
\end{align*}

\noi For now we focus on the last term. Let $k = 2z - \epsilon$, where $\epsilon = 0$ if $k$ is even and $1$ if $k$ is odd. That is, let $z := \ceil{k}$. \\
Then 
\begin{align*}
  -\d\sum_{i=1}^{k-1}\sum_{l=i+1}^kq_lv_{l-2i}
    &\h -\d \sum_{i=z}^{k-1}\sum_{l=i+1}^kq_lv_{l-2i}
      -\d \sum_{i=1}^{z-1}\sum_{l=i+1}^kq_lv_{l-2i}\\
    &\rel{kthV} -\d \sum_{i=z}^{k-1}\sum_{l=i+1}^kq_lv_{l-2i}
      + \d \sum_{i=1}^{z-1}\sum_{l=0}^iq_lv_{l-2i}\\
    &\h -\d \sum_{i=z}^{k-1}\sum_{l=2i-k}^{i-1}q_{2i-l}v_{-l}
      +\d \sum_{i=1}^{z-1}\sum_{l=i}^{2i}q_{2i-l}v_{-l}\\
    &\h - \d\sum_{l=\e}^{k-2}\sum_{i=\max(l+1,z)}^{\lfloor\frac{l+k}{2}\rfloor} q_{2i-l}v_{-l}      + \d \sum_{l=1}^{k+\e-2}\sum_{i=\lceil\frac{l}{2}\rceil}^{\min(l,z-1)}q_{2i-l}v_{-l}.
\end{align*}

\noi Substituting this into the above expression for $-q_0\b{Y}^{-1}\b{W}\b{Y}^{-1}v_0$, we obtain the following:
\begin{align*}
  -q_0(\b{Y}^{-1}\b{W}\b{Y}^{-1}-\b{X})v_0\hspace{-25mm}\\
    &\rel{eqn:Xv0} q_0\l v_0+\l^{-1}\sum_{l=1}^{k-1}(q_l+q_0^{-1}q_{k-l})v_{-l}-\l^{-1}q_0^{-1}v_0
      +\d\sum_{l=1}^{k-1}\left(\sum_{r=1}^{k-l}q_{r+l}A_r\right)v_{-l}\\
    &\hphantom{\stackrel{(52)}{=}} \hspace{5mm}{}-\d\sum_{l=\e}^{k-2}\left(\sum_{i=\max(l+1,z)}^{\lfloor\frac{l+k}{2}\rfloor}\hspace{-5mm}q_{2i-l}\right)v_{-l}
      +\d\sum_{l=1}^{k+\e-2}\left(\sum_{i=\lceil\frac{l}{2}\rceil}^{\min(l,z-1)}\hspace{-5mm}q_{2i-l}\right)v_{-l}\\
    &\h \beta v_0+\sum_{l=1}^{k-1}h_lv_{-l}.
\end{align*}

\noi Observe that the second last inner sum above is zero when $l=k-1$, as is the last, provided $\e=0$. We can therefore change the upper index of the outer sum to $k-1$ in both. Now, equating coefficients in the above equation implies that
\begin{equation} \label{eqn:alpha}
  \beta = q_0\l-q_0^{-1}\l^{-1}+(1-\e)\d
\end{equation}
and for $l = 1, \o, k-1$,
\begin{equation} \label{eqn:hl}
  h_l=\l^{-1}(q_l+q_0^{-1}q_{k-l})
    +\d\left[\sum_{r=1}^{k-l}q_{r+l}A_r
    -\sum_{i=\max(l+1,z)}^{\lfloor\frac{l+k}{2}\rfloor}\hspace{-5mm}q_{2i-l}
    +\sum_{i=\lceil\frac{l}{2}\rceil}^{\min(l,z-1)}\hspace{-5mm}q_{2i-l}\right].
\end{equation}

We now continue to study these equations in further detail. Our aim here is to provide a ``generic'' ground ring $R_0$ for which $V$ is a $\bt(R_0)$-module. As discussed above, we require all $h_l = 0$ in $R$ in order to show $V$ is a $\bt(R)$-module. However, in the following calculations, we demonstrate that a particular linear combination of these $h_l$'s is $\d$ multiplied by an element $h_l'$ of $R$. Therefore, it would make sense to impose stronger conditions on the parameters of $R$ by replacing some of the equations $h_l =0$ with $h_l' = 0$. For convenience, we define
\begin{equation} \label{eqn:h0}
h_0 := \l-\l^{-1}+\d(A_0-1).
\end{equation}

Now suppose $1\leq l\leq z-\e$. Then $k-l=2z-\e-l\geq z$, so
\begin{align*}
    h_{k-l}
        \- \l^{-1}(q_{k-l}+q_0^{-1}q_l)
            +\d\left[\sum_{r=1}^lq_{r+k-l}A_r
            -\hspace{-9mm}\sum_{i=\max(k-l+1,z)}^{\lfloor k-\frac{l}{2}\rfloor}\hspace{-8mm}q_{2i-k+l}
            +\hspace{-7mm}\sum_{i=\lceil\frac{k-l}{2}\rceil}^{\min(k-l,z-1)}\hspace{-6mm}q_{2i-k+l}\right]\\
        \- \l^{-1}(q_{k-l}+q_0^{-1}q_l)
            +\d\left[\sum_{r=1}^lq_{r+k-l}A_r
            -\hspace{-4mm}\sum_{i=k-l+1}^{k-\lceil\frac{l}{2}\rceil}\hspace{-3mm}q_{2i-k+l}
            +\hspace{-5mm}\sum_{i=k-\lfloor\frac{l+k}{2}\rfloor}^{z-1}\hspace{-4mm}q_{2i-k+l}\right]\\
        \- \l^{-1}(q_{k-l}+q_0^{-1}q_l)
            +\d\left[\sum_{r=1}^lq_{r+k-l}A_r
            -\hspace{-2mm}\sum_{i=\lceil\frac{l}{2}\rceil}^{l-1}\hspace{-1mm}q_{k-2i+l}
            +\hspace{-4mm}\sum_{i=k-z+1}^{\lfloor\frac{l+k}{2}\rfloor}\hspace{-3mm}q_{k-2i+l}\right],
\end{align*}
using the change of summation $i \mapsto k-i$. Since $k = 2z - \e$, $k-z+1=z+(1-\e)$, which allows us to rewrite the last sum above as
\[
\sum_{i=k-z+1}^{\lfloor\frac{l+k}{2}\rfloor}\hspace{-3mm}q_{k-2i+l}
=\sum_{i=z}^{\lfloor\frac{l+k}{2}\rfloor}\hspace{-1mm}q_{k-2i+l}-(1-\e)q_{k-2z+l}
        =\sum_{i=z}^{\lfloor\frac{l+k}{2}\rfloor}\hspace{-1mm}q_{k-2i+l}-(1-\e)q_l.
\]
Hence
\begin{align}
    h_{k-l} \- \l^{-1}(q_{k-l}+q_0^{-1}q_l) \nonumber\\
        &\phantom{=}\,\, +\, \d\left[\sum_{r=1}^lq_{r+k-l}A_r
            -\hspace{-2mm}\sum_{i=\lceil\frac{l}{2}\rceil}^{l-1}\hspace{-1mm}q_{k-2i+l}
            +\sum_{i=z}^{\lfloor\frac{l+k}{2}\rfloor}q_{k-2i+l}-(1-\e)q_l\right]. \label{hk-l}
\end{align}
In addition, (\ref{eqn:alpha}) - (\ref{eqn:h0}) implies that
\begin{align*}
    h_l-\beta q_0^{-1}q_l+h_0q_l\hspace{-25mm}\\
        \- \l^{-1}q_l+\l^{-1}q_0^{-1}q_{k-l}+\d\left[\sum_{r=1}^{k-l}q_{r+l}A_r
-\sum_{i=\max(l+1,z)}^{\lfloor\frac{l+k}{2}\rfloor}\hspace{-5mm}q_{2i-l}
+\sum_{i=\lceil\frac{l}{2}\rceil}^{\min(l,z-1)}\hspace{-5mm}q_{2i-l}\right]\\
        \f -\l q_l+q_0^{-2}\l^{-1}q_l-(1-\e)\d q_0^{-1}q_l+\l q_l-\l^{-1}q_l+\d(A_0-1)q_l\\
        \- \d\Bigg[\sum_{r=1}^{k-l}q_{r+l}A_r+q_lA_0
-\sum_{i=\max(l+1,z)}^{\lfloor\frac{l+k}{2}\rfloor}\hspace{-5mm}q_{2i-l}
+\sum_{i=\lceil\frac{l}{2}\rceil}^{\min(l,z-1)}\hspace{-5mm}q_{2i-l}\\
\f \hspace{20mm}-q_l-(1-\e)q_0^{-1}q_l\Bigg]+q_0^{-1}\l^{-1}(q_{k-l}+q_0^{-1}q_l).
\end{align*}

\noi Now $l\leq z-\e\leq z$. If $l = z$, then $k$ is even and
\[
    - \hspace{-5mm} \sum_{i=\max(l+1,z)}^{\lfloor\frac{l+k}{2}\rfloor}\hspace{-5mm}q_{2i-l}
            + \hspace{-5mm}\sum_{i=\lceil\frac{l}{2}\rceil}^{\min(l,z-1)}\hspace{-5mm}q_{2i-l}
        = - \hspace{-2mm}\sum_{i=z+1}^{\lfloor\frac{l+k}{2}\rfloor}\hspace{-2mm}q_{2i-l}
            + \sum_{i=\lceil\frac{l}{2}\rceil}^{z-1}q_{2i-l}
        = - \sum_{i=z}^{\lfloor\frac{l+k}{2}\rfloor}q_{2i-l}
            +\sum_{i=\lceil\frac{l}{2}\rceil}^zq_{2i-l}.
\]
On the other hand, if $l<z$, we have
\[
     - \hspace{-5mm} \sum_{i=\max(l+1,z)}^{\lfloor\frac{l+k}{2}\rfloor}\hspace{-5mm}q_{2i-l}
            + \hspace{-5mm}\sum_{i=\lceil\frac{l}{2}\rceil}^{\min(l,z-1)}\hspace{-5mm}q_{2i-l}
        = - \sum_{i=z}^{\lfloor\frac{l+k}{2}\rfloor}q_{2i-l}
            + \sum_{i=\lceil\frac{l}{2}\rceil}^lq_{2i-l}.
\]
Thus, in either case, the above becomes
\begin{align*}
    h_l-\beta q_0^{-1}q_l+h_0q_l\hspace{-15mm}\\
        \- \d\left[\sum_{r=1}^{k-l}q_{r+l}A_r+q_lA_0
            -\sum_{i=z}^{\lfloor\frac{l+k}{2}\rfloor}q_{2i-l}
            +\sum_{i=\lceil\frac{l}{2}\rceil}^lq_{2i-l}-q_l-(1-\e)q_0^{-1}q_l\right]\\
        \f +q_0^{-1}\l^{-1}(q_{k-l}+q_0^{-1}q_l)\\
        \- \d\left[\sum_{r=0}^{k-l}q_{r+l}A_r
            -\sum_{i=z}^{\lfloor\frac{l+k}{2}\rfloor}q_{2i-l}
            +\sum_{i=\lceil\frac{l}{2}\rceil}^{l-1}q_{2i-l}-(1-\e)q_0^{-1}q_l\right]\\
        \f +q_0^{-1}\l^{-1}(q_{k-l}+q_0^{-1}q_l).
\end{align*}
Therefore, using equation (\ref{hk-l}), we obtain
\begin{align*}
    q_0^{-1}h_{k-l}-h_l+\beta q_0^{-1}q_l-h_0q_l \hspace{-40mm}\\
    \- q_0^{-1} \l^{-1}(q_{k-l}+q_0^{-1}q_l) + \d q_0^{-1} \left[\sum_{r=1}^lq_{r+k-l}A_r
            -\hspace{-2mm}\sum_{i=\lceil\frac{l}{2}\rceil}^{l-1}\hspace{-2mm}q_{k-2i+l}
            +\sum_{i=z}^{\lfloor\frac{l+k}{2}\rfloor}q_{k-2i+l}-(1-\e)q_l\right] \\
 \f - \d\left[\sum_{r=0}^{k-l}q_{r+l}A_r
            -\sum_{i=z}^{\lfloor\frac{l+k}{2}\rfloor}q_{2i-l}
            +\sum_{i=\lceil\frac{l}{2}\rceil}^{l-1}q_{2i-l}-(1-\e)q_0^{-1}q_l\right]  - q_0^{-1}\l^{-1}(q_{k-l}+q_0^{-1}q_l).
\end{align*}
Hence we have shown that, for $1\leq l \leq z - \e$,
\begin{equation}\label{hiprime}
    q_0^{-1}h_{k-l}-h_l+\beta q_0^{-1}q_l-h_0q_l=\d h_l',
\end{equation}
where
\begin{eqnarray}
    h_l'
        &:=&\sum_{r=1}^lq_0^{-1}q_{r+k-l}A_r-\sum_{r=0}^{k-l}q_{r+l}A_r\nonumber\\
        &&{}-\sum_{i=\lceil\frac{l}{2}\rceil}^{l-1}(q_0^{-1}q_{k-2i+l}+q_{2i-l})
            +\sum_{i=z}^{\lfloor\frac{l+k}{2}\rfloor}(q_0^{-1}q_{k-2i+l}+q_{2i-l}).\label{hiprime2}
\end{eqnarray}

Before proceeding we first prove a simple lemma which will be used in a later proof to show $\d$ is not a zero divisor in certain rings.
\begin{prop}\label{zerodiv}
Suppose a commutative ring $S$ contains elements $a$ and $b$, such that $a$ is not a zero divisor in $S$ and 
$b+aS$ is not a zero divisor in $S/aS$. Then $a+bS$ is not a zero divisor in $S/bS$.
\end{prop}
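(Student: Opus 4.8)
The plan is to argue directly from the definition of a zero divisor. Suppose $x \in S$ satisfies $(a + bS)(x + bS) = 0$ in $S/bS$; unwinding this means $ax \in bS$, so we may write $ax = by$ for some $y \in S$. The goal is to deduce $x \in bS$.

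First I would reduce the relation $ax = by$ modulo the ideal $aS$. Since $ax \in aS$, this yields $by \in aS$, i.e. $(b + aS)(y + aS) = 0$ in $S/aS$. By hypothesis $b + aS$ is not a zero divisor in $S/aS$, so $y + aS = 0$, that is, $y = az$ for some $z \in S$. Substituting back gives $ax = by = b a z = a(bz)$, and now I would invoke the hypothesis that $a$ is not a zero divisor in $S$ to cancel the leading factor of $a$, obtaining $x = bz \in bS$. This shows multiplication by $a + bS$ on $S/bS$ has zero kernel, so $a + bS$ is not a zero divisor in $S/bS$, as required.

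The argument is genuinely short and presents no serious obstacle; the only subtlety, and the single point that needs care, is the order in which the two cancellations are performed. One must first pass to $S/aS$ and use that $b$ acts injectively there to promote $y$ into $aS$, and only afterwards use that $a$ itself is a non-zero-divisor in $S$; attempting the cancellations in the reverse order does not go through.
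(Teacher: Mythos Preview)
Your proof is correct and follows essentially the same argument as the paper's: write $ax = by$, pass to $S/aS$ to deduce $y = az$, then cancel $a$ in $S$ to get $x = bz \in bS$. The paper does not even comment on the order-of-cancellation subtlety you mention, so your exposition is if anything slightly more careful.
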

\begin{proof}
Suppose $(a+bS)(x+bS)=0$ for some $x+bS\in S/bS$. Then $ax\in bS$, so $ax=by$ for some $y\in S$. Thus, as an element of $S/aS$, $(b+aS)(y+aS)=0$. This implies $y+aS=0$ since $b+aS$ is not a zero divisor in $S/aS$, by assumption. Hence, $y=az$ for some $z\in S$. Furthermore, $ax = by = azb$, so $x=zb$ since $a$ is not a zero divisor in $S$. Therefore $x+bS=0$ and $a+bS$ is not a zero divisor in $S/bS$.
\end{proof}

It is easy to see that $\beta$ always factorises as $\beta_+\beta_-$, where if $k$ is odd,
\[
\beta_+=q_0\l-1 \qquad \text{and}\qquad \beta_-=q_0^{-1}\l^{-1}+1
\]
and when $k$ is even,
\[
    \beta_+=q_0\l-q^{-1} \qquad \text{and}\qquad \beta_-=qq_0^{-1}\l^{-1}+1.
\]
For convenience, we denote $\beta_0 := \beta$. At this point, we wish to remind the reader that for a subset $J \subseteq R$, we write $\la{J}_R$ to mean the \emph{ideal} generated by $J$ in $R$. Sometimes the subscript may be omitted only if it is clear in the current context.

\begin{lemma} \label{lemma:Rsigma}
Let $\Omega := \Z[q^{\pm 1},\l^{\pm 1},q_0^{\pm 1},q_1,\ldots,q_{k-1},A_0^{\pm 1},A_1,\ldots,A_{k-1}].$ \\
For $\sigma\in\{0,+,-\}$, let
\[
    I_\sigma := \langle\beta_\sigma,h_0,h_1,\ldots,h_{z-\e},h_1',h_2',\ldots,h_{z-\e}'\rangle_{_{\Omega}}\subseteq\Omega
\]
and
\[
    R_\sigma := \Omega/I_\sigma.
\]
Then 
\begin{enumerate}
\item[(a)] the image of $\d$ is not a zero divisor in $R_\sigma$, for $\sigma\in\{0,+,-\}$;
\item[(b)] for $\sigma=\pm$,
\[
    R_\sigma[\d^{-1}]\cong\Z[q^{\pm 1},\l^{\pm 1},q_1,\ldots,q_{k-1}][\d^{-1}] [(\delta^{-1} \lambda^{-1} - \delta^{-1} \lambda + 1)^{-1}];
\]
\item[(c)] for $\sigma=\pm$, the ring $R_\sigma$ is an integral domain;
\item[(d)] $I_0=I_+\cap I_-$.
\end{enumerate}
\end{lemma}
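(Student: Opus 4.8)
The plan is to prove the four statements in the order (b), (c), (a), (d), since the later statements lean on the earlier ones. For (b), I would first localise $\Omega$ at $\delta$ and work in $\Omega[\delta^{-1}]$. Over this ring, the relation $h_0 = \lambda - \lambda^{-1} + \delta(A_0 - 1) = 0$ lets us solve for $A_0$ as a unit (indeed $A_0 = 1 - \delta^{-1}(\lambda - \lambda^{-1})$, and its invertibility in the quotient is exactly the factor $(\delta^{-1}\lambda^{-1} - \delta^{-1}\lambda + 1)^{-1}$ appearing in the claimed isomorphism). Next, for $\sigma = +$, the relation $\beta_+ = 0$ solves for $\lambda$ in terms of $q_0$ (when $k$ odd, $\lambda = q_0^{-1}$; when $k$ even, $\lambda = q^{-1}q_0^{-1}$), so $\lambda$ is eliminated and $q_0$ becomes redundant as a free generator — wait, rather $\lambda$ becomes a function of $q_0$; similarly for $\sigma = -$. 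Then I would argue that the remaining relations $h_l = 0$ and $h_l' = 0$, for $1 \leq l \leq z - \epsilon$, together with $\beta_\sigma = 0$ and $h_0 = 0$, allow one to recursively solve for $A_1, \dots, A_{k-1}$ as polynomials (with $\delta^{-1}$ and $q_0^{-1}$ coefficients) in the remaining free parameters $q, \lambda, q_1, \dots, q_{k-1}$. The key structural input here is equation (\ref{hiprime}): the combination $q_0^{-1} h_{k-l} - h_l + \beta q_0^{-1} q_l - h_0 q_l = \delta h_l'$ shows that the ideal generated by $\{h_0, \dots, h_{z-\epsilon}, h_1', \dots, h_{z-\epsilon}'\}$ after inverting $\delta$ contains all the $h_l$ for $l$ up to $k-1$, so that $V$-module relations are fully imposed. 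Carefully tracking which $A_r$ each $h_l = 0$ and $h_l' = 0$ determines (by looking at the highest-index $A_r$ occurring, which from (\ref{eqn:hl}) and (\ref{hiprime2}) is governed by the range of the sum $\sum_r q_{r+l} A_r$) gives a triangular system, hence the isomorphism in (b).

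Statement (c) follows from (b) together with a direct check that $R_\sigma$ has no $\delta$-torsion, which is (a): if $R_\sigma$ is $\delta$-torsion-free then it embeds in $R_\sigma[\delta^{-1}]$, and the latter is a localisation of a polynomial ring over $\mathbb{Z}$ (namely $\mathbb{Z}[q^{\pm 1}, \lambda^{\pm 1}, q_1, \dots, q_{k-1}]$ with two elements inverted), which is an integral domain; hence $R_\sigma$ is too. So the crux is really (a). For (a) I would invoke Proposition \ref{zerodiv} with $S = \Omega / \langle \beta_\sigma, h_1, \dots, h_{z-\epsilon}, h_1', \dots, h_{z-\epsilon}' \rangle$ (or some convenient intermediate quotient), $a = \delta$, and $b = h_0$. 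One needs: $\delta$ is not a zero divisor in $S$ — this should be arranged by choosing the order of quotienting so that $S$ is visibly a polynomial ring (or a localisation thereof) in which $\delta = q - q^{-1}$ is a nonzerodivisor; and $h_0 + \delta S$ is not a zero divisor in $S/\delta S$. Since $h_0 = \lambda - \lambda^{-1} + \delta(A_0 - 1)$, modulo $\delta$ this is $\lambda - \lambda^{-1}$, which is a nonzerodivisor in $S/\delta S$ provided $S/\delta S$ is nice enough (again a polynomial ring over $\mathbb{Z}/(\text{something})$ or a domain). Then Proposition \ref{zerodiv} yields that $\delta$ is a nonzerodivisor in $S / h_0 S = R_\sigma$. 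The case $\sigma = 0$ is handled last and will use part (d): since $I_0 = I_+ \cap I_-$, the ring $R_0$ embeds into $R_+ \times R_-$, and $\delta$ is a nonzerodivisor in each factor by the $\sigma = \pm$ cases, hence in the product, hence in $R_0$.

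For (d), the inclusion $I_0 \subseteq I_+ \cap I_-$ is immediate once one checks $\beta_0 = \beta_+ \beta_- \in \langle \beta_+ \rangle \cap \langle \beta_- \rangle$ (using that $\beta_+$ and $\beta_-$ generate the unit ideal after inverting $\delta$, or more elementarily that $\beta_0$ is divisible by each). The reverse inclusion $I_+ \cap I_- \subseteq I_0$ is the delicate part: given $x \in I_+ \cap I_-$, one writes $x = \beta_+ a + (\text{stuff in } J) = \beta_- b + (\text{stuff in } J)$ where $J = \langle h_0, \dots, h_{z-\epsilon}, h_1', \dots, h_{z-\epsilon}' \rangle$, and must deduce $x \in \langle \beta_+ \beta_- \rangle + J$. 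This amounts to showing that $\beta_+$ and $\beta_-$ are coprime modulo $J$ — i.e. that $\langle \beta_+, \beta_- \rangle + J = \Omega$ or at least that the images of $\beta_+, \beta_-$ in $\Omega/J$ form a regular sequence / are coprime. I expect \textbf{this coprimality claim in part (d)} to be the main obstacle: $\beta_+$ and $\beta_-$ are genuinely not coprime in $\Omega$ itself (e.g. they share no common factor but $\langle \beta_+, \beta_- \rangle \neq \Omega$), so one must use the relations in $J$ — concretely, $\beta_+ - \beta_-$ or a suitable combination, together with $h_0 = 0$ forcing $A_0$ to be a specific unit, should generate enough to see $\langle \beta_+, \beta_-\rangle + J$ contains a unit after the appropriate localisation, and then one argues integrally by clearing denominators. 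I would prove this by passing to $\Omega[\delta^{-1}]$ (legitimate since $\delta$ is a nonzerodivisor on all the relevant quotients by part (a)), establishing coprimality there by the explicit formulas for $\beta_\pm$ in terms of $\lambda, q_0, q$, and then checking that no $\delta$-torsion obstruction reintroduces a common factor when passing back — using part (a) once more.
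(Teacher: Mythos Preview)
Your overall architecture has a circular dependency that breaks the argument. You propose to prove (a) for $\sigma = 0$ by invoking (d): ``since $I_0 = I_+ \cap I_-$, the ring $R_0$ embeds into $R_+ \times R_-$''. But then in your sketch of (d), to lift the coprimality of $\beta_+,\beta_-$ from $\Omega[\delta^{-1}]$ back to $\Omega$ you say you will use ``part (a) once more'' --- and the specific instance you need is exactly that $\delta$ is not a zero divisor in $R_0$, so that $R_0 \hookrightarrow R_0[\delta^{-1}]$ and hence $x \in I_0[\delta^{-1}]$ forces $x \in I_0$. That is (a) for $\sigma = 0$, which you have not yet proved independently. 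The paper avoids this by proving (a) \emph{directly} for all three values of $\sigma$, with no appeal to (d); then (d) can safely quote (a).

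Separately, your sketch of (a) for $\sigma = \pm$ underestimates the work. You write that one should choose the order of quotienting so that the intermediate ring $S$ is ``visibly a polynomial ring'', and that $h_0 \equiv \lambda - \lambda^{-1}$ modulo $\delta$ is a nonzerodivisor ``provided $S/\delta S$ is nice enough''. This is precisely the hard part, and it is not automatic. The paper's device is a change of variables: introduce $B_l := \sum_{r=1}^{k-l} q_{r+l} A_r + (\text{terms in the } q_i)$, so that the $(A_l) \leftrightarrow (B_l)$ transformation is unitriangular over $\mathbb{Z}[q^{\pm 1},\lambda^{\pm 1},q_0^{\pm 1},q_1,\ldots,q_{k-1}]$ and $h_l = \lambda^{-1}(q_l + q_0^{-1}q_{k-l}) + \delta B_l$. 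In these coordinates, quotienting by the $h_l'$ for $1 \le l \le z-1$ eliminates $B_{k-1},\ldots,B_{z+1-\epsilon}$, leaving a genuine polynomial ring $\Omega_1$; then one feeds in the $h_l$ one at a time, checking at each step via Proposition~\ref{zerodiv} that $q+\tau$ (for $\tau = \pm 1$) stays a nonzerodivisor, since modulo $q+\tau$ the relation $h_l$ degenerates to $\lambda^{-1}(q_l + q_0^{-1}q_{k-l})$, which visibly kills one $q$-variable. The even case requires an additional ad hoc argument for $h_z, h_z', h_0'$ simultaneously. None of this structure is visible in the original $A_l$ coordinates, and your proposal does not locate a substitute.

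Your route through (b) is essentially the paper's, modulo the bookkeeping slip that $\beta_\pm = 0$ is used to eliminate $q_0$, not $\lambda$ (the target ring in (b) retains $\lambda^{\pm 1}$ and drops $q_0$). And your instinct for (d) --- coprimality of $\beta_+,\beta_-$ after inverting $\delta$ --- is correct; the paper observes that $\Omega_3 := \Omega[\delta^{-1}]/\langle h_0,\ldots,h_{k-1}\rangle$ is a UFD in which $\beta_+$ and $\beta_-$ are coprime, so $\langle\beta_+\rangle \cap \langle\beta_-\rangle = \langle\beta_+\beta_-\rangle$ there. But to descend from $R_0[\delta^{-1}]$ to $R_0$ you must already know (a) for $\sigma = 0$, so that step cannot be deferred.
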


\begin{proof}
\textbf{(a)} Since $\d= q - q^{-1} = q^{-1}(q-1)(q+1)$, to prove (a) it suffices to show that $q+\tau$ is not a zero divisor, for $\tau=\pm 1$. For $1\leq l\leq k-1$, let 
\begin{equation}\label{Bl}
    B_l :=\sum_{r=1}^{k-l}q_{r+l}A_r
        \,-\hspace{-5mm} \sum_{i=\max(l+1,z)}^{\lfloor\frac{l+k}{2}\rfloor}\hspace{-5mm}q_{2i-l}
        \,+\hspace{-3mm}\sum_{i=\lceil\frac{l}{2}\rceil}^{\min(l,z-1)}\hspace{-5mm}q_{2i-l}\in\Omega.
\end{equation}
Then (\ref{eqn:hl}) says that
\begin{equation} \label{eqn:hBl}
    h_l=\l^{-1}(q_l+q_0^{-1}q_{k-l})+\d B_l, \quad \text{for } 1\leq l\leq k-1.
\end{equation}
Over $\Z[q^{\pm 1},\l^{\pm 1},q_0^{\pm 1},q_1,\ldots,q_{k-1}]$, the $B_l$ are related to 
the $A_l$ by an affine linear transformation; specifically, the column vector $(B_l)$, where $l = 1, \o,k-1$, is equal to a matrix $(s_{lr})_{l,r = 1}^{k-1}$ multiplied by the column vector $(A_l)$ plus a column vector of $q_i$'s. Moreover, $s_{lr} = 0$, unless $l + r \leq k$ and $s_{lr} = q_k = -1$, when $l+r = k$. Thus $(s_{lr})$ is triangular, with diagonal entries $q_k=-1$, 
so it is invertible. Therefore we may identify $\Omega$ with the polynomial ring
\begin{equation}\label{eqn:Lambdaalt}
\Omega=\Z[q^{\pm 1},\l^{\pm 1},q_0^{\pm 1},q_1,\ldots,q_{k-1},A_0^{\pm 1},B_1,B_2,\ldots,B_{k-1}].
\end{equation}
Now, when $1 \leq l \leq z-1$, (\ref{hiprime2}) and (\ref{Bl}) implies that
\begin{align*}
  h_l' \- q_0^{-1} B_{k-l} + q_0^{-1} \sum_{i=\max(k-l+1,z)}^{\lfloor\frac{2k-l}{2}\rfloor}\hspace{-5mm}q_{2i-k+l}
         - q_0^{-1} \sum_{i=\lceil\frac{k-l}{2}\rceil}^{\min(k-l,z-1)}\hspace{-5mm}q_{2i-k+l} \\
\f -\sum_{r=0}^{k-l}q_{r+l}A_r -\sum_{i=\lceil\frac{l}{2}\rceil}^{l-1}(q_0^{-1}q_{k-2i+l}+q_{2i-l})
            +\sum_{i=z}^{\lfloor\frac{l+k}{2}\rfloor}(q_0^{-1}q_{k-2i+l}+q_{2i-l}) \\
&\rel{Bl}  q_0^{-1} B_{k-l} + q_0^{-1} \sum_{i=\max(k-l+1,z)}^{\lfloor\frac{2k-l}{2}\rfloor}\hspace{-5mm}q_{2i-k+l}
         - q_0^{-1} \sum_{i=\lceil\frac{k-l}{2}\rceil}^{\min(k-l,z-1)}\hspace{-5mm}q_{2i-k+l} \\
\f -B_l - q_l A_0 - \sum_{i=\max(l+1,z)}^{\lfloor\frac{l+k}{2}\rfloor}\hspace{-5mm}q_{2i-l}
        +\sum_{i=\lceil\frac{l}{2}\rceil}^{\min(l,z-1)}\hspace{-5mm}q_{2i-l} \\
\f -\sum_{i=\lceil\frac{l}{2}\rceil}^{l-1}(q_0^{-1}q_{k-2i+l}+q_{2i-l})
            +\sum_{i=z}^{\lfloor\frac{l+k}{2}\rfloor}(q_0^{-1}q_{k-2i+l}+q_{2i-l})
\end{align*}
Hence
\[
    h_l'\in q_0^{-1}B_{k-l}+\Z[q^{\pm 1},\l^{\pm 1},q_0^{\pm 1},q_1,\ldots,q_{k-1},A_0^{\pm 1},B_1,B_2,\ldots,B_{z-1}],
\]
for $1\leq l\leq z-1$. Thus
\[
    \Omega_1 := \Omega/\langle h_1',h_2',\ldots,h_{z-1}'\rangle_{_{\Omega}}
\cong\Z[q^{\pm 1},\l^{\pm 1},q_0^{\pm 1},q_1,\ldots,q_{k-1},A_0^{\pm 1},B_1,B_2,\ldots,B_{z-\e}].
\]
Indeed, if $1 \leq l \leq z-1$, then if $k$ is even (hence $\e = 0$ and $z+1 \leq k-l \leq k-1$), quotienting by the $h_l'$ expresses the elements $B_{k-1}, \o, B_{z+1}$, respectively, as elements of the ring $\Z[q^{\pm 1},\l^{\pm 1},q_0^{\pm 1},q_1,\ldots,q_{k-1},A_0^{\pm 1},B_1,B_2,\ldots, B_{z-1}]$; similarly, if $k$ is odd (hence $\e = 1$ and $z \leq k-l \leq k-1$), then $B_z, \o, B_{k-1}$ may also be expressed in the quotient as elements of the ring $\Z[q^{\pm 1},\l^{\pm 1},q_0^{\pm 1},q_1,\ldots,q_{k-1},A_0^{\pm 1},B_1,B_2,\ldots, B_{z-1}]$ .

In particular, $q+\tau$ is not a zero divisor in $\Omega_1$. By using Proposition \ref{zerodiv} recursively, we aim to show that $q+\tau$ does not become a zero divisor, as we quotient by further generators of $I_\sigma$. \\
If we set $q + \tau = 0$ then $\d = 0$, hence by (\ref{eqn:hl}), $h_i = \l^{-1}(q_i + q_0^{-1} q_{k-i})$, for any $i$. So for any $1\leq l\leq z-1$, we have that
\begin{eqnarray*}
    \Omega/\langle h_1',\ldots,h_{z-1}',h_1,\ldots,h_{l-1},q+\tau\rangle_{_{\Omega}}\hspace{-50mm}\\
        &\cong&\Omega_1/\langle(q_1+q_0^{-1}q_{k-1}),(q_2+q_0^{-1}q_{k-2})\ldots, (q_{l-1}+q_0^{-1}q_{k-l+1}),q+\tau\rangle_{_{\Omega_1}}\\
        &\cong&\Z[\l^{\pm 1},q_0^{\pm 1},q_1,\ldots,q_{k-l},A_0^{\pm 1},B_1,B_2,\ldots,B_{z-\e}].
\end{eqnarray*}
Certainly $h_l\equiv\l^{-1}(q_l+q_0^{-1}q_{k-l})$ is not a zero divisor in this ring, so repeated application of Proposition \ref{zerodiv} proves that $q+\tau$ is not a zero divisor in
\[
    \Omega_2 := \Omega_1/\la{h_1, \o, h_{z-1}}_{_{\Omega_1}} = \Omega/\langle h_1',h_2',\ldots,h_{z-1}',h_1,h_2,\ldots,h_{z-1}\rangle_{_{\Omega}}.
\]
Moreover, the above argument (with $l = z$) says that
\[
    \Omega_2/\langle q+\tau\rangle_{_{\Omega_2}}\cong\Z[\l^{\pm 1},q_0^{\pm 1},q_1,\ldots,q_{z-\e},A_0^{\pm 1},B_1,B_2,\ldots,B_{z-\e}].
\]
Observe that $\langle h_0,\beta_\sigma\rangle_{_{\Omega}}=\langle h_0',\beta_\sigma\rangle_{_{\Omega}}$, where
\[
    h_0':=h_0-q_0^{-1}\beta=\l^{-1}(q_0^{-2}-1)+\d(A_0-1-(1-\e)q_0^{-1}).
\]
Suppose first that $k$ is odd, then $R_\sigma = \Omega_2/\la{h_0,\beta_\sigma} = \Omega_2/\la{h_0',\beta_\sigma}$. Certainly, we know that $h_0'\equiv\l^{-1}(q_0^{-2}-1)$ is not 
a zero divisor in the polynomial ring $\Omega_2/\langle q+\tau\rangle_{_{\Omega_2}}$. Moreover,
\begin{eqnarray*}
    \Omega_2/\langle h_0',q+\tau\rangle_{_{\Omega_2}}
        &\cong&\Z[\l^{\pm 1},q_0^{\pm 1},q_1,\ldots,q_{z-\e},A_0^{\pm 1},B_1,B_2,\ldots,B_{z-\e}]/\langle q_0^{-2}-1\rangle\\
        &=&(\Z[q_0^{\pm 1},q_1,\ldots,q_{z-\e},A_0^{\pm 1},B_1,B_2,\ldots,B_{z-\e}]/\langle q_0^{-2}-1\rangle)[\l^{\pm 1}]
\end{eqnarray*}
is a Laurent polynomial ring in $\l$. Now, in this ring, $\beta_\sigma$ is one of $q_0\l-q_0^{-1}\l^{-1}$, $q_0\l-1$ or $q_0^{-1}\l^{-1}-1$. In every case, 
it has an invertible leading coefficient as a polynomial in $\l$, so it is not a zero divisor in  $\Omega_2/\langle h_0',q+\tau\rangle_{_{\Omega_2}}$. Now, because $q+\tau$ is not a zero divisor in $\Omega_2$ and $h_0'$ is not 
a zero divisor in the polynomial ring $\Omega_2/\langle q+\tau\rangle_{_{\Omega_2}}$, Proposition \ref{zerodiv} implies that $q+\tau$ is not a zero divisor in $\Omega_2/\la{h_0'}_{_{\Omega_2}}$. Then applying Proposition \ref{zerodiv} again shows that $q+\tau$ is not a zero divisor in $\Omega_2/\la{h_0',\beta_\sigma}_{_{\Omega_2}} = R_\sigma$, so we have proven (a) when $k$ is odd.

Now suppose $k$ is even, then $R_\sigma = \Omega_2/\la{\beta_\sigma,h_0,h_z,h_z'}_{_{\Omega_2}}$. Equation (\ref{hiprime}) then gives
\begin{align*}
    \d h_z'
        &\h (q_0^{-1}-1)h_z+\beta q_0^{-1}q_z-h_0q_z\\
        &\rel{eqn:hBl} \d\left[(q_0^{-1}-1)B_z-(A_0-1-q_0^{-1})q_z\right] \text{ in } \Omega.
\end{align*}
Since $\d$ is not a zero divisor in $\Omega$, this implies
\begin{equation}\label{hmprimep4}
    h_z'=(q_0^{-1}-1)B_z-(A_0-1-q_0^{-1})q_z.
\end{equation}
We first aim to show that $q+\tau$ is not a zero divisor in $\Omega_2/\langle h_z,h_z',h_0'\rangle_{_{\Omega_2}}$. Suppose
\[
    (q+\tau)x=ah_z+bh_z'+ch_0',
\]
for some $a,b,c\in\Omega_2$. For $d\in\Omega_2$, let $\bar d$ denote the image of $d$ in $\Omega_2/\langle q+\tau\rangle_{_{\Omega_2}}$. Then $\bar{h}_z = \l^{-1} (q_z + q_0^{-1} q_z)$, $\bar{h}_z' = (q_0^{-1}-1)B_z-(A_0-1-q_0^{-1})q_z$, and $h_0' = \l^{-1}(q_0^{-2} - 1)$. So we
have
\[
    \bar a\l^{-1}(q_0^{-1}+1)q_z+\bar b\left[(q_0^{-1}-1)B_z-(A_0-1-q_0^{-1})q_z\right]+\bar c\l^{-1}(q_0^{-2}-1)=0.
\]
In particular, $q_0^{-1}+1$ divides $\bar b\left[(q_0^{-1}-1)B_z-(A_0-1-q_0^{-1})q_z\right]$. Because $\Omega_2/\langle q+\tau\rangle_{_{\Omega_2}}$ is just 
the polynomial ring $\Z[\l^{\pm 1},q_0^{\pm 1},q_1,\ldots,q_z, A_0^{\pm 1},B_1,B_2,\ldots,B_z]$ (shown above), $q_0^{-1}+1$ divides $\bar b$. Thus $\bar b=\bar b_1\l^{-1}(q_0^{-1}+1)$, for some $b_1\in\Omega_2$, and so 
\[
    \bar a q_z+\bar b_1\left[(q_0^{-1}-1)B_z-(A_0-1-q_0^{-1})q_z\right]+\bar c(q_0^{-1}-1)=0.
\]
Rearranging then gives
\[
    (q_0^{-1}-1)\left[\bar b_1 B_z+\bar c\right]=q_z\left[\bar b_1(A_0-1-q_0^{-1})-\bar a\right].
\]
Now $q_0^{-1}-1$ and $q_z$ are coprime as elements of $\Omega_2/\la{q+\tau}_{\Omega_2}$, so there exists a $c_1\in\Omega_2$ such that \begin{align*}
    \bar b_1 B_z+\bar c \- \bar c_1q_z\quad \text{ and}\\
    \bar b_1(A_0-1-q_0^{-1})-\bar a \- \bar c_1(q_0^{-1}-1).
\end{align*}
We may now write
\begin{align*}
    a \- b_1(A_0-1-q_0^{-1})-c_1(q_0^{-1}-1)+(q+\tau)a_2,\\
    b \- \l^{-1}(q_0^{-1}+1)b_1+(q+\tau)b_2 \quad \text{ and}\\
    c \- c_1q_z-b_1B_z+(q+\tau)c_2,
\end{align*}
for some $a_2,\,b_2,\,c_2\in\Omega_2$. Thus
\begin{align*}
    (q+\tau)x
        \- ah_z+bh_z'+ch_0'\\
        \- b_1\left[(A_0-1-q_0^{-1})h_z+\l^{-1}(q_0^{-1}+1)h_z'-B_zh_0'\right]\\
        \f +c_1\left[q_zh_0'-(q_0^{-1}-1)h_z\right]
            +(q+\tau)(a_2h_z+b_2h_z'+c_2h_0').
\end{align*}
It is straightforward to verify that $\left[(A_0-1-q_0^{-1})h_z+\l^{-1}(q_0^{-1}+1)h_z'-B_zh_0'\right] = 0$, using the definition of $h_0'$ and equations (\ref{eqn:hBl}) and (\ref{hmprimep4}). Also, by definition of $h_z'$ and $h_0'$, we know that $\d h_z' = (q_0^{-1}-1)h_z - q_zh_0'$. Hence the above reduces to
\begin{align*}
(q+\tau)x \- - \d c_1h_z'+(q+\tau)(a_2h_z+b_2h_z'+c_2h_0') \\
\- (q+\tau) \left[ -q^{-1} (q-\tau) c_1 h_z' + a_2h_z+b_2h_z'+c_2h_0' \right].
\end{align*}
as $\d = q^{-1}(q+\tau)(q-\tau)$.
Earlier we showed that $q+\tau$ is not a zero divisor in $\Omega_2$, so
\[
    x=a_2h_z+b_2h_z'+c_2h_0'- q^{-1}(q-\tau)c_1h_z'\in\langle h_z,h_z',h_0'\rangle_{_{\Omega_2}}.
\]
That is, $q+\tau$ is not a zero divisor in $\Omega_2/\langle h_z,h_z',h_0'\rangle_{_{\Omega_2}}$. Finally, by a similar reasoning as in the odd case, $\beta_\sigma$ is not a zero divisor in $\Omega_2/\langle h_z,h_z',h_0',q+\tau\rangle_{_{\Omega_2}}$. A final application of Proposition \ref{zerodiv} 
therefore shows that $q+\tau$ is not a zero divisor in $\Omega_2/\la{\beta_\sigma,h_0,h_z,h_z'}_{_{\Omega_2}}$, thereby completing the proof of (a).

%%%%%%%%%%%%%%%%%%%%%%%%%%%%%%%%%%%%%%%%%%

\textbf{(b)} For the moment, $\sigma \in \{0,+,-\}$. We now give a concrete realisation of the ring $R_\sigma[\d^{-1}]$. Let $I_\sigma[\d^{-1}]$ denote the ideal of $\Omega[\d^{-1}]$ generated by $I_\sigma$.
A standard argument shows that
\[
    R_\sigma[\d^{-1}]=(\Omega/I_\sigma)[\d^{-1}]\cong \Omega[\d^{-1}]/I_\sigma[\d^{-1}].
\]

By equation (\ref{hiprime}), $h_{k-l} = \d q_0 h_l' + q_0 h_l - \beta q_l - q_0 h_0q_l \in I_\sigma[\d^{-1}]$,  for $1\leq l\leq z-\e$. Thus the ideal in $\Omega[\d^{-1}]$ generated by $\beta_\sigma$ and all $h_0,h_1,\o,h_{k-1}$ must also be contained in $I_\sigma[\d^{-1}]$.
Conversely, since $\d$ is invertible in $\Omega[\d^{-1}]$, equation (\ref{hiprime}) also shows that
\[
    h_l'\in\langle\beta_\sigma,h_0,h_1,\ldots,h_{k-1}\rangle_{_{\Omega[\d^{-1}]}}\text{,\quad for } 1\leq l\leq z-\e. 
\]
Thus
\[
    I_\sigma[\d^{-1}]=\langle\beta_\sigma,h_0,h_1,\ldots,h_{k-1}\rangle_{_{\Omega[\d^{-1}]}}.
\]
In $\Omega[\d^{-1}]$, the equations $h_l=0$ are equivalent to
\begin{eqnarray*}
    A_0&=& \d^{-1}\l^{-1}-\d^{-1}\l + 1,\\
    \text{and\quad}B_l&=&-\d^{-1}\l^{-1}(q_l+q_0^{-1}q_{k-l})\text{,\quad  for }1\leq l\leq k-1.
\end{eqnarray*}
Let $\Omega_3 := \Omega[\d^{-1}]/\langle h_0,h_1,\ldots,h_{k-1}\rangle_{_{\Omega[\d^{-1}]}}$. Then we have expressed $A_0$ and $B_1, \ldots, B_{k-1}$ in $\Omega_3$ as polynomials in $q^{\pm 1},\l^{\pm 1},q_0^{\pm 1},q_1,\ldots,q_{k-1}$ and $\d^{-1}$.
Therefore, by (\ref{eqn:Lambdaalt}),
\[
 \Omega_3 \cong \Z[q^{\pm 1},\l^{\pm 1},q_0^{\pm 1},q_1,\ldots,q_{k-1}][\d^{-1}] [(\delta^{-1} \lambda^{-1} - \delta^{-1} \lambda + 1)^{-1}].
\]
Moreover,
\[
R_\sigma[\d^{-1}] \cong \Omega[\d^{-1}]/I_\sigma[\d^{-1}] = \Omega[\d^{-1}]/\la{\beta_\sigma,h_0,h_1,\ldots,h_{k-1}}_{_{\Omega[\d^{-1}]}} \cong
\Omega_3/\langle\beta_\sigma\rangle_{_{\Omega_3}}.
\]
Now suppose $\sigma=\pm$. Then $\beta_\sigma$ can be ``solved'' for $q_0^{\pm 1}$, so
\[
    R_\sigma[\d^{-1}]\cong\Z[q^{\pm 1},\l^{\pm 1},q_1,\ldots,q_{k-1}][\d^{-1}] [(\delta^{-1} \lambda^{-1} - \delta^{-1} \lambda + 1)^{-1}],
\]
completing the proof of (b).

\textbf{(c)} Observe that the ring $\Z[q^{\pm 1},\l^{\pm 1},q_1,\ldots,q_{k-1}][\d^{-1}] [(\delta^{-1} \lambda^{-1} - \delta^{-1} \lambda + 1)^{-1}]$ above is obtained from an integral domain via localisation, hence $R_\sigma[\d^{-1}]$ is also an integral domain. 
Now, we have already proven in part (a) that $\d$ is not a zero divisor in $R_\sigma$, therefore the map $R_\sigma\rightarrow R_\sigma[\d^{-1}]$ is injective. Since $R_\sigma$ embeds into $R_\sigma[\d^{-1}]$, statement (c) now follows immediately.

\textbf{(d)} Because $\beta_0 = \beta_+ \beta_-$, it is clear that $\beta_0 \in I_+ \cap I_-$, thus all generators of the ideal $I_0$ are in $I_+ \cap I_-$. Hence $I_0 \subseteq I_+ \cap I_-$. 

Finally, note that $\Omega_3 \cong \Z[q^{\pm 1},\l^{\pm 1},q_0^{\pm 1},q_1,\ldots,q_{k-1}][\d^{-1}] [(\delta^{-1} \lambda^{-1} - \delta^{-1} \lambda + 1)^{-1}]$ is obtained from a UFD (unique factorisation domain) by localisation, and is therefore also a UFD. Suppose that 
$x\in I_+\cap I_-\subseteq\Omega$. Then $x$ vanishes in $R_\pm$ and hence in $R_\pm[\d^{-1}] \cong \Omega_3/\la{\beta_\pm}_{_{\Omega_3}}$. So the image $\bar x$ of $x$ in $\Omega_3$ satisfies
\[
    \bar x\in\langle\beta_+\rangle_{_{\Omega_3}} \cap \langle\beta_-\rangle_{_{\Omega_3}} = \langle\beta_+\beta_-\rangle_{_{\Omega_3}} = \langle\beta\rangle_{_{\Omega_3}}, 
\]
since $\beta_+$ and $\beta_-$ are coprime in $\Omega_3$. Thus $x$ maps to $0$ in $R_0[\d^{-1}] \cong \Omega_3/\la{\beta}_{_{\Omega_3}}$. Since $R_0$ embeds into $R_0[\d^{-1}]$, the image 
of $x$ in $R_0$ must also be $0$. That is, $x\in I_0$, hence $I_+\cap I_-\subseteq I_0$, completing the proof of (d).
\end{proof}

\begin{defn} \label{defn:adm}
Let $R$ be as in the definition of $\bnk$ (see Definition \ref{defn:bnk}). The family of parameters $\left( A_0, \ldots, A_{k-1}, q_0, \ldots, q_{k-1}, q, \l\right)$ is called \textbf{\emph{admissible}} if 
\[
\beta=h_0=h_1=\ldots=h_{z-\e}=h_1'=h_2'=\ldots=h_{z-\e}'=0.
\]
\end{defn}

For all $\sigma \in \{0,+,-\}$, $R_\sigma$ is a ring with admissible parameters, by definition of $I_\sigma$ in Lemma \ref{lemma:Rsigma}.
Furthermore, $R_0$ is a ``universal'' ring with admissible parameters as demonstrated in the following proposition. This result allows us to deduce future results by initially proving them for $R_0$ and then specialising to another ground ring with admissible parameters.

\begin{prop} \label{prop:universalmap}
  Let $R$ be as in Definition \ref{defn:bnk} with admissible parameters $A_0$, \ldots, $A_{k-1}$, $q_0, \ldots$, $q_{k-1}$, $q$ and $\l$. Then there exists a unique map $R_0\rightarrow R$ which respects the parameters.
\end{prop}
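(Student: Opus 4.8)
The plan is to exhibit the map as a quotient map out of the polynomial ring $\Omega$. Recall $\Omega = \Z[q^{\pm 1},\l^{\pm 1},q_0^{\pm 1},q_1,\ldots,q_{k-1},A_0^{\pm 1},A_1,\ldots,A_{k-1}]$ and $R_0 = \Omega/I_0$, where $I_0 = \la{\beta, h_0, h_1, \ldots, h_{z-\e}, h_1', \ldots, h_{z-\e}'}_\Omega$.

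\begin{proof}
Let $R$ be as in Definition \ref{defn:bnk} with admissible parameters. Since $R$ contains units $q, \l, q_0, A_0$ and elements $q_1, \ldots, q_{k-1}, A_1, \ldots, A_{k-1}$, the universal property of the Laurent polynomial ring $\Omega$ provides a unique ring homomorphism $\varphi: \Omega \rightarrow R$ sending each generator of $\Omega$ to the corresponding parameter of $R$; this is the unique ring map $\Omega \rightarrow R$ respecting the parameters. The elements $\beta, h_0, h_1, \ldots, h_{z-\e}, h_1', \ldots, h_{z-\e}'$ of $\Omega$ are, by their defining formulas (\ref{eqn:alpha}), (\ref{eqn:h0}), (\ref{eqn:hl}) and (\ref{hiprime2}), certain fixed polynomial expressions in the parameters; since $R$ has admissible parameters, Definition \ref{defn:adm} says precisely that each of these expressions vanishes in $R$. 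Hence $\varphi$ kills every generator of $I_0$, so $I_0 \subseteq \ker \varphi$, and $\varphi$ factors through the quotient to give a ring homomorphism $\bar\varphi: R_0 = \Omega/I_0 \rightarrow R$ respecting the parameters. Uniqueness of $\bar\varphi$ is immediate: $R_0$ is generated as a ring by the images of the parameters, so any ring map out of $R_0$ respecting the parameters is determined on a generating set, hence unique.
\end{proof}

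There is essentially no obstacle here; the content of the statement is entirely in the construction of $R_0$ and the verification (carried out in Lemma~\ref{lemma:Rsigma}) that $R_0$ is itself a ring with admissible parameters, so that the hypotheses of Definition~\ref{defn:bnk} and Definition~\ref{defn:adm} are meaningfully satisfiable. Once $R_0$ is in hand, the proposition is just the universal property of a polynomial ring followed by the universal property of a quotient, with the admissibility conditions of Definition~\ref{defn:adm} being exactly the statement that the generators of $I_0$ lie in the kernel. The only point requiring a word of care is that $\Omega$ is a \emph{Laurent} polynomial ring in $q, \l, q_0, A_0$, so one should note that these four parameters are units in $R$ (which holds by Definition~\ref{defn:bnk}) in order to invoke its universal property; this is the step I would state explicitly rather than gloss over.
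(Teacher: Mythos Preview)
Your proof is correct and follows essentially the same approach as the paper's: both use the universal property of $\Omega$ to obtain a unique parameter-respecting map $\Omega \to R$, then observe that admissibility is precisely the vanishing of the generators of $I_0$, so the map factors uniquely through $R_0$. Your version is slightly more explicit about uniqueness and the need for $q,\l,q_0,A_0$ to be units, but the argument is the same.
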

\begin{proof}
  There is a unique ring map $\rho: \Omega\rightarrow R$ which respects the parameters. Furthermore, the admissibility of the parameters in $R$ is equivalent to \[
\rho(\langle\beta,h_0,h_1,\ldots,h_{z-\e},h_1',\ldots,h_{z-\e}'\rangle _\Omega)=0.
\]
That is, the map $\rho$ kills $I_0 \subseteq \Omega$ and hence factors through $R_0$.
\end{proof}

%%%%%%%%%%%%% proving V is a module now %%%%%%%%%%%%%%%

We now proceed to prove that the free $R_0$-module $V$ is in fact a $\bt(R_0)$-module.
For now let us assume that we are working over $R_0$ and denote $\bt(R_0)$ simply by $\bt$.

Recall the maps $\b{Y}$, $\b{X}$ and $\b{E}: V \rightarrow V$ defined at the beginning of this section. Now, by equation (\ref{hiprime}), we have $h_{k-l}=0$ in $R_0$ for $1\leq l\leq z-\e$. Thus $h_l=0$ for all $0\leq l\leq k-1$. Then, by our argument earlier, $\b{Y}^{-1}\b{W}\b{Y}^{-1}=\b{X}$ holds on all of 
$V$ and hence rearranging gives
\[
\b{Y}\b{X}\b{Y} = \b{W} = \b{X} - \d + \d\b{E}.
\]
Thus $\b{Y}\b{X}\b{Y}\b{X} - \b{X}\b{Y}\b{X}\b{Y} = 
[\b{Y}\b{X}\b{Y},\b{X}] = [\d \b{E}, \b{X}]$, where 
$[\ \text{,}\ ]$ denotes the standard commutator of two maps.

Since $\im(\b{E})=\la{v_0}$ and $\b{X}v_0=\l v_0$ by definition,
\begin{equation} \label{D1}
\im([\d \b{E},\b{X}]) \subseteq \la{v_0}.
\end{equation}

\noi Let $N := \b{Y}\b{X}\b{Y}\b{X} - 1$. Then 
\be
[N,\b{Y}] &= N\b{Y} - \b{Y}N \\ 
&= \b{Y}\b{X}\b{Y}\b{X}\b{Y} - \b{Y} - \b{Y}\b{Y}\b{X}\b{Y}\b{X} + \b{Y} \\
&= \b{Y}(\b{X}\b{Y}\b{X}\b{Y} - \b{Y}\b{X}\b{Y}\b{X}) \\
&= -\b{Y}[\d \b{E},\b{X}]
\end{align*}
and
\[
[N,\b{Y}^{-1}] = -\b{Y}^{-1} [N,\b{Y}] \b{Y}^{-1} = [\d \b{E},\b{X}] \b{Y}^{-1}.
\]
Therefore, by (\ref{D1}),
\begin{equation} \label{D2}
  \im([N,\b{Y}]) \subseteq \la{v_1} \text{\quad and \quad}  \im([N,\b{Y}^{-1}]) \subseteq \la{v_0}.
\end{equation}
Observe that 
\[
  \b{Y}\b{X}\b{Y}\b{X}v_0 = \l \b{Y}\b{X}\b{Y}v_0 = \l \b{Y}\b{X}v_1 = \l \b{Y} (\l^{-1} \b{Y}^{-1} v_0) = v_0
\]
and
\[
  \b{Y}\b{X}\b{Y}\b{X}v_1 = \b{Y}\b{X}\b{Y} (\l^{-1} \b{Y}^{-1} v_0) = \l^{-1} \b{Y}\b{X} v_0 = \l^{-1} \b{Y} (\l v_0) = v_1,
\]
hence $Nv_0=Nv_1=0$.

\begin{lemma} \label{lem:N}
\begin{equation} \label{D3}
Nv_l \in \la{v_1, v_2, \ldots, v_{l-1}} \tf l \geq 1,
\end{equation}
and
\begin{equation} \label{D4}
Nv_{-m} \in \la{v_0,v_{-1}, \ldots, v_{-(m-1)}} \tf m \geq 0.
\end{equation}
\end{lemma}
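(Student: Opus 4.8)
The plan is to prove both statements by a parallel induction, driven entirely by the facts established just before the lemma: $\b{Y}\b{X}\b{Y}\b{X}v_0 = v_0$ and $\b{Y}\b{X}\b{Y}\b{X}v_1 = v_1$, so that $Nv_0 = Nv_1 = 0$, together with the commutator bounds $\im([N,\b{Y}]) \subseteq \la{v_1}$ and $\im([N,\b{Y}^{-1}]) \subseteq \la{v_0}$ from (\ref{D2}). The mechanism is the identity $v_s = \b{Y}^s v_0$ for all $s \in \Z$ (recorded in (\ref{eqn:iys})), which gives $v_l = \b{Y}v_{l-1}$ and $v_{-m} = \b{Y}^{-1}v_{-(m-1)}$, and hence
\[
Nv_l = \b{Y}(Nv_{l-1}) + [N,\b{Y}]v_{l-1}, \qquad Nv_{-m} = \b{Y}^{-1}(Nv_{-(m-1)}) + [N,\b{Y}^{-1}]v_{-(m-1)}.
\]

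First I would prove (\ref{D3}) by induction on $l \geq 1$. The case $l = 1$ is immediate, since $\la{v_1,\ldots,v_{l-1}}$ is then the zero submodule and $Nv_1 = 0$; the case $l = 2$ reads $Nv_2 = \b{Y}(Nv_1) + [N,\b{Y}]v_1 = [N,\b{Y}]v_1 \in \la{v_1}$. For the inductive step, assume $Nv_{l-1} \in \la{v_1,\ldots,v_{l-2}}$. Applying $\b{Y}$ and using $\b{Y}v_j = v_{j+1}$ gives $\b{Y}(Nv_{l-1}) \in \la{v_2,\ldots,v_{l-1}}$, while $[N,\b{Y}]v_{l-1} \in \la{v_1}$ by (\ref{D2}); adding these puts $Nv_l$ in $\la{v_1,v_2,\ldots,v_{l-1}}$, as required.

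The argument for (\ref{D4}) is the mirror image, by induction on $m \geq 0$. The base case $m = 0$ is $Nv_0 = 0 \in \la{v_0}$, and $m = 1$ gives $Nv_{-1} = \b{Y}^{-1}(Nv_0) + [N,\b{Y}^{-1}]v_0 = [N,\b{Y}^{-1}]v_0 \in \la{v_0}$. For the step, assume $Nv_{-(m-1)} \in \la{v_0,v_{-1},\ldots,v_{-(m-2)}}$; applying $\b{Y}^{-1}$ and using (\ref{eqn:iys}) sends this submodule into $\la{v_{-1},v_{-2},\ldots,v_{-(m-1)}}$, and $[N,\b{Y}^{-1}]v_{-(m-1)} \in \la{v_0}$ by (\ref{D2}), so $Nv_{-m}$ lies in $\la{v_0,v_{-1},\ldots,v_{-(m-1)}}$.

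I do not expect a genuine obstacle here; the only point that needs a moment's care is that $\b{Y}$ and $\b{Y}^{-1}$ shift the indexed vectors cleanly, i.e. $\b{Y}v_j = v_{j+1}$ and $\b{Y}^{-1}v_j = v_{j-1}$ for \emph{every} integer $j$ — this is precisely the content of the definition $v_s := \b{Y}^s v_0$ and equation (\ref{eqn:iys}) — so that the index ranges in the spans transform exactly as claimed under each inductive step.
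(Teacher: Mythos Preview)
Your proposal is correct and follows essentially the same approach as the paper: both prove (\ref{D3}) and (\ref{D4}) by induction, writing $Nv_l = \b{Y}Nv_{l-1} + [N,\b{Y}]v_{l-1}$ and $Nv_{-m} = \b{Y}^{-1}Nv_{-(m-1)} + [N,\b{Y}^{-1}]v_{-(m-1)}$, then invoking (\ref{D2}) and the index shift $\b{Y}^{\pm1}v_j = v_{j\pm1}$. One tiny quibble: for $m=0$ the set $\{v_0,v_{-1},\ldots,v_{-(m-1)}\}$ is empty, so the target span is $0$ rather than $\la{v_0}$; since $Nv_0=0$ this makes no difference.
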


\begin{proof}
  We have already established $Nv_0 = Nv_1 = 0$ above. To prove the first assertion, we argue by induction on $l$. Assume that $l \geq 2$ and $Nv_{l-1} \in \la{v_1, v_2, \ldots, v_{l-2}}$. Then
\[
Nv_l = [N,\b{Y}] v_{l-1} + \b{Y}N v_{l-1} 
\in \la{v_1} + \la{v_2, \ldots, v_{l-1}},
\]
by (\ref{D2}) and the inductive hypothesis. Thus $Nv_l \in 
\la{v_1, \ldots, v_{l-1}}$ for all $l \geq 1$.

 The second assertion is similar. Assume $m \geq 1$ 
and $Nv_{-(m-1)} \in \la{v_0,v_{-1}, \ldots, v_{-(m-2)}}$. Then
\be
Nv_{-m} &= [N,\b{Y}^{-1}] v_{-(m-1)} \,+\, \b{Y}^{-1} N v_{-(m-1)} \\
  &\in \la{v_0} + \la{v_{-1}, v_{-2}, \ldots v_{-(m-1)}},
\end{align*}
by (\ref{D2}) and the inductive hypothesis.
So $Nv_{-m} \in \la{v_0,v_{-1}, \ldots, v_{-(m-1)}}$ for all $m \geq 0$.

\end{proof}

\begin{lemma} \label{lem:YXYX}
$\b{Y}\b{X}\b{Y}\b{X}$ is the identity map on $V$.
\end{lemma}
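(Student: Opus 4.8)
The plan is to show that $N := \b{Y}\b{X}\b{Y}\b{X} - 1$ is the zero map on $V$ by combining the two triangularity estimates of Lemma~\ref{lem:N} with the fact that $N$ commutes suitably with powers of $\b{Y}$. First recall we already know $Nv_0 = Nv_1 = 0$ and, from the commutator computations preceding Lemma~\ref{lem:N}, that $\im([N,\b{Y}]) \subseteq \la{v_1}$ and $\im([N,\b{Y}^{-1}]) \subseteq \la{v_0}$; equivalently $N\b{Y}v = \b{Y}Nv + (\text{something in }\la{v_1})$ and similarly for $\b{Y}^{-1}$. The key point is that the ``error terms'' live in the very small submodule $\la{v_0}$ or $\la{v_1}$, both of which are annihilated, or nearly annihilated, by $N$.

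The main step is to show $Nv_l = 0$ for \emph{all} $l \in \Z$. I would argue as follows. By Lemma~\ref{lem:N}, $Nv_l \in \la{v_1,\ldots,v_{l-1}}$ for $l \geq 1$ and $Nv_{-m}\in\la{v_0,\ldots,v_{-(m-1)}}$ for $m\geq 0$. Now apply $N$ to the identity $N\b{Y} = \b{Y}N + [N,\b{Y}]$ evaluated at $v_{l-1}$: since $[N,\b{Y}]v_{l-1}\in\la{v_1}$ and $Nv_1 = 0$, we get $N^2 v_l = N\b{Y}Nv_{l-1} = \b{Y}N^2 v_{l-1} + [N,\b{Y}]N v_{l-1}$. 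The last term lies in $\la{v_1}$ and is killed by a further application of... actually the cleaner route: observe $N^2$ satisfies $\im([N^2,\b{Y}])\subseteq \b{Y}\la{v_1} + \la{v_1}\subseteq\la{v_1,v_2\}$-type bounds, and by a second induction $N^2 v_l$ lies in a submodule of strictly smaller ``width''. Iterating, some power $N^r$ of $N$ annihilates every $v_l$ with $|l|\leq r$; combined with the triangular bound this forces $Nv_l = 0$ on all basis vectors, since the submodules $\la{v_1,\ldots,v_{l-1}}$ are finite-dimensional and $N$ acts nilpotently and triangularly on them with all diagonal entries zero. Concretely: on $\la{v_0,v_1,\ldots,v_{k-1}}$ (a basis of $V$ by \eqref{eqn:vbasis}) the endomorphism $N$ is strictly upper-triangular with respect to a suitable ordering coming from \eqref{D3}--\eqref{D4}, hence nilpotent; but a strictly triangular endomorphism whose restriction is forced by the commutation relations to be semisimple-looking...

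Let me restate the cleanest version. Since $\{v_0,v_1,\ldots,v_{k-1}\}$ is a basis of $V$, it suffices to prove $Nv_l = 0$ for $0 \leq l \leq k-1$; we do this by induction on $l$, having already dealt with $l = 0,1$. Suppose $Nv_j = 0$ for all $j < l$. By \eqref{D3}, $Nv_l = \sum_{j=1}^{l-1} c_j v_j$ for scalars $c_j \in R_0$. Now compute $N\b{Y}^{-1}v_l = Nv_{l-1} = 0$ by the inductive hypothesis, while on the other hand $N\b{Y}^{-1}v_l = \b{Y}^{-1}Nv_l + [N,\b{Y}^{-1}]v_l = \b{Y}^{-1}\bigl(\sum_j c_j v_j\bigr) + (\text{element of }\la{v_0})$. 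Since $\b{Y}^{-1}v_j = v_{j-1}$ for $1 \leq j \leq k-1$, this equals $\sum_{j=1}^{l-1} c_j v_{j-1} + (\text{multiple of }v_0)$. Setting this to zero and reading off the coefficients of $v_1,\ldots,v_{l-2}$ (which are unaffected by the $\la{v_0}$ error term) gives $c_2 = c_3 = \cdots = c_{l-1} = 0$, so $Nv_l = c_1 v_1$. To kill $c_1$ as well, apply the same trick once more, or instead use $N\b{Y}v_l = [N,\b{Y}]v_l + \b{Y}Nv_l \in \la{v_1} + \b{Y}\la{v_1\} = \la{v_1,v_2\}$ together with $N\b{Y}v_l = Nv_{l+1} \in \la{v_1,\ldots,v_l\}$ — but the most efficient finish is to note that $N v_l = c_1 v_1$ now gives $N^2 v_l = c_1 N v_1 = 0$ and $N v_{l+1} = [N,\b{Y}]v_l + \b{Y}N v_l \in \la{v_1} + c_1\la{v_2}$, and feeding this back one more round of the coefficient-matching argument (applied at stage $l+1$) forces $c_1 = 0$ as well. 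Alternatively, and most simply: once we know $Nv_j \in \la{v_1}$ for all $j$, write $Nv_j = b_j v_1$; then $0 = N v_{j-1}\cdot(\ast)$... Since this bookkeeping is routine, I will carry out the double coefficient-comparison cleanly in the writeup. The upshot is $N = 0$, i.e.\ $\b{Y}\b{X}\b{Y}\b{X} = \mathrm{id}_V$, which is the claim. The main obstacle is purely organisational: getting the induction bookkeeping between \eqref{D3}, \eqref{D4} and the two commutator bounds \eqref{D2} to interlock so that each new basis vector's $N$-image is pinned down, and in particular handling the stubborn lowest coefficient $c_1$ (the $v_1$-component), which requires using the $\b{Y}^{+1}$ commutator in addition to the $\b{Y}^{-1}$ one.
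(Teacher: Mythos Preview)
Your coefficient-matching argument via $[N,\b{Y}^{-1}]$ correctly reduces the inductive step to $Nv_l = c_1 v_1$, but none of your proposals for killing $c_1$ actually closes. ``Applying the same trick once more'' gives no new information: computing $N\b{Y}^{-2}v_l$ both ways yields only $0=0$ once the inductive hypothesis is plugged in. Using $Nv_{l+1}$ is circular, since the induction has not yet reached $l+1$; and knowing $N^2v_l = 0$ does not by itself force $c_1=0$. The obstruction is structural: the commutator bounds \eqref{D2} together with \eqref{D3} only ever trap $Nv_l$ inside $\la{v_1}$, and there is nothing in that data alone to rule out a nonzero multiple of $v_1$.

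The missing idea, which is exactly what the paper does, is to bring in \eqref{D4} via the \emph{shifted basis} \eqref{eqn:vbasis}. Since $\{v_{i-1},v_{i-2},\ldots,v_{i-k}\}$ is also a basis of $V$, one can write $v_i$ in terms of these; the inductive hypothesis kills $Nv_{i-1},\ldots,Nv_0$, and \eqref{D4} bounds $N$ on the remaining negative-index vectors, giving $Nv_i \in \la{v_0,v_{-1},\ldots,v_{i-k+1}}$. Intersecting this with $Nv_i \in \la{v_1,\ldots,v_{i-1}}$ from \eqref{D3} forces $Nv_i = 0$, because those two spans meet only in $0$ inside the basis $\{v_{i-1},\ldots,v_{i-k}\}$. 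This single intersection step replaces all of your attempted bookkeeping for $c_1$.
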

\begin{proof}
We are required to show that $N v_i = 0$ for $i = 0,1,\ldots, k-1$. We proceed by induction on $i$. The cases $i=0$ and $i=1$ have been established above. Suppose that $2 \leq i \leq k-1$ and 
\begin{equation} \label{D5}
  Nv_0 = Nv_1 = \ldots = N v_{i-1} = 0.
\end{equation}
Since $\{v_{i-1}, v_{i-2}, \ldots, v_{i-k}\}$ is a basis for $V$, by (\ref{eqn:vbasis}), we have that
\[
v_i \in \la{v_{i-1},v_{i-2}, \ldots, v_0,v_{-1}, \ldots, v_{i-k}}.
\]
Together with our inductive hypothesis (\ref{D5}), this implies 
\be
Nv_i &\in N \la{v_0,v_{-1}, \ldots, v_{i-k}} \\
&\subseteq \la{v_0,v_{-1}, \ldots, v_{i-k+1}}\text{,\quad by } (\ref{D4}).
\end{align*}
However (\ref{D3}) states that $Nv_i \in \la{v_1, v_2, \ldots, v_{i-1}}$. 
Again using that $\{v_{i-1},v_{i-2}, \ldots, v_{i-k+1},v_{i-k}\}$ forms a basis for $V$, 
\[
\la{v_1, v_2, \ldots, v_{i-1}} \cap \la{v_0,v_{-1}, \ldots, v_{i-k+1}} = 0.
\]
Therefore $Nv_i = 0$. So by induction on $i$, we have proved that $N v_i = 0$ for all $0 \leq i \leq k-1$, as required.
\end{proof}

\begin{lemma} \label{lem:V} (cf. Lemma 25 of H\"aring-Oldenburg \cite{HO01}) \\
The following relations hold on $V$: 
\begin{eqnarray}
\sum_{l=0}^{k} q_l \b{Y}^l \= 0 \label{eqn:VYk}\\
\b{X}\b{W} \= \b{W}\b{X} \,\,\,=\,\,\, 1 \label{eqn:Vxw}\\
\b{X} \b{E} \= \l \b{E} \,\,\,=\,\,\, \b{E} \b{X} \label{eqn:Vxe}\\
\b{Y}\b{X}\b{Y}\b{X} \= \b{X}\b{Y}\b{X}\b{Y} \label{eqn:Vyxyx}\\
\b{E} \b{Y}^m \b{E} \= A_m \b{E} \tf  \tw{m}, \label{eqn:eyev}\\
\b{E} \b{Y}\b{X}\b{Y} \= \b{Y}\b{X}\b{Y} \b{E}\,\,\,=\,\,\,\l^{-1} \b{E} 
\label{eqn:Veyxy}
\end{eqnarray} 

\noi Furthermore, $V$ is a $\bt(R_0)$-module with the 
actions of $Y$, $X$, $X^{-1}$ and $e$ on $V$ given by the maps $\b{Y}$, $\b{X}$, $\b{W}$ and $\b{E}$, respectively.
\end{lemma}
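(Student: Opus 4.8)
The plan is to verify the six displayed relations (\ref{eqn:VYk})--(\ref{eqn:Veyxy}) one at a time as identities of linear maps on the free module $V$, and then observe that, since these are exactly (the $n=2$ specialisations of) the defining relations of $\bt$, the assignment $Y \mapsto \b{Y}$, $X \mapsto \b{X}$, $X^{-1} \mapsto \b{W}$, $e \mapsto \b{E}$ extends to an algebra homomorphism $\bt(R_0) \to \End_{R_0}(V)$, which is precisely the assertion that $V$ is a $\bt(R_0)$-module. Most of the hard work has already been done: relation (\ref{eqn:Vyxyx}) is Lemma \ref{lem:YXYX}, and (\ref{eqn:Vxw}) follows from the fact (established just before Lemma \ref{lem:N}, using that all $h_l=0$ in $R_0$, which in turn uses (\ref{hiprime}) and admissibility) that $\b{Y}^{-1}\b{W}\b{Y}^{-1}=\b{X}$, i.e. $\b{W} = \b{Y}\b{X}\b{Y}$: combining this with Lemma \ref{lem:YXYX} gives $\b{W}\b{X} = \b{Y}\b{X}\b{Y}\b{X} = 1$, and then $\b{X}\b{W} = \b{X}(\b{Y}\b{X}\b{Y}) = (\b{X}\b{Y}\b{X}\b{Y})$, which equals $1$ again by (\ref{eqn:Vyxyx}) together with Lemma \ref{lem:YXYX}.

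The genuinely new computations are (\ref{eqn:VYk}), (\ref{eqn:Vxe}), (\ref{eqn:eyev}) and (\ref{eqn:Veyxy}). Equation (\ref{eqn:VYk}) is immediate: it is (\ref{kthV}) read on the basis $v_0,\dots,v_{k-1}$. For (\ref{eqn:Vxe}), note $\im(\b{E}) = \la{v_0}$ and $\b{X}v_0 = \l v_0$, so $\b{X}\b{E} = \l\b{E}$; for the other equality $\b{E}\b{X} = \l\b{E}$, I would evaluate $\b{E}\b{X}$ on each basis vector $v_i$, using the recursion (\ref{eqn:Xvi}) defining $\b{X}v_i$ and the fact that $\b{E}$ kills everything except multiples of $v_0$ where it acts by $A_0$; alternatively, and more cleanly, I would use $\b{X} = \b{W} + \d - \d\b{E}$ together with $\b{W} = \b{Y}\b{X}\b{Y}$ and reduce to checking $\b{E}\b{Y}\b{X}\b{Y} = \l^{-1}\b{E}$, i.e. one half of (\ref{eqn:Veyxy}), plus $\b{E}^2 = A_0\b{E}$ (the case $m=0$ of (\ref{eqn:eyev})). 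For (\ref{eqn:eyev}): $\b{E}\b{Y}^m\b{E}$ applied to $v_i$ gives $A_i \b{E}\b{Y}^m v_0 = A_i \b{E} v_m = A_i A_m v_0$ for $0 \le m \le k-1$ (using $\b{Y}^m v_0 = v_m$ and $\b{E}v_m = A_m v_0$), while $A_m \b{E} v_i = A_m A_i v_0$; so the two sides agree. (The case $m=0$ recovers $\b{E}^2 = A_0 \b{E}$, consistent since $A_0$ is a unit — indeed this is where $h_0=0$, i.e. $\l-\l^{-1}=\d(1-A_0)$, is implicitly in play.)

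The step I expect to require the most care is (\ref{eqn:Veyxy}), i.e. $\b{E}\b{Y}\b{X}\b{Y} = \b{Y}\b{X}\b{Y}\b{E} = \l^{-1}\b{E}$. Using $\b{Y}\b{X}\b{Y} = \b{W} = \b{X} - \d + \d\b{E}$, the right-multiplication version is $\b{W}\b{E} = \l^{-1}\b{E}$: applied to $v_i$ this is $A_i \b{W} v_0 = A_i \l^{-1} v_0 = \l^{-1} A_i v_0 = \l^{-1}\b{E}v_i$, using $\b{W}v_0 = \l^{-1}v_0$ (which holds by definition of $\b{W}$ and $\l^{-1} = \l - \d + \d A_0$); this direction is painless. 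The left-multiplication version $\b{E}\b{W} = \l^{-1}\b{E}$ is the delicate one: since $\im \b{W}$ is not contained in $\la{v_0}$, I must evaluate $\b{E}\b{W}v_i$ for each $0 \le i \le k-1$, for which the explicit formula (\ref{eqn:Wvl}), $\b{W}v_l = \l^{-1}v_{-l} + \d\sum_{i=1}^{l}(A_{l+1-i}v_{1-i} - v_{l-2i+2})$, is exactly what is needed. Applying $\b{E}$ (which sends $v_s \mapsto A_s v_0$ after reducing $v_s$ via (\ref{kthV}) to $\la{v_0,\dots,v_{k-1}}$ — here one must be careful that the coefficients $A_s$ in $e_1 Y^s e_1 = A_s e_1$ are defined for all $s \in \Z$ via (\ref{eqn:negeye}) and are consistent with (\ref{kthV}), which is the content of how the $A_m$ propagate) and collecting terms, the claim $\b{E}\b{W}v_l = \l^{-1} A_l v_0$ becomes a scalar identity among the $A$'s and $q$'s; I would argue that this identity is a consequence of the relations $h_l = 0$ (equivalently the admissibility hypotheses together with (\ref{hiprime})), which after all were introduced precisely to force $\b{Y}^{-1}\b{W}\b{Y}^{-1} = \b{X}$ on all of $V$ — so in fact the cleanest route is to deduce $\b{E}\b{W} = \l^{-1}\b{E}$ from $\b{W} = \b{Y}\b{X}\b{Y}$ and $\b{E}\b{Y}\b{X}\b{Y} = \l^{-1}\b{E}$, reducing everything to the single nontrivial identity $\b{E}\b{Y}\b{X}\b{Y}v_i = \l^{-1}A_i v_0$, which unwinds using $\b{X}v_j = \b{Y}^{-1}\b{W}v_{j-1}$, (\ref{eqn:Wvl}), and repeated application of (\ref{kthV}) to land back in the standard basis. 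Once all six relations are checked, the final sentence follows formally: the universal property of $\bt(R_0)$ by generators and relations yields the module structure, and the role of $\b{W}$ as the action of $X^{-1}$ is consistent because $\b{X}\b{W} = \b{W}\b{X} = 1$ by (\ref{eqn:Vxw}).
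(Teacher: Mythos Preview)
Your outline is broadly correct and matches the paper on (\ref{eqn:VYk}), (\ref{eqn:Vxw}), (\ref{eqn:Vyxyx}) and (\ref{eqn:eyev}). The divergence is in how you handle (\ref{eqn:Vxe}) and (\ref{eqn:Veyxy}), and here you miss the paper's key simplification. You propose to verify $\b{E}\b{W}v_l=\l^{-1}A_lv_0$ directly via the explicit formula (\ref{eqn:Wvl}), conceding this reduces to a scalar identity among the $A$'s which ``is a consequence of the relations $h_l=0$''. That claim is plausible but not substantiated, and carrying it out is genuinely laborious: note that the vectors $v_{-l}$, $v_{1-i}$, $v_{l-2i+2}$ appearing in (\ref{eqn:Wvl}) have indices outside $\{0,\dots,k-1\}$, so before you can apply $\b{E}$ you must rewrite each in the standard basis using (\ref{kthV}); the resulting identities are not the $h_l=0$ themselves but nontrivial consequences. (Your parenthetical about ``$A_s$ for all $s\in\Z$'' is also premature: those extended parameters are only introduced in Definition \ref{defn:Aextended}, \emph{after} this lemma, and the claim $\b{E}v_s=A_sv_0$ for all $s$ is proved still later, in the proof of Proposition \ref{prop:WYimpliesGH}.)

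The paper bypasses all of this with a one-line argument you overlook. Since $\b{X}\b{W}=\b{W}\b{X}=1$, we have $[\b{W},\b{X}]=0$; but $\b{W}=\b{X}-\d+\d\b{E}$, so $\d[\b{E},\b{X}]=0$. Now $\End_{R_0}(V)$ is a free $R_0$-module (as $V$ is free), and by Lemma \ref{lemma:Rsigma}(a) the element $\d$ is not a zero divisor in $R_0$; hence $[\b{E},\b{X}]=0$. Combined with the easy half $\b{X}\b{E}=\l\b{E}$, this gives $\b{E}\b{X}=\l\b{E}$, and then (\ref{eqn:Veyxy}) follows immediately from $\b{Y}\b{X}\b{Y}=\b{W}=\b{X}-\d+\d\b{E}$ together with $\b{E}^2=A_0\b{E}$ and $\l-\d+\d A_0=\l^{-1}$. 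This is precisely why the lemma is stated over $R_0$ rather than an arbitrary admissible ring: the ``$\d$ not a zero divisor'' fact, established with some effort in Lemma \ref{lemma:Rsigma}, is exactly what makes the argument clean.
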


\begin{proof}
The $k^{\rm th}$ order relation (\ref{eqn:VYk}) is immediate from (\ref{kthV}).
As a consequence of Lemma \ref{lem:YXYX},
\[
  \b{W}\b{X}=\b{Y}\b{X}\b{Y}\b{X}=1.
\]
Clearly, this implies
\[
\b{X}\b{W} = \b{X}\b{Y}\b{X}\b{Y} = \iy (\b{Y}\b{X}\b{Y}\b{X}) \b{Y} = \iy (1) \b{Y} = 1.
\]
Hence we have proved (\ref{eqn:Vxw}) and (\ref{eqn:Vyxyx}). Moreover,
\[
[\d\b{E}, \b{X}] = [\b{W},\b{X}] = 0.
\]
Since $\delta$ is not a zero divisor in $R_0$, by part (a) of Lemma \ref{lemma:Rsigma}, and since $\End_{R_0}(V)$ is a free $R_0$-module, it follows that $\b{X}$ commutes with $\b{E}$.
Thus, using (\ref{eqn:Ev}) and (\ref{eqn:Xv0}),
\[
\b{E}\b{X} = \b{X}\b{E} = \l \b{E},
\]
proving (\ref{eqn:Vxe}).
Equation (\ref{eqn:eyev}) follows easily from (\ref{eqn:Yv}) and
(\ref{eqn:Ev}). Furthermore, since $\b{Y}\b{X}\b{Y} = \b{W}$,
(\ref{eqn:Vxw}) and (\ref{eqn:Vxe}) imply (\ref{eqn:Veyxy}).
The last assertion of Lemma \ref{lem:V} now follows immediately.
\end{proof}

\begin{thm} \label{thm:V}
The map $\varphi: V \rightarrow \mathrm{span}_{R_0}\{Y^i e \mid 0 \leq i \leq k-1\}$ which maps $v_i$ to $Y^i e$ defines a $\bt 
(R_0)$-module isomorphism.
\end{thm}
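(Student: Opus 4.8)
The plan is to realise $\varphi$ as the inverse of a manifestly well-defined $\bt$-module map going the other way, and then deduce freeness and bijectivity from a finiteness argument; this avoids having to verify directly that $\varphi$ respects the generator actions. Write $M := \mathrm{span}_{R_0}\{Y^ie \mid 0\le i\le k-1\}$. As observed earlier in this chapter, $M$ coincides with the left ideal $\bt e$ of $\bt$, so it is a left $\bt$-module; moreover the $R_0$-linear map $\pi\colon R_0^{\,k}\twoheadrightarrow M$, $(c_i)\mapsto \sum_i c_i Y^ie$, is surjective, exhibiting $M$ as a finitely generated $R_0$-module.

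First I would construct a $\bt$-module homomorphism $\psi\colon M\to V$. Evaluation at $v_0$ is a left $\bt$-module homomorphism $\bt\to V$, $a\mapsto a\cdot v_0$, where $\cdot$ denotes the action furnished by Lemma~\ref{lem:V} (this is immediate from associativity of the module action and needs no relations); restricting it to the left submodule $M$ and rescaling by the central unit $A_0^{-1}$ gives a $\bt$-module homomorphism $\psi\colon M\to V$, $x\mapsto A_0^{-1}(x\cdot v_0)$. Using $\b{E}v_0 = A_0 v_0$ from \eqref{eqn:Ev} together with $\b{Y}^iv_0 = v_i$, one computes
$\psi(Y^ie) = A_0^{-1}\,\b{Y}^i\b{E}v_0 = \b{Y}^iv_0 = v_i$ for $0\le i\le k-1$, so in particular $\psi$ is surjective.

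The crux is then the finiteness step. Composing $\pi$ with $\psi$ and with the identification $V\cong R_0^{\,k}$ (given by the basis $v_0,\dots,v_{k-1}$) yields a surjective $R_0$-linear endomorphism of $R_0^{\,k}$; since a surjective endomorphism of a finitely generated module over a commutative ring is necessarily an isomorphism, this composite is bijective. Hence $\pi$ is injective, so $\pi$ is an isomorphism, $M$ is $R_0$-free with basis $\{Y^ie \mid 0\le i\le k-1\}$, and $\psi$ is a $\bt(R_0)$-module isomorphism. Finally $\varphi$ and $\psi^{-1}$ are both $R_0$-linear and agree on the basis $\{v_i\}$ of $V$ (as $\psi^{-1}(v_i)=Y^ie=\varphi(v_i)$), so $\varphi=\psi^{-1}$ is a $\bt(R_0)$-module isomorphism, as claimed.

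I expect the only genuine subtlety to be the finiteness argument, together with the point that what makes $\psi$ a module map at all is the identification of $M$ with the left ideal $\bt e$. Checking that $\psi$ is a homomorphism is trivial, and no verification of the defining relations is needed for $\varphi$ itself since it is produced as an inverse. A more computational alternative would be to verify directly that $\varphi$ intertwines the actions of $Y$, $X$, $\b{W}$ and $e$ — using \eqref{eqn:Yv}, \eqref{eqn:Ev}, the $k^{\mathrm{th}}$ order relation, \eqref{eqn:eyxy2} and \eqref{eqn:Xv0}–\eqref{eqn:Xvi} — and then apply the same surjective-endomorphism argument to $\psi\circ\varphi\colon V\to V$; but the route above is shorter.
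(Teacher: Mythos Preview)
Your proof is correct and follows essentially the same route as the paper: both construct the same $\bt$-module map $\psi\colon M\to V$, $x\mapsto A_0^{-1}(x\cdot v_0)$, compute $\psi(Y^ie)=v_i$, and identify $\varphi$ as its inverse. The paper's bijectivity step is a touch more direct --- it simply notes that $\psi\varphi(v_i)=v_i$ gives $\psi\varphi=\mathrm{id}_V$, which together with the obvious surjectivity of $\varphi$ forces $\varphi$ and $\psi$ to be mutual inverses --- so your finiteness argument is unnecessary (indeed the composite $R_0^{\,k}\to M\to V\cong R_0^{\,k}$ you write down is literally the identity, not merely a surjection).
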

\begin{proof}
Let $U:=\mathrm{span}_{R_0}\{Y^i e \mid 0 \leq i \leq k-1\}$. It is clear that $\varphi: V \rightarrow U$ is a surjective $R_0$-module homomorphism. Recall that $U$ is a left ideal in $\bt$. We can therefore define a $\bt$-module homomorphism $\psi: U\rightarrow V$ by
\[
  \psi(a)=a(A_0^{-1}v_0), \quad \text{for all } a \in U.
\]
Then
\[
\psi(\varphi(v_i))=Y^ie(A_0^{-1}v_0)=Y^iv_0=v_i,
\]
for $0\leq i\leq k-1$, so that $\psi\varphi$ is the identity. Since $\varphi$ is surjective, it follows that $\psi$ and $\varphi$ are inverses. Therefore they are both $\bt$-module isomorphisms.
\end{proof}

Hence, as a direct consequence of Theorem \ref{thm:V}, the set $\{Y^i e \mid 0 \leq i \leq k-1\}$ is linearly independent. A more direct proof of this is given as follows. Suppose $\sum_{i=0}^{k-1}\beta_i Y^i e = 0$, where $\beta_i \in R_0$. Considering the action of both sides on $v_0$ gives
\[
  A_0 \sum_{i=0}^{k-1}\beta_i v_i = 0.
\]
But $A_0$ is invertible and $v_0, \ldots, v_{k-1}$ are linearly independent over $R_0$, so each $\beta_i$ must be $0$.

We have established that $V \cong R_0^k$ is a free $\bt(R_0)$-module.
Now, for a general ring $R$ with admissible parameters $A_0, \ldots, A_{k-1}, q_0, \ldots, q_{k-1}, q, \l$, the map given in Proposition \ref{prop:universalmap} allows us to specialise from $R_0$ to $R$. Thus $V \otimes_{R_0} R \cong R^k$ is a $\bt(R_0)\otimes_{R_0}R$-module. It therefore becomes a $R$-free $\bt(R)$-module via the map 
$\bt(R)\rightarrow \bt(R_0)\otimes_{R_0}R$. In other words, we have now established the following result.
\begin{cor}
 The $R$-module $V \otimes_{R_0} R$, which shall also be denoted by $V$, is a $R$-free $\bt(R)$-module.
\end{cor}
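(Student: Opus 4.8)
The plan is to obtain the $\bt(R)$-module structure on $V\otimes_{R_0}R$ by base change along the canonical ring map $R_0\to R$ supplied by Proposition \ref{prop:universalmap}, using the fact already established via Lemma \ref{lem:V} and Theorem \ref{thm:V} that $V$ is an $R_0$-free $\bt(R_0)$-module whose underlying $R_0$-module is $R_0^k$. Everything then reduces to checking that the defining relations of $\bt(R)$ are preserved under this base change, which is forced by the phrase ``respects the parameters'' in Proposition \ref{prop:universalmap}.

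First I would record that, since $V\cong R_0^k$ as an $R_0$-module, the tensor product $V\otimes_{R_0}R$ is isomorphic to $R^k$ as an $R$-module; in particular it is $R$-free of rank $k$, which is half of the assertion. Next I would note that the endomorphisms $\b{Y},\b{X},\b{W},\b{E}$ of $V$ give, after applying the functor $-\otimes_{R_0}R$, endomorphisms $\b{Y}\otimes 1,\b{X}\otimes 1,\b{W}\otimes 1,\b{E}\otimes 1$ of $R^k$; because relations (\ref{eqn:VYk})--(\ref{eqn:Veyxy}) of Lemma \ref{lem:V} are $R_0$-linear identities in $\End_{R_0}(V)$, they remain valid after base change, with each parameter $A_m,q_i,q,\l$ replaced by its image in $R$. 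Finally I would invoke the presentation of $\bt(R)$: the map sending $Y,X,e$ to $\b{Y}\otimes 1,\b{X}\otimes 1,\b{E}\otimes 1$ respects all the defining relations of $\bt(R)$ (since, by Proposition \ref{prop:universalmap}, the images in $R$ of the parameters of $R_0$ are precisely the parameters of $R$), hence extends to an algebra map $\bt(R)\to\End_R(R^k)$. This makes $V\otimes_{R_0}R$ a $\bt(R)$-module, and it is $R$-free of rank $k$ by the first step.

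The one point that needs care — and which I would treat as the main obstacle — is the compatibility of parameters: one must be sure that the structural constants appearing in the operators $\b{Y}\otimes 1,\ldots,\b{E}\otimes 1$ on $R^k$ are exactly the parameters $A_m,q_i,q,\l$ of $R$ occurring in the relations of Definition \ref{defn:bnk} (specialised to $n=2$), so that Lemma \ref{lem:V} really does certify all the relations of $\bt(R)$ over $R$. This is immediate once one observes that applying the ring homomorphism $R_0\to R$ to the coefficients of a relation that holds in $\End_{R_0}(V)$ yields the corresponding relation in $\End_R(R^k)$, and that ``respects the parameters'' says this homomorphism carries each $R_0$-parameter to the like-named $R$-parameter. (Equivalently, and this is the route I would write up, one can bypass the algebra map $\bt(R)\to\bt(R_0)\otimes_{R_0}R$ altogether and simply verify directly that $\b{Y}\otimes1,\b{X}\otimes1,\b{W}\otimes1,\b{E}\otimes1$ satisfy the relations of $\bt(R)$, reading them off from Lemma \ref{lem:V} after the substitution of parameters.) No genuinely new computation is required; the content is entirely in the universality of $R_0$.
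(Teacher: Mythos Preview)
Your proposal is correct and matches the paper's argument: both obtain the module structure by base change along the universal map $R_0\to R$ of Proposition~\ref{prop:universalmap}, using that $V\cong R_0^k$ is a $\bt(R_0)$-module by Lemma~\ref{lem:V}. The paper phrases the last step via the canonical algebra map $\bt(R)\to\bt(R_0)\otimes_{R_0}R$ (which you mention as an alternative), whereas your primary write-up checks the relations directly on the base-changed operators; these are the same argument.
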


Henceforth we assume $R$ to be as in Definition \ref{defn:bnk} with admissible parameters $A_0, \ldots, A_{k-1}$, $q_0, \ldots, q_{k-1}$, $q$ and $\l$.

%%%%%%%%%%%%%%%%%%%%%%%%%%%%%%%%%%%%%%%%%%%%%%%%%%%%%%%%%%%

\section{Comparison of Admissibility Conditions} \label{section:comparison}

It is important to remark here that our definition of admissibility differs from the various notions of admissibility arising in H\"aring-Oldenburg \cite{HO01} and Goodman and Hauschild Mosley \cite{GH107,GH207}. However, if $\delta$ is not a zero divisor, the above equations are equivalent to those obtained by H\"aring-Oldenburg. In contrast, Goodman and Hauschild Mosley impose infinitely many relations which are polynomials in the $A_i$'s. We now shed some light on their relations and demonstrate how our admissibility conditions relate to theirs. Specifically, we prove that all of their relations hold in $R_0$. This then allows us to replace their invalid generic ground ring with our $R_0$ and proceed with a similar argument from \cite{GH107} to prove linear independence of our spanning set for general $n$.

The algebras $\bnk$ are defined slightly differently by Goodman and Hauschild Mosley in \cite{GH107}. Specifically, suppose $R'$ is a commutative unital ring containing units $\theta_0, p_0, \ldots, p_{k-1}, q, \l$ and further 
elements $\theta_j$, for $j \geq 1$, such that $\l - \l^{-1} = \d(1-\theta_0)$ holds, where $\d = q - q^{-1}$. They initially consider the affine BMW algebra over $R'$, in which $e_1 Y^{j} e_1 = \theta_{j} e_1$, for all $j \geq 1$, and  define the cyclotomic BMW algebra to be the quotient of this by the ideal generated by the $k^\mathrm{th}$ order relation $\prod_{i=0}^{k-1}(Y-p_i) = 0$. (In this situation, as noted in Chapter \ref{chap:bnkintro}, the $q_i$ in relation (\ref{eqn:ycyclo}) would become the signed elementary symmetric polynomials in the $p_i$, where $q_0 = (-1)^{k-1} \prod_i p_i$ is invertible).
They then proceed to define $\theta_{-j} \in R'$ for $j \geq 1$ so that
\[
e_1 Y^{-j} e_1 = \theta_{-j} e_1.
\]
More precisely, from our proof of Lemma \ref{lemma:eype}, we know that
\[
X_1 Y^{-p} e_1 = \l Y^p e_1 + \d \pum{s} Y^{p-2s} e_1 - \d \pum{s} Y^{p-s} e_1 Y^{-s} e_1,
\]
for all $p \geq 1$.
Then multiplying on the left hand side by $e_1$ gives
\[
\l e_1 Y^{-p} e_1 = \l e_1 Y^p e_1 + \d \pum{s} e_1 Y^{p-2s} e_1 - \d \pum{s} \theta_{p-s} e_1 Y^{-s} e_1.
\]
Now, applying a change of summation $s \mapsto p-s$, we obtain
\[
\l e_1 Y^{-p} e_1 = \l \theta_p e_1 + \d \sum_{s=0}^{p-1} e_1 Y^{2s-p} e_1 - \d \sum_{s=0}^{p-1} \theta_{s} e_1 Y^{s-p} e_1.
\]
Moreover, since $\l^{-1} = \l - \d + \d \theta_0$ in $R'$, this implies
\begin{equation}\label{eqn:eypnege}
e_1 Y^{-p} e_1 = \l^2 \theta_p e_1 + \d\l \sum_{s=1}^{p-1} e_1 Y^{2s-p} e_1 - \d\l \sum_{s=1}^{p-1} \theta_{s} e_1 Y^{s-p} e_1.
\end{equation}
Thus defining $\theta_{-p}$ for $p \geq 1$ inductively by 
\[\theta_{-p} := \l^2 \theta_p + \d\l \sum_{s=1}^{p-1} \theta_{2s-p} - \d\l \sum_{s=1}^{p-1} \theta_{s} \theta_{s-p},
\]
we see that $e_1 Y^{-j} e_1 = \theta_{-j} e_1$, for all $j \geq 1$.

\noi On the other hand, recall that $Y$ satisfies the following equation
\begin{equation} \label{eqn:yk}
\sum_{r=0}^k q_r Y^r = 0.
\end{equation}
Therefore for \emph{all} integers $s$,
\begin{equation} \label{eqn:yks}
\sum_{r=0}^k q_r e_1 Y^{r+s} e_1 = 0.
\end{equation}

If we assume $e_1 \neq 0$ in $\bt(R')$ and that $\bt(R')$ is torsion free then the above results would imply certain relations hold amongst the infinite number of parameters $\theta_j$, where $j \in \Z$. This motivates the following definition, which is precisely the definition of admissibility given by Goodman and Hauschild-Mosley in \cite{GH107}. In order to distinguish the two different notions of admissibility, we shall rename theirs to ``weak admissibility''.

\begin{defn} \label{defn:GHadm}
Let $R'$ be a commutative unital ring containing units $\theta_0, q_0, q, \l$ and further 
elements $q_1, \ldots, q_{k-1}$ and $\theta_j$, for $j \geq 1$. The parameters of $R'$ are \emph{weak admissible} if the following relations hold:
\begin{enumerate}
  \item[(i)] $\l - \l^{-1} = \d(1-\theta_0)$, where $\d = q - q^{-1}$; \\
  \item[(ii)] $\sum_{r=0}^k q_r \theta_{r+s} = 0$, for all $s \in \Z$, where for $p \geq 1$, the element $\theta_{-p}$ is defined by
\begin{equation}\label{thetaneg}
\theta_{-p} := \l^2 \theta_p + \d\l \sum_{s=1}^{p-1} \theta_{2s-p} - \d\l \sum_{s=1}^{p-1} \theta_{s} \theta_{s-p}.
\end{equation}
\end{enumerate}
\end{defn}

Now we are able to show that by extending the parameters $A_j$ to \emph{all} integers $j$ appropriately, the $A_j$ satisfy the equations on $\theta_j$ given by (\ref{thetaneg}).

Recall the relation (\ref{eqn:eye}) of the algebra which states that
\[
e_1 Y^m e_1 = A_m e_1, \quad \text{for all } m = 0, \o, k-1.
\]
and that $Y$ satisfies the relation
\[
\sum_{r=0}^k q_r Y^r = 0.
\]
Therefore for \emph{all} integers $s$,
\[
\sum_{r=0}^k q_r e_1 Y^{r+s} e_1 = 0.
\]
This motivate us to make the following definition:
\begin{defn} \label{defn:Aextended}
  Define $A_j$ inductively for $j <0$ and $j \geq k$ so that
\begin{equation} \label{eqn:Aextended}
\sum_{j=0}^k q_j A_{j+s} = 0, \quad \text{for all } s \in \mathbb{Z}.
\end{equation}
\end{defn}

\begin{prop} \label{prop:WYimpliesGH}
  Let $R$ be as in Definition \ref{defn:bnk} with a set of admissible parameters $A_0$, \ldots, $A_{k-1}$, $q_0$, \ldots, $q_{k-1}$, $q$ and $\l$ (see Definition \ref{defn:adm}). Furthermore, for $j <0$ and $j \geq k$, let $A_j \in R$ be given by Definition \ref{defn:Aextended}. Then the parameters of $R$ are weak admissible, in the sense of Definition \ref{defn:GHadm}, (by identifying $\theta_j$ with $A_j$, for all $j \in \Z$).
\end{prop}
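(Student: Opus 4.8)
The plan is to verify the two defining conditions of Definition~\ref{defn:GHadm} for the ring $R$, identifying $\theta_j$ with $A_j$ for every $j \in \mathbb{Z}$. Condition~(i), namely $\lambda - \lambda^{-1} = \delta(1-A_0)$, is immediate: it is part of the standing hypothesis that $R$ is as in Definition~\ref{defn:bnk} (and is also the relation $h_0 = 0$ of (\ref{eqn:h0}), which admissibility imposes). For condition~(ii), the relation $\sum_{r=0}^k q_r A_{r+s} = 0$ for all $s \in \mathbb{Z}$ holds by construction, since this is exactly how the $A_j$ with $j<0$ or $j \geq k$ are defined in Definition~\ref{defn:Aextended}; moreover the same recursion shows these $A_j$ agree with the scalars occurring in (\ref{eqn:negeye}) (feeding the $k^{\mathrm{th}}$ order relation on $Y$ into $e_1 Y^{m} e_1$ propagates $e_1 Y^j e_1 = A_j e_1$ from the window $0 \leq j \leq k-1$ to all $j$ via precisely (\ref{eqn:Aextended})). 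Thus everything reduces to establishing the recursion (\ref{thetaneg}) with $\theta_j$ replaced by $A_j$: for each $p \geq 1$,
\[
A_{-p} = \lambda^2 A_p + \delta\lambda \sum_{s=1}^{p-1} A_{2s-p} - \delta\lambda \sum_{s=1}^{p-1} A_s A_{s-p}.
\]

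The key tool is the $R$-free $\bt(R)$-module $V$ constructed in Section~\ref{section:V}, on which $e$ acts by $\b{E}$ with $\b{E} v_0 = A_0 v_0$ and $A_0 \in R^\times$. The point is that the natural manipulations only produce the desired identities \emph{after right multiplication by $e_1$} inside $\bt(R)$, and at this stage of the thesis nothing is known about torsion in $\bt(R)$; passing to $V$, where $e$ has the invertible eigenvalue $A_0$ on the free basis vector $v_0$, is exactly what lets us cancel and recover genuine relations among the parameters. Concretely, the derivation of (\ref{eqn:eypnege}) given in the text uses only the formula for $X_1 Y^{-p} e_1$ from the proof of Lemma~\ref{lemma:eype}, the relation $e_1 X_1 = \lambda e_1$, and $\lambda^{-1} = \lambda - \delta + \delta A_0$, all of which hold in $\bt(R)$; hence, after substituting $e_1 Y^j e_1 = A_j e_1$ (using the coincidence noted above), one obtains
\[
A_{-p}\, e_1 = \Bigl(\lambda^2 A_p + \delta\lambda \sum_{s=1}^{p-1} A_{2s-p} - \delta\lambda \sum_{s=1}^{p-1} A_s A_{s-p}\Bigr) e_1
\]
in $\bt(R)$ for every $p \geq 1$. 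Alternatively one can run the whole computation inside $\End_R(V)$: multiply the endomorphism identity $\b{X}\b{Y}^{-p}\b{E} = \lambda \b{Y}^p \b{E} + \delta \sum_{s=1}^{p} \b{Y}^{p-2s} \b{E} - \delta \sum_{s=1}^{p} \b{Y}^{p-s}\b{E}\b{Y}^{-s}\b{E}$ on the left by $\b{E}$, use $\b{E}\b{X} = \lambda\b{E}$ from (\ref{eqn:Vxe}) together with $\b{E}\b{Y}^{m}\b{E} = A_m\b{E}$ (extended from (\ref{eqn:eyev}) to all $m \in \mathbb{Z}$ via (\ref{eqn:VYk})), reindex each sum by $s \mapsto p-s$, isolate the extreme term and simplify with $\lambda^{-1} = \lambda - \delta + \delta A_0$.

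Either way, applying the resulting identity to $v_0 \in V$ gives the displayed scalar identity multiplied by the unit multiple $A_0 v_0$ of a free $R$-basis vector, so the scalars agree in $R$. This establishes (\ref{thetaneg}) for $\theta_j = A_j$, hence condition~(ii) of Definition~\ref{defn:GHadm}, and with condition~(i) already checked the parameters of $R$ are weak admissible. The only genuine obstacle is the torsion issue just described, which is precisely why the module $V$ must be invoked; everything else is bookkeeping with relations already recorded in Lemmas~\ref{lemma:eype} and~\ref{lem:V}.
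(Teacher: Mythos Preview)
Your proof is correct and uses essentially the same idea as the paper: both arguments hinge on the $\bt(R)$-module $V$ to convert an identity of the form $(\text{scalar})\cdot e_1 = 0$ into a genuine relation among the parameters, by acting on $v_0$ where $e$ has the invertible eigenvalue $A_0$. The only organizational difference is that the paper computes directly in $V$ by evaluating $\b{E}\b{X}v_p = \b{X}\b{E}v_p$ (after first proving the claim $\b{E}v_s = A_s v_0$ for all $s\in\Z$ via the recursion~(\ref{eqn:Aextended})), whereas you recycle the text's derivation of~(\ref{eqn:eypnege}) to obtain the algebra identity first and only then pass to $V$; these amount to the same computation read in two orders.
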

\begin{proof}
 We are now required to prove the following recursive relations hold for all $p \geq 1$:
\begin{equation} \label{eqn:GHadm}
  A_{-p} = \l^2 A_p + \d \l\sum_{s=1}^{p-1} A_{2s-p} - \d \l\sum_{s=1}^{p-1} A_{s} A_{s-p}.
\end{equation}

Recall our $k$-dimensional $\bt(R)$-module V, with basis $\{v_0,\o,v_{k-1}\}$, where the action of the generators $Y$, $X_1$ and $e_1$ were described by the maps $\b{Y}$, $\b{X}$ and $\b{E}$, respectively. 
%As we have shown that these maps respect the relations of the algebra, we may now omit the bars.

\noi \textbf{Claim:} For all $s \in \Z$, $\b{E} v_{s}$ = $\b{E} \b{Y}^{s} v_0 = A_{s} v_0$.

\noi By equation (\ref{eqn:yk}), $\sum_{r = 0}^k q_r Y^{r+s} v_0 = 0$, for all $s \in \Z$. Since $v_i = Y^i v_0$ by definition, this implies that $\sum_{r = 0}^k q_r v_{r+s} = 0$, hence $\sum_{r = 0}^k q_r \b{E} v_{r+s} = 0$, for all $s \in \Z$. Together with equation (\ref{eqn:Aextended}), this shows that
\[
\sum_{r = 0}^k q_r \left( \b{E} v_{r+s} - A_{r+s} v_0 \right) = 0\text{, for all } s \in \Z.
\]
But we know, by definition of $\b{E}$, $\b{E}v_s - A_s v_0 = 0$, for all $s = 0, \o, k-1$. Thus, by induction on $s$, it must be zero for all integers $s$, verifying our claim. Note that, in particular, this implies $\b{E} v_{-p} = A_{-p} v_0$, for all $p \geq 1$. 

Now, since we have proved $V$ is a $\bt(R)$-module, in particular $\b{X}\b{E} v_p = \b{E} \b{X} v_p$, for all $p \geq 1$.
Using equation (\ref{eqn:magic}) and Lemma \ref{lem:YXYX}, a straightforward calculation shows
\begin{align*}
  \b{X} v_p \- \l^{-1} v_{-p} + \d \sum_{i=1}^{p-1} A_{p-i} v_{-i} -  \d \sum_{i=1}^{p-1} v_{p-2i} \\
\- \l^{-1} v_{-p} + \d \sum_{s=1}^{p-1} A_{s} v_{s-p} -  \d \sum_{s=1}^{p-1} v_{2s-p}.
\end{align*}  
Therefore, by our claim above,
\[
\b{E} \b{X} v_p = \l^{-1} A_{-p} v_0 + \d \sum_{s=1}^{p-1} A_{s} A_{s-p} v_0 -  \d \sum_{s=1}^{p-1} A_{2s-p} v_0.
\]
On the other hand,
\[
\b{X} \b{E} v_p = A_p \b{X} v_0 \rel{eqn:Xv0} \l A_p v_0.
\]
Thus, as $v_0$ is a basis element, $\b{X}\b{E} v_p = \b{E} \b{X} v_p$ implies that
\begin{align*}
  \l A_p \- \l^{-1} A_{-p} + \d \sum_{s=1}^{p-1} A_{s} A_{s-p} - \d \sum_{s=1}^{p-1} A_{2s-p} \\
\Leftrightarrow \quad A_{-p} \- \l^2 A_p + \d \l\sum_{s=1}^{p-1} A_{2s-p} - \d \l\sum_{s=1}^{p-1} A_{s} A_{s-p},
\end{align*}
which is precisely equation (\ref{eqn:GHadm}).
Hence, by setting
$\theta_j=A_j$ for $j\geq0$, we have proven the parameters $q$, $\l$, $q_0$, \ldots, $q_{k-1}$ and $\theta_j$ are weak admissible, completing the proof of the Proposition.
\end{proof}

\chapter{The Freeness of $\bnk$} \label{chap:basis}

Recall in the introduction we mentioned that the BMW algebras $\bmw_n$ are isomorphic to the Kauffman tangle algebras $\mathbb{KT}_n$ and are of rank $(2n-1)!! = (2n-1) \cdot (2n-3) \cdot \o \cdot 1$, the same as that of the Brauer algebras.
The Brauer algebras were introduced by Brauer \cite{B37} as a device for studying the representation theory of the symplectic and orthogonal groups, and are typically defined to have a basis consisting of Brauer diagrams, which basically look like tangles except over and under-crossings are not distinguished. The BMW algebras are a deformation of the Brauer algebras comparable to the way the Iwahori-Hecke algebras of type $A_{n-1}$ are a deformation of the group algebras of the symmetric group $\mathfrak{S}_n$. Alternatively, the Brauer algebra is the ``classical limit" of the BMW algebra or Kauffman tangle algebra in the sense one just ``forgets" the notion of over and under crossings in tangles diagrams and so tangle diagrams consisting only of vertical strands degenerate into permutations. In fact, by starting with the set of Brauer $n$-diagrams (see Section \ref{section:cyclobrauer}), together with a fixed ordering of the vertices and a rule for which strands cross over which, one may easily write down a diagrammatic basis of the BMW algebra $\mathscr{C}_n$; for more details on this construction, we refer the reader to Morton and Wasserman \cite{MW89} and Halverson and Ram \cite{HR95}.

The affine and cyclotomic Brauer algebras were first introduced by H\"aring-Oldenburg \cite{HO01}, as classical limits of their BMW analogues in the above sense. The cyclotomic case is studied by Rui and Yu in \cite{RY04} and Rui and Xu in \cite{RX07}. (There is also the notion of a $G$-Brauer algebra for an arbitrary abelian group $G$, introduced by Parvathi and Savithri in \cite{PS02}). The cyclotomic Brauer algebra is, by definition, free of rank $k^n (2n-1)!!$. Therefore we would expect the cyclotomic BMW algebras to be of this rank too.
The main aim of this chapter is to establish the linear independence of our spanning set, obtained in Chapter \ref{chap:spanning}, initially over $R_0$, the ``universal'' ground ring with admissible parameters constructed in Lemma \ref{lemma:Rsigma}. To achieve this goal, we use a modification of the arguments made in Section 6 of Goodman and Hauschild Mosley~ \cite{GH107}, which were adapted from Morton and Traczyk~\cite{MT90} and Morton and Wasserman~ \cite{MW89}. We require the existence of a trace on $\bnk(R_0)$ which, using its specialisation into the cyclotomic Brauer algebra, is shown to be non-degenerate. This will prove the linear independency of our spanning set.

This chapter is set out as follows. We begin with a brief introduction on tangles and affine tangles and define the cyclotomic Kauffman tangle algebras through the affine Kauffman tangle algebras. From here, we introduce the cyclotomic Brauer algebras and associate with it a trace map. Using the nondegeneracy of this trace, given by a result of Parvathi and Savithri \cite{PS02}, we are then able to establish the nondegeneracy of a trace on the cyclotomic BMW algebras over a specific quotient ring of $R_\sigma$, for $\sigma \in \{0,+,-\}$. From this, we then deduce that the same result holds for these three rings. In particular, this implies that we have a basis of $\bnk(R_0)$, thereby proving $\bnk(R)$ is $R$-free for \emph{all} rings $R$ with admissible parameters. Moreover, as a consequence of these results, we also prove the cyclotomic BMW algebras are isomorphic to the cyclotomic Kauffman tangle algebras. These results verify the claims made in Goodman and Hauschild Mosley \cite{GH107} for rings with \emph{weak admissible} parameters.

\begin{defn}
  An \emph{\textbf{$n$-tangle}} is a piece of a link diagram,  consisting of a union of arcs and a finite number of closed cycles, in a rectangle in the plane such that the end points of the arcs consist of $n$ points located at the top and $n$ points at the bottom in some fixed position.
\end{defn}

An $n$-tangle may be diagrammatically presented as two rows of $n$ vertices and $n$ strands connecting the vertices so that every vertex is incident to precisely one strand, and over and under-crossings and self-intersections are indicated. In addition, this diagram may contain finitely many closed cycles.
 
\begin{defn}
  Two tangles are said to be \emph{\textbf{ambient isotopic}} if they are related by a sequence of \emph{\textbf{Reidemeister moves}} of types I, II and III (see Figure \ref{fig:reidemeister}), together with an isotopy of the rectangle which fixes the boundary. They are \emph{\textbf{regularly isotopic}} if the Reidemeister move of type I is omitted from the previous definition.
\end{defn}

\begin{figure}[h!]
  \begin{center}
  \begin{tabular}{rc}
\raisebox{.5cm}{I}\qquad\qquad\qquad &   \includegraphics{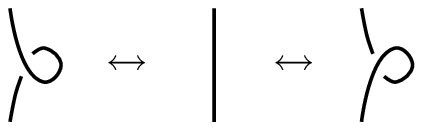} \\[1cm]
\raisebox{.55cm}{II}\qquad\qquad\qquad &   \, \includegraphics{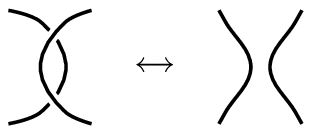} \\[1cm]
\raisebox{.65cm}{III}\qquad\qquad\qquad &    \, \includegraphics{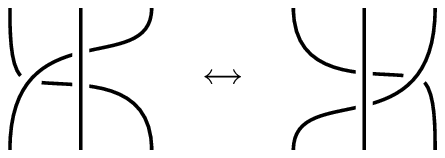}
\end{tabular}
    \caption{Reidemeister moves of types I, II and III.} \label{fig:reidemeister}
  \end{center}
\end{figure}

One obtains a monoid structure on the regular isotopy equivalence classes of $n$-tangles where composition is defined by concatenation of diagrams.
As mentioned in the introduction chapter, the algebra of these tangles together with a skein relation coming from the Kauffman link invariant is a diagrammatic formulation of the original BMW algebra. 

The tangles which appear in the topological intepretation of the affine and cyclotomic BMW algebras feature in type $B$ braid/knot theory. Affine braids or braids of type $B$ (see Lambropoulou \cite{L93}, tom Dieck \cite{T94} and Allcock \cite{A02}) are commonly depicted as braids in a (slightly thickened) cylinder, or a (slightly thickened) annulus, or as braids with a flagpole. Essentially, braids on $n$ strands of type $B$ are just ordinary braids (of type $A$) on $n+1$ strands in which the first strand is pointwise fixed; this single fixed line is usually presented as a ``flagpole'', a thickened vertical segment, on the left and the other strands may loop around this flagpole.
 
\begin{defn}
  An \emph{\textbf{affine $n$-tangle}} is an $n+1$-tangle with a monotonic path joining the first top and bottom vertex, which is diagramatically presented by the flagpole mentioned above.
\end{defn}

\noi The diagram shown in Figure \ref{fig:afftang} is an example of an affine $2$-tangle.
\begin{figure}[h!]
  \begin{center}
    \includegraphics[width=40mm,height=35mm] {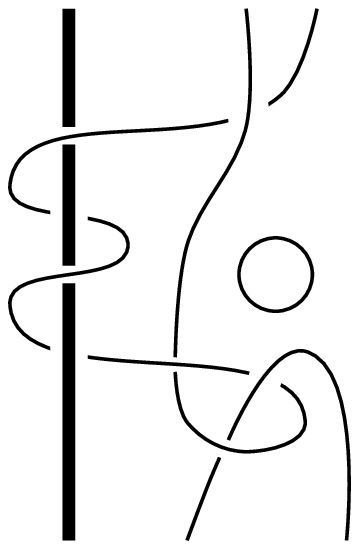}
\caption{Affine $2$-tangle diagram.}
\label{fig:afftang}
  \end{center}
\end{figure} 

Two affine $n$-tangles are ambient (regularly, respectively) isotopic if they are ambient (regularly, respectively) isotopic as $n+1$-tangles. Note that, as the flagpole is required to be a fixed monotonic path, the Reidemiester move of type I is never applied to the flagpole in an ambient isotopy. As with ordinary tangles, the equivalence classes of affine $n$-tangles under regular isotopy carry a monoid structure under concatenation of tangle diagrams. Let $\wh{\mathbb{T}}_n$ denote the monoid of the regular isotopy equivalence classes of affine $n$-tangles.

For $j \geq 0$, let us denote by $\Theta_j$ (the regular isotopy equivalence class of) the non-self-intersecting closed curve which winds around the flagpole in the `positive sense' $j$ times. These are special affine $0$-tangles which will feature in definitions later. Observe that $\Theta_0$ is represented by a closed curve that does not interact with the flagpole. The following figure illustrates $\Theta_3$:
\begin{figure}[h!]
  \begin{center}
    \includegraphics
    {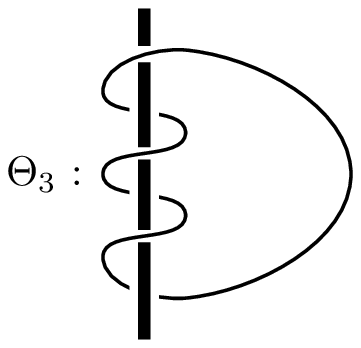}
\caption{$\Theta_3$.}
  \end{center}
\end{figure} 

Using this monoid algebra of affine $n$-tangles, we may now define the affine and cyclotomic BMW algebras. We remark here that the definitions differ slightly to those given in Goodman and Hauschild \cite{GH107}; the difference is that their initial ground ring involves an infinite family of $A_j$'s, for every $j \geq 0$. Instead, here we define the algebras over a ring $R$ under the same assumptions as in the definition of $\bnk$ and take $A_j$, where $j \geq k$, to be elements of $R$ defined by equation (\ref{eqn:Aextended}).

The figures in relations given in the following two definitions indicate affine tangle diagrams which differ locally only in the region shown and are identical otherwise. 

\begin{defn} \label{defn:affineKT}
Let $R$ be as in Definition \ref{defn:bnk}; that is, a commutative unital ring containing units $A_0, q_0, \ldots, q_{k-1},q,\l$ and further elements $A_1, \ldots, A_{k-1}$ such that $\l - \l^{-1} = \d(1-A_0)$ holds, where $\d = q - q^{-1}$. Moreover, let $A_j \in R$, for all $j \geq k$, be defined by equation (\ref{eqn:Aextended}).

  The \emph{\textbf{affine Kauffman tangle algebra}}  $\wh{\mathbb{KT}}_n(R)$ is the monoid $R$-algebra $R\wh{\mathbb{T}}_n$ modulo the following relations:
\pagebreak
\begin{enumerate}
\item[(1)] (Kauffman skein relation)
  \begin{center}
    \includegraphics{pspictures/kauff_scaled.eps}
  \end{center}
\item[(2)] (Untwisting relation) 
\begin{center}
        \includegraphics{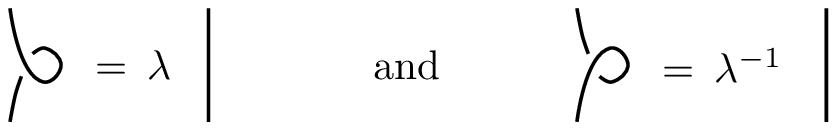}
\end{center}
\item[(3)] (Free loop relations) \\
For $j \geq 0$, 
\[ T \amalg \Theta_j = A_j T, \]
where $T \amalg \Theta_j$ is the diagram consisting of the affine $n$-tangle $T$ and a copy of the loop $\Theta_j$ defined above, such that there are no crossings between $T$ and $\Theta_j$.
\end{enumerate}
\end{defn}

\textbf{Remark:} 
An important case to consider is the affine $0$-tangle algebra $\wh{\mathbb{KT}}_0(R)$. If relation~(3) is removed from the above definition, a result of Turaev \cite{T88} shows that the affine $0$-tangle algebra is freely generated by the $\Theta_j$, where $j \geq 0$, and embeds in the center of the affine $n$-tangle algebra. This motivates relation~(3) in the above definition. This then shows $\wh{\mathbb{KT}}_0(R) \cong R$. 

Recall the affine BMW algebra $\wh{\mathscr{B}}_n(R)$ over $R$ is simply the cyclotomic BMW algebra $\bnk(R)$ with the cyclotomic $k^\mathrm{th}$ order relation on the generator $Y$ omitted. Goodman and Hauschild \cite{GH06} prove the maps given in Figure \ref{fig:dghom} determine an $R$-algebra isomorphism $\wh{\dg}$ between the affine BMW and affine Kauffman tangle algebras.
\begin{figure}[h!]
  \begin{center}
    \includegraphics{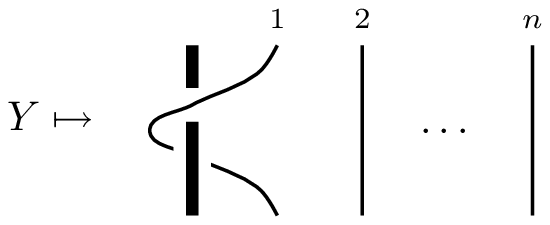} \vskip 1cm
    \includegraphics{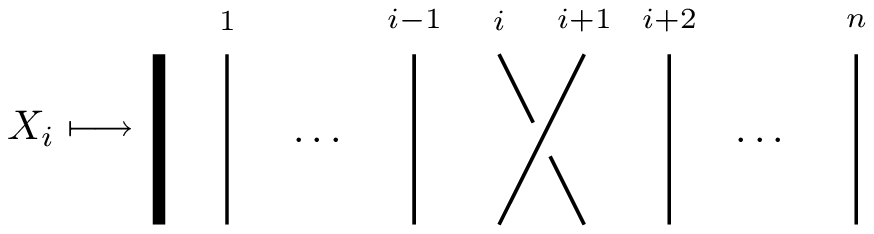}
\vskip 1cm
    \includegraphics{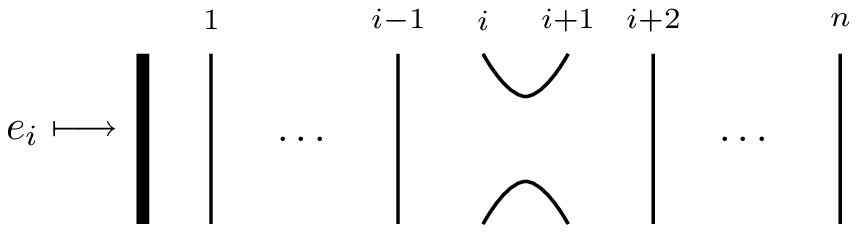}
    \caption{The isomomorphism between the affine BMW and affine Kauffman tangle algebras.}
\label{fig:dghom}
  \end{center}
\end{figure}

We write $\mathcal{Y}$, $\mathcal{X}_i$ and $\mathcal{E}_i$ for the images of the generators $Y$, $X_i$ and $e_i$ under $\wh{\dg}$, respectively.
In particular, the image of $Y_i'$ is exemplified in Figure \ref{fig:y4dash}.
\begin{figure}[h!]
  \begin{center}
    \includegraphics[scale=0.9]{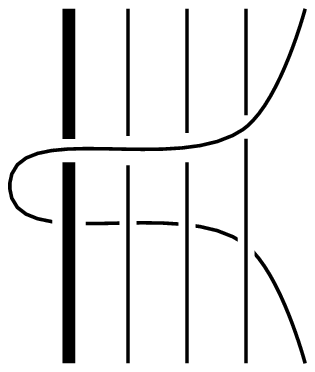}
    \caption{The affine $4$-tangle associated with the element $Y_4'$.}
    \label{fig:y4dash}
  \end{center}
\end{figure} 

\begin{defn}
  Let $R$ be as in Definition \ref{defn:affineKT}.
The \emph{\textbf{cyclotomic Kauffman tangle algebra}} $\ckt{n}(R)$ is the affine Kauffman tangle algebra $\wh{\mathbb{KT}}_n(R)$ modulo the cyclotomic skein relation:
  \begin{center}
\includegraphics[scale=1.2]{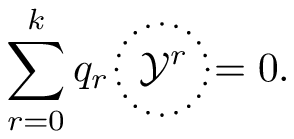}
  \end{center}
The interior of the disc shown in the above relation represents part of an affine tangle diagram isotopic to
$\mathcal{Y}^r$ (see Figure \ref{fig:dghom} for $\mathcal{Y}$).
The sum in this relation is over affine tangle diagrams which differ only in the interior of the disc shown and are otherwise identical. 
\end{defn}

By definition, there is a natural projection $\pi_t: \wh{\mathbb{KT}}_n(R) \twoheadrightarrow \ckt{n}(R)$ and a natural projection $\pi_b: \wh{\mathscr{B}}_n(R) \twoheadrightarrow \bnk(R)$. Moreover, $\hat{\dg}: \wh{\mathscr{B}}_n(R) \rightarrow \wh{\mathbb{KT}}_n(R)$ induces an $R$-algebra homomoprhism $\dg: \bnk(R) \rightarrow \ckt{n}(R)$ such that the following diagram of $R$-algebra homomorphisms commutes. 
\[
\begin{CD}
\wh{\mathscr{B}}_n(R) @>\hat{\dg}>> \wh{\mathbb{KT}}_n(R) \\
@V{\pi_b}VV    @VV{\pi_t}V \\
\bnk(R) @>\dg>> \ckt{n}(R)
\end{CD}
\]
\\
Moreover, because $\wh{\dg}$ is an isomorphism, this implies $\dg: \bnk(R) \rightarrow \ckt{n}(R)$ is surjective. \label{defn:dg}
Furthermore, the homomorphism $\dg$ commutes with specialisation of rings. More precisely, given a parameter preserving ring homomorphism $R_1 \rightarrow R_2$, we can consider $\bnk(R_2)$ as an $R_1$-algebra and construct the $R_1$-algebra homomorphism $\eta_b: \bnk(R_1) \rightarrow \bnk(R_2)$, which sends generator to generator. The ring homomorphism also extends to a $R_1$-algebra homomorphism $\eta_t: \ckt{n}(R_1) \rightarrow \ckt{n}(R_2)$. Then it is easy to verify that $\dg \circ \eta_b = \eta_t \circ \dg$ holds on the generators of $\bnk(R_1)$, hence we have the following commutative diagram of $R_1$-algebra homomorphisms:
\begin{equation} \label{cd:dgspec}
\begin{CD}
\bnk(R_1) @>\eta_b>> \bnk(R_2) \\
@V{\dg}VV    @VV{\dg}V \\
\ckt{n}(R_1) @>\eta_t>> \ckt{n}(R_2)
\end{CD}
\end{equation}

\textbf{Remark:} The tangle analogue of the (\ref{eqn:star}) anti-involution described at the beginning of Chapter \ref{chap:bnkintro} is then just  the anti-automorphism of $\ckt{n}(R)$ which flips diagrams top to bottom. In particular, it fixes $\mathcal{Y}$, $\mathcal{X}_i$ and $\mathcal{E}_i$. 
Furthermore, the map of affine tangles that reverses all crossings, including crossings of strands with the flagpole, determines an isomorphism from $\ckt{n}(q,\l,A_i,q_i) \rightarrow \ckt{n}(q^{-1},\l^{-1},A_{-i},-q_{k-i}q_0^{-1})$.

\section{Construction of a Trace on $\bnk$}

We now work our way towards a Markov trace on the cyclotomic BMW algebras, via maps on the affine BMW algebras, as described in Goodman and Hauschild \cite{GH06}.
Observe that, there is a natural inclusion map $\iota$ from the set of affine $n-1$-tangles to the set of affine $n$-tangles defined by simply adding an additional strand on the right without imposing any further crossings, as illustrated below.  
\begin{figure}[h!]
  \begin{center}
    \includegraphics{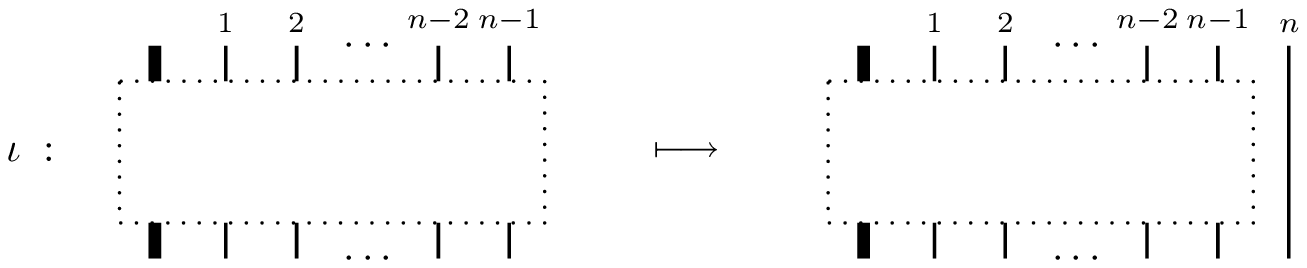}   
  \end{center}
\end{figure}
\\ 
Furthermore, the map $\iota$ respects regular isotopy, composition of affine tangle diagrams and the relations of $\akt{n}$, so induces an $R$-algebra homomorphism $\iota: \akt{n-1}(R) \rightarrow \akt{n}(R)$. Moreover, it respects the cyclotomic skein relation, hence induces an $R$-algebra homomorphism $\iota: \ckt{n-1}(R) \rightarrow \ckt{n}(R)$. 

There is also a ``closure'' map $\cl{n}$ for the affine Kauffman tangle algebras, from the set of affine $n$-tangles to the set of affine $n-1$-tangles, given by closure of the rightmost strand, as illustrated below.

We define $\varepsilon_n: \akt{n}(R) \rightarrow \akt{n-1}(R)$ to be the $R$-linear map given by 
\[
\varepsilon_n(T) := A_0^{-1} \cl{n}(T).
\]
Moreover, taking these closure maps recursively produces a \emph{trace} map on $\akt{n}$. 
We define $\varepsilon_{_{\mathbb{T}}}: \akt{n}(R) \rightarrow \akt{0}(R) \cong R$ by
\[
\varepsilon_{_{\mathbb{T}}} := \varepsilon_1 \circ \cdots \circ \varepsilon_n.
\]
 \begin{figure}[h!]
  \begin{center}
    \includegraphics{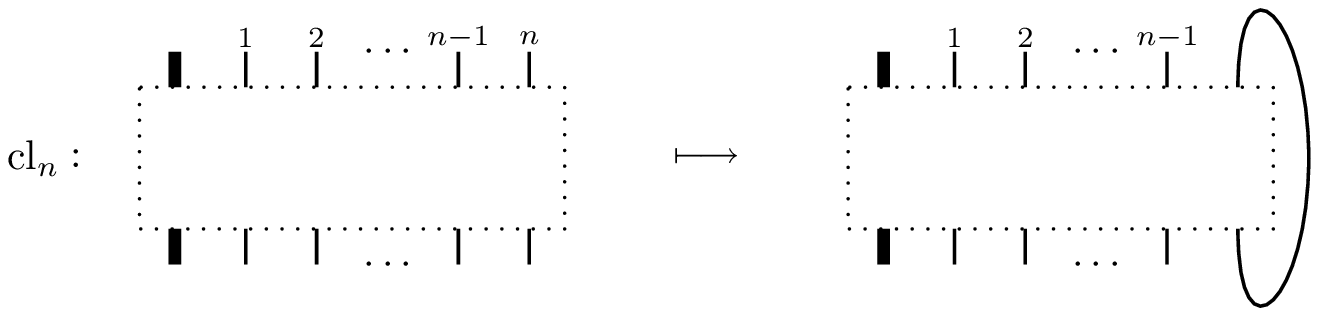}   
  \end{center}
\end{figure} 
\[
\hspace{5mm} \includegraphics{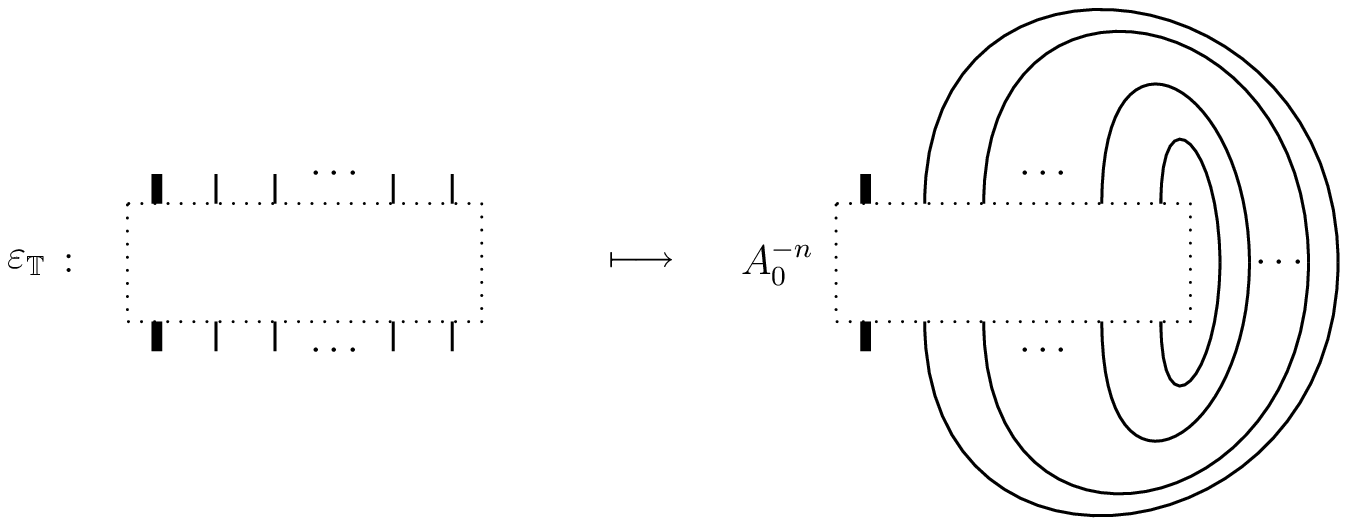}   
\]
\\
In particular, these closure and trace maps respect the cyclotomic skein relation, therefore induce analogous maps $\varepsilon_n: \ckt{n}(R) \rightarrow \ckt{n-1}(R)$ and $\varepsilon_{_{\mathbb{T}}}: \ckt{n}(R) \rightarrow \ckt{0}(R)$.
Now, using Turaev's result and the conditions for weak admissibility (see Definition \ref{defn:GHadm}), one can show that $\ckt{0}(S) \cong S$, for any ring $S$ with \emph{weak admissible} parameters.  Hence we have a trace map 
\[
\varepsilon_{_{\mathbb{T}}}: \ckt{n}(S) \rightarrow S.
\] In particular, by Proposition \ref{prop:WYimpliesGH}, this also holds for any ring with \emph{admissible} parameters.

\textbf{Remark:} Observe that $\varepsilon_n \circ \iota$ is the identity on $\ckt{n-1}$, hence the map $\iota: \ckt{n-1} \rightarrow \ckt{n}$ is injective and we may naturally regard $\ckt{n-1}$ as a subalgebra of $\ckt{n}$.
However, it is \emph{not} clear, \emph{a priori}, that the analogous map $\iota: \bmk \rightarrow \bnk$ defined by $Y \mapsto Y$, $X_i \mapsto X_i$ and $e_i \mapsto e_i$ is injective.

\textbf{Remark:} The map $\varepsilon_n$ is sometimes called a ``\emph{conditional expectation}''; see Goodman and Hauschild \cite{GH06} for more detail. Also, for a ring $S$ with admissible or weak admissible parameters, the trace map $\varepsilon_{_{\mathbb{T}}}: \ckt{n}(S) \rightarrow S$ satisfies certain properties and is usually called a \emph{Markov trace}. This terminology originated in Jones \cite{J83}. For more details, we refer the reader to Goodman and Hauschild \cite{GH06,GH107}.

Taking the closure of the usual braids of type $A$ (or tangles) produces links in $S^3$. In the type $B$ case, closure of affine braids and affine tangles yields links in a solid torus. This leads to the study of Markov traces on the Artin braid group of type $B$ and, moreover, invariants of links in the solid torus. Various invariants of links in the solid torus, analogous to the Jones and HOMFLY-PT invariants (for links in $S^3$), have been discovered using Markov traces on the cyclotomic Hecke algebras; see, for example, Lambropoulou \cite{L99} and references therein.
Kauffman-type invariants for links in the solid torus can be recovered from the Markov trace on the affine BMW algebra. This is discussed by Goodman and Hauschild in \cite{GH06}. 

The trace map on $\ckt{n}$ commutes with specialisation of ground rings, in the following sense.
Suppose we have two rings $S_1$ and $S_2$ with weak admissible parameters and there is a parameter preserving ring homomorphism $\nu: S_1 \rightarrow S_2$. Then the following diagram of $S_1$-linear maps commutes.
\begin{equation} \label{cd:ckttracespec}
\begin{CD}
\ckt{n}(S_1) @>\varepsilon_{_{\mathbb{T}}}>> S_1 \\
@V{\eta_t}VV  @VV{\nu}V \\
\ckt{n}(S_2) @>\varepsilon_{_{\mathbb{T}}}>> S_2
\end{CD}
\end{equation}
Indeed, because $\ckt{n}(S_1)$ is spanned over $S_1$ by affine $n$-tangle diagrams, it suffices to check $\nu \circ \varepsilon_{_{\mathbb{T}}} = \varepsilon_{_{\mathbb{T}}} \circ \eta_t$ on affine $n$-tangle diagrams, by the $S_1$-linearity of $\varepsilon_{_{\mathbb{T}}}$ and definition of $\eta_t$. This then follows because $\cl{n}$ and the isomorphism $\ckt{0}(S) \cong S$ commutes with specialisation. The former is easy to verify, as it suffices to check on diagrams, and the latter is clear since the isomorphism $\ckt{0}(S) \rightarrow S$ is inverse to the natural inclusion map $S \rightarrow \ckt{0}$. 

Using this trace on the cyclotomic Kauffman tangle algebras, we are now able to define a trace on the cyclotomic BMW algebras, over any ring $S$ with weak admissible parameters, by taking its composition with the diagram homomorphism $\dg: \bnk(S) \rightarrow \ckt{n}(S)$ described on page \pageref{defn:dg}.

\begin{defn}
  For any ring $S$ with weak admissible parameters, define the $S$-linear map 
\[\varepsilon_{\!_{\mathscr{B}}} := \varepsilon_{_{\mathbb{T}}} \circ \dg: \bnk(S) \rightarrow S.
\]
\end{defn}
Note that, in particular, we now have a trace map on $\bnk$ over any ring with \emph{admissible} parameters, by Proposition \ref{prop:WYimpliesGH}.

Now, by the commutative diagrams (\ref{cd:dgspec}) and (\ref{cd:ckttracespec}), it is easy to see that $\varepsilon_{\!_{\mathscr{B}}}$ commutes with specialisation of rings as well. In other words, if $S_1$ and $S_2$ are two rings with weak admissible parameters and $\nu: S_1 \rightarrow S_2$ is a parameter preserving ring homomorphism, then the following diagram of $S_1$-linear maps commutes.
\begin{equation} \label{cd:bnktracespec}
\begin{CD}
\bnk(S_1) @>\varepsilon_{\!_{\mathscr{B}}}>> S_1 \\
@V{\eta_b}VV  @VV{\nu}V \\
\bnk(S_2) @>\varepsilon_{\!_{\mathscr{B}}}>> S_2
\end{CD}
\end{equation}
In the next section, we will show that for a particular ring with admissible (and hence weak admissible) parameters, this trace map $\varepsilon_{\!_{\mathscr{B}}}$ is in fact non-degenerate. For this, we consider its relationship with a known non-degenerate trace on the cyclotomic Brauer algebras.

%%%%%%%%%% CYCLOTOMIC BRAUER ALGEBRA %%%%%%%%%%

\section{Cyclotomic Brauer Algebras} \label{section:cyclobrauer}

For a fixed $n$, consider the set of all partitions of the set $\{1,2,\o,n,1',2',\o,n'\}$ into subsets of size two. Any such partition can be represented by a \emph{Brauer $n$-diagram}; that is, a graph on $2n$ vertices with the $n$ top vertices marked by $\{1,2,\o,n\}$ and the bottom vertices $\{1',2',\o,n'\}$ and a strand connecting vertices $i$ and $j$ if they are in the same subset.
Given an arbitrary unital commutative ring $U$ and an element $A_0 \in U$, the \emph{Brauer algebra} $\EuScript{B}_n$ is defined to be the $U$-algebra with $U$-basis the set of Brauer $n$-diagrams. The multiplication of two Brauer $n$-diagrams in the algebra is defined as follows. Given two Brauer $n$-diagrams $D_1$ and $D_2$, let $D_3$ denote the Brauer $n$-diagram obtained by removing all closed loops formed in the concatenation of $D_1$ and $D_2$. Then the product of $D_1$ and $D_2$ is defined to be $A_0^r D_3$, where $r$ denotes the number of loops removed.

The Brauer algebras have been studied extensively in the literature. For example, the generic structure of the algebra and a criterion for semisimplicity of $\EuScript{B}_n$ have been determined; see Wenzl \cite{W88}, Rui \cite{R05} and Enyang \cite{E07} and references therein.

The Brauer algebra is the ``classical limit'' of the BMW algebra in the sense that $\EuScript{B}_n$ is a specialisation of the BMW algebra $\bmw_n$ obtained by sending the parameter $\d$ to $0$. Under this specialisation, the element $X_i$ is then identified with its inverse; this is equivalent to removing the notion of over and under-crossings in $n$-tangles.
In a similar fashion, the cyclotomic Brauer algebras (also termed the ``$\Z_k$-Brauer algebras'' in Goodman and Hauschild Mosley \cite{GH107}) may be thought of as the ``classical limit'' of the cyclotomic Kauffman tangle algebras $\ckt{n}$. The following definition is taken from \cite{GH107}.

\begin{defn}
  A \emph{\textbf{$k$-cyclotomic Brauer $n$-diagram}} (or $\Z_k$-Brauer $n$-diagram) is a Brauer $n$-diagram, in which each strand is endowed with an orientation and labelled by an an element of the cyclic group $\Z_k = \Z/k \Z$. Two diagrams are considered the same if the orientation of a strand is reversed and the $\Z_k$-label on the strand is replaced with its inverse (in the group $\Z_k$).
\end{defn}

\noi An example of a $\Z_6$-Brauer $5$-diagram is given in Figure \ref{fig:cyclobrauer}. 

\begin{figure}[h!]
  \begin{center}
    \includegraphics{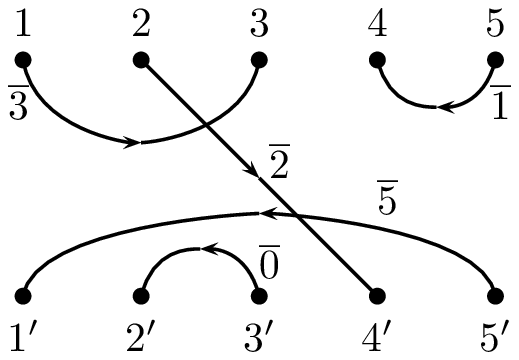}
  \end{center}
  \caption{Example of a $\Z_6$-Brauer $5$-diagram.}
  \label{fig:cyclobrauer}
\end{figure}

Now let $R_c$ denote the polynomial ring $\mathbb{Z}[A_0^{\pm 1}, A_1,\ldots, A_{\floor{k/2}}]$.
The following rules define a multiplication for these diagrams.
Firstly, given two $\Z_k$-Brauer $n$-diagrams $D_1$ and $D_2$, concatenate them as one would for ordinary Brauer $n$-diagrams. In the resulting diagram, horizontal strands, vertical strands and closed loops are formed. For each composite strand $s$, we arbitrarily assign an orientation to $s$ and make the orientations of the components of $s$ from the two diagrams agree with the orientation of $s$ by changing the $\Z_k$-labels for each component of $s$ accordingly. The label of $s$ is then the sum of its consisting component labels. Finally, for $d = 0, 1, \ldots, \floor{\frac{k}{2}}$, let $r_d$ be the number of closed loops with label $\pm d$, mod $k$. Let $D_1 \circ D_2$ denote the $\Z_k$-Brauer $n$-diagram obtained by removing all closed loops and define 
\[D_1 \cdot D_2 := \left( \prod_d A_d^{r_d} \right) D_1 \circ D_2.\]
\begin{defn}
  The \emph{\textbf{cyclotomic Brauer algebra}} (or $\mathbb{Z}_k$-Brauer algebra) $\EuScript{CB}_n^k(R_c)$ is the unital associative $R_c$-algebra with $R_c$-basis the set of $\Z_k$-Brauer diagrams, with multiplication defined by $\cdot$ above.
\end{defn}

We now proceed to show that $\varepsilon_{\!_{\mathscr{B}}}\!: \bnk(R_c) \rightarrow R_c$ is a nondegenerate trace, using an analogously defined trace on $\EuScript{CB}_n^k(R_c)$. 
Rather then considering this trace directly in \cite{GH107}, Goodman and Hauschild Mosley instead consider the trace $\varepsilon_{_{\mathbb{T}}}$ using a so-called `connector' map from the cyclotomic Kauffman tangle algebras to the cyclotomic Brauer algebras.

Just as in the context of tangle algebras, one has a closure map $\mathrm{cl}_n$ from $\Z_k$-Brauer $n$-diagrams to $\Z_k$-Brauer $(n-1)$-diagrams given by joining up the vertices $n$ and $n'$. In addition, any concatenated strands formed in the resulting diagram are labelled according to the same rule as for multiplication in the algebra. Also, if $n$ and $n'$ are joined in the original diagram, then closure will result in a closed loop with some label $\bar{d} \in \Z_k$, where $d = 0,1,\o,\floor{\frac{k}{2}}$ which is then removed and replaced by the coefficient $A_d$. We define $\varepsilon_n: \EuScript{CB}_n^k(R_c) \rightarrow \EuScript{CB}_{n-1}^k(R_c)$ by $\varepsilon_n(D) := A_0^{-1} \mathrm{cl}_n(D)$, for a $\Z_k$-Brauer $n$-diagram $D$. Then $\varepsilon_c := \varepsilon_1 \circ \cdots \circ \varepsilon_n$ is a trace map
\[
\varepsilon_c: \EuScript{CB}_n^k(R_c) \rightarrow \EuScript{CB}_0^k(R_c) = R_c.
\]

The following lemma is due to the work of Parvathi and Savithri \cite{PS02} on $G$-Brauer algebra traces, for finite abelian groups $G$. A sketch of its proof is given in Goodman and Hauschild Mosley \cite{GH107}.

\begin{lemma} \label{lemma:cycloBrauernondeg}
The trace $\varepsilon_c: \EuScript{CB}_n^k(R_c) \rightarrow R_c$ is nondegenerate. In other words, for every $d_1 \in \EuScript{CB}_n^k(R_c)$, there exists a $d_2 \in \EuScript{CB}_n^k(R_c)$ such that $\varepsilon_c(d_1d_2) \neq 0$. Equivalently, it says that the determinant of the matrix $\left( \varepsilon_c(D\!\cdot \!D')\right)_{D,D'}$, where $D,D'$ vary over all $\Z_k$-Brauer $n$-diagrams, is nonzero in $R_c$.
\end{lemma}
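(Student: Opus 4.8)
The plan is to establish nondegeneracy of the trace $\varepsilon_c$ on the cyclotomic Brauer algebra $\EuScript{CB}_n^k(R_c)$ by reducing it to the analogous (known) fact for the ordinary Brauer algebra and exploiting the block-triangular structure of the Gram matrix $\left(\varepsilon_c(D \!\cdot\! D')\right)_{D,D'}$ with respect to a suitable ordering of the $\Z_k$-Brauer diagrams. First I would fix a $\Z_k$-Brauer $n$-diagram $D$ and describe its \emph{shape}: the underlying Brauer $n$-diagram (i.e. the matching of $\{1,\ldots,n,1',\ldots,n'\}$ into pairs) obtained by forgetting orientations and labels. For each pairing one has a ``diagonal'' diagram $D_0$ whose labels are all $0 \in \Z_k$; the key observation is that for diagrams of the same shape, $\varepsilon_c(D \!\cdot\! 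D^*)$ picks up only closed loops whose labels are sums of the labels along $D$ and $D^*$, and by choosing $D^*$ to be the reflection of $D$ with all labels negated, every loop formed closes up with total label $0$, contributing the invertible coefficient $A_0$. This shows the ``diagonal'' entry (in the shape-block sense) is a power of $A_0$ times a permutation-type term, hence a unit.

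Next I would set up an ordering on the set of $\Z_k$-Brauer $n$-diagrams that refines the ordering on shapes coming from the number of horizontal strands (the ``through-strand'' count, as in the cell-module filtration of the Brauer algebra). The claim is that, with respect to this order, $\varepsilon_c(D \!\cdot\! D')$ vanishes unless the shape of $D'$ is comparable to (in fact, matches up with) that of $D$ in the appropriate way — concatenation of two Brauer diagrams of incompatible shapes produces fewer through-strands, and after taking the full closure $\mathrm{cl}_1 \circ \cdots \circ \mathrm{cl}_n$ one only gets a nonzero scalar in $R_c = \EuScript{CB}_0^k(R_c)$ when all $2n$ endpoints close up into loops, which forces a rigidity condition relating the two shapes. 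This makes the Gram matrix block-triangular, with the diagonal blocks indexed by shapes. Within a single shape block, the diagrams are parametrised by assignments of $\Z_k$-labels to the strands (modulo the orientation-reversal identification), and one computes that the block is, up to an invertible diagonal rescaling by powers of $A_0$, a tensor/convolution-type matrix over the group ring $R_c[\Z_k]$ whose determinant is a nonzero polynomial in the $A_d$'s; here I would invoke the computation of Parvathi and Savithri \cite{PS02}, which handles exactly this $G$-Brauer situation for finite abelian $G$, to conclude each diagonal block has nonzero determinant in $R_c$.

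Finally, since the full Gram matrix is block-triangular with each diagonal block having nonzero determinant in the integral domain $R_c = \mathbb{Z}[A_0^{\pm 1}, A_1, \ldots, A_{\lfloor k/2 \rfloor}]$, its determinant is the product of the block determinants and hence nonzero in $R_c$. This is precisely the equivalent formulation of nondegeneracy stated in the lemma, and it immediately gives: for every $d_1 \in \EuScript{CB}_n^k(R_c)$ there is a $d_2$ with $\varepsilon_c(d_1 d_2) \neq 0$ (expand $d_1$ in the diagram basis and use that the Gram matrix is invertible over the fraction field of $R_c$).

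The main obstacle I expect is proving the block-triangularity claim cleanly — i.e. showing that $\varepsilon_c(D \!\cdot\! D')$ really does vanish outside the expected shape-pairing and that on the diagonal it is genuinely a unit times an honest permutation matrix entry rather than something that could degenerate. This requires a careful bookkeeping argument about which strands of $D$ and $D'$ get joined during concatenation and then during the iterated closure, tracking both the combinatorics of the Brauer pairings and the additive $\Z_k$-labels simultaneously; the orientation-reversal identification on labels needs to be handled consistently throughout. Once that structural statement is in place, invoking \cite{PS02} for the within-block determinant and assembling the triangular pieces is routine.
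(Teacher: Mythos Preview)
The paper does not give its own proof of this lemma: it attributes the result to Parvathi and Savithri \cite{PS02} and refers to a sketch in Goodman and Hauschild Mosley \cite{GH107}. So there is no in-paper argument to compare against directly; the question is whether your proposed argument is correct on its own terms.

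It is not, and the error is precisely the point you flagged as the main obstacle. You assert that $\varepsilon_c(D\cdot D')$ \emph{vanishes} unless the shapes of $D$ and $D'$ are suitably compatible, giving a block-triangular Gram matrix with zero off-diagonal blocks. This is false. For any pair of $\Z_k$-Brauer $n$-diagrams, the concatenation $D\cdot D'$ is a nonzero scalar multiple of a diagram, and the full closure $\varepsilon_c$ then converts every strand into a labelled closed loop; the result is always a nonzero monomial $A_0^{-n}\prod_j A_{d_j}$ in $R_c$. Your sentence ``one only gets a nonzero scalar in $R_c$ when all $2n$ endpoints close up into loops'' is vacuous: full closure always joins every endpoint into some loop. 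No entry of the Gram matrix is zero, so strict block-triangularity fails.

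What actually distinguishes the ``diagonal'' pairing $D\leftrightarrow D^*$ (mirror with negated labels) is not vanishing of the other entries but their \emph{degree}. If $\ell(D,D')$ denotes the total number of loops produced, then $\ell(D,D')\le n$ with equality iff the underlying Brauer shape of $D'$ is the mirror of that of $D$; and among those, $D'=D^*$ is the unique choice making every loop label zero, so that $\varepsilon_c(D\cdot D^*)=1$. The argument one finds in \cite{PS02} (and sketched in \cite{GH107}) is a leading-term computation of this sort in the polynomial ring $R_c$, not a vanishing argument. Your plan can be repaired along these lines, but as written the block-triangularity step does not go through.
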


\section{Nondegeneracy of the Trace on $\bnk$ over $R_0$}
Let us fix $\sigma$ to be $+$ or $-$.
In the ring $R_c$, put $q := 1$, $\l := \pm 1$, depending on the sign of $\sigma$, $q_0 := 1$ and $q_i : =0$, for all $i = 1, \o, k-1$. Also, let $A_j$, where $j \notin \{0, \o, \floor{\frac{k}{2}}\}$, be such that $A_m = A_{m+k}$ and $A_{-m} = A_m$ hold for all $m \in \mathbb{Z}$. Then it is straightforward to verify that, by identifying $\theta_j$ with $A_j$, the conditions of weak admissibility in Definition \ref{defn:GHadm} are satisfied. Hence $R_c$ is a ring with weak admissible parameters $\l, q, q_i$ and $A_j$, as defined above.

Furthermore, recalling the definition of $R_{\sigma}$ from Lemma \ref{lemma:Rsigma}, we have natural surjective ring homomorphisms $R_{\sigma} \twoheadrightarrow R_c$, for $\sigma =\pm $, given by:
\begin{eqnarray*}
\varsigma: R_\sigma &\rightarrow& R_c \\ 
\l &\mapsto& \pm 1 \\
q &\mapsto& 1 \quad (\Rightarrow \d \mapsto 0) \\
q_0 &\mapsto& 1 \\
q_i &\mapsto& 0 \\
A_j &\mapsto& A_j.
\end{eqnarray*}

 Certainly, $\varsigma$ defines a map $\Omega \rightarrow R_c$. It is easy to verify that the image of $\beta_\sigma$ and $h_l$ under $\varsigma$ are zero, for all $l = 0,1, \o, z-~\epsilon$. Also, by (\ref{hiprime2}), the $h_l'$ are mapped to $A_{k-l} - A_l$ in $R_c$, but these are simply all zero, as $A_l = A_{-l} = A_{-l+k}$ in $R_c$. Thus the generators of $I_\sigma$ indeed vanish under the map $\varsigma$, hence the above defines a ring homomorphism $\varsigma: R_{\sigma} \twoheadrightarrow R_c$. As an immediate consequence of this, we also have a surjective map $R_0 \twoheadrightarrow R_c$, which factors through $R_{\sigma}$. Observe that, by Definition \ref{defn:adm}, the existence of this map is equivalent to the admissibility of the parameters in $R_c$ chosen at the beginning of this section. Hence, by Proposition \ref{prop:WYimpliesGH}, this gives an alternative proof that $R_c$ is a ring with weak admissible parameters.

We have an $R_c$-algebra homomorphism $\xi: \bnk(R_c) \rightarrow \EuScript{CB}_n^k(R_c)$, given by Figure \ref{fig:bnk2brauer}, in which only non-zero labels on strands have been indicated.
\begin{figure}[h!]
  \begin{center}
    \includegraphics{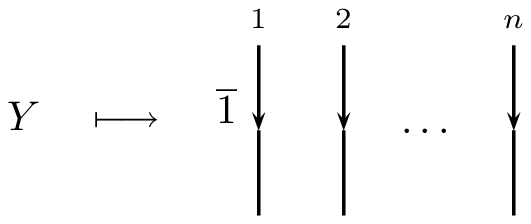} \vskip 1cm
    \includegraphics{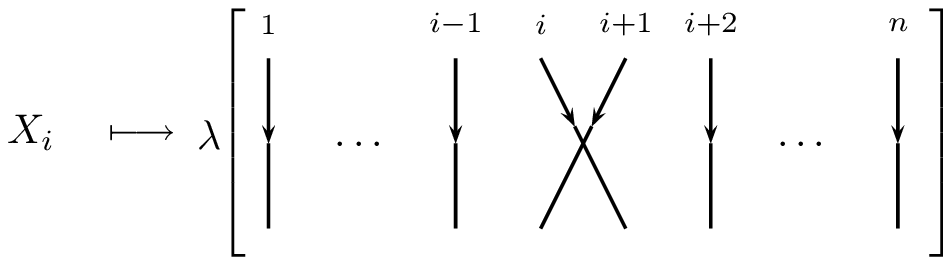}
\vskip 1cm
    \includegraphics{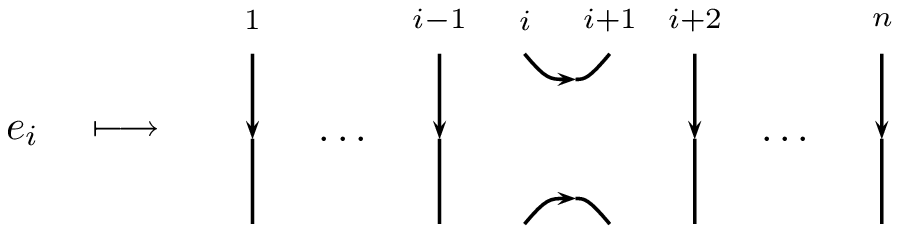}
    \caption{The isomorphism $\xi: \bnk(R_c) \rightarrow \EuScript{CB}_n^k(R_c)$.}
\label{fig:bnk2brauer}
  \end{center}
\end{figure}
\\
It is known that the cyclotomic Brauer algebra $\EuScript{CB}_n^k := \EuScript{CB}_n^k(R_c)$ is generated by the diagrams given in Figure \ref{fig:bnk2brauer}. (We refer the reader to Rui and Xu \cite{RX07} for a full presentation). Hence $\xi$ is surjective. Moreover, we already have a spanning set of size $k^n (2n-1)!!$ of $\bnk(R_c)$, given by Theorem \ref{thm:span}. Since $\xi$ is surjective, this maps onto a spanning set of $\EuScript{CB}_n^k$. But $\EuScript{CB}_n^k$ is of rank $k^n (2n-1)!!$, by definition, hence the image of our spanning set of $\bnk(R_c)$ is in fact a $R_c$-\emph{basis} of $\EuScript{CB}_n^k$. Thus, as $R_c$-algebras, $\bnk(R_c) \cong \EuScript{CB}_n^k$, under $\xi$.

We now compile the above information into the following diagram.
\begin{equation} \label{cd:bmwbrauer}
\begin{CD}
\bnk(R_c) @>{\xi}>{\cong}> \EuScript{CB}_n^k(R_c) \\
@V{\dg}VV  @VV{\varepsilon_c}V \\
\ckt{n}(R_c) @>\varepsilon_{_{\mathbb{T}}}>> R_c
\end{CD}
\end{equation}
Let us now prove that the diagram above commutes. Take a diagram $D \in \EuScript{CB}_n^k$. Then the map $\varepsilon_{_{\mathbb{T}}} \circ \dg \circ \xi^{-1} (D)$ is essentially the trace of the diagram obtained by `seperating' all the non-zero labels from the rest of the diagram and replacing them with appropriate analogous $\mathcal{Y}$-type diagrams. In $\mathbb{KT}_0^k(R_c)$, over and under-crossings do not matter, so $\varepsilon_{_{\mathbb{T}}} \circ \dg \circ \xi^{-1} (D)$ is reduced to disjoint closed loops (around the flagpole), which are then all identified (as $\mathbb{KT}_0^k(R_c) \cong R_c$) with a product of $A_i$'s, where $i$ depends on the original labels in $D$. This produces precisely the same result as taking the trace $\varepsilon_c$ of $D$, as required.

Our aim now is to utilise the above information to show that our spanning set of $\bnk(R_c)$ is linearly independent, by proving $\varepsilon_{\!_{\mathscr{B}}}\!: \bnk(R_c) \rightarrow R_c$ is nondegenerate. From this, we are then finally able to prove that $\bnk(R_0)$ is $R_0$-free of rank $k^n(2n-1)!!$.

\begin{lemma} \label{lemma:bnknondegtrace}
  Suppose $\{T_D\}$ is a spanning set of $\bnk(R_c)$ consisting of $k^n(2n-1)!!$ elements $T_D$ such that $\xi(T_D) = D$, where $D$ is a $\Z_k$-Brauer diagram. Then $\mathrm{det}\left( \varepsilon_{\!_{\mathscr{B}}}(T_{D_1} T_{D_2})\right)_{D_1,D_2} \neq 0$, hence the trace $\varepsilon_{\!_{\mathscr{B}}}\!: \bnk(R_c) \rightarrow R_c$ is nondegenerate.
\end{lemma}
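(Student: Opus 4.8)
The plan is to deduce the nondegeneracy of $\varepsilon_{\!_{\mathscr{B}}}$ on $\bnk(R_c)$ directly from the commutative diagram (\ref{cd:bmwbrauer}) together with the known nondegeneracy of the trace $\varepsilon_c$ on the cyclotomic Brauer algebra $\EuScript{CB}_n^k(R_c)$ (Lemma \ref{lemma:cycloBrauernondeg}). The key observation is that $\varepsilon_{\!_{\mathscr{B}}} = \varepsilon_{_{\mathbb{T}}} \circ \dg$ by definition, and the diagram (\ref{cd:bmwbrauer}) says $\varepsilon_{_{\mathbb{T}}} \circ \dg = \varepsilon_c \circ \xi$. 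Hence for any $a, b \in \bnk(R_c)$ we have $\varepsilon_{\!_{\mathscr{B}}}(ab) = \varepsilon_c(\xi(a)\xi(b))$, since $\xi$ is an algebra homomorphism.

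First I would apply this identity to the elements $T_{D_1}, T_{D_2}$ of the given spanning set: since $\xi(T_{D_i}) = D_i$, we get $\varepsilon_{\!_{\mathscr{B}}}(T_{D_1} T_{D_2}) = \varepsilon_c(D_1 D_2)$ for every pair of $\Z_k$-Brauer $n$-diagrams $D_1, D_2$. Therefore the matrix $\left( \varepsilon_{\!_{\mathscr{B}}}(T_{D_1} T_{D_2}) \right)_{D_1, D_2}$ is \emph{literally equal} to the matrix $\left( \varepsilon_c(D_1 \cdot D_2) \right)_{D_1, D_2}$, whose determinant is nonzero in $R_c$ by Lemma \ref{lemma:cycloBrauernondeg}. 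This immediately gives $\mathrm{det}\left( \varepsilon_{\!_{\mathscr{B}}}(T_{D_1} T_{D_2})\right)_{D_1,D_2} \neq 0$.

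For the final sentence, nondegeneracy of $\varepsilon_{\!_{\mathscr{B}}}$ follows by a standard linear-algebra argument: the spanning set $\{T_D\}$ has $k^n(2n-1)!!$ elements, which is exactly the rank of $\EuScript{CB}_n^k(R_c)$, and since $\xi$ is surjective it maps this spanning set onto a basis of $\EuScript{CB}_n^k(R_c)$; in particular the $T_D$ are linearly independent, hence form a basis of $\bnk(R_c)$ as a free $R_c$-module. Now if $a = \sum_D c_D T_D \in \bnk(R_c)$ is such that $\varepsilon_{\!_{\mathscr{B}}}(ab) = 0$ for all $b$, then in particular $\varepsilon_{\!_{\mathscr{B}}}(a\, T_{D_2}) = 0$ for all $D_2$, which says the vector $(c_D)_D$ lies in the left kernel of the matrix $\left( \varepsilon_{\!_{\mathscr{B}}}(T_{D_1} T_{D_2})\right)_{D_1, D_2}$; since this matrix has nonzero determinant over the integral domain $R_c$ it is injective, forcing all $c_D = 0$, i.e. $a = 0$.

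The argument is essentially a bookkeeping exercise, so there is no serious obstacle; the only point requiring a little care is the commutativity of diagram (\ref{cd:bmwbrauer}), but that has already been verified in the text preceding the lemma. One should also remark that, although the statement is phrased over $R_c$, the fact that $\bnk(R_c)$ is free of rank $k^n(2n-1)!!$ on the $T_D$ (a byproduct of the above) is exactly what is needed in the next step, where one pulls this back to $R_\sigma$ and then to $R_0$ via the specialisation maps and the commutative diagram (\ref{cd:bnktracespec}).
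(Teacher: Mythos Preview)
Your proof is correct and follows essentially the same approach as the paper: use the commutative diagram (\ref{cd:bmwbrauer}) to rewrite $\varepsilon_{\!_{\mathscr{B}}}(T_{D_1}T_{D_2}) = \varepsilon_c(\xi(T_{D_1})\xi(T_{D_2})) = \varepsilon_c(D_1 D_2)$, then invoke Lemma \ref{lemma:cycloBrauernondeg}. The paper's proof is terser and places the deduction of linear independence of the $T_D$ (and hence freeness of $\bnk(R_c)$) immediately \emph{after} the lemma rather than inside it, but the content is the same.
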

\begin{proof}
  Consider the matrix $\left( \varepsilon_{\!_{\mathscr{B}}}(T_{D_1} T_{D_2})\right)_{D_1,D_2}$, where $D_1$, $D_2$ vary over all $\Z_k$-Brauer diagrams. Recall $\varepsilon_{\!_{\mathscr{B}}} = \varepsilon_{_{\mathbb{T}}} \circ \dg$, by definition. Since $\varepsilon_{_{\mathbb{T}}} \circ \dg = \varepsilon_c \circ \xi$ (see (\ref{cd:bmwbrauer})),
therefore
\[
\mathrm{det}\left( \varepsilon_{\!_{\mathscr{B}}}(T_{D_1} T_{D_2})\right)_{D_1,D_2} = \mathrm{det}(\varepsilon_{_{\mathbb{T}}} \circ \dg(T_{D_1} T_{D_2})) = \mathrm{det}(\varepsilon_c \circ \xi(T_{D_1} T_{D_2})) = \mathrm{det}(\varepsilon_c(D_1 D_2)) \neq 0,
\]
by Lemma \ref{lemma:cycloBrauernondeg}. 
\end{proof}
Since $R_c$ is an integral domain, the matrix $\left( \varepsilon_{\!_{\mathscr{B}}}(T_{D_1} T_{D_2})\right)_{D_1,D_2}$ is invertible over the field of fractions of $R_c$. Thus the set $\{T_D \mid D\text{ is a }\Z_k\text{-Brauer diagram}\}$ is linearly independent over $R_c$. In particular, this implies the spanning set of $\bnk(R_c)$ given in Theorem \ref{thm:span} is a basis over $R_c$.

\begin{thm} \label{thm:thebigbasisone} 
 \begin{enumerate}
 \item[(1)] The spanning set $\mathbb{B}_{R_0}$ of $\bnk(R_0)$ given in Theorem \ref{thm:span} is a basis over $R_0$. Thus $\bnk(R_0)$ is $R_0$-free of rank $k^n (2n-1)!!$.
\item[(2)] $\dg: \bnk(R_0) \rightarrow \ckt{n}(R_0)$ is an $R_0$-algebra isomorphism. 
\end{enumerate}
\end{thm}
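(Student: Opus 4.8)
The plan is to leverage everything assembled in the previous section: we have a spanning set $\mathbb{B}_{R_c}$ of $\bnk(R_c)$ of cardinality $k^n(2n-1)!!$, and Lemma~\ref{lemma:bnknondegtrace} shows the associated Gram matrix $\left(\varepsilon_{\!_{\mathscr{B}}}(T_{D_1}T_{D_2})\right)_{D_1,D_2}$ has nonzero determinant in the integral domain $R_c$. The strategy for part~(1) is a ``lifting'' argument through the universal ring $R_0$: because $R_0$ has admissible (hence weak admissible) parameters, the trace $\varepsilon_{\!_{\mathscr{B}}}$ is defined on $\bnk(R_0)$, and by the specialisation square (\ref{cd:bnktracespec}) applied to the map $\varsigma:R_0\twoheadrightarrow R_c$ constructed at the start of this section, we have $\varsigma\circ\varepsilon_{\!_{\mathscr{B}}}^{R_0}=\varepsilon_{\!_{\mathscr{B}}}^{R_c}\circ\eta_b$. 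First I would fix the spanning set $\mathbb{B}_{R_0}=\{T_D\}$ of $\bnk(R_0)$ from Theorem~\ref{thm:span}, enumerated (after identifying $\pi_n$ and the diagram picture of Figure~\ref{fig:onealpha}) so that its image under $\eta_b$ maps to a spanning set of $\bnk(R_c)$ whose $\xi$-image is the $\Z_k$-Brauer diagram basis. Then the matrix $M_{R_0}:=\left(\varepsilon_{\!_{\mathscr{B}}}^{R_0}(T_{D_1}T_{D_2})\right)$ satisfies $\varsigma(\det M_{R_0})=\det M_{R_c}\neq0$, so $\det M_{R_0}\neq0$, and since $R_0$ embeds in its field of fractions, the bilinear form given by $M_{R_0}$ is nondegenerate there; this forces $\{T_D\}$ to be $R_0$-linearly independent. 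Combined with Theorem~\ref{thm:span} (which says $\{T_D\}$ spans), this proves $\mathbb{B}_{R_0}$ is an $R_0$-basis and $\bnk(R_0)$ is $R_0$-free of the claimed rank.

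For part~(2), recall from page~\pageref{defn:dg} that $\dg:\bnk(R_0)\to\ckt{n}(R_0)$ is surjective (it is induced by the isomorphism $\wh\dg$ composed with the projections $\pi_b,\pi_t$). So it suffices to show $\dg$ is injective, equivalently that $\dg$ sends the basis $\mathbb{B}_{R_0}$ to a linearly independent set in $\ckt{n}(R_0)$. Here I would use the trace: $\varepsilon_{\!_{\mathscr{B}}}^{R_0}=\varepsilon_{_{\mathbb{T}}}\circ\dg$ by definition, so the Gram matrix of $\{\dg(T_D)\}$ under $\varepsilon_{_{\mathbb{T}}}$ (paired via multiplication in $\ckt{n}(R_0)$, using surjectivity of $\dg$ to realise products $\dg(T_{D_1})\dg(T_{D_2})=\dg(T_{D_1}T_{D_2})$) equals $M_{R_0}$, which we have just shown has nonzero determinant. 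Hence $\{\dg(T_D)\}$ is $R_0$-linearly independent, so $\dg$ is injective, and being already surjective it is an isomorphism of $R_0$-algebras.

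The main obstacle I anticipate is bookkeeping rather than conceptual: one must check that the enumeration of $\mathbb{B}_{R_0}$ can be matched up bijectively, via $\eta_b$ and $\xi$, with the set of $\Z_k$-Brauer $n$-diagrams so that Lemma~\ref{lemma:bnknondegtrace} applies verbatim --- i.e. that $\xi\circ\eta_b(T_D)$ really is (a scalar multiple of, or exactly) the Brauer diagram $D$ for the chosen spanning set. This requires tracking the diagrammatic interpretation of the $\alpha$-chains in Theorem~\ref{thm:span} through Figure~\ref{fig:bnk2brauer}, and observing that the spanning set, having exactly $k^n(2n-1)!!$ elements and mapping onto a spanning set of the rank-$k^n(2n-1)!!$ free module $\EuScript{CB}_n^k(R_c)$, must map \emph{bijectively} onto the diagram basis (up to invertible scalars from the relation $\alpha_{ijl}^p=A_0^{-1}\alpha_{ijl}^pe_l$). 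Once that identification is in place, the determinant-pullback argument and the embedding of $R_0$ in its fraction field finish both parts cleanly; no further genuine computation is needed.
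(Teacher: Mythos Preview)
Your overall strategy is right, but there is a genuine gap at the step where you write ``since $R_0$ embeds in its field of fractions''. The ring $R_0$ is \emph{not} an integral domain: by Lemma~\ref{lemma:Rsigma}(d) we have $I_0=I_+\cap I_-$, and in $R_0=\Omega/I_0$ the element $\beta_0=\beta_+\beta_-$ vanishes while $\beta_+$ and $\beta_-$ are each nonzero (each has nonzero image in $R_-$, respectively $R_+$). So $R_0$ has zero divisors and no field of fractions. Consequently, knowing only that $\det M_{R_0}\neq 0$ does \emph{not} give you linear independence of $\{T_D\}$ over $R_0$; for that you need $\det M_{R_0}$ to be a non-zero-divisor (so that the adjugate argument $\mathrm{adj}(M)\cdot M=\det(M)\cdot I$ forces any relation vector to vanish).

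The paper fills exactly this gap, and it is the only genuinely new idea you are missing. One pushes the determinant $\omega=\det M_{R_0}$ down to both $R_+$ and $R_-$ via the surjections $R_0\twoheadrightarrow R_\sigma\twoheadrightarrow R_c$; since its image in $R_c$ is nonzero (Lemma~\ref{lemma:bnknondegtrace}), its image in each $R_\sigma$ is nonzero. Now if $\omega x=0$ in $R_0$, then because $R_+$ and $R_-$ \emph{are} integral domains (Lemma~\ref{lemma:Rsigma}(c)), the image of $x$ vanishes in both, so $x\in I_+\cap I_-=I_0$ by Lemma~\ref{lemma:Rsigma}(d), i.e.\ $x=0$ in $R_0$. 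Thus $\omega$ is a non-zero-divisor, and your Gram-matrix argument then goes through. Once you insert this step, your proof of both (1) and (2) is essentially the paper's; your proposed bookkeeping (matching $\mathbb{B}_{R_0}$ directly with Brauer diagrams) is a harmless variant of the paper's choice of arbitrary lifts $T_D$.
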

\begin{proof}
As discussed above, we have specialisation maps $R_0 \twoheadrightarrow R_\sigma \twoheadrightarrow R_c$. Also, we have a surjection $\bnk(R_0) \rightarrow \bnk(R_c) \cong \EuScript{CB}_n^k$. Thus, for every $\Z_k$-Brauer $n$-diagram $D$, let us choose elements $T_D \in \bnk(R_0)$ which are mapped to $D$ under this surjection. Then, by (\ref{cd:bnktracespec}) above and Lemma \ref{lemma:bnknondegtrace}, the determinant of the trace matrix $\omega := \mathrm{det}\left( \varepsilon_{\!_{\mathscr{B}}}(T_{D_1} T_{D_2})\right)_{D_1,D_2} \in R_0$ has a nonzero image in $R_c$, and hence in $R_\sigma$.

Now suppose $\omega x = 0$, for some $x\in R_0$. Because $R_\sigma$ is an integral domain, by part (c) of Lemma \ref{lemma:Rsigma}, the image of $x$ in $R_\sigma$ must be zero, for 
both $\sigma=+$ and $-$. Hence $x \in I_+ \cap I_-$. However, part (d) of Lemma \ref{lemma:Rsigma} states that $I_0=I_+\cap I_-$, so it follows that $x \in I_0$, hence $x=0$ in $R_0$. That is, $\omega$ is not a zero divisor in $R_0$.

\noi In addition, by definition of $\varepsilon_{\!_{\mathscr{B}}}$,
\[
\mathrm{det}(\varepsilon_{_{\mathbb{T}}} (\dg(T_{D_1}) \dg(T_{D_2})))
 = \mathrm{det}\left( \varepsilon_{\!_{\mathscr{B}}}(T_{D_1} T_{D_2})\right)_{D_1,D_2}
\]
is not a zero divisor in $R_0$. This implies that the set of all $\dg(T_D)$ is linearly independent over $R_0$. Hence $\ckt{n}(R_0)$ contains $k^n(2n-1)!!$ linearly independent elements. However, by surjectivity of $\psi$, the image of $\mathbb{B}_{R_0}$ in $\ckt{n}(R_0)$, under the map $\dg$, is also a spanning set of $\ckt{n}(R_0)$. Hence it must also be linearly independent and of size $k^n(2n-1)!!$. Thus $\mathbb{B}_{R_0}$ is a basis of $\bnk(R_0)$, proving (1) and (2).
\end{proof}

Recall Proposition \ref{prop:universalmap} which states that any ring $R$ with admissible parameters $A_0, \ldots, A_{k-1}$, $q_0$, \ldots, $q_{k-1}$, $q$ and $\l$, admits a unique map $R_0\rightarrow R$. Therefore it makes sense to consider the specialisation algebras $\bnk(R_0) \otimes_{R_0} R$ and $\ckt{n}(R_0) \otimes_{R_0} R$. The following result shows that these are in fact isomorphic to the algebras $\bnk(R)$ and $\ckt{n}(R)$, respectively, and hence $\bnk(R) \cong \ckt{n}(R)$.

\begin{cor}
  Let $R$ be a ring with admissible parameters $A_0, \ldots, A_{k-1}, q_0, \ldots, q_{k-1}$, $q$ and $\l$. Let $\mathbb{B}_R$ denote the set of all
\[
\alpha_{i_1j_1,n-1}^{s_1}\ldots\alpha_{i_mj_m,n-2m+1}^{s_m} \bar \chi^{(n-2m)}(\alpha_{s_mt_m,n-2m+1}^{r_m})^*\ldots(\alpha_{s_1t_1,n-1}^{r_1})^*,
\]
where $m\geq0$, $i_1>i_2>\ldots>i_m$, $s_m<s_{m-1}<\ldots<s_1$, and $\bar{\chi}^{(n-2m)}$ is an element of $\widetilde{\X}_{n-2m,k}$. Then $\mathbb{B}_R$ is an $R$-basis of $\bnk(R)$ and its image under $\dg: \bnk(R) \rightarrow \ckt{n}(R)$ is an $R$-basis of $\ckt{n}(R)$.

Furthermore, $\bnk(R) \cong \bnk(R_0) \otimes_{R_0} R$ and $\ckt{n}(R) \cong \ckt{n}(R_0) \otimes_{R_0} R$, as $R$-algebras, and $\dg: \bnk(R) \rightarrow \ckt{n}(R)$ is an $R$-algebra isomorphism.
\end{cor}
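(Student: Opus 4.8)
The plan is to leverage everything already assembled: the universal ring $R_0$ with admissible parameters (Lemma~\ref{lemma:Rsigma}), the unique parameter-preserving map $R_0 \to R$ (Proposition~\ref{prop:universalmap}), the fact that $\bnk(R_0)$ is $R_0$-free with basis $\mathbb{B}_{R_0}$ and $\dg: \bnk(R_0) \to \ckt{n}(R_0)$ is an isomorphism (Theorem~\ref{thm:thebigbasisone}), and the spanning set of Theorem~\ref{thm:span}, which for general $R$ already gives a spanning set $\mathbb{B}_R$ of $\bnk(R)$ of cardinality $k^n(2n-1)!!$ (the set displayed in Lemma~\ref{Iprop}(c) with $l=n$, $M=0$). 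So the only real content is to upgrade ``spanning'' to ``basis'', and this is a standard base-change argument.

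First I would set up the specialisation algebra $\bnk(R_0)\otimes_{R_0} R$. Since the defining relations of $\bnk$ are all $R$-linear in form (they hold over any ring with the stated parameters), there is a natural $R$-algebra homomorphism $\Phi: \bnk(R_0)\otimes_{R_0} R \to \bnk(R)$ sending (generator)$\otimes 1$ to the corresponding generator; it is surjective because the generators generate. Conversely, the presentation of $\bnk(R)$ together with the map $R_0\to R$ of Proposition~\ref{prop:universalmap} yields an $R$-algebra map $\bnk(R) \to \bnk(R_0)\otimes_{R_0} R$ inverse to $\Phi$ on generators, hence $\Phi$ is an isomorphism. Now $\bnk(R_0)$ is $R_0$-free on $\mathbb{B}_{R_0}$, so $\bnk(R_0)\otimes_{R_0} R$ is $R$-free on $\{b\otimes 1 : b\in\mathbb{B}_{R_0}\}$, and $\Phi$ carries this onto $\mathbb{B}_R$. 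Therefore $\mathbb{B}_R$ is an $R$-basis of $\bnk(R)$ and $\bnk(R)\cong\bnk(R_0)\otimes_{R_0}R$. (One should note $\mathbb{B}_R$ is indeed the image of $\mathbb{B}_{R_0}$: the elements $\alpha_{ijl}^p$, the lifted Ariki--Koike basis elements $\wh\X_{m,k}$, and the anti-involution $^*$ are all defined by the same formulas over any ground ring, so base change along $R_0\to R$ sends $\mathbb{B}_{R_0}$ to $\mathbb{B}_R$.)

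The same argument runs for the cyclotomic Kauffman tangle algebra: $\ckt{n}(R)$ is an affine tangle monoid algebra modulo relations whose coefficients involve only the parameters and the $A_j$ (with $A_j$ for $j\ge k$ or $j<0$ given by the $R$-linear recursion~(\ref{eqn:Aextended})), so there is an $R$-algebra isomorphism $\ckt{n}(R_0)\otimes_{R_0}R \xrightarrow{\sim}\ckt{n}(R)$ by the identical universal-property reasoning, using that $R_0$ has admissible hence weak admissible parameters (Proposition~\ref{prop:WYimpliesGH}). Since $\dg$ commutes with specialisation (diagram~(\ref{cd:dgspec})), the isomorphism $\dg:\bnk(R_0)\to\ckt{n}(R_0)$ of Theorem~\ref{thm:thebigbasisone}(2) base-changes to $\dg\otimes\mathrm{id}_R: \bnk(R_0)\otimes_{R_0}R \to \ckt{n}(R_0)\otimes_{R_0}R$, which under the two identifications is exactly $\dg:\bnk(R)\to\ckt{n}(R)$; hence the latter is an $R$-algebra isomorphism and the image of $\mathbb{B}_R$ is an $R$-basis of $\ckt{n}(R)$.

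The only mildly delicate point — the ``main obstacle'', though it is more a bookkeeping issue than a genuine difficulty — is verifying that the various canonical maps really are mutually inverse isomorphisms rather than merely surjections, i.e.\ checking the universal properties carefully on both sides for both $\bnk$ and $\ckt{n}$; for $\ckt{n}$ one must be slightly careful that the free-loop coefficients $A_j$ for all $j\in\Z$ are genuinely determined $R$-linearly by the recursion and so are compatible with base change. Once that is in place, flatness of free modules under $-\otimes_{R_0}R$ does all the remaining work, and no further computation is needed.
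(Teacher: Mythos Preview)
Your proposal is correct and follows essentially the same base-change strategy as the paper: use the universal map $R_0\to R$ from Proposition~\ref{prop:universalmap} together with the freeness result over $R_0$ (Theorem~\ref{thm:thebigbasisone}) to deduce the result over $R$. The only cosmetic difference is that you construct mutually inverse maps $\bnk(R)\leftrightarrows\bnk(R_0)\otimes_{R_0}R$ via universal properties, whereas the paper builds just the map $\bnk(R)\to\bnk(R_0)\otimes_{R_0}R$, notes it is surjective, and then uses that the spanning set $\mathbb{B}_R$ of cardinality $k^n(2n-1)!!$ maps onto a spanning set of a free module of the same rank, forcing it to be a basis and the map an isomorphism; both are standard variants of the same argument.
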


\begin{proof}

Consider the canonical $R$-algebra homomorphism $\bnk(R) \rightarrow \bnk(R_0) \otimes_{R_0} R$. Certainly, it is clear that this is a surjective homomorphism. Thus the image of the spanning set $\mathbb{B}_R$ of $\bnk(R)$ spans $\bnk(R_0) \otimes_{R_0} R$. However, by Theorem \ref{thm:thebigbasisone} above, we have already that $\bnk(R_0) \otimes_{R_0} R$ is $R$-free of rank $k^n(2n-1)!!$. Therefore $\mathbb{B}_R$ is a basis of $\bnk(R)$ and $\bnk(R)\cong \bnk(R_0) \otimes_{R_0} R$.

It is clear that $\ckt{n}(R)$ and $\ckt{n}(R_0) \otimes_{R_0} R$ are isomorphic, as $R$-algebras. Similar reasoning also shows the image of $\mathbb{B}_R$ under $\dg$ is a basis of $\ckt{n}(R)$. 

Finally, we have also proven that 
\[
\bnk(R) \cong \bnk(R_0) \otimes_{R_0} R \cong \ckt{n}(R_0) \otimes_{R_0} R \cong \ckt{n}(R).
\]\!
\end{proof}

Recall we noted earlier that it is not clear \emph{a priori} that $\bmk(R)$ is a subalgebra of $\bnk(R)$. This now follows as a direct consequence of the isomorphism $\bnk(R) \cong \ckt{n}(R)$. Furthermore, we may henceforth identify $\bl{l}$ with $\blk$.

Finally, to help the reader visualise our basis of the cyclotomic BMW algebras, the isomorphism $\dg$ maps $\a{4,6,6}{0} \a{1,3,4}{0}$, considered as an element of $\mathscr{B}_7^k$ to the tangle shown in Figure \ref{fig:alpha}.

\begin{figure}[h!]
   \begin{center}
   \includegraphics{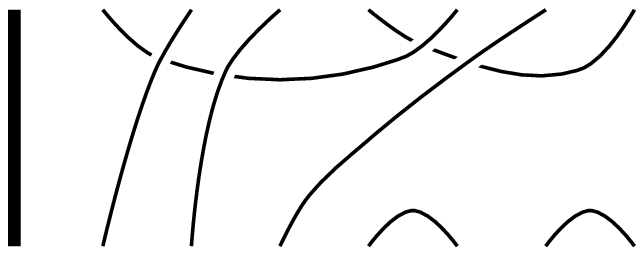}
        \caption{The affine $7$-tangle associated with  the element $\a{4,6,6}{0} \a{1,3,4}{0} \in \mathscr{B}_7^k$.}
     \label{fig:alpha}
   \end{center}
 \end{figure}   
 
%%%%%%%%%%%%%%%%%%%%%%%%%%%%%%%%%%%%%%%%%%%%%%%%%%%

\chapter{The Cellularity of $\bnk$} \label{chap:cellularity}

The theory of cellular algebras was developed in a well-known paper of Graham and Lehrer \cite{GL96}. 
Cellular algebras are a special class of associative algebras defined by a \emph{cell datum} which includes a distinguished basis with certain multiplicative properties that reflect the ideal structure of the algebra and an anti-involution of the algebra.
The principal motivation for their work comes from Kazhdan and Lusztig's study of Iwahori-Hecke algebras in types $A$ and $B$ \cite{KL79}. In particular, the multiplicative and combinatorial properties seen in the Kazhdan-Lusztig basis for Iwahori-Hecke algebras and the ``Robinson-Schensted correspondence'' in the type $A$ case is encapsulated in an axiom for a cell datum. 

A cellular basis of an algebra $\mathscr{A}$ gives rise to a filtration of $\mathscr{A}$, with composition factors isomorphic to the `cell modules' (or `standard modules') of $\mathscr{A}$. Moreover, all of the irreducible $A$-modules arise as quotients of these cell modules by the radical of a symmetric bilinear form defined by the structure constants of the cellular basis. Thus one obtains a complete parametrisation of (the isomorphism classes of) the irreducible $\mathscr{A}$-modules. In addition, important questions, for example, regarding the semisimplicity and quasi-heredity of the algebra, are reduced to linear algebra problems involving this bilinear form.

The general theory of cellular algebras allows one to deduce information about their representation theory, even in the non-semisimple case in most cases. One of the key features of cellular algebras is that the cellular structure is preserved under specialisation. Thus, in this way, the representation theory of an algebra under its generic semisimple setting may be used in many cases to understand non-semisimple specialisations of the algebra.

In \cite{GL96}, Graham and Lehrer prove the cellularity of the Brauer algebras, the Ariki-Koike algebras, the (generalised) Temperley-Lieb algebras and Jones' ``annular'' algebras.
Since then, cellular structures have been discovered for many other important algebras. Examples include the BMW algebras, cyclotomic $q$-Schur algebras, cyclotomic Nazarov-Wenzl algebras and diagram algebras such as the cyclotomic Brauer and Temperley-Lieb algebras and the partition algebras.  
(For example, see \cite{AMR06,DJM98,E04,M99,RX04,RY04,X99,X00}). Also, there is an alternative (equivalent) ring-theoretic and basis-free definition of cellularity, as given by K\"onig and Xi in \cite{KX98}. 

This chapter is concerned with the cellularity of the cyclotomic BMW algebras. As previously mentioned, Enyang \cite{E04} and Xi \cite{X00} utilise the already well-known cellular structure of the Iwahori-Hecke algebras of type $A$ to prove the BMW algebras are cellular. Given that the BMW algebras and Ariki-Koike algebras are cellular, one is naturally lead to ask whether the cyclotomic BMW algebras are also cellular.

Two different cellular bases of the Ariki-Koike algebra were produced in Graham and Lehrer \cite{GL96} and Dipper et al. \!\!\cite{DJM98}. By using a particular ``lifting'' of a slightly modified version of these bases for the $\{\chi^{(l)}\}$ of Theorem \ref{thm:span}, we show that our basis of $\bnk$ is a cellular basis.
A nice feature of our proof is that we need not be explicitly concerned with a particular cellular basis of $\ak{n}$; we need only use the fact that it is cellular with the natural anti-involution and the existence of this lifting map.

In order to use the cellularity of the Ariki-Koike algebras, we must work over a ring $\mathcal{R}$ in which the $k^\mathrm{th}$ order relation $\sum_{j=0}^k q_j y^j$ splits; that is, we require the relation $\prod_{i=0}^{k-1} (Y - p_i) = 0$ to hold, for invertible parameters $p_i \in \mathcal{R}$. (Thus the $q_i$ appearing in relation (\ref{eqn:ycyclo}) now become the signed elementary symmetric polynomials in the $p_i$; in particular, $q_0 = (-1)^{k-1} \prod_i p_i$ is invertible).
For the purposes of cellularity, we will therefore restrict our class of ground rings further to ``split admissible'' rings (see Definition \ref{defn:splitadm}).

\begin{defn} \label{defn:cellular}
  Let $\mathcal{R}$ be a unital commutative ring. An associative $\mathcal{R}$-algebra $\mathscr{A}$ is \emph{\textbf{cellular}} with cell datum $(\Lambda,M,\mathcal{C},\!*)$ if it satisfies the following conditions:
\begin{enumerate}
  \item[(C1)] $\Lambda$ is a finite partially ordered set (poset), and associated with each $\lambda \in \Lambda$ is a finite set $M(\lambda)$ such that the set
\[
\mathcal{C} = \left\{C_{\mathfrak{s},\mathfrak{t}}^{\l} \,|\, \l \in \Lambda\text{ and } \mathfrak{s},\mathfrak{t} \in M(\l) \right\}
\]
is an $\mathcal{R}$-basis of $\mathscr{A}$.
\item[(C2)] The map $^*$ is an anti-involution, i.e. an $\mathcal{R}$-linear involutory anti-automorphism, of $\mathscr{A}$ such that 
\[
\left( C_{\mathfrak{s},\mathfrak{t}}^{\l} \right)^* = C_{\mathfrak{t},\mathfrak{s}}^{\l}.
\]
\item[(C3)] If $\l \in \Lambda$ and $\mathfrak{s},\mathfrak{t} \in M(\l)$ then, for all $a \in \mathscr{A}$, 
\[
aC_{\mathfrak{s},\mathfrak{t}}^{\l} \,\,\equiv \sum_{\mathfrak{s}' \in M(\l)} r_a(\mathfrak{s}',\mathfrak{s}) C_{\mathfrak{s'},\mathfrak{t}}^{\l} \qquad \text{mod } \mathscr{A}(<\l)
\]
where the coefficients $r_a(\mathfrak{s}',\mathfrak{s}) \in \mathcal{R}$ do not depend on $\mathfrak{t}$ and where $\mathscr{A}(<\l)$ is the $\mathcal{R}$-submodule of $\mathscr{A}$ generated by $\{C_{\mathfrak{u},\mathfrak{v}}^{\mu} \mid \mu <\l\text{ and } \mathfrak{u},\mathfrak{v} \in M(\mu)\}$.
\end{enumerate}
\end{defn}

\textbf{Remark:} As an aside note, we remark here that it is possible to work with an extended definition of cellular algebras; Wilcox \cite{W05} removes the assumption that $A$ contains an identity element and that the indexing sets $M(\lambda)$ are finite. Wilcox also introduces the notion of ``conjugate cellular algebras'' (see \cite{W05}). Observe also that, by definition, cellular algebras are required to be finite dimensional. In Green \cite{G99}, the concept of cellular algebras is generalised to infinite dimensional cellular algebras. Also, recently K\"onig and Xi \cite{KX07} have introduced the notion of \emph{affine} cellular algebras.
\\

Let us now recall the definition of the Ariki-Koike algebras $\ak{n}$ from Chapter \ref{chap:spanning}, with the additional assumption that the $k^\mathrm{th}$-order relation on the generator $T_0$ splits over the ground ring. Suppose $\mathcal{R}$ is a unital commutative ring with invertible parameters $q, p_0, p_1\o, p_{k-1}$. Then $\ak{n} := \ak{n}(\mathcal{R})$ is the unital associative $\mathcal{R}$-algebra generated by $T_0^{\pm 1}$, $T_1^{\pm 1}$, \o, $T_{n-1}^{\pm 1}$ subject to the following relations:
\[
\begin{array}{rcll}
  T_0 T_1 T_0 T_1 &=& T_1 T_0 T_1 T_0& \\
  T_i T_{i \pm 1} T_i \= T_{i \pm 1} T_i T_{i \pm 1}&\quad \text{for } i = 1,\o,n-2\\
  T_i T_j \= T_j T_i&\quad \text{for } |i-j|\geq 2 \\
  \prod_{i=0}^{k-1}(T_0-p_i) &=& 0 &\\
   T_i^2 \= (q^2-1)T_i + q^2 &\quad \text{for } i = 1,\o,n-2.
\end{array}
\]
The following result is proved in Graham and Lehrer \cite{GL96} and Dipper et al. \!\!\cite{DJM98}. In both papers however, the invertibility of the parameters $p_0$, $p_1$, \o, $p_{k-1}$ is not required. We require this condition as we assume the generator $Y$ of $\bnk$ is invertible, which is given by the invertibility of $q_0 = (-1)^{k-1} \prod_i p_i$. 
\begin{thm} \label{thm:akcellular}
   The Ariki-Koike algebra $\ak{n}(\mathcal{R})$ is cellular for any unital commutative ring $\mathcal{R}$ with invertible parameters $q, p_0, p_1\o, p_{k-1}$.
\end{thm}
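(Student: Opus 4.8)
The plan is to cite Theorem~\ref{thm:akcellular} essentially as a black box and not reprove it here. The statement to be proved, Theorem~\ref{thm:akcellular}, is precisely the cellularity of the Ariki--Koike algebra $\ak{n}(\mathcal{R})$ over an arbitrary unital commutative ring $\mathcal{R}$ with invertible parameters $q,p_0,\ldots,p_{k-1}$, and this is exactly what is established in Graham--Lehrer~\cite{GL96} and Dipper--James--Mathas~\cite{DJM98}. So the ``proof'' here is really a pointer to the literature, together with the one observation that the invertibility hypothesis on the $p_i$ is harmless: the constructions in both \cite{GL96} and \cite{DJM98} are carried out over a ring in which the $p_i$ are merely elements (not necessarily units), so in particular they apply verbatim when the $p_i$ happen to be invertible. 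Nothing in the cell datum, the anti-involution, or the straightening/multiplication arguments of those papers uses non-invertibility, so restricting to invertible $p_i$ only shrinks the class of allowed ground rings.

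Concretely, I would recall that both references produce a cell datum $(\Lambda, M, \mathcal{C}, *)$ for $\ak{n}$ in which $\Lambda$ is the set of $k$-multipartitions of $n$ (with the dominance order), $M(\boldsymbol\lambda)$ is the set of standard $\boldsymbol\lambda$-tableaux, the basis elements $C^{\boldsymbol\lambda}_{\mathfrak{s},\mathfrak{t}}$ are the Murphy-type elements $m_{\mathfrak{s}\mathfrak{t}}$ built from the Ariki--Koike generators, and $*$ is the anti-involution fixing each $T_i$ (which is the restriction to $\ak{n}$ of the anti-involution denoted $(\ref{eqn:star})$ at the level of $\bnk$). One then simply notes that axioms (C1)--(C3) of Definition~\ref{defn:cellular} are verified in \cite[\S5-6]{DJM98} (or \cite{GL96}) with no use of invertibility of the $p_i$, and that finiteness of $\Lambda$ and of each $M(\boldsymbol\lambda)$ is automatic since there are finitely many $k$-multipartitions of $n$ and finitely many standard tableaux of each shape.

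The only genuinely new remark to make is the sentence already flagged in the excerpt: we demand the $p_i$ be invertible because in our setting $q_0=(-1)^{k-1}\prod_i p_i$ must be a unit (since $Y$ is required to be invertible in $\bnk$), whereas \cite{GL96,DJM98} impose no such restriction; thus Theorem~\ref{thm:akcellular} as stated is an immediate specialisation of their theorems. I do not anticipate any obstacle: there is no calculation to perform, only a careful citation and the trivial observation that a stronger hypothesis on the ground ring preserves the conclusion. If one wanted to be completely self-contained one would have to import the entire Murphy basis construction, but that is outside the scope of this chapter, whose real content is the \emph{lifting} of this cellular structure to $\bnk$ in the subsequent sections.

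\begin{proof}
This is precisely the content of Graham and Lehrer \cite[Theorem~3.7]{GL96} and Dipper, James and Mathas \cite[Theorem~3.26]{DJM98}. In both references a cell datum $(\Lambda,M,\mathcal{C},*)$ for $\ak{n}(\mathcal{R})$ is constructed, where $\Lambda$ is the (finite) poset of $k$-multipartitions of $n$ ordered by dominance, $M(\boldsymbol\lambda)$ is the (finite) set of standard $\boldsymbol\lambda$-tableaux, the basis $\mathcal{C}=\{m_{\mathfrak{s}\mathfrak{t}}\}$ consists of Murphy-type elements in the generators $T_0,\ldots,T_{n-1}$, and $*$ is the $\mathcal{R}$-linear anti-automorphism fixing each $T_i$. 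Axioms (C1)--(C3) of Definition~\ref{defn:cellular} are verified there by explicit straightening arguments in $\ak{n}(\mathcal{R})$. Neither the construction of the basis nor the verification of the cellular axioms makes any use of invertibility of the parameters $p_0,\ldots,p_{k-1}$; those papers work over an arbitrary commutative ring with the $p_i$ and $q$ merely elements, with $q$ sometimes assumed invertible. Hence the conclusion holds a fortiori under our stronger hypothesis that $q,p_0,\ldots,p_{k-1}$ are all units, which we impose only to guarantee that $q_0=(-1)^{k-1}\prod_i p_i$ is invertible, as required for $Y$ to be invertible in $\bnk$.
\end{proof}
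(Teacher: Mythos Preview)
Your proposal is correct and matches the paper's approach exactly: the paper does not prove this theorem at all but simply cites it as a known result from Graham--Lehrer \cite{GL96} and Dipper--James--Mathas \cite{DJM98}, together with the remark that those references do not require the $p_i$ to be invertible whereas here we do (only to ensure $q_0$ is a unit so that $Y$ is invertible in $\bnk$). Your write-up is, if anything, more detailed than the paper's treatment, which contains no proof environment for this statement.
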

Now suppose $\mathcal{R}$ is as in Definition \ref{defn:bnk}, such that the $k^\mathrm{th}$ order relation $\sum_{j=0}^k q_j y^j$ splits in $\mathcal{R}$. Then recall $\ak{n}(\mathcal{R})$ is a quotient of $\bnk(\mathcal{R})$ under the following projection
\begin{eqnarray*}
  \pi_n: \bnk &\rightarrow& \ak{n} \\
  Y &\mapsto& T_0, \\
  X_i &\mapsto& q^{-1} T_i, \text{ \quad for }1 \leq i \leq n-1  \\
  e_i &\mapsto& 0.
\end{eqnarray*}

%We may consider elements of $\mathfrak{h}_{n,k}(R)$ as elements in $\bnk$ by picking a copy of $\mathfrak{h}_{n,k}$ in $\bnk$ as an $R$-module (\emph{not} an $R$-algebra). Let $\mathfrak{W}_{n,k}$ be any set mapping onto a basis of $\mathfrak{h}_{n,k}$.

Following Ariki and Koike \cite{AK94}, define elements $s_1, s_2, \o, s_n \in \ak{n}$ inductively by $s_1 := T_0$ and $s_i := q^{-2} T_{i-1} s_{i-1} T_{i-1}$ if $i > 1$. Hence $s_i = \pi_n(Y_i')$, for all $i$. Consequently, this shows that the $s_i$ are pairwise commutative.
For $t = 1, 2, \o, k$, let 
\[
  f_t(x):=\prod_{j=1}^{t-1}(x-p_j). 
\]
If $\tau: \{1,\o,n\} \rightarrow \{1,\o,k\}$ is any function, define 
\[
  p^\tau := \prod_{i=1}^n f_{\tau(i)}(s_i) \in \ak{n}.
\]
The symmetric group $\mathfrak{S}_n$ acts on the set of functions $\tau: \{1,\o,n\} \rightarrow \{1,\o,k\}$ by composition;
\[
\tau w := \tau \circ w.
\]
Consider the set $\mathbb{M}$ of non-increasing maps $\tau: \{1,\o,n\} \rightarrow \{1,\o,k\}$. Then $\mathbb{M}$ is a set of orbit representatives for this action. For each $\tau \in \mathbb{M}$, the stabiliser 
\[
S(\tau) := \{w \in \mathfrak{S}_n \mid \tau w = \tau\}
\]
is a standard parabolic subgroup of $\mathfrak{S}_n$. Observe that $S(\tau) \cong \mathfrak{S}_{|\tau^{-1}(1)|} \times \mathfrak{S}_{|\tau^{-1}(2)|} \times \o \times \mathfrak{S}_{|\tau^{-1}(k)|}$ and is generated by the simple transpositions $\{\sigma_i = (i\,\,i+1) \mid \tau(i) = 
\tau(i+1)\}$. Consider the set $D(\tau)$ of distinguished (shortest) left coset representatives for $S(\tau)$ in $\mathfrak{S}_n$. Then 
\[
D(\tau) = \{w \in \mathfrak{S}_n \mid w(i) < w(j)\text{ for all } i,j\text{ such that } i <j\text{ and } \tau(i) = \tau(j)\}. 
\]
Lastly, given a reduced expression $\sigma_{i_1} \o \sigma_{i_l}$ for $w \in \mathfrak{S}_n$, let $T_w := T_{i_1} \o T_{i_l}$; it is well-known from standard Coxeter group theory that the relations of the algebra ensure $T_w$ is independent of the choice of reduced expression for $w$. Furthermore, as for the cyclotomic BMW algebras, there is a natural anti-involution $^*$ of $\ak{n}$ (the same that appears in a cell datum of $\ak{n}$) determined by $T_i^* := T_i$, for all $i = 0, 1, \o, n-1$. Then $T_w^* = T_{w^{-1}}$ and $s_i^* = s_i$. And, since the $s_i$ are pairwise commutative, the $p^{\tau}$ are also fixed by $^*$.
(By abuse of notation, we do not distinguish between the $^*$ anti-involution on $\ak{n}$ with the $^*$ anti-involution of $\bnk$ defined by (\ref{eqn:star}) in Chapter \ref{chap:bnkintro}).
  
By the arguments given in Graham and Lehrer \cite{GL96} and Ariki and Koike \cite{AK94},
\[
  \B=\{T_{d_1}^*p^\tau T_wT_{d_2}\mid\tau\in\M,\;d_1,d_2\in D(\tau)\text{ and }w\in S(\tau)\}.
\]
is an $\mathcal{R}$-basis of $\ak{n}$. 
By Lemma (3.3) of \cite{AK94}, the element $p^\tau$ commutes with $T_w$ if $w\in S(\tau)$. 
Thus
\begin{equation} \label{eqn:b*}
  \left(T_{d_1}^*p^\tau T_wT_{d_2}\right)^*
    =T_{d_2}^*T_{w^{-1}}p^\tau T_{d_1}
    =T_{d_2}^*p^\tau T_{w^{-1}}T_{d_1},
\end{equation}
so $\B$ is setwise invariant under $^*$.

Recall, on page \pageref{defn:phiak}, we chose an arbitrary $\mathcal{R}$-module homomorphism $\phi_n:\ak{n}\rightarrow \bnk$ such that 
$\pi_n\phi_n={\rm id}_{\ak{n}}$. However, in general, the invariance of $\B$ under $^*$ may not be preserved under such a map $\phi_n$. Indeed, one may be tempted to consider the map which essentially `replaces' all $s_i$ and $T_i$ with $Y_i'$ and $X_i$, respectively, and $p^{\tau}$ and $T_w$ with their appropriate analogues $Y^{\tau}$ and $X_w$, respectively, in $\bnk$. Unfortunately, $Y^{\tau}$ and $X_w$ do not necessarily commute, hence the invariance of $\B$ under $^*$ would be lost when mapped to $\bnk$. 
%(\textbf{Stewart,with these $\phi$ maps - can't the subscript be omitted? i mean, now that everything is free, there are no more need for tildes and there can be just a pi and a phi map, right? as in, for example, $\phi_l$ and $\phi_n$ of an element of $\ak{l}$ should be the same, whether considered as an element in the lth or nth level of either algebra?})
 
Our aim now is to ``lift'' $\B$ from $\ak{n}$ to a set in $\bnk$ which is still compatible with the $^*$ anti-involution on $\bnk$ defined by (\ref{eqn:star}) on page \pageref{eqn:star}. In other words, we desire an $\mathcal{R}$-module map $\phi:\ak{n}\rightarrow \bnk$ such that $\pi_n\phi={\rm id}_{\ak{n}}$ and
\begin{equation}\label{phistar}
  \phi(b^*)=\phi(b)^*
\end{equation}
for all $b\in \ak{n}$. Due to the $\mathcal{R}$-linearity of $\phi$, it suffices to define $\phi$ on $\B$ such that (\ref{phistar}) holds for all $b\in\B$.

Now, in $\B$, equation (\ref{eqn:b*}) implies that the element $T_{d_1}^*p^\tau T_wT_{d_2}$ is invariant under $^*$ if and only if $d_1=d_2$ and $w$ is an involution in $S(\tau)$. Thus we may express $\B$ as a disjoint union $\B_1\amalg\B_2\amalg\B_3$, where $\B_2^*=\B_3$ and
\[
  \B_1:=\{T_d^*p^\tau T_wT_d\mid\tau\in\M,\;d\in D(\tau)\text{ and }w=w^{-1}\in S(\tau)\}.
\]
For $b\in\B_2$, let $\phi(b)$ be an arbitrary element of $\pi^{-1}(b)$. If $c\in\B_3$, then $c^*\in\B_2$, 
so we may define $\phi(c)=\phi(c^*)^*$. Then, for all $b\in\B_2$ we have
\[
  \phi(b^*)=\phi\big((b^*)^*\big)^*=\phi(b)^*,
\]
as $(b^*)^* = b$.
On the other hand, if $c\in\B_3$ then, by definition,
\[
\big(\phi(c)\big)^*=\big(\phi(c^*)^*\big)^*=\phi(c^*).
\]
Thus (\ref{phistar}) holds on $\B_2\amalg\B_3$. It now remains to define $\phi$ on $\B_1$. For this, we draw on the following standard result from Coxeter group theory, proved in \cite[Proposition 3.2.10]{GP00}: \\
if $w$ is an involution in a Coxeter group $W$ then there exists an element $u\in W$ such that $\ell(uwu^{-1})=\ell(w)-2\ell(u)$ and $uwu^{-1}$ is central in some standard parabolic subgroup.
Specifically, in the case of the symmetric group this can be restated as
follows.

\begin{prop} \label{prop:winvolution}
Let $\sigma_i$ denote the simple transposition $(i,i+1)\in \mathfrak{S}_n$. Suppose $w\in \mathfrak{S}_n$ is an involution. Then $w$ has an expression of the form
\[
  w=u^{-1}\sigma_{i_1}\sigma_{i_2}\ldots\sigma_{i_l}u,
\]
where $i_{m+1}\geq i_m+2$, for all $m=1,2,\ldots,l-1$, and $u$ is an element of $\mathfrak{S}_n$ such that $\ell(uwu^{-1})=\ell(w)-2\ell(u)$, where $\ell(v)=|\{\,(i,j)\mid \text{$i<j$ and $vi>vj$}\,\}|$.
\end{prop}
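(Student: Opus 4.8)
The plan is to reduce Proposition \ref{prop:winvolution} to the general Coxeter-theoretic result quoted just above it, namely \cite[Proposition 3.2.10]{GP00}, which states that an involution $w$ in a Coxeter group $W$ can be written $w = u^{-1} w_J u$ where $\ell(uwu^{-1}) = \ell(w) - 2\ell(u)$ and $w_J$ is central in the standard parabolic subgroup $W_J$ for some subset $J$ of simple reflections. So the only real work is to translate the conclusion ``$w_J = uwu^{-1}$ is central in $W_J$'' into the explicit normal form ``$\sigma_{i_1}\sigma_{i_2}\cdots\sigma_{i_l}$ with $i_{m+1} \geq i_m + 2$'' in the special case $W = \mathfrak{S}_n$.

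First I would recall the structure of standard parabolic subgroups of $\mathfrak{S}_n$: each such $W_J$ is a Young subgroup $\mathfrak{S}_{a_1} \times \mathfrak{S}_{a_2} \times \cdots \times \mathfrak{S}_{a_r}$ acting on consecutive blocks of $\{1,\ldots,n\}$, generated by the simple transpositions $\sigma_i$ with $i \in J$. The center of a symmetric group $\mathfrak{S}_a$ is trivial unless $a \leq 2$, so a central element $w_J$ of $W_J$ must be a product of the longest elements of those factors $\mathfrak{S}_{a_t}$ with $a_t = 2$; equivalently, $w_J$ is a product of simple transpositions $\sigma_{j}$ corresponding to a set of indices $j$ that form isolated blocks of size two — that is, no two of these indices are adjacent. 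Writing these indices in increasing order $i_1 < i_2 < \cdots < i_l$, the non-adjacency is exactly the condition $i_{m+1} \geq i_m + 2$ for all $m$, and since the $\sigma_{i_m}$ then pairwise commute, their product (in this or any order) is well-defined and reduced, with $\ell(w_J) = l$. Setting $w = u^{-1} w_J u = u^{-1}\sigma_{i_1}\cdots\sigma_{i_l}u$ gives the claimed expression, with the length condition $\ell(uwu^{-1}) = \ell(w) - 2\ell(u)$ inherited directly from \cite[Proposition 3.2.10]{GP00}.

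The main obstacle — though it is minor — is making the identification of central elements of Young subgroups fully rigorous: I need to observe that the center of a direct product is the product of the centers, that $Z(\mathfrak{S}_a)$ is trivial for $a \geq 3$ and equals $\{1, \sigma\}$ for $a = 2$ (where $\sigma$ is the unique transposition), and then unwind how the generating simple transpositions of a Young subgroup sit inside $\{\sigma_1,\ldots,\sigma_{n-1}\}$ so that ``block of size two'' translates to ``isolated index.'' One should also double-check the degenerate cases: if $w_J = 1$ (i.e. $w$ is conjugate to the identity, forcing $w = 1$) then $l = 0$ and the empty product convention applies, and the length identity reads $0 = 0$; and if $u = 1$ then $w$ itself is already in the desired normal form. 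Once these bookkeeping points are settled, the proposition follows immediately, so I would keep the proof short and cite \cite{GP00} for the substantive input.
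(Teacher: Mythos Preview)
Your proposal is correct and follows exactly the approach the paper takes: the paper does not give an independent proof but simply states the proposition as the specialisation to $\mathfrak{S}_n$ of \cite[Proposition 3.2.10]{GP00}, and your argument supplies precisely the translation step (identifying central elements of Young subgroups as products of pairwise non-adjacent simple transpositions) that the paper leaves implicit.
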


Let us fix $\tau \in \M$ and consider $b := T_d^*p^\tau T_wT_d\in\B_1$, where $w\in S(\tau)$ is an involution. By Proposition \ref{prop:winvolution}, $w$ has a reduced expression of the form
\[
  w=u^{-1}\sigma_{i_1}\sigma_{i_2}\ldots\sigma_{i_l}u,
\]
where $i_{m+1}\geq i_m+2$ for all $m=1,2,\ldots,l-1$.
Thus
\[
  b=T_d^* p^{\tau} T_{u^{-1}} T_{\sigma_{i_1}}T_{\sigma_{i_2}} \ldots T_{\sigma_{i_l}} T_u T_d.
\]
By definition of $^*$, $T_{u^{-1}} = T_u^*$. Since $u^{-1} \in S(\tau)$, we know that $p^{\tau} T_{u^{-1}} = T_{u^{-1}} p^{\tau}$ and so, as $d \in D(\tau)$ is a coset representative, the above expression for $b$ becomes
\begin{equation} \label{eqn:B1b}
  b=T_{ud}^* p^{\tau} T_{\sigma_{i_1}}T_{\sigma_{i_2}} \ldots T_{\sigma_{i_l}} T_{ud}.
\end{equation}

Moreover, since $S(\tau)$ is a parabolic subgroup of $\mathfrak{S}_n$, any reduced expression for $w\in S(\tau)$ involves only generators of $S(\tau)$. Hence we must have $\sigma_{i_m} \in S(\tau)$ and $\tau(i_m)=\tau(i_m+1)$, for all $m=1,2,\ldots,l$. Thus $f_{\tau(i_m)}(s_{i_m})f_{\tau(i_m+1)}(s_{i_m+1})$ is a symmetric polynomial in $s_{i_m}$ and $s_{i_m+1}$. We can therefore rewrite $p^{\tau}$ as
\[
  p^\tau=\Bigg[\prod_{i\neq i_m,i_m+1 \atop \text{for all } m}f_{\tau(i)}(s_i)\Bigg]
    \left[\prod_{m=1}^lg_m(s_{i_m}+s_{i_m+1},s_{i_m}s_{i_m+1})\right]
\]
for some polynomials $g_m(x,y)$. Thus, substituting into (\ref{eqn:B1b}) above gives
\[
  b= T_{ud}^*\Bigg[\prod_{i\neq i_m,i_m+1 \atop \text{for all } m}f_{\tau(i)}(s_i)\Bigg]
\left[\prod_{m=1}^lg_m(s_{i_m}+s_{i_m+1},s_{i_m}s_{i_m+1})T_{i_m}\right]T_{ud}.
\]

As $X_{i_m}$ does not necessarily commute with $Y^\tau$ in $\bnk$, we now instead use the following result to help us define $\phi(b)$ in order to satisfy (\ref{phistar}).
\begin{lemma} \label{lemma:almostcomm}
For all $i$, we have $[X_i,Y_i'Y_{i+1}']=0$ and $  [X_i,Y_i'+Y_{i+1}']=\delta[Y_{i+1}',e_i]$, where $[\ \text{,}\ ]$ denotes the standard commutator of two elements in $\bnk$.
\end{lemma}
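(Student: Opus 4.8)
The plan is to establish both commutator identities by direct computation from the defining relations and the consequences already assembled in Proposition~\ref{prop:conseq}, Proposition~\ref{prop:XXp1} and Lemma~\ref{lemma:mult}. For the first identity, $[X_i,Y_i'Y_{i+1}']=0$, recall $Y_{i+1}' = X_i Y_i' X_i$, so $Y_i'Y_{i+1}' = Y_i' X_i Y_i' X_i$. Using the braid-type relation $X_iY_i'X_iY_i' = Y_i'X_iY_i'X_i$ — which is exactly relation~(\ref{eqn:braidb}) conjugated into position $i$ via the $X$'s, equivalently part~(c) of Proposition~\ref{prop:conseq} applied to $Y_i'$ and $Y_{i+1}'$ together with $Y_{i+1}'=X_iY_i'X_i$ — one gets $X_i(Y_i'Y_{i+1}') = X_i Y_i' X_i Y_i' X_i = Y_i' X_i Y_i' X_i X_i$; wait, more carefully: $X_i(Y_i'X_iY_i'X_i) = (X_iY_i'X_iY_i')X_i = (Y_i'X_iY_i'X_i)X_i = (Y_i'Y_{i+1}')X_i$. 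So the first identity reduces precisely to the ``$XY$-braid'' relation $X_iY_i'X_iY_i' = Y_i'X_iY_i'X_i$, which I would first record as a lemma-free consequence of (\ref{eqn:braidb}) and the braid relations (\ref{eqn:braid1}), (\ref{eqn:braid2}) (essentially the content underlying Proposition~\ref{prop:conseq}(c)).

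For the second identity I would expand $X_iY_i'$ and $X_iY_{i+1}'$ using Lemma~\ref{lemma:mult}. Taking $p=1$ in (\ref{eqn:magic}) gives $X_iY_i' = Y_{i+1}'X_i - \delta Y_{i+1}' + \delta Y_{i+1}'e_i$ (the sums collapse to the single term $s=1$), and taking $p=1$ in (\ref{eqn:m5}) gives $X_iY_{i+1}' = Y_i'X_i + \delta Y_i' - \delta Y_i'e_i$. Adding,
\[
X_i(Y_i'+Y_{i+1}') = (Y_i'+Y_{i+1}')X_i - \delta Y_{i+1}' + \delta Y_{i+1}'e_i + \delta Y_i' - \delta Y_i'e_i.
\]
So $[X_i, Y_i'+Y_{i+1}'] = \delta\big(Y_i' - Y_{i+1}' + Y_{i+1}'e_i - Y_i'e_i\big)$. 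It remains to identify the bracketed expression with $Y_{i+1}'e_i - e_iY_{i+1}' = [Y_{i+1}',e_i]$. By Proposition~\ref{prop:conseq}(e), equation~(\ref{eqn:m9}) with $p=1$, we have $e_i Y_{i+1}' = Y_i'{}^{-1} e_i$, i.e.\ $e_iY_{i+1}' = \y{i}{-1}e_i$. Hmm, that introduces an inverse; instead I would use the other half: $Y_i' e_i$ versus $Y_{i+1}'e_i$. Actually the cleanest route is to observe $Y_{i+1}'e_i = Y_i'{}^{-1}e_i$ is the wrong pairing — rather, from (\ref{eqn:m9}), $\y{i+1}{1}e_i = \y{i}{-1}e_i$ and $e_i\y{i+1}{1} = e_i\y{i}{-1}$, so $[Y_{i+1}',e_i] = \y{i}{-1}e_i - e_i\y{i}{-1} = [\,\y{i}{-1}, e_i\,]$. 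Then I would verify directly that $Y_i' - Y_{i+1}' + Y_{i+1}'e_i - Y_i'e_i$ equals $Y_{i+1}'e_i - e_iY_{i+1}'$ by moving $e_i$ across using $X_ie_i = \lambda e_i = e_iX_i$ and $Y_{i+1}' = X_iY_i'X_i$: indeed $Y_{i+1}'e_i = X_iY_i'X_ie_i = \lambda X_iY_i'e_i$ and $e_iY_{i+1}' = \lambda e_iY_i'X_i$, while relation (\ref{eqn:xxe})/(\ref{eqn:eyxy}) ties these to $e_i$-multiples; the residual terms $Y_i' - Y_{i+1}' - Y_i'e_i$ should then cancel against $-e_iY_{i+1}'$ after applying $X_iY_i'X_i = Y_i'X_iY_i'X_iY_i'{}^{-1}$-type rewriting.

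The main obstacle I anticipate is precisely this last bookkeeping step: matching the four-term expression $\delta(Y_i' - Y_{i+1}' + Y_{i+1}'e_i - Y_i'e_i)$ with $\delta[Y_{i+1}',e_i]$ requires showing $Y_i' - Y_{i+1}' - Y_i'e_i = -e_iY_{i+1}'$, i.e.\ $Y_i'(1-e_i) - Y_{i+1}' + e_iY_{i+1}' = 0$, i.e.\ $Y_i'(1-e_i) = (1-e_i)Y_{i+1}'$. This is a genuine identity to be checked: using $Y_{i+1}' = X_iY_i'X_i$ and $X_i - X_i^{-1} = \delta(1-e_i)$, one writes $(1-e_i)Y_{i+1}' = \delta^{-1}(X_i - X_i^{-1})X_iY_i'X_i$ and similarly for the left side, then reduces via (\ref{eqn:xi2}), (\ref{eqn:untwist}) and (\ref{eqn:exe}). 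I expect this to be a short but slightly delicate manipulation; once it is done, both claimed commutator relations follow immediately, so I would present the ``$XY$-braid'' consequence and this $(1-e_i)$-intertwining identity as the two computational lemmas feeding the proof, and then assemble the statement in a few lines.
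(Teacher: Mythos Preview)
Your approach to the first identity is fine and matches the paper's: both reduce to $X_iY_i'X_iY_i' = Y_i'X_iY_i'X_i$, which is exactly the commutativity of $Y_i'$ and $Y_{i+1}'$ from Proposition~\ref{prop:conseq}(c).

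For the second identity your strategy via Lemma~\ref{lemma:mult} is sound, but you have misread equation~(\ref{eqn:m5}). With $p=1$ the single summand has $s=1$, so $\y{i}{p-s}\y{i+1}{s} = \y{i}{0}\y{i+1}{1} = Y_{i+1}'$ and $\y{i}{p-s}e_i\y{i+1}{s} = e_iY_{i+1}'$. Thus (\ref{eqn:m5}) actually gives
\[
X_iY_{i+1}' \;=\; Y_i'X_i + \delta\,Y_{i+1}' - \delta\,e_iY_{i+1}',
\]
not $Y_i'X_i + \delta Y_i' - \delta Y_i'e_i$ as you wrote. Adding this to (\ref{eqn:magic}) at $p=1$, namely $X_iY_i' = Y_{i+1}'X_i - \delta Y_{i+1}' + \delta Y_{i+1}'e_i$, the terms $\pm\delta Y_{i+1}'$ cancel and you get
\[
X_i(Y_i'+Y_{i+1}') \;=\; (Y_i'+Y_{i+1}')X_i + \delta\bigl(Y_{i+1}'e_i - e_iY_{i+1}'\bigr),
\]
which is the claimed identity immediately. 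No further bookkeeping is needed.

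The ``main obstacle'' you anticipated, the identity $Y_i'(1-e_i) = (1-e_i)Y_{i+1}'$, is an artifact of this misreading and is in fact \emph{false} in general. For instance at $i=1$ one has $e_1Y_2' = e_1X_1YX_1 = \lambda e_1YX_1 = e_1Y^{-1}$ (using $e_1YX_1Y = \lambda^{-1}e_1$), so the putative identity would force $Y - Ye_1 = Y_2' - e_1Y^{-1}$, which does not hold. So had you pursued that route you would not have been able to close the argument. With the corrected reading of (\ref{eqn:m5}) your approach is slightly slicker than the paper's, which instead expands $X_iY_{i+1}' = X_i^2Y_i'X_i$ via (\ref{eqn:xi2}) and then simplifies using (\ref{eqn:untwist}).
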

\begin{proof}
We have
\[
X_iY_i'Y_{i+1}'=Y_{i+1}'X_i^{-1}Y_{i+1}'=Y_{i+1}'Y_i'X_i =Y_i'Y_{i+1}'X_i,
\]
by equation (\ref{eqn:prop1c}) of Proposition \ref{prop:conseq}.
Also,
\begin{eqnarray*}
  X_i(Y_i'+Y_{i+1}')
    &=&Y_{i+1}'X_i^{-1}+X_i^2Y_i'X_i\\
&\stackrel{(\ref{eqn:ix}),(\ref{eqn:xi2})}{=}&Y_{i+1}'X_i-\delta Y_{i+1}'+\delta Y_{i+1}'e_i+Y_i'X_i+\delta X_iY_i'X_i-\delta\lambda e_iY_i'X_i\\
    &\rel{eqn:untwist}& (Y_{i+1}'+Y_i')X_i+\delta [Y_{i+1}',e_i].
\end{eqnarray*}
\end{proof}

\noi Therefore, applying Lemma \ref{lemma:almostcomm} recursively, we obtain 
\begin{align*}
  [X_i,(Y_i'+Y_{i+1}')^r(Y_i'Y_{i+1}')^s]    &=\sum_{c=1}^r(Y_i'+Y_{i+1}')^{c-1}\delta[Y_{i+1}',e_i](Y_i'+Y_{i+1}')^{r-c}(Y_i'Y_{i+1}')^s\\
&=\left[Y_{i+1}',\delta\sum_{c=1}^r(Y_i'+Y_{i+1}')^{c-1}e_i(Y_i'+Y_{i+1}')^{r-c}\right],
\end{align*}
by equations (\ref{eqn:prop1c}) and (\ref{eqn:m9}) of Proposition \ref{prop:conseq}. Rearranging, this gives
\begin{eqnarray*}
  X_i(Y_i'+Y_{i+1}')^r(Y_i'Y_{i+1}')^s
    +\delta\left[\sum_{c=1}^r(Y_i'+Y_{i+1}')^{c-1}e_i(Y_i'+Y_{i+1}')^{r-c}\right]Y_{i+1}' \hspace{-105mm}\\
    &=&(Y_i'Y_{i+1}')^s(Y_i'+Y_{i+1}')^rX_i+\delta Y_{i+1}'\left[\sum_{c=1}^r(Y_i'+Y_{i+1}')^{r-c}e_i(Y_i'+Y_{i+1}')^{c-1}\right].
\end{eqnarray*}
Thus the right hand side of the above equation is an element of $\pi_n^{-1}((s_i+s_{i+1})^r(s_is_{i+1})^sT_i)$ which is invariant under $^*$. In addition, it is in 
the subalgebra generated by $X_i$, $e_i$, $Y_i'$ and $Y_{i+1}'$. Therefore, for each $m$, there exists
$b_m\in\langle X_{i_m},e_{i_m},Y_{i_m},Y_{i_m+1}\rangle$ which is invariant under $^*$, such that
\[
\pi_n(b_m)=g_m(s_{i_m}+s_{i_m+1},s_{i_m}s_{i_m+1})T_{i_m}.
\]
Now let us define
\[
  \phi(b):=X_{ud}^*\Bigg[\prod_{i\neq i_m,i_m+1 \atop \text{for all } m}f_{\tau(i)}(Y_i')\Bigg]
    \left[\prod_{m=1}^lb_m\right]X_{ud},
\]
where $X_v := X_{g_1} X_{g_2} \o X_{g_j} \in \bnk$ for a reduced expression $\sigma_{g_1} \sigma_{g_2} \o \sigma_{g_j}$ of $v \in \mathfrak{S}_n$. \\
As $i_{m+1}\geq i_m+2$, all the $b_m$ commute with one another and $\prod_{i\neq i_m,i_m+1}f_{\tau(i)}(Y_i')$ commutes with $\prod_{m=1}^lb_m$, by equations (\ref{eqn:prop1b}) and (\ref{eqn:prop1c}). Thus $\phi(b)$ is invariant under $*$ and maps to $b$ under $\pi$. Hence $\phi(b)^*=\phi(b)=\phi(b^*)$, as required.
Now that we have defined $\phi(b)$, for all $b \in \B$, we extend $\phi$ to all of $\ak{n}$ by $\mathcal{R}$-linearity.
To summarise, we now have a map $\phi: \ak{n} \rightarrow \bnk$ such that the following diagram commutes.
\[
\begin{CD}
\ak{n}(\mathcal{R}) @>>{\phi}> \bnk(\mathcal{R}) \\
@V{*}VV  @VV{*}V \\
\ak{n}(\mathcal{R}) @>>{\phi}> \bnk(\mathcal{R})
\end{CD}
\]
Since $n$ was arbitrary, we have maps $\phi_l: \ak{l} \rightarrow \blk$ satisfying $\pi_l \phi_l = {\rm id}_{\ak{l}}$ which commute with $^*$.
Now that we have the existence of such maps $\phi_l$ which are compatible with the $^*$ anti-involutions, we are able to proceed with proving the cellularity of $\bnk$.

\begin{defn} \label{defn:splitadm}
  Let $R$ be as in the definition of $\bnk$ (see Definition \ref{defn:bnk}). Then the family of parameters $\left( A_0, \ldots, A_{k-1}, q_0, \ldots, q_{k-1}, q, \l\right)$ is said to be \textbf{\emph{split \,admissible}} if they are admissible (see Definition \ref{defn:adm}) \emph{and} there exists units $p_i \in R$ such that $ y^k-\sum_{j=0}^{k-1}q_jy^j = \prod_{i=0}^{k-1} (y - p_i)$.
\end{defn}

\noi Henceforth, we restrict to ground rings $\mathcal{R}$ with split admissible parameters $A_0, \ldots, A_{k-1}$, $q_0, \ldots, q_{k-1}$, $q$ and $\l$. 

\noi Let $(\Lambda_l,M_l,\mathcal{C},\!*)$ be a cell datum for $\ak{l}$ and
\[
\{C_{\mathfrak{s},\mathfrak{t}}^\lambda\mid\lambda\in\Lambda_l,\;\mathfrak{s},\mathfrak{t}\in M_l(\lambda)\}
\]
be a cellular basis of $\ak{l}$. Let $\bar{C}_{\mathfrak{s},\mathfrak{t}}^\lambda$ denote the image of $\phi_l(C_{\mathfrak{s},\mathfrak{t}}^\lambda)$ in $\bnk$. Observe that, by our construction of $\phi_l$ above,
\begin{equation} \label{eqn:akcstar}
\left(\bar{C}_{\mathfrak{s},\mathfrak{t}}^\lambda\right)^* = \left(\phi_l\left(C_{\mathfrak{s},\mathfrak{t}}^\lambda\right)\right)^*=
\phi_l\left((C_{\mathfrak{s},\mathfrak{t}}^{\lambda})^*\right) =\phi_l(C_{\mathfrak{t},\mathfrak{s}}^\lambda)
=\bar{C}_{\mathfrak{t},\mathfrak{s}}^\lambda.
\end{equation}
Define
\[
\Lambda:=\coprod_{m\geq0}^{\floor{n/2}} \Lambda_{n-2m}.
\]
We extend the partial orders on $\Lambda_l$ to a partial order on $\Lambda$ by declaring that $\Lambda_{n-2m}<\Lambda_{n-2m'}$ for $m>m'$. 
For each $\lambda\in\Lambda_{n-2m}\subseteq\Lambda$, define $M(\lambda):=V_{n,m}\times M_{n-2m}(\lambda)$ and, for each pair of $(a,\mathfrak{s}),(b,\mathfrak{t}) \in M(\lambda)$, let 
\[
C_{(a,\mathfrak{s})(b,\mathfrak{t})}^\lambda:=a\bar{C}_{\mathfrak{s},\mathfrak{t}}^\lambda b^*.
\]

\begin{thm} \label{thm:cellularity}
Let $\mathcal{R}$ be as in Definition \ref{defn:splitadm}. The algebra $\bnk(\mathcal{R})$ is cellular with cell datum $(\Lambda,M,\mathcal{C},*)$ defined as above and cellular basis
\[
    \{C_{(a,\mathfrak{s})(b,\mathfrak{t})}^\lambda \mid\lambda\in\Lambda,\;(a,\mathfrak{s}),(b,\mathfrak{t}) \in M(\lambda)\}
\]
\end{thm}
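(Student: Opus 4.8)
The plan is to verify the three cellular axioms (C1), (C2), (C3) for the proposed datum, reducing each to facts about the cellular basis $\{C_{\mathfrak{s},\mathfrak{t}}^\lambda\}$ of the Ariki--Koike algebras together with the structural results of Chapters 2--4, in particular the basis theorem (Theorem~\ref{thm:thebigbasisone} and its corollary) and the ``two-sided ideal'' Lemma~\ref{Iprop}. For (C1): Theorem~\ref{thm:span} together with Lemma~\ref{Iprop}(c) (applied with $l=n$, $M=0$) shows that $\bnk(\mathcal R)$ is spanned by the elements $u\chi^{(n-2m)}v^*$ with $u,v\in V_{n,m}$ and $\chi^{(n-2m)}\in\widetilde{\X}_{n-2m,k}$; choosing $\X_{l,k}$ to be (a suitable modification of) the cellular basis of $\ak l$ and $\overline{\X}_{l,k}=\phi_l(\{C^\lambda_{\mathfrak s,\mathfrak t}\})$, the spanning set becomes exactly $\{C^\lambda_{(a,\mathfrak s)(b,\mathfrak t)}\}$. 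Since by the Corollary to Theorem~\ref{thm:thebigbasisone} this set has the right cardinality $k^n(2n-1)!!$ and $\bnk(\mathcal R)$ is $\mathcal R$-free of that rank, a spanning set of that size is automatically a basis. (One must check the counting: $\sum_m |V_{n,m}|^2\cdot|\ak{n-2m}$-basis$|$ equals $k^n(2n-1)!!$, which is the same count already used to verify the cardinality of the Theorem~\ref{thm:span} spanning set.)

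For (C2): the anti-involution $^*$ of $\bnk$ fixes each $Y_i'$ and each $X_i$, $e_i$, hence $(a)^*$ again lies in $V_{n,m}$ (with the $\alpha$-chain reversed in the evident way, which the definition of $V_{n,m}$ as $\{u_1u_2\cdots\}$ built from the $\a{ijl}{p}$ and their $^*$-images is set up to accommodate), so $(v^*)^*=v\in V_{n,m}$. Combined with equation~(\ref{eqn:akcstar}) — which is exactly the statement that $\phi_l$ was built to satisfy — we get
\[
\left(C^\lambda_{(a,\mathfrak s)(b,\mathfrak t)}\right)^* = \left(a\bar C^\lambda_{\mathfrak s,\mathfrak t}b^*\right)^* = b\,(\bar C^\lambda_{\mathfrak s,\mathfrak t})^*\,a^* = b\,\bar C^\lambda_{\mathfrak t,\mathfrak s}\,a^* = C^\lambda_{(b,\mathfrak t)(a,\mathfrak s)},
\]
which is (C2). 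The entire purpose of the elaborate construction of $\phi_l$ (via Proposition~\ref{prop:winvolution} and Lemma~\ref{lemma:almostcomm}) is precisely to make this line work, so this axiom is essentially already done.

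The substantive axiom is (C3), and I expect it to be the main obstacle. I need to show that for a generator $\gamma$ of $\bnk$ (one of $Y,X_1,\dots,X_{n-1},e_1,\dots,e_{n-1}$, which suffices by $\mathcal R$-linearity and multiplicativity) and any basis element $C^\lambda_{(a,\mathfrak s)(b,\mathfrak t)}=a\bar C^\lambda_{\mathfrak s,\mathfrak t}b^*$ with $\lambda\in\Lambda_{n-2m}$, the product $\gamma\cdot a\bar C^\lambda_{\mathfrak s,\mathfrak t}b^*$ is congruent, modulo $\mathscr A(<\lambda)$, to $\sum_{(a',\mathfrak s')} r_\gamma\big((a',\mathfrak s'),(a,\mathfrak s)\big)\,a'\bar C^\lambda_{\mathfrak s',\mathfrak t}b^*$ with coefficients independent of $(b,\mathfrak t)$. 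The idea is: by Lemma~\ref{lemma:leftideal}/Lemma~\ref{lemma:angels} (left-ideal properties of the $\alpha$-chains) together with Lemma~\ref{lemma:beer} (which controls reordering of two consecutive chains and in particular preserves the strict inequality $i'>i$ at the cost of ``residue'' terms in a smaller subalgebra), left multiplication $\gamma\cdot a$ can be rewritten as $\sum a'' h$ where the $a''\in V_{n,m}$ (possibly with $a''$ ``deeper'', i.e. landing in $V_{n,m+1}\bnk$-type terms which go into lower cells $\Lambda_{n-2m'}$, $m'>m$, hence into $\mathscr A(<\lambda)$ by the ordering we imposed) and $h\in\bl{n-2m}=\widetilde{\mathscr B}_{n-2m}^k$. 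Then $h\bar C^\lambda_{\mathfrak s,\mathfrak t}$ must be handled: since $\phi_l$ need not be an algebra map, one works modulo the ideal generated by the $e_i$'s in $\bl{n-2m}$, where $\bl{n-2m}/(\text{ideal})\cong\ak{n-2m}$ and $\bar C^\lambda_{\mathfrak s,\mathfrak t}\equiv C^\lambda_{\mathfrak s,\mathfrak t}$; applying axiom (C3) for $\ak{n-2m}$ gives $h C^\lambda_{\mathfrak s,\mathfrak t}\equiv\sum_{\mathfrak s'} r_h(\mathfrak s',\mathfrak s)C^\lambda_{\mathfrak s',\mathfrak t}$ modulo $\ak{n-2m}(<\lambda)$, and one lifts this congruence back to $\bnk$ — the $\ak{n-2m}(<\lambda)$ part lifts into $\mathscr A(<\lambda)$, and the $e_i$-ideal part, when multiplied on left by the $\alpha$-chain $a''$ and on right by $b^*$, is absorbed (via Lemma~\ref{Iprop}(b)) into $V_{n,m+1}\At{\cdot}V_{n,m+1}^*\subseteq I_{n,m+1}$, i.e. into lower cells. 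The crucial bookkeeping is (i) that the coefficients $r$ produced at each stage genuinely do not involve $b$ or $\mathfrak t$ — this follows because every manipulation acts purely on the left factor $a\bar C^\lambda_{\mathfrak s,\mathfrak t}$ and the $\ak{}$-level (C3) coefficients are $\mathfrak t$-independent by hypothesis — and (ii) that all the ``error'' terms really do lie below $\lambda$ in the partial order, which is why $\Lambda$ was ordered with strata $\Lambda_{n-2m}$ decreasing in $m$ and within a stratum by the Ariki--Koike order. I would organize the proof by first establishing a ``straightening lemma'': $\bnk=\bigoplus_\lambda(\text{span of }C^\lambda_{(a,\mathfrak s)(b,\mathfrak t)})$ as $\mathcal R$-modules with $\mathscr A(\leq\lambda)$, $\mathscr A(<\lambda)$ being two-sided ideals — the ideal property being essentially Lemma~\ref{Iprop}(a),(c) — and then checking the left-multiplication rule generator by generator, case by case along the lines of the case analysis in the proof of Lemma~\ref{lemma:leftideal}. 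The book-keeping that every residue term lands strictly below $\lambda$ (never merely $\leq$) is the delicate point and will require careful use of the minimality/ordering conditions recorded in Lemmas~\ref{lemma:leftideal}--\ref{lemma:beer}.
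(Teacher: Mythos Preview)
Your proposal is correct and follows essentially the same route as the paper's proof. The only point worth noting is that you over-elaborate (C3): the paper handles an arbitrary $x\in\bnk$ in one stroke, using that $V_{n,m}\bl{n-2m}$ is a left ideal of $\bnk$ (Lemmas~\ref{lemma:Vgleftideal} and~\ref{lemma:VgV}) to write $xa=\sum_i a_i x_i$ with $a_i\in V_{n,m}$ and $x_i\in\bl{n-2m}$ --- no generator-by-generator case analysis is needed, and no terms escape to a ``deeper'' $V_{n,m+1}$ at this stage. The only drop to lower cells comes (exactly as you say) from the kernel $\bl{n-2m}e_{n-2m-1}\bl{n-2m}$ of $\pi_{n-2m}$ when expanding $x_i\bar C^\lambda_{\mathfrak s,\mathfrak t}$, and that is absorbed into $I_{n,m+1}$ via Lemma~\ref{Iprop}(b),(c).
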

\begin{proof}
By Theorem \ref{thm:span}, 
\[
    \{C_{(a,\mathfrak{s})(b,\mathfrak{t})}^\lambda \mid\lambda\in\Lambda,\;(a,\mathfrak{s}),(b,\mathfrak{t}) \in M(\lambda)\}.
\]
is a basis of $\bnk$. Hence (C1) is satisfied.
We already know that $^*$, as defined by (\ref{eqn:star}), is an anti-involution of $\bnk$. Moreover, by (\ref{eqn:akcstar}), we see that
\[    \left(C_{(a,\mathfrak{s})(b,\mathfrak{t})}^\lambda\right)^*=\left(a\bar{C}_{\mathfrak{s},\mathfrak{t}}^\lambda b^*\right)^*
        =b\bar{C}_{\mathfrak{t},\mathfrak{s}}^\lambda a^*
        =C_{(b,\mathfrak{t})(a,\mathfrak{s})}^\lambda,
\]
so that (C2) holds.
It now remains to prove the multiplication axiom (C3) holds. Suppose $x\in \bnk$. Let $l=n-2m$, and fix a $\lambda\in\Lambda_l\subseteq\Lambda$ and 
$(a,\mathfrak{s})\in M(\lambda) = V_{n,m} \times M_l(\l)$. By Lemma \ref{lemma:Vgleftideal}, $V_{n,m}\blk$ is a left ideal of $\bnk$, therefore
\[
    xa=\sum_ia_ix_i,
\]
for some $a_i\in V_{n,m}$ and $x_i\in\blk$. Because $\ak{l}$ is cellular, there exists 
$r_{\pi_l(x_i)}(\mathfrak{u},\mathfrak{s})\in \mathcal{R}$ such that
\[    \pi_l(x_i)C_{\mathfrak{s},\mathfrak{t}}^\lambda
\in\sum_{\mathfrak{u}\in M_l(\lambda)}r_{\pi_l(x_i)}(\mathfrak{u},\mathfrak{s})
C_{\mathfrak{u},\mathfrak{t}}^\lambda
+\langle C_{\mathfrak{s}'\mathfrak{t}'}^\mu\mid\mu\in\Lambda_l\text{ and }\mu<\lambda\rangle,
\]
for any $\mathfrak{t}\in M_l(\lambda)$. Recall that $\blk e_{l-1}\blk$ is the kernel of $\pi_l:\blk \rightarrow \ak{l} $. Therefore
\begin{eqnarray*}
    x_i\phi_l(C_{\mathfrak{s},\mathfrak{t}}^\lambda)
        &\in&\sum_{\mathfrak{u}\in M_l(\lambda)}r_{\pi_l(x_i)}(\mathfrak{u},\mathfrak{s})\phi_l(C_{\mathfrak{u},\mathfrak{t}}^\lambda)\\   &&{}+\langle\phi_l(C_{\mathfrak{s}'\mathfrak{t}'}^\mu)\mid\mu\in\Lambda_l\text{ and }\mu<\lambda\rangle+\blk e_{l-1}\blk.
\end{eqnarray*}
Hence
\[
    x_i\bar{C}_{\mathfrak{s},\mathfrak{t}}^\lambda
        \,\in\hspace{-1mm}\sum_{\mathfrak{u}\in M_l(\lambda)}r_{\pi_l(x_i')}(\mathfrak{u},\mathfrak{s})\bar{C}_{\mathfrak{u},\mathfrak{t}}^\lambda
            +\langle\bar{C}_{\mathfrak{s}'\mathfrak{t}'}^\mu\mid\mu\in\Lambda_l\text{ and }\mu<\lambda\rangle
            +\blk e_{l-1}\blk.
\]
Thus, for all $(b,\mathfrak{t})\in M(\lambda)$, we have
\begin{eqnarray}
    xC_{(a,\mathfrak{s})(b,\mathfrak{t})}^\lambda
        &=&xa\bar{C}_{\mathfrak{s},\mathfrak{t}}^\lambda b^* \nonumber\\
        &=&\sum_i a_ix_i\bar{C}_{\mathfrak{s},\mathfrak{t}}^\lambda b^* \nonumber\\
        &\in&\sum_i\sum_{\mathfrak{u}\in M_l(\lambda)}r_{\pi_l(x_i')}(\mathfrak{u},\mathfrak{s})a_i \bar{C}_{\mathfrak{u},\mathfrak{t}}^\lambda b^* \nonumber\\
        &&{}+\langle a_i\bar{C}_{\mathfrak{s}'\mathfrak{t}'}^\mu b^*\mid\mu\in\Lambda_l\text{ and }\mu<\lambda\rangle
            +a_i\blk e_{l-1}\blk b^*. \nonumber\\
        &\subseteq&\sum_i\sum_{\mathfrak{u}\in M_l(\lambda)}r_{\pi_l(x_i')}(\mathfrak{u},\mathfrak{s})C_{(a_i,\mathfrak{u})(b,\mathfrak{t})}^\lambda \nonumber\\
        &&{}+\langle C_{(a_i,\mathfrak{s}')(b,\mathfrak{t}')}^\mu\mid\mu\in\Lambda_l\text{ and }\mu<\lambda\rangle + V_{n,m}\blk e_{l-1}\blk V_{n,m}^*. \label{eqn:actionofx}
\end{eqnarray}
By parts (b) and (c) of Lemma \ref{Iprop},
\begin{eqnarray*}
    V_{n,m}\blk e_{l-1}\blk V_{n,m}^*
        &\subseteq&I_{n,m+1}\\
        &=&\langle C_{(a',\mathfrak{s}')(b',\mathfrak{t}')}^\mu \,|\,\mu\in\Lambda_{n-2m'}\text{ where }m'\geq m+1\rangle\\
        &\subseteq&\langle C_{(a',\mathfrak{s}')(b',\mathfrak{t}')}^\mu\mid \mu<\lambda\rangle,
\end{eqnarray*}
by definition of the partial ordering on $\Lambda$.
Combining this with (\ref{eqn:actionofx}) above, we therefore have 
\[
    xC_{(a,\mathfrak{s})(b,\mathfrak{t})}^\lambda
        \in\sum_i\sum_{\mathfrak{u}\in M_l(\lambda)}r_{\pi_l(x_i)}(\mathfrak{u},\mathfrak{s}) C_{(a_i,\mathfrak{u})(b,\mathfrak{t})}^\lambda
        +\langle C_{(a',\mathfrak{s}')(b',\mathfrak{t}')}^\mu\mid \mu<\lambda\rangle,
\]
proving (C3) and completing the proof of the Theorem.
\end{proof} 

At this point, it would be natural to use the general theory of cellular algebras given in Graham and Lehrer \cite{GL96} to further study the representation theory and structure of $\bnk$, including completely describing its irreducible representations over a field and determining a criterion for semisimplicity. This detailed study will hopefully feature in future work.

%\newpage

\bibliographystyle{plain}

\begin{thebibliography}{}

\end{thebibliography}


\begin{thebibliography}{10}

\bibitem{A02}
D.~Allcock.
\newblock Braid pictures for {A}rtin groups.
\newblock {\em Transactions of the American Mathematical Society},
  354(9):3455--3474, 2002.

\bibitem{AK94}
S.~Ariki and K.~Koike.
\newblock A {H}ecke algebra of $(\mathbb{Z} / r\mathbb{Z})\wr \mathfrak{S}_n$
  and construction of its irreducible representations.
\newblock {\em Advances in Mathematics}, 106:216--243, 1994.

\bibitem{AMR06}
S.~Ariki, A.~Mathas, and H.~Rui.
\newblock Cyclotomic {N}azarov-{W}enzl algebras.
\newblock {\em Nagoya Mathematical Journal}, 182:47--134, 2006.

\bibitem{BW89}
J.~S. Birman and H.~Wenzl.
\newblock Braids, link polynomials and a new algebra.
\newblock {\em Transactions of the American Mathematical Society},
  313:249--273, 1989.

\bibitem{B37}
R.~Brauer.
\newblock On algebras which are connected with the semisimple continuous
  groups.
\newblock {\em Annals of Mathematics. Second Series}, 38(4):857--872, 1937.

\bibitem{BM93}
M.~Brou{\'e} and G.~Malle.
\newblock Zyklotomische {H}eckealgebren.
\newblock {\em Ast\'erisque}, (212):119--189, 1993.
\newblock Repr\'esentations unipotentes g\'en\'eriques et blocs des groupes
  r\'eductifs finis.

\bibitem{CGW05}
A.M. Cohen, D.A.H. Gijsbers, and D.B. Wales.
\newblock \text{BMW algebras of simply laced type}.
\newblock {\em Journal of Algebra}, 286(1):107--153, 2005.

\bibitem{DJM98}
R.~Dipper, G.~James, and A.~Mathas.
\newblock Cyclotomic {$q$}-{S}chur algebras.
\newblock {\em Mathematische Zeitschrift}, 229(3):385--416, 1998.

\bibitem{E04}
J.~Enyang.
\newblock Cellular bases for the {B}rauer and {B}irman-{M}urakami-{W}enzl
  algebras.
\newblock {\em Journal of Algebra}, 281(2):413--449, 2004.

\bibitem{E07}
J.~Enyang.
\newblock Specht modules and semisimplicity criteria for {B}rauer and
  {B}irman-{M}urakami-{W}enzl algebras.
\newblock Preprint, arXiv:math/0705.4142v1, 2007.

\bibitem{FG95}
S.~Fishel and I.~Grojnowski.
\newblock Canonical bases for the {B}rauer centralizer algebra.
\newblock {\em Mathematical Research Letters}, 2(1):15--26, 1995.

\bibitem{GP00}
M.~Geck and G.~Pfeiffer.
\newblock {\em Characters of finite {C}oxeter groups and {I}wahori-{H}ecke
  algebras}, volume~21 of {\em London Mathematical Society Monographs. New
  Series}.
\newblock The Clarendon Press Oxford University Press, New York, 2000.

\bibitem{GH06}
F.~Goodman and H.~Hauschild.
\newblock Affine {B}irman-{W}enzl-{M}urakami algebras and tangles in the solid
  torus.
\newblock {\em Fundamenta Mathematicae}, 190:77--137, 2006.

\bibitem{GH107}
F.~Goodman and H.~Hauschild~Mosley.
\newblock Cyclotomic {B}irman-{W}enzl-{M}urakami algebras, {I}: freeness and
  realization as tangle algebras.
\newblock Preprint, arXiv:math/0612065v2, 2007.

\bibitem{GH207}
F.~Goodman and H.~Hauschild~Mosley.
\newblock Cyclotomic {B}irman-{W}enzl-{M}urakami algebras, {II}: admissibility
  relations and representation theory.
\newblock Preprint, arXiv:math/0612064v2, 2007.

\bibitem{GL96}
J.J. Graham and G.I. Lehrer.
\newblock Cellular algebras.
\newblock {\em Inventiones Mathematicae}, 123(1):1--34, 1996.

\bibitem{G99}
R.~M. Green.
\newblock Completions of cellular algebras.
\newblock {\em Communications in Algebra}, 27(11):5349--5366, 1999.

\bibitem{HR95}
T.~Halverson and A.~Ram.
\newblock Characters of algebras containing a {J}ones basic construction: the
  {T}emperley-{L}ieb, {O}kasa, {B}rauer, and {B}irman-{W}enzl algebras.
\newblock {\em Advances in Mathematics}, 116(2):263--321, 1995.

\bibitem{HO01}
R.~H{\"a}ring-Oldenburg.
\newblock Cyclotomic {B}irman-{M}urakami-{W}enzl algebras.
\newblock {\em Journal of Pure and Applied Algebra}, 161:113--144, 2001.

\bibitem{J83}
V.~F.~R. Jones.
\newblock Index for subfactors.
\newblock {\em Inventiones Mathematicae}, 72:1--25, 1983.

\bibitem{K90}
L.~H. Kauffman.
\newblock An invariant of regular isotopy.
\newblock {\em Transactions of the American Mathematical Society},
  318(2):417--471, 1990.

\bibitem{KL79}
D.~Kazhdan and G.~Lusztig.
\newblock Representations of {C}oxeter groups and {H}ecke algebras.
\newblock {\em Inventiones Mathematicae}, 53(2):165--184, 1979.

\bibitem{KX98}
S.~K{\"o}nig and C.~Xi.
\newblock On the structure of cellular algebras.
\newblock In {\em Algebras and modules, II (Geiranger, 1996)}, volume~24 of
  {\em Canadian Mathematical Society Conference Proceedings}, pages 365--386.
  American Mathematical Society, Providence, RI, 1998.

\bibitem{KX07}
S.~K{\"o}nig and C.~Xi.
\newblock Affine cellular algebras.
\newblock Preprint, 2007.

\bibitem{L93}
S.~Lambropoulou.
\newblock {\em A study of braids in $3$-manifolds}.
\newblock PhD thesis, University of Warwick, 1994.

\bibitem{L99}
S.~Lambropoulou.
\newblock Knot theory related to generalized and cyclotomic {H}ecke algebras of
  type {$B$}.
\newblock {\em Journal of Knot Theory and its Ramifications}, 8(5):621--658,
  1999.

\bibitem{LR97}
R.~Leduc and A.~Ram.
\newblock A ribbon {H}opf algebra approach to the irreducible representations
  of centralizer algebras: the {B}rauer, {B}irman-{W}enzl, and type {A}
  {I}wahori-{H}ecke algebras.
\newblock {\em Advances in Mathematics}, 125(1):1--94, 1997.

\bibitem{M99}
A.~Mathas.
\newblock {\em Iwahori-{H}ecke algebras and {S}chur algebras of the symmetric
  group}, volume~15 of {\em University Lecture Series}.
\newblock American Mathematical Society, Providence, RI, 1999.

\bibitem{MT90}
H.~Morton and P.~Traczyk.
\newblock Knots and algebras.
\newblock In E.~Mart\'in-Peinador and A.~Rod\'es, editors, {\em Contribuciones
  matem\'aticas en homenaje al profesor {D}. {A}ntonio {P}lans {S}anz de
  {B}remond}, pages 201--220. Universidad de Zaragoza, 1990.

\bibitem{MW89}
H.~Morton and A.~Wassermann.
\newblock A basis for the {B}irman-{W}enzl algebra.
\newblock Unpublished manuscript, 29 pp., 1989.

\bibitem{M87}
J.~Murakami.
\newblock \text{The Kauffman polynomial of links and representation theory}.
\newblock {\em Osaka Journal of Mathematics}, 24:745--758, 1987.

\bibitem{N96}
M.~Nazarov.
\newblock Young's orthogonal form for {B}rauer's centralizer algebra.
\newblock {\em Journal of Algebra}, 182(3):664--693, 1996.

\bibitem{OR04}
R.~Orellana and A.~Ram.
\newblock Affine braids, markov traces and the category {$O$}.
\newblock Preprint, arXiv:math/0401317, 2004.

\bibitem{PS02}
M.~Parvathi and D.~Savithri.
\newblock Representations of {$G$}-{B}rauer algebras.
\newblock {\em Southeast Asian Bulletin of Mathematics}, 26(3):453--468, 2002.

\bibitem{R05}
H.~Rui.
\newblock A criterion on the semisimple {B}rauer algebras.
\newblock {\em Journal of Combinatorial Theory. Series A}, 111(1):78--88, 2005.

\bibitem{RS06}
H.~Rui and M.~Si.
\newblock Gram determinants and semisimple criteria for {B}irman-{W}enzl
  algebras.
\newblock Preprint, arXiv:math/0607266v1, 2006.

\bibitem{RX04}
H.~Rui and C.~Xi.
\newblock The representation theory of cyclotomic {T}emperley-{L}ieb algebras.
\newblock {\em Commentarii Mathematici Helvetici}, 79(2):427--450, 2004.

\bibitem{RX07}
H.~Rui and J.~Xu.
\newblock On the semisimplicity of cyclotomic {B}rauer algebras, {II}.
\newblock {\em Journal of Algebra}, 312(2):995--1010, 2007.

\bibitem{RY04}
H.~Rui and W.~Yu.
\newblock On the semi-simplicity of the cyclotomic {B}rauer algebras.
\newblock {\em Journal of Algebra}, 277(1):187--221, 2004.

\bibitem{T94}
T.~tom Dieck.
\newblock Symmetrische br\"ucken und knotentheorie zu den {D}ynkin-diagrammen
  vom typ {$B$}.
\newblock {\em Journal f\"ur die Reine und Angewandte Mathematik}, 451:71--88,
  1994.

\bibitem{T88}
V.~G. Turaev.
\newblock The {C}onway and {K}auffman modules of a solid torus.
\newblock {\em Zapiski Nauchnykh Seminarov Leningradskogo Otdeleniya
  Matematicheskogo Instituta imeni V. A. Steklova Akademii Nauk SSSR (LOMI)},
  167(Issled. Topol. 6):79--89, 190, 1988.

\bibitem{W88}
H.~Wenzl.
\newblock On the structure of {B}rauer's centralizer algebras.
\newblock {\em Annals of Mathematics}, 128:173--193, 1988.

\bibitem{W90}
H.~Wenzl.
\newblock Quantum groups and subfactors of type {$B$}, {$C$}, and {$D$}.
\newblock {\em Communications in Mathematical Physics}, 133(2):383--432, 1990.

\bibitem{W05}
S.~Wilcox.
\newblock Cellularity of twisted semigroup algebras of regular semigroups.
\newblock Master's thesis, University of Sydney, 2005.

\bibitem{WY06}
S.~Wilcox and S.~Yu.
\newblock The cyclotomic {BMW} algebra associated with the two string type {B}
  braid group.
\newblock Preprint, arXiv:math/0611518, 2006.

\bibitem{X99}
C.~Xi.
\newblock Partition algebras are cellular.
\newblock {\em Compositio Mathematica}, 119(1):99--109, 1999.

\bibitem{X00}
C.~Xi.
\newblock On the quasi-heredity of {B}irman-{W}enzl algebras.
\newblock {\em Advances in Mathematics}, 154(2):280--298, 2000.

\end{thebibliography}

\end{document}